\documentclass[reqno,oneside]{amsart}
\usepackage[textwidth=16.1cm]{geometry}
\usepackage{mathrsfs}  
\usepackage{cancel} 
\usepackage[dvipsnames]{xcolor}
\usepackage{enumitem}
\usepackage{mathtools}
\usepackage[final]{hyperref}
\usepackage{bm}
\usepackage{subcaption}
\usepackage{comment}
\usepackage{xargs}
\usepackage{soul}
\usepackage{subcaption}
\usepackage{amssymb}
\usepackage{tikz, pgf}
\usetikzlibrary{decorations.pathmorphing}

\usepackage[textsize=tiny,textwidth=2.05cm]{todonotes}
\setlength{\marginparwidth}{2.05cm}
\newcommandx{\task}[2][1=]{\todo[linecolor=blue,backgroundcolor=blue!30,#1]{#2}}
\newcommandx{\note}[2][1=]{\todo[linecolor=green,backgroundcolor=green!30,#1]{#2}}
\newcommandx{\error}[2][1=]{\todo[linecolor=red,backgroundcolor=red!30,#1]{#2}}
\usepackage{marginnote}
\usepackage[dvipsnames]{xcolor}
\usepackage{mathrsfs}
\usepackage{mathtools}
\usepackage{esint}
\usepackage{stackengine}
\usepackage{hyperref}
\usepackage{bbm}

\newtheorem{theorem}{Theorem}[section]
\newtheorem{lemma}[theorem]{Lemma}
\newtheorem{proposition}[theorem]{Proposition}
\newtheorem{corollary}[theorem]{Corollary}

\theoremstyle{definition}
  \newtheorem{definition}[theorem]{Definition}

\theoremstyle{remark}
  \newtheorem{remark}[theorem]{Remark}
  \def\myellipse{(0,0) ellipse (4cm and 1cm)}

\newcommand{\compo}{H}
\newcommand{\dx}{\di x}

\newcommand{\eps}{\varepsilon}
\newcommand{\N}{\mathbb{N}}
\newcommand{\Z}{\mathbb{Z}}
\newcommand{\R}{\mathbb{R}}
\newcommand{\C}{\mathbb{C}}

\newcommand{\defas}{\coloneqq}

\usepackage{stackengine}

\newcommand*{\di}{\mathop{}\!\mathrm{d}}

\DeclareMathOperator{\dist}{dist}
\DeclareMathOperator*{\argmin}{argmin}

\newcommand{\BBB}{\color{black}}

\newcommand{\EEE}{\color{black}}

\newcommand{\MMM}{\color{black}}

\newcommand{\GGG}{\color{black}}

\newcommand{\JJJ}{\color{black}}
\newcommand{\VVV}{\color{black}}
\newcommand{\VIT}{\color{black}}
\newcommand{\JOS}{\color{black}}

\newcommand{\T}{\mathcal{T}}

\newcommand{\mres}{\mathbin{\vrule height 1.6ex depth 0pt width 0.13ex\vrule height 0.13ex depth 0pt width 1.3ex}}

% VITO
\newcommand{\ol}{\overline}
\newcommand{\sm}{\setminus}

\newcommand{\tsubset}{\, \tilde{\subset} \, }

 \newcommand{\dhn}{\mathrm{d}\mathcal{H}^{d-1}}
 \newcommand{\hn}{\mathcal{H}^{d-1}}
 \newcommand{\dhu}{\mathrm{d}\mathcal{H}^{1}}
 \newcommand{\hu}{\mathcal{H}^{1}}

\DeclarePairedDelimiterX\setof[1]\{\}{#1}
\DeclarePairedDelimiterX\abs[1]\lvert\rvert{#1}
\DeclarePairedDelimiterX\norm[1]\lVert\rVert{#1}
\DeclarePairedDelimiterX\sprod[2]\langle\rangle{#1, #2}

\usepackage{enumitem}
\numberwithin{equation}{section}

\setlist[enumerate,1]{font=\normalfont}
\setlist[itemize,1]{font=\normalfont}
\newlist{thmlist}{enumerate}{1}
\setlist[thmlist]{label=(\roman{thmlisti}),
	ref=(\roman{thmlisti}),font=\normalfont,
	noitemsep}

\title[Adaptive finite element approximation for quasi-static crack growth]{Adaptive finite element approximation  \\ for quasi-static crack growth}

\subjclass[2020]{49J45, 70G75, 74A45, \EEE 74R10}
\keywords{Brittle materials, variational fracture, free-discontinuity problems, quasi-static crack propagation, irreversibility condition, finite element approximation, \EEE  $\Gamma$-convergence.}

\author[V.~Crismale]{Vito Crismale}
\address[Vito Crismale]{
  Dipartimento di Matematica “G. Castelnuovo”, Sapienza Universita` di Roma, Piazzale A. Moro 2, I-00185, Rome, Italy}
\email{crismale@mat.uniroma1.it}

\author[M.~Friedrich]{Manuel Friedrich} 
\address[Manuel Friedrich]{
  Department of Mathematics \\
  Friedrich-Alexander Universit\"at Erlangen-N\"urnberg \\
   Cauerstr.~11, D-91058 Erlangen, Germany 
% \\
%   \& \DDD Applied \EEE Mathematics M\"{u}nster \\
%   University of M\"{u}nster \\
%   Einsteinstr.~62, D-48149 M\"{u}nster, Germany
% 
}
\email{manuel.friedrich@fau.de}

\author[J.~Seutter]{Joscha Seutter}
\address[Joscha Seutter]{
 Department of Mathematics \\
 Friedrich-Alexander Universit\"at Erlangen-N\"urnberg \\
 Cauerstr.~11, D-91058 Erlangen, Germany
}
\email{joscha.seutter@fau.de}

\begin{document}
\begin{abstract}
  We provide an  adaptive finite element approximation  for a model of quasi-static crack growth  in dimension two. The discrete setting  consists of  integral functionals that are defined on continuous, piecewise affine functions, where the triangulation is a part of the unknown of the problem and adaptive in each minimization step. The limit passage is conducted simultaneously in the vanishing mesh size and discretized time step, and results in an evolution for the  continuum Griffith model of brittle fracture with isotropic surface energy \cite{FriedrichSolombrino} which  is characterized by  an irreversibility condition, a global stability, and an energy balance. Our result corresponds to an evolutionary counterpart of the static $\Gamma$-convergence result in \cite{BonBab} for which, as a byproduct, we provide an alternative proof.

    \end{abstract}
    \maketitle
    
    \section{Introduction}
    
    The fracture behavior of brittle materials has been a central focus of research in mechanical engineering, beginning with {\sc Griffith}'s pioneering work in the 1920s \cite{griffith}. Griffith's theory revolutionized the understanding of crack formation and propagation by framing it as a competition between the elastic bulk energy  of a material and the energy needed to increase the area of the cracked surface. Building on this foundation, {\sc Francfort} and {\sc Marigo}  \cite{frma98} introduced a variational approach to fracture, where displacement fields and crack paths are determined by minimizing  so-called \emph{Griffith energies}. They proposed an evolutionary model in the framework of rate independent systems, called an \EEE \emph{irreversible quasi-static crack evolution}, which is governed by three fundamental principles: irreversibility of the crack, static equilibrium at every time, and an energy balance that ensures that ensures that the process is non-dissipative. \EEE Unlike traditional fracture theories, this framework does not rely on prescribed crack paths and provides a more effective description of crack initiation. The present paper is devoted to an approximation result of such fracture evolutions based on adaptive finite elements. We start by giving a nonexhaustive account on the relevant literature.

    \textbf{Existence of quasi-static crack evolutions:} The mathematical well-posedness of the model from \cite{frma98} was initiated in \cite{DM-Toa} for a  $2d$ antiplane model   with  restrictive assumptions on the crack topology. The topological restrictions have then been removed in the  breakthrough result  \cite{Francfort-Larsen:2003} by passing to the so-called \emph{weak formulation}, i.e., expressing the problem in the functional setting of $SBV$-functions  \cite{Ambrosio-Fusco-Pallara:2000} and replacing the crack by the jump set of the displacement $u$. This study was subsequently generalized to nonlinear elasticity \cite{dMasoFranToad, DalGiac},  including the setting  of non-interpenetration \cite{Lazzaroni}. We also refer to \cite{DM-Toa2, Larsen}  for some results based on local minimization.  All such existence results are derived from solving certain time-incremental problems where one  fundamental  challenge \EEE consists in proving that the static equilibrium property at all times    is conserved in the passage to   time-continuous solutions. The extension of this strategy to the Griffith energy in linearized elasticity  gives rise to several additional difficulties inherent to the presence of symmetrized gradients. Indeed, due to  the lack of Korn's inequality, there is only control on  the symmetric part of the gradient $e(u) = \frac{1}{2}(\nabla u^T + \nabla u)$ but not on the full gradient $\nabla u$, \VVV leading \EEE to an analytically more intricate formulation  in  the larger space of  special   functions of bounded deformation. Only recently, departing from {\sc Dal Maso}'s seminal paper \cite{DM13} on the generalized space $GSBD$,  there have been significant developments  for the analysis of linear Griffith models. We refer the reader to static existence results, both in the weak \cite{AlmiTassssso, CC-JEMS, FriedrichPerugini2} and the strong \cite{Iu3, Crismale-Cham-ex, FLS} \EEE setting.  In \cite{FriedrichSolombrino}, the existence of quasi-static  crack evolutions has been established in dimension two, generalizing the seminal work    \cite{Francfort-Larsen:2003} to the vectorial, geometrically linear setting.     \EEE

  \textbf{Approximation of the Griffith functional:} Due to the presence of unknown surfaces,  minimization problems for the Griffith functional   are    notoriously difficult to be
  solved numerically in an efficient and  robust   way.  Consequently, the  {regularization} of free-discontinuity problems by  {computationally more  viable models} is of fundamental importance. With the aim of rigorously proving convergence of such approximation schemes, one usually resorts to the variational notion of $\Gamma$-convergence  \cite{dalmasoIntroductionGConvergence1993}   that ensures convergence of minimizers. Over the last years, a variety of different ways to approximate the Griffith energy has been proposed, including so-called phase-field approximations where the sharp discontinuity is smoothened into a diffuse crack in terms of an  auxiliary phase-field variable. This well-known approach was not only extensively studied from a mathematical point of view, see \cite{Chambolle:2004, Chambolle-Conti-Francfort-phase,  ChaCri19, Iurlano:13} for recent results in the  linearly elastic setting,     but constitutes also one of the  most popular computational methods for simulating brittle fracture. Practically, this approach is combined with an additional spatial discretization of both variables in terms of  finite-difference or finite-elements  (for rigorous results by means of $\Gamma$-convergence see  \cite{Bach, Belle, Crismale-Scilla}). \EEE One major drawback lies in the fact that the two-parameter approximation in terms of the  diffuse crack and the mesh-size parameter gives rise to a multiscale numerical problem requiring the use of a very thin mesh. Similar issues may appear for approximations by non-local functionals, where the energy density depends on some  `averaged behavior' of $u$ in a neighborhood of vanishing size, see \cite{Marziani, non-local1, Scilla} for results in the  setting of linear fracture models. To avoid dealing with a further approximation parameter, one therefore searches for discrete approximations of single-scale type,  see \cite{Frater07, PodGon21, RanChi2015a}.

    In this paper, we want to focus on one prominent example of those, namely a discrete finite-element approximation that makes use of adaptive mesh-refinements and was first introduced for the Mumford-Shah functional by {\sc Dal Maso and Chambolle} \cite{ChamboDalMaso}.   The main feature of this approach is that it involves an implicit mesh-optimization and hence gives enough flexibility to approximate isotropic crack energies. As described in \cite{chambolleBourdain}, this  is a real advantage as this method does not require to use very fine meshes.    
     In \cite{negri}, this approximation scheme was then generalized to the Griffith functional in linearized elasticity. Yet, in this result, the approximating sequence of functionals still depends on the full gradient $\nabla u$ in an unnatural way. Only recently, {\sc Babadjian \EEE and Bonhomme} \cite{BonBab} succeeded in extending the results from \cite{ChamboDalMaso} to the linear case which is exclusively written in terms of the symmetric gradient $e(u)$ of the displacement  $u$. We refer to \cite[Introduction]{BonBab} for more details and for a review of related discrete-to-continuous approximations.  
     
  \textbf{Evolutionary approximation results:} \EEE  All the aforementioned results are purely static and do not take into account the irreversible nature of crack growth. In fact, the literature on approximation of evolutionary fracture models  is  comparably scarce. In \cite{Giacomini:2005}, a phase-field approximation of quasi-static fracture evolution was provided for the antiplane case, and in the setting of finite  elasticity a result on discontinuous-finite-element approximation is available   \cite{GP1, GP2}. Additionally, crack evolutions have been identified as the effective variational limit of sequences of problems in various settings of applicative relevance such as atomistic models \cite{FriedrichSeutter}, homogenization \cite{GiacPonsi}, linearization \cite{steinke}, or a cohesive-to-brittle passage in the limit of infinite specimen size \cite{Giacomini:2005b}. However, rigorous approximation schemes for quasi-static crack growth in the framework of linearized elasticity are still pending. In the present paper,  we move a first step in this direction by providing  a discrete-to-continuous passage for the adaptive finite element model analyzed in  \cite{BonBab, ChamboDalMaso}.  Besides providing a first  approximation result for an evolutionary Griffith model, we believe that the techniques developed in this paper may also contribute to tackle other relevant approximation schemes in the future,  such as (spatially discretized) phase-field models. \EEE

  \textbf{The present paper:}  Following  \EEE the setting  in \cite{BonBab, ChamboDalMaso}, we call a triangulation of  the reference domain \EEE $\Omega \subset \R^2$ admissible if the angles of all contained triangles exceed a given value $\theta_{0}$ and the size of their edges lies between $\eps$ and a given function $\omega(\eps)>\eps$, with $\omega(\eps) \to 0$ as $\eps \to 0$. We  consider functionals defined on the family $\mathcal{A}_{\eps}(\Omega)$ of continuous functions $u\colon \Omega \to \R^2$ which are piecewise affine on admissible triangulations of $\Omega$, and take the form 
    \begin{equation*}
      E_{\eps}(u)\defas 
      \frac{1}{\eps} \int_{\Omega}  f(\eps \GGG \, \C e(u) \colon e(u)\EEE)  \, {\rm d}x \,.
    \end{equation*} \EEE
  Here, $f$ is a nondecreasing function behaving as the identity near $0$ and as a constant near $\infty$, see  \eqref{f-assump},  and by $\C$ we denote a \EEE fourth order elasticity tensor, bounded from above and below, see \eqref{C-assump}. \EEE  For simplicity, we will mainly focus on the special case $f(t)= t \wedge \kappa $  and   $\C=\mathrm{Id}_{2{\times}2 \times 2 \times 2}$. \EEE For any function $u\in \mathcal{A}_{\eps}(\Omega)$, we regard a triangle $T$ as \VVV `cracked' \EEE if the absolute value of the constant symmetric gradient $|e(u)_{T}|$ on the triangle exceeds the threshold $\sqrt{\kappa/\eps}$. This criterion enables us to set up a time-discrete evolution by incrementally minimizing the energy at discrete time sets $(t^k_\delta)_k \subset [0,T]$ depending on a time-discretization parameter $\delta$. In each step $t^k_\delta$, the energy should depend on  the displacements $(u^j_{\eps,\delta})_{j<k}$ at all previous time steps. To this end,  \EEE we consider
  the union of all \VVV `cracked \EEE triangles' at previous time steps as the discrete crack set  $\Omega_{\eps,\delta, k}^{\rm crack}$, and define the energy
    \begin{equation}\label{hist-depend}
      E_{\eps}(u, (u^j_{\eps,\delta})_{j<k})= \int_{\Omega\sm \Omega_{\eps,\delta, k}^{\rm crack}} |e(u)|^2 + \kappa \frac{|\Omega_{\eps,\delta,k}^{\rm crack}|}{\eps}\,.
    \end{equation}
    This corresponds to the energy  $E_{\eps}$, accounting  additionally for `cracked triangles' at all the previous time steps.
    In order to implement an irreversibility condition, we however have to require additional properties for the admissible triangulations, depending on the  previous time steps. More precisely,  for each $t^{k}_\delta$, we assume that all triangles that have been  `cracked'  at previous time steps $t^{j}_\delta<t^k_\delta$ are contained in the triangulation. This requirement restricts the flexibility in the choice of the mesh and  gives rise to a notion of an increasing crack set on the time-discrete level.  Secondly, we need an additional technical condition related to a background mesh, which is inspired by   the construction of recovery sequences in \cite{ChamboDalMaso}. \EEE

    % Secondly, we
    % require that all triangles of the triangulation  which are far enough away from all `cracked' triangles should be part of a fixed regular background mesh (see \ref{V2}). This second condition stems from the construction of recovery sequences in \cite{ChamboDalMaso} and will turn out to be crucial also for our proof of the stability of the static equilibrium property, see Theorem \ref{stability}. 

    Starting from an incremental minimization scheme of the history-dependent energy in \eqref{hist-depend}, we obtain a time-discrete evolution that is piecewise constant in time and piecewise affine in space. In our main result (Theorem \ref{maintheorem}), we show that in the simultaneous limit $\delta\to 0, \eps \to 0$ this evolution converges to an irreversible quasi-static crack evolution in the sense of \cite{frma98}. This means that we find a pair $t\mapsto (u(t),K(t))$ for  $t \in [0,T]$, where $u(t)$ lies in the space $GSBD^2(\Omega)$, see \cite{DM13}, and $K(t)$ is a rectifiable set \MMM containing the jump set of $u(t)$, such that  $t\mapsto (u(t),K(t))$ satisfies: \EEE  (a) an irreversibility condition, (b) a global stability at all times (sometimes referred to as unilateral minimality), and (c) an energy balance law, see Definition \ref{main def} for details. As a byproduct,  we get that the energies converge at all times. Moreover,  we show the convergence of crack sets in the sense of $\sigma$-convergence introduced in \cite{GiacPonsi}  (see Definition \ref{def:sigmaconv}) and we obtain strong convergence of the linear strains. Besides providing an approximation result for fracture evolution, our main theorem also provides an alternative proof of the existence result in \cite{FriedrichSolombrino}.

     \textbf{Proof strategy:}  In the proof, we   essentially face three challenges: (1)  we need a suitable compactness argument to identify the continuum crack $K(t)$. (2) This notion needs to be compatible for deriving a  $\liminf$-inequality for the history-dependent energy $E_{\eps}$ \JJJ in \eqref{hist-depend}\EEE. (3) Eventually, we need to show the stability of the unilateral minimality property, which corresponds to constructing a mutual recovery sequence for displacements and crack sets.  We now briefly sketch our strategies to tackle these issues.

  (1) A suitable notion for convergence of crack sets, the so-called \emph{$\sigma$-convergence}, is already available and has been employed successfully for proving the  stability of unilateral minimality   \cite{GiacPonsi}. We adapt this notion to the $GSBD$-setting by resorting to tools developed in \cite{FriPerSol20} which allow to separate effects of bulk and surface energies for linear Griffith functionals,  see  Section~\ref{subsec:comp}.

  (2)  The proof of the lower bound in the $\Gamma$-convergence result  \cite{BonBab} relies on a careful blow-up analysis. As it appears to be incompatible to combine this strategy with $\sigma$-convergence, we approach the problem from a different perspective: we consider the energy as split into a bulk and a surface part, called \emph{displacement-void representation},  where the surface part is essentially of the form $c\mathcal{H}^1(\partial \Omega_{\eps,\delta,k}^{\rm crack})$ for a suitable constant $c>0$, cf.\ \eqref{hist-depend}.  Since  \EEE then the energy can be represented as a pair of function and set, we can resort to recent results on such functionals \cite{Friedrich-Crismale}, obtained in connection with models for material voids in elastically stressed solids. Note that a similar proof strategy was adopted already in \cite{chambolleBourdain} for the Mumford-Shah functional. 
    Roughly speaking, the proof therein relies on the fact that the \VVV `averaged \EEE volume' of the set of `cracked \EEE triangles' can be bounded from below by the boundary of a slightly smaller set of triangles, where the deformation gradient can still be controlled on the removed triangles.  We are able \JJJ to \EEE derive an   analogous statement for the vector-valued case and the symmetric gradient $e(u)$ which provides a sharp lower bound for the boundary of void sets after a suitable modification, see Theorem \ref{generaltheorem}. The original argument in \cite{chambolleBourdain} strongly relies on a scalar-valued displacement field and consists in locally controlling the gradient in  `cracked \EEE triangles',  see  \cite[Remark 3.5]{ChamboDalMaso} or \cite[Introduction]{BonBab}.  Our proof instead is considerably more involved as it is inherently nonlocal and uses some techniques from planar graph theory.  This  theorem on `void modifications' lies at the core of our analysis and, in our opinion, represents the crucial novelty of our work. As a byproduct, this result also allows us to give an alternative short proof of the $\Gamma$-convergence result in \cite{BonBab}, see Theorem \ref{thm:liminf}. \EEE

    (3) The third major issue in the proof of the main result lies in showing the stability of unilateral minimizers.  Here, relying on some arguments from \cite{FriedrichSeutter}, we adapt the \emph{jump-transfer construction} from \cite{Francfort-Larsen:2003} to our discrete setting. One challenge is to find  an admissible triangulation for the construction of a piecewise affine interpolation of the function with  `transferred \EEE jump'.  As this triangulation has to contain the `cracked triangles' of previous time steps, we set up a triangulation by combining the triangulation of the former time step with the construction of \EEE \cite{ChamboDalMaso}. \EEE Since our crack set consists of boundaries of voids, we also have to make sure to choose the correct interpolation of $u$ on these triangles.

    \textbf{Organization of the paper:}  The paper is organized as follows. In Section \ref{sec: gamma} we introduce the finite-element approximation for the Griffith functional and we state the result on modification of voids, which is fundamental for our analysis. With this  at hand, we then give a short proof  of  the \MMM (static)  $\Gamma$-liminf inequality \EEE from \cite{BonBab}. Section \ref{sec: mod} is devoted to the proof of the void modification   by combining results from planar graph theory with extension arguments relying on suitable Korn-type inequalities. Then, in Section \ref{sec: evo}, we introduce the quasi-static adaptive finite-element model.  After setting up  a time-discrete evolution by inductively minimizing the history-dependent energy, we state our main result on the approximation of quasi-static crack growth.  Subsequently, in Section \ref{sec: preps},  we establish some preparatory compactness and semicontinuity results in our functional setting and  recall the notion of $\sigma$-convergence,  together with proofs of the corresponding irreversibility and compactness properties.  Section \ref{sec: main} is devoted to the proof of our main result, where we first  derive properties of the time-discrete evolutions and then pass to the continuum limit. To confirm that the latter   is indeed   a quasi-static crack evolution, we depend critically on the aforementioned stability of unilateral minimality, whose proof  is deferred to Section \ref{sec: stability}.
    
   \textbf{Notation:}   We close the introduction by introducing some notation: The \MMM 2-dimensional \EEE Lebesgue and the $1$-dimensional Hausdorff measures in $\R^2$ are denoted by $\mathcal{L}^2$ or $|\cdot|$ and $\mathcal{H}^1$ respectively. Depending on the context $\, \tilde{\subset} \, $ stands for inclusions up to sets that are negligible with respect to  either $\mathcal{H}^1$ or $\mathcal{L}^2$. The interior \MMM and closure \EEE of a set $A\subset \R^2$ are denoted by ${\rm int}(A)$ \MMM and $\overline{A}$, \EEE   by $\partial^* A$ we denote its reduced boundary, \MMM and by $\chi_A$ the corresponding characteristic function. \EEE  By $A \triangle B$ we indicate the symmetric difference of two sets $A,B \subset \R^2$. \EEE   
    The set of $2\times 2$ matrices is denoted by $\R^{2\times2}$ and the subset of symmetric and skew-symmetric matrices by $\R^{2\times2}_{\rm sym}$, respectively $\R^{2\times2}_{\rm skew}$.
    For an open and bounded set $U\subset\R^2$, 
    we denote by $ L^0(  U; \R^2)$ the $\mathcal{L}^2$-measurable functions from $U$ to $\R^2$. \MMM By $a \wedge b$ we denote the minimum of $a,b \in \R$. \EEE Finally, in the following $C>0$ denotes a universal constant that may change from line to line. 
    % We say, that $\{u_n\}_{n\in \N}\subset L^0(U,\R^2)$ converges to $u$ in $L^0(U,\R^2)$ if it converges in measure. 

\section{Finite element approximation of Griffith: The displacement-void approach}\label{sec: gamma}

In this section we revisit the recent $\Gamma$-convergence result \cite{BonBab} on the approximation of the Griffith functional \EEE by adaptive finite elements. We follow a different approach,   based on a \emph{displacement-void representation }of the energy, which will form the basis for  our study for quasi-static fracture evolution starting in Section~\ref{sec: evo}. 

 Let $\Omega\subset \R^2$ be a bounded domain with Lipschitz boundary.  We call a \emph{triangulation} of $\Omega$ a collection of closed triangles that only intersect on common edges or vertices and whose union contains $\Omega$.  The vertices of the triangles are called \emph{nodes} of the triangulation.  For a given angle $\theta_0>0$ and a function $\omega\colon\R^{+}\to \R^{+}$ with $\omega(\eps)\geq 6\eps$ for all $\eps>0$ and $\lim_{\eps\to 0}\omega(\eps)=0$, we denote by $\mathcal{T}_{\eps}(\Omega) := \mathcal{T}_{\eps}(\Omega,\theta_0,\omega)$ all triangulations of $\Omega$ whose edges have length between $\eps$ and $\omega(\eps)$ and whose angles are greater than or equal to $\theta_0$. We say that $u$ is \emph{piecewise affine on a triangulation $\mathbf{T}\MMM \in \mathcal{T}_{\eps}(\Omega)$\EEE} if $u$ is affine on each \MMM triangle \EEE $T\in \mathbf{T}$. The corresponding  constant symmetrized gradient of $u$ on each $T$ is denoted by $e(u)_T$. We then define 
\begin{align}\label{Veps}
\mathcal{A}_{\eps}(\Omega)\defas \{u\colon\Omega \to \R^2\, \GGG \text{continuous}\EEE \colon \, \text{there exists a $\mathbf{T}\in \mathcal{T}_{\eps}(\Omega)$ such that $u$ is  piecewise affine on $\mathbf{T}$}\} \,.
\end{align}
We \EEE will associate to each function $u\in \mathcal{A}_{\eps}(\Omega)$ a possibly non-unique triangulation $\mathbf{T}(u)\in \mathcal{T}_{\eps}(\Omega)$ as the ambiguity in the choice of $\mathbf{T}(u)$ does not pose any issues in the following.  
 
%  we denote by $\mathcal{T}_{\eps}(u)$ the set of all triangulations on which $u$ is \GGG piecewise \EEE affine.
% \EEE Note that by definition of $\mathcal{A}_{\eps}(  \Omega   )$ the set $\mathcal{T}_{\eps}(u)$ is never empty. 

Let $f\colon [0,\infty)\to [0,\infty) $ be a nondecreasing continuous function,   which is differentiable at $0$ and satisfies  
 \begin{equation}\label{f-assump}
 f(0)=0\,,\quad \lim_{t\to 0^{+}}\frac{f(t)}{t}=1\,, \quad \lim_{t\to \infty}f(t)=\kappa 
\end{equation}
 for some $\kappa >0$.  Moreover, let $\C\in\R^{2 \times 2 \times 2 \times 2}$ be \EEE a symmetric fourth order tensor such that  
 \begin{equation}\label{C-assump}
c_1|\xi|^2\leq \C \xi\colon \xi \leq c_2 |\xi|^2 \quad \text{\MMM for all $\xi \in \R^{2 \times 2}_{\rm sym}$\EEE}
 \end{equation}
 for some constants   $0 < c_1 \le c_2$. \EEE
 \EEE We define an energy   $E_{\varepsilon} \colon L^0(\Omega;\R^2)\to \R$ by 
  \begin{equation}\label{energy-static}
    E_{\varepsilon}(u)\defas \begin{cases}
    \frac{1}{\eps} \int_{\Omega}f(\eps  \,\C e(u)\colon e(u)\EEE)\,  {\rm d}x \EEE & \text{if $u\in \mathcal{A}_{\varepsilon}(\Omega)$}\,,\\
    +\infty & \text{if $u\in L^0(\Omega;\R^2)\setminus \mathcal{A}_{\varepsilon}(\Omega)$}\,.
  \end{cases}\end{equation}
% From the numerical perspective it is convenient to use the following variant, which is also considered in \cite{ChamboDalMaso}. Let $h_T$ denote the smallest height of a triangle $T\in \mathbf{T}(u)$. Then 
% \[E_{\varepsilon}(u)\defas \begin{cases}
%   \inf_{\mathbf{T}\in \mathcal{T}_{\varepsilon}(u)}\sum_{T\in \mathbf{T}} |T\cap \Omega|\frac{1}{h_{T}}f(h_{T}|e(u)_{T}|^2)\, & u\in V_{\varepsilon}(\Omega)\\
%   +\infty & \text{if $u\in L^0(\Omega,\R^2)\setminus V_{\varepsilon}(\Omega)$ }\,. \end{cases}\] 
  In \cite{BonBab}, the authors showed that $E_{\eps}$ $\Gamma$-converges \EEE to the \emph{Griffith functional} $E\colon L^0(\Omega\VIT; \EEE\R^2)\to \R$ defined by  
\begin{equation}
  E(u)\defas \begin{cases}\int_{\Omega} \C e(u)\colon e(u)\EEE\,   {\rm d}x \EEE + \kappa\sin(\theta_{0})\,\mathcal{H}^1(J_{u}) & \text{if $u \in GSBD^2(\Omega)$}\,,\\ 
  +\infty & \text{otherwise},
  \end{cases}
\end{equation}  
  in terms of the topology induced by measure convergence.  For basic notation and properties of $GSBD$ functions, we refer the reader to \cite{DM13}.   Our first goal is to rederive this result  based on a different technique.

\subsection{Displacement-void representation and modification of voids}  \EEE  

To explain the idea, we consider the special case
\begin{align}\label{eq: special case}
f(t) =  t \wedge \kappa \quad \text{for } t \ge 0  \quad \text{ and } \quad    \C=\mathrm{Id}_{2{\times}2 \times 2 \times 2}. \EEE
\end{align}
For a function $u\in \mathcal{A}_{\eps}(\Omega)$ and a corresponding triangulation $\mathbf{T}(u) \in \mathcal{T}_{\eps}(u)$, we define     $ \mathbf{T}_{\eps}^{\rm big} \EEE (u)\defas \{T \in   {\mathbf{T}} \EEE (u)\colon \eps |e(u)_{T}|^2\geq  \kappa \}$,  corresponding to the set of triangles where $|e(u)_{T}|$ is big. We \EEE define their union in $\Omega$ as
\begin{align}\label{omegacrack}
     \Omega_{\eps}^{\rm big} (u)\EEE\defas \mathrm{int}\Big( \bigcup_{T\in   \mathbf{T}_{\eps}^{\rm big}(u)  } T \Big) \cap \Omega.
\end{align}
Note that $\kappa$ corresponds to the point where $f$ is not differentiable. With this choice, the energy can be split into
\begin{equation}\label{first-lb} 
E_{\eps}(u) =   \sum_{T\in \mathbf{T}(u)\setminus   \mathbf{T}_{\eps}^{\rm big} \EEE (u)} |T\cap \Omega| |e(u)_{T}|^2  \, +\, \kappa  \frac{|  \Omega_{\eps}^{\rm big}(u)\EEE  |}{\eps} \,.\end{equation}
Our idea relies on rewriting this as an energy featuring bulk and surface terms of the form
\begin{align}\label{apprxi}
 E_{\eps}(u) \sim  \int_{\Omega \setminus   \Omega_{\eps}^{\rm big}(u)\EEE} |e(u)|^2 \, {\rm d}x +  \frac{ \kappa \sin \theta_0}{2} \EEE \mathcal{H}^1(\partial  \Omega_{\eps}^{\rm big}(u)\EEE). 
 \end{align}
This will  allow us to directly apply $\Gamma$-convergence results for a class of energies defined on pairs of function-set \cite{Friedrich-Crismale}. Since in \cite{Friedrich-Crismale} the main motivation was a model for material voids inside elastically stressed solids, we call this a \emph{displacement-void representation} of the energy.

However, in general \eqref{apprxi} is not an identity as one can only guarantee $\frac{|  \Omega_{\eps}^{\rm big}\EEE |}{\eps} \ge  \frac{\sin \theta_0}{2} c \EEE  \mathcal{H}^1(\partial  \Omega_{\eps}^{\rm big}\EEE)$ for some $0 < c < 1$. Our first main result states that a sharp lower bound up to an arbitrarily small error can be achieved by a suitable modification of $  \Omega_{\eps}^{\rm big}\EEE$.  In the following,  we say that a set $E$  is induced by  (a subset
% union 
of triangles) \EEE $\mathbf{T}_E  \subset \mathbf{T}$ if it is given by the  interior of the union of $\mathbf{T}_E$  intersected with $\Omega$, \EEE i.e.,   
\begin{align}\label{EEE} 
 E=\mathrm{int}\Big( \bigcup_{T\in\mathbf{T}_E} T \Big)  \cap \Omega. \EEE
\end{align}
    
\begin{theorem}[Void modification]\label{generaltheorem}
Let $u \in H^{1}(\Omega;\R^2)$ be a function which is piecewise affine \GGG on a \EEE
% with respect to 
 triangulation  
$\mathbf{T} \GGG \in \mathcal{T}_{\eps}(\Omega), \EEE $
%=(t_i)_{i}$, 
and suppose that,  for a given subset  $\mathbf{T}_A\subset \mathbf{T}$ and $A$ induced by $\mathbf{T}_A$, it holds that  
\begin{align}\label{energybound1}
\int_{\Omega \setminus A}  |e(u)|^2 \, {\rm d}x   +   \frac{2}{\eps  \sin\theta_0}| A \EEE |  \le C_0.
  \end{align}
 Then, given $\eta >0$ \EEE  there exist   $A_{\rm mod}$   induced by some subset of triangles \EEE in \GGG $\mathbf{T}$ \EEE and $u_{\rm mod} \in H^1( \Omega_{\eps,\eta}; \EEE \R^2)$ such that
 \begin{align}\label{themainthing0}
    |A_{\rm mod}| \le C_\eta\, \eps \EEE , \quad \quad \quad   | \lbrace u \neq u_{\rm mod} \rbrace  \cap \Omega_{\eps,\eta} \EEE | \le C_\eta\, \eps \EEE    
    \end{align}
and 
\begin{align}\label{themainthing}
 \Vert   e(u_{\rm mod}) \Vert_{L^2(  \Omega_{\eps,\eta}  \EEE  \setminus A_{\rm mod })} \le C_\eta \EEE, \quad \quad \quad  \mathcal{H}^1(\partial A_{\rm mod }     ) \le  \frac{2}{\eps  \sin\theta_0}| A \EEE   |  +  C\eta,   
 \end{align}
 where $C>0$ depends only on $C_0$ in \eqref{energybound1}, $C_\eta$  is a constant times a negative power of $\eta$,
  and $\Omega_{\eps,\eta} := \lbrace x \in \Omega \colon {\rm dist}(x,\partial\Omega) >  2 \EEE \omega(\eps) +  \frac{\eps}{\eta^3} \EEE \rbrace$. \EEE Furthermore, if $\mathcal{C}(A_{\rm mod})$ denotes the set of all connected components of $A_{\rm mod}$, \EEE we have \begin{equation}\label{extra-statement}
  \# \mathcal{C}(A_{\rm mod})\leq C \frac{\eta}{\eps}.
  \end{equation}
\end{theorem}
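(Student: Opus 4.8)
Everything is driven by one elementary geometric fact about the mesh. If $T\in\mathbf{T}$ is a triangle and $e\subset\partial T$ one of its edges, then $|T|=\tfrac12\,|e|\,h$ where $h$ is the height of $T$ relative to $e$, and $h\geq\eps\sin\theta_0$ because the two remaining edges have length $\geq\eps$ and meet $e$ at an angle $\geq\theta_0$; hence $|e|\leq\tfrac{2}{\eps\sin\theta_0}\,|T|$. Consequently, if $B$ is any set induced by a subfamily of $\mathbf{T}$ in which \emph{every} contributing triangle has at most one edge lying on $\partial B$, then $\mathcal{H}^1(\partial B)\leq\tfrac{2}{\eps\sin\theta_0}\,|B|$. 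So the plan is to replace $A$ by a set $A_{\rm mod}$, still induced by triangles of $\mathbf{T}$ and essentially contained in $A$, which is a disjoint union of \emph{fat} pieces — so that, up to an error of order $\eta$ coming from corners, each of its triangles carries a single exposed edge — while the portion $A\setminus A_{\rm mod}$ on which $u$ must be redefined is small and geometrically tame enough to rebuild $u_{\rm mod}$ there with controlled symmetrized gradient. As a preliminary reduction, the area bound $|T|\geq\tfrac12\eps^2\sin\theta_0$ valid for every mesh triangle together with \eqref{energybound1} shows that $\#\mathbf{T}_A$ is of order $1/\eps$ and $|A|$ of order $\eps$; we then fix a working scale $\rho=\eps\,\eta^{-k}$ for a suitable power $k$ (this $\rho$ is what produces both the layer excised in the definition of $\Omega_{\eps,\eta}$ and the negative powers of $\eta$ in $C_\eta$), overlay a grid of squares of side comparable to $\rho$, and call a square \emph{heavy} if it carries at least a fixed $\eta$-power fraction of its area's worth of $A$, and \emph{light} otherwise; by \eqref{energybound1} there are at most $C\eta/\eps$ heavy squares.

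\textbf{Light squares: the nonlocal reconstruction.} On a light square $A$ occupies only a tiny fraction of the square. There we simply delete $A$ from $A_{\rm mod}$ and replace $u$ by a function $u_{\rm mod}$ that agrees with $u$ outside a thin interface and satisfies $\Vert e(u_{\rm mod})\Vert_{L^2}\leq C(\eta)\,\Vert e(u)\Vert_{L^2(\,\cdot\,\setminus A)}$. This is precisely the step where one cannot argue triangle by triangle as in the scalar Chambolle--Dal Maso estimate: having control only on $e(u)$ and not on $\nabla u$, one invokes instead a Korn--Poincar\'e inequality for $H^1$ functions off a set of small measure with controlled boundary, the $\eta$-dependence of the constant encoding that, although $A$ has small measure inside the square, its internal geometry may still be complicated. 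Summing over all light squares (and over the thin layers stripped off the heavy squares, see the next step) yields the bounds in \eqref{themainthing0} on $|\{u\neq u_{\rm mod}\}\cap\Omega_{\eps,\eta}|$ and the bound $\Vert e(u_{\rm mod})\Vert_{L^2(\Omega_{\eps,\eta}\setminus A_{\rm mod})}\leq C_\eta$ in \eqref{themainthing}; since $A$ has total measure of order $\eps$, this is compatible with $|A_{\rm mod}|\leq C_\eta\eps$.

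\textbf{Heavy squares: fattening and the planar-graph count.} Inside a heavy square we keep $A$ but prune it. Working with the planar graph dual to the restriction of $\mathbf{T}$ to the square, we iteratively remove (reassigning them to the thin layers of the previous step) all triangles carrying two or more exposed edges, using Euler's formula both to decide which triangles can be deleted without disconnecting the piece and to bound the number of exposed edges that survive. This produces in each heavy square a disjoint union of fat, simply connected pieces, whose triangles have exactly one exposed edge apart from $O(1)$ corner exceptions. Applying the governing observation piece by piece gives $\mathcal{H}^1(\partial A_{\rm mod})\leq\tfrac{2}{\eps\sin\theta_0}\,|A_{\rm mod}|+(\text{corner terms})$; since there are at most $C\eta/\eps$ heavy squares and each exceptional triangle or interface contributes of order $\eps$, the error is of order $\eta$, and because $A_{\rm mod}\subseteq A$ up to the negligible triangles already handled in the light step, we obtain $\mathcal{H}^1(\partial A_{\rm mod})\leq\tfrac{2}{\eps\sin\theta_0}\,|A|+C\eta$, i.e.\ the second inequality in \eqref{themainthing}. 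On $\Omega_{\eps,\eta}\setminus A_{\rm mod}$ inside heavy squares one has $u_{\rm mod}=u$, so no further modification is needed there, and the first estimate in \eqref{themainthing} follows. Finally, every connected component of $A_{\rm mod}$ meets a heavy square and each heavy square contributes only $O(1)$ of them, whence $\#\mathcal{C}(A_{\rm mod})\leq C\eta/\eps$, which is \eqref{extra-statement}.

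\textbf{Main obstacle.} The delicate point is the interaction between the last two steps. One must design the pruning/fattening so that the exposed-edge count is genuinely \emph{sharp} — the constant $2/(\eps\sin\theta_0)$, not merely the crude $C\,\omega(\eps)/\eps^2$ that naive edge counting gives — which is where the planar graph theory is really needed and where the argument departs substantially from the scalar case in \cite{ChamboDalMaso, chambolleBourdain}; and, at the same time, the discarded set $A\setminus A_{\rm mod}$ must remain regular enough that the \emph{vectorial} Korn--Poincar\'e reconstruction applies to it and, crucially, that the pieces reconstructed on different squares glue into a single $H^1$ function on $\Omega_{\eps,\eta}$. Making these two requirements compatible for an arbitrary mesh-piecewise-affine $u$ is the core difficulty, and it is the reason the proof is inherently nonlocal.
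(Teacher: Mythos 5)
Your ``governing observation'' is exactly the paper's inequality \eqref{ti}, the preliminary bounds on $\#\mathbf{T}_A$ and $|A|$ match \eqref{eq: control on A}, and the overarching strategy (heal away the pathological parts, count the rest) is the same in spirit. But the concrete realization differs, and the difference is where the gap lies.

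The paper never overlays a grid of squares. It works component by component: it first fills small holes of $\overline{A}$ (Modification~1), then removes components hanging off \emph{separating vertices} (Modification~2) and small components in $\mathcal{D}^{\rm small}_2$ (Modification~3), and only then heals individual triangles $\mathbf{M}^{\rm heal}_j$. The classification into ``small'' vs.\ ``large'' is by $|\mathrm{sat}(H_j)|\lessgtr\eps^2/\eta^2$ on each component, not by density in a grid square, and the crucial point is that Lemmas~\ref{healing1} and~\ref{heal entire com} only apply to very restricted geometries: a triangle with a vertex shared with no other triangle of $\mathbf{T}_H$, or a connected, saturated, small component whose closure meets the rest of $\overline{B}$ in at most \emph{two} points. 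Those hypotheses are exactly what makes a Korn-type extension possible with an $\eta$-controlled constant, and they have to be earned via the planar-graph count (Lemma~\ref{lemma:afirststep}) and the separating-vertex surgery. Your proposal skips all of this and simply asserts a ``Korn--Poincar\'e inequality for $H^1$ functions off a set of small measure with controlled boundary'' on every light square.

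That assertion is the genuine gap. If $A\cap Q$ is a thin chain of triangles that spans a light square $Q$ of side $\rho=\eps\eta^{-k}$, its area is of order $\eps\rho$, which is negligible relative to $\rho^2$ once $\eta$ is small, so $Q$ can be light no matter how you tune the thresholds consistently with the $C\eta/\eps$ bound on heavy squares. Yet on the two sides of the chain, $u$ may be two unrelated infinitesimal rigid motions with $e(u)\equiv 0$: deleting $A\cap Q$ and extending $u$ across it with controlled $L^2$-norm of the \emph{symmetric} gradient is then impossible, at any constant. The same happens if $A\cap Q$ is a thin ring enclosing a hole (something the paper pre-empts by filling small holes in Modification~1, a step missing from your plan). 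So ``on a light square we simply delete $A$'' does not work, and you must instead identify which small pieces are genuinely removable --- which is what the paper's separating-vertex and $\mathcal{D}_2^{\rm small}$ analysis does, and what your text never pins down. Secondary issues in the same spirit: the heavy-square pruning claims to leave ``$O(1)$ corner exceptions'' and ``$O(1)$ components'' per heavy square, but since $\rho/\eps=\eta^{-k}\gg 1$, neither bound is automatic and no mechanism (like the Euler-formula count of Lemma~\ref{lemma:afirststep}) is supplied to deliver it.
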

% \begin{corollary}\label{monotonicity-corollary}
%     Let $A^1,A^2 $ be induced by $\mathbf{T}_{A^1}$ and $\mathbf{T}_{A^2}$. \MMM Suppose that $\mathbf{T}_{A^1} \subset \mathbf{T}_{A^2}$. \EEE Then,  the sets $A_{\rm mod}^1$, $ A_{\rm mod}^2$ in Theorem~\ref{generaltheorem} can be chosen such that \EEE  $A_{\rm mod}^1\subset A_{\rm mod}^2$ \MMM and $\mathbf{T}^{\rm mod}_{A^1} \subset \mathbf{T}^{\rm mod}_{A^2}$, where $\mathbf{T}^{\rm mod}_{A^j} := \lbrace T \in \mathbf{T}_{A^j} \colon T \subset A_{\rm mod}^j \rbrace$ for $j=1,2$. \EEE  
%     % Let $\{u^k\}_{k=1}^{N}\subset  H^1(\Omega, \R^2)$ be a collection of functions, which are piecewise affine  Let $A^1\subseteq A^2 \subseteq \dots \subseteq A^{N}$ be a finite collection of increasing sets such that each $A_k\subset \Omega$ is induced by $\mathbf{T}_{A^k}$. Then there exists $\{A_{\rm mod}^k\}_{k=1}^{N}$ with $A_{\rm mod}^k\subset A_{\rm mod}^{k+1}$ and $u_{\rm mod}^{k}\in H^1(\Omega, \R^2)$ such that the properties in \eqref{themainthing} and \eqref{themainthing0} still hold. 
% \end{corollary}

The result will be proven in Section \ref{sec: mod}. We now show that this allows to obtain a short proof of the $\Gamma$-liminf inequality in \cite{BonBab}.

\subsection{$\Gamma$-convergence}\label{subsec: Gamma-conv}

The upper bound follows from   a density result   (see \cite{ChaCri19,Cortesanietal}) and an explicit construction, relying on the construction of the recovery sequence in \cite{ChamboDalMaso}. The lower bound in \cite{BonBab} is based on a careful blow-up analysis. We provide an alternative proof based on  Theorem~\ref{generaltheorem}.

\begin{theorem}\label{thm:liminf}
  Let $(u_\eps)_{\eps}  \subset  L^{0}(\Omega;\R^2)$ with  $u_{\eps}\to u$ in measure on $\Omega$. Then it holds 
  \begin{align}\label{gammaliminf}
  \liminf_{\eps \to 0} E_{\eps}(u_{\eps})\geq E(u)\,.
  \end{align}
\end{theorem}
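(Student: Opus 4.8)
The plan is to reduce the statement to the ``displacement-void'' representation \eqref{apprxi} and then invoke the $\Gamma$-liminf inequality for function-set energies from \cite{Friedrich-Crismale}, with Theorem \ref{generaltheorem} supplying the missing sharp surface bound. First I would assume, passing to a subsequence, that $\liminf_\eps E_\eps(u_\eps)$ is a finite limit (otherwise there is nothing to prove), so that each $u_\eps \in \mathcal{A}_\eps(\Omega)$ and $E_\eps(u_\eps) \le C_0$ uniformly. Using \eqref{f-assump} and \eqref{C-assump} to compare $f(t)$ with $t\wedge\kappa$ and $\C e(u):e(u)$ with $|e(u)|^2$ up to multiplicative constants, I reduce to the model case \eqref{eq: special case}: more precisely, for every $\delta>0$ one has $f(t) \ge (1-\delta)(t \wedge \kappa_\delta)$ for a suitable $\kappa_\delta \to \kappa$, which lets the general tensor/nonlinearity be absorbed into constants that are recovered by letting $\delta \to 0$ at the end. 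With the splitting \eqref{first-lb} in force, set $A_\eps := \Omega_\eps^{\mathrm{big}}(u_\eps)$, so that the energy bound gives exactly the hypothesis \eqref{energybound1} of Theorem \ref{generaltheorem}.

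Next I would apply Theorem \ref{generaltheorem} with a fixed parameter $\eta>0$ to obtain modified sets $A_\eps^{\mathrm{mod}}$ and modified displacements $u_\eps^{\mathrm{mod}} \in H^1(\Omega_{\eps,\eta};\R^2)$. The estimates \eqref{themainthing0}--\eqref{themainthing} then yield: (i) $|A_\eps^{\mathrm{mod}}| \to 0$ and $|\{u_\eps \neq u_\eps^{\mathrm{mod}}\}\cap \Omega_{\eps,\eta}| \to 0$, so $u_\eps^{\mathrm{mod}} \to u$ in measure on $\Omega$ as well (the exhaustion $\Omega_{\eps,\eta} \nearrow \Omega$ handles the shrinking domain); (ii) a uniform $L^2$-bound on $e(u_\eps^{\mathrm{mod}})$ away from $A_\eps^{\mathrm{mod}}$; and (iii) the key inequality $\tfrac{\kappa\sin\theta_0}{2}\mathcal{H}^1(\partial A_\eps^{\mathrm{mod}}) \le \kappa \tfrac{|A_\eps|}{\eps} + C\eta$. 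Hence
\begin{align}\label{reduced-lb}
E_\eps(u_\eps) \ge \int_{\Omega\setminus A_\eps^{\mathrm{mod}}} |e(u_\eps^{\mathrm{mod}})|^2\,{\rm d}x + \frac{\kappa\sin\theta_0}{2}\mathcal{H}^1(\partial A_\eps^{\mathrm{mod}}) - C\eta + o(1),
\end{align}
where I also use that on $\Omega\setminus A_\eps^{\mathrm{mod}}$ the strain $e(u_\eps^{\mathrm{mod}})$ agrees with $e(u_\eps)$ off a set of measure $o(1)$ on which the bulk integrand is controlled. The right-hand side of \eqref{reduced-lb} is precisely a function-set energy of the type treated in \cite{Friedrich-Crismale}: the pair $(u_\eps^{\mathrm{mod}}, A_\eps^{\mathrm{mod}})$ with $u_\eps^{\mathrm{mod}} \to u$ in measure and $|A_\eps^{\mathrm{mod}}| \to 0$. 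Applying their $\Gamma$-liminf inequality (with the perimeter lower-semicontinuity built in, and with $u \in GSBD^2(\Omega)$ as the natural limit in view of the uniform bounds and a $GSBD$ compactness/slicing argument) gives
\begin{align}\label{fc-lb}
\liminf_\eps\Big(\int_{\Omega\setminus A_\eps^{\mathrm{mod}}} |e(u_\eps^{\mathrm{mod}})|^2 + \frac{\kappa\sin\theta_0}{2}\mathcal{H}^1(\partial A_\eps^{\mathrm{mod}})\Big) \ge \int_\Omega |e(u)|^2\,{\rm d}x + \kappa\sin\theta_0\, \mathcal{H}^1(J_u).
\end{align}
Combining \eqref{reduced-lb} and \eqref{fc-lb} yields $\liminf_\eps E_\eps(u_\eps) \ge E(u) - C\eta$, and letting $\eta \to 0$ (and then undoing the $\delta$-reduction to the model case) finishes the proof.

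The main obstacle I anticipate is checking that the modified pairs $(u_\eps^{\mathrm{mod}}, A_\eps^{\mathrm{mod}})$ genuinely fall under the hypotheses of the function-set $\Gamma$-convergence result of \cite{Friedrich-Crismale}: one must verify that the bulk integrands match (the model case $|e(u)|^2$ is fine, but the passage from the general $\C$ and $f$ needs the $\delta$-trick above to be done cleanly), that the limit displacement lies in $GSBD^2$ with jump set controlled by $\limsup \partial A_\eps^{\mathrm{mod}}$ rather than just a rectifiable set (here the ``void'' boundaries being the only surface contribution, plus the extra bound \eqref{extra-statement} on the number of connected components, are what keep the surface measure from degenerating), and that the shrinking of the domain to $\Omega_{\eps,\eta}$ does not lose surface energy near $\partial\Omega$. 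A secondary technical point is ensuring the factor $\sin\theta_0$ comes out with the exact constant: this is exactly what the sharp bound $\mathcal{H}^1(\partial A_\eps^{\mathrm{mod}}) \le \tfrac{2}{\eps\sin\theta_0}|A_\eps| + C\eta$ in Theorem \ref{generaltheorem} delivers, so no further geometric analysis of triangle angles is needed here — it has been quarantined into Theorem \ref{generaltheorem}.
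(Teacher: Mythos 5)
Your overall strategy matches the paper's: invoke Theorem~\ref{generaltheorem} to pass to the displacement-void representation \eqref{apprxi}, and then draw on the lower semicontinuity machinery of \cite{Friedrich-Crismale} for pairs $(u,A)$. The paper likewise sets $v_\eps := (1-\chi_{\Omega^{\rm mod}_\eps(u_\eps)})(u_\eps)_{\rm mod}$, uses the compactness result from \cite{Friedrich-Crismale}/\cite{CC-JEMS} to get $u\in GSBD^2$, and then applies the void lower semicontinuity (\cite[Theorem 5.1]{Friedrich-Crismale}) and weak $L^2$ lower semicontinuity for the bulk part separately rather than a single function-set $\Gamma$-liminf; this is a presentational difference, not a gap. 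One small misattribution: \eqref{extra-statement} (the bound on $\#\mathcal{C}(A_{\rm mod})$) is not used here; it is needed later for the stability argument.

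There is, however, a genuine gap in your reduction to the model case \eqref{eq: special case}. The inequality you propose, $f(t)\ge(1-\delta)(t\wedge\kappa_\delta)$ with $\kappa_\delta\to\kappa$, is \emph{not} available under \eqref{f-assump} alone. Try $f(t)=t/(1+t/\kappa)$: then $f(t)\ge(1-\delta)t$ forces $t\le\kappa\delta/(1-\delta)$, so the best admissible $\kappa_\delta$ tends to $0$ as $\delta\to0$, not to $\kappa$. The difficulty is structural: near $t=\kappa_\delta$ you would need $f$ to exceed $(1-\delta)\kappa_\delta\approx\kappa$, but $f<\kappa$ everywhere; meanwhile the region where $f(t)\ge(1-\delta)t$ is pinned by the behaviour of $f$ near $0$ and can be arbitrarily small. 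This is precisely why the paper decouples the two sharp constants: it introduces an independent threshold $R$ for the ``big-gradient'' triangles and lower-bounds their contribution by $f(R)|\Omega_\eps^{\rm big}|/\eps$, sending $R\to\infty$ only at the very end, while for the bulk part it writes $f(s)=s+\gamma(s)$ with $\gamma(s)/s\to0$ and truncates at the intermediate scale $\eps^{-1/4}$ via the cut-off $\chi_\eps$, discarding (without harm) the triangles whose gradient lies between $\eps^{-1/4}$ and $\sqrt{R/\eps}$. These two truncation levels live on different scales and cannot be compressed into a single $\delta$-parameter as in your plan. Similarly, replacing $\C e(u):e(u)$ by $|e(u)|^2$ up to multiplicative constants is lossy for the bulk term; the paper instead carries $|\cdot|_\C$ along throughout, which is harmless but not a ``reduction to the model case''. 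Once you replace the $\delta$-trick by the two-parameter argument (or simply keep $f$ general as the paper does), the remainder of your plan goes through.
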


\begin{proof} 
It is not restrictive to assume that $E_{\eps}(u_{\eps}) \le C_0$ for some $C_0 >0$.  We start by expressing the energy similarly as in \eqref{first-lb}  for general densities $f$. To this end, given $  R >0$, we define $  \mathbf{T}_\eps^{\rm big} \EEE(u)\subset \mathbf{T}(u)$ by 
\[ \mathbf{T}_\eps^{\rm big} \EEE(u)\defas \{T \in   {\mathbf{T}} \EEE (u)\colon \eps |e(u)_T|^2_{\C} \EEE   \geq \GGG R \EEE\}\,,\]
where for shorthand we set $|e(u)_T|_{\C} := (\C e(u)_T\colon e(u)_T)^{1/2}$.   Correspondingly, we define  the set $  \Omega_{\eps}^{\rm big}(u)\EEE$ as in \eqref{omegacrack}. Since $f$ is non-decreasing, we obtain the estimate 
\begin{equation}\label{first-lb.new} E_{\eps}(u)\geq \sum_{T\in \mathbf{T}(u)\setminus  \mathbf{T}_\eps^{\rm big} \EEE(u)} \frac{|T\cap \Omega|}{\eps}f(\eps |e(u)_T|^2_{\C}) \, +\, f( R \EEE )\, \frac{|   \Omega_{\eps}^{\rm big}(u)\EEE  |}{\eps} \,.\end{equation}
Note that in the general case we will need to  consider the limit $R \to \infty$ whereas in the special case  \eqref{eq: special case} one can fix $R =\kappa$.

We first deal with  the second term in \eqref{first-lb.new}. We let $\eta>0$,  $\eps_0 >0$, \EEE and fix $\Omega_* \subset \subset \Omega$  such that \EEE $\Omega_{\eps,\eta} \supset \Omega_*$ for all $0 < \eps \le \eps_0$,  with $\Omega_{\eps,\eta} = \lbrace x \in \Omega \colon {\rm dist}(x,\partial\Omega) >  2 \EEE \omega(\eps) +  \frac{\eps}{\eta^3} \EEE \rbrace$. \EEE We  apply Theorem~\ref{generaltheorem} for  $u_\eps$ and  $A =  \Omega_{\eps}^{\rm big}(u_{\eps})$ 
% \RRR too sloppy? \EEE 
to obtain $(u_{\eps})_{\rm mod}$ and $ \Omega^{\rm mod}_\eps (u_\eps)\EEE \defas A_{\rm mod}$. Then, we define $v_{\eps} \colon \Omega_* \to \R^2$ by

 \[v_{\eps}(x)\defas \begin{cases}
    (u_{\eps})_{\rm mod}  (x) \EEE  & \text{if $x\in \Omega_*\setminus  \Omega^{\rm mod}_\eps (u_\eps) $}\,, \\
      0 & \text{if $x\in   \Omega^{\rm mod}_\eps (u_\eps) \cap \Omega_{*}$}\,.
    \end{cases}\] 
Then, in view of the \GGG energy bound and \eqref{themainthing}, \EEE we have 
    \[\sup_{0 < \eps \le \eps_0} \Big( \|e(v_{\eps})\|_{L^2(\Omega_*)}+ \mathcal{H}^1(J_{v_{\eps}}) \Big) <\infty.\]
      Since \GGG $u_\varepsilon\to u$ in measure, and \EEE  $\{ v_{\eps} \neq u_{\eps} \}\leq |  \Omega^{\rm mod}_\eps (u_\eps)\EEE|+ |\{u_{\eps} \neq (u_{\eps})_{\rm mod} \}|\leq  C_\eta \EEE \eps$ by \eqref{themainthing0}, we obtain  $v_{\eps}\to u$ in measure on $\Omega_*$ and then by compactness (see e.g.\  \cite[Theorem 3.5]{Friedrich-Crismale} or \cite{CC-JEMS})  that  $u \in GSBD^2(\Omega_*)$.     Moreover, from $ |   \Omega^{\rm mod}_\eps (u_\eps)\EEE| \leq  C_\eta \EEE \eps$ we have $\chi_{   \Omega^{\rm mod}_\eps (u_\eps)\EEE}\to 0$ in $L^1(\Omega)$. Therefore, we can apply the   \GGG lower semicontinuity result for  surface measures of voids stated in  \cite[Theorem 5.1]{Friedrich-Crismale}. This     together with the second estimate in \eqref{themainthing} gives 
    \begin{equation}\label{liminf-jump}
    \liminf_{\eps \to 0}\frac{|  \Omega_{\eps}^{\rm big}(u_{\eps})\EEE|}{\eps} + C\eta \geq  \liminf_{\eps \to 0} \frac{\sin(\theta_{0})}{2}\mathcal{H}^1(\partial   \Omega^{\rm mod}_\eps (u_\eps)\EEE \cap \Omega_*)\geq \sin(\theta_{0})\mathcal{H}^1(J_{u} \cap \Omega_*)\,.
    \end{equation}
        Now, we prove the lower semicontinuity of the elastic part of the energy. By \eqref{f-assump} and  a   Taylor expansion we obtain,  for $s \ge 0$, \EEE  \[f(s)=f(0)+f'(0)s+\gamma(s)= s+\gamma(s)\,,\] where $\gamma\colon \R^{+}\to \R^{+}$ with $\frac{\gamma(s)}{s}\to 0$ for $s\to 0$. For any subset $\mathbf{T}' \subset \mathbf{T}(u_{\eps}) $ this leads to
           \begin{equation}\label{elastic-split}
      \begin{aligned}
      &\sum_{T\in \mathbf{T}'\setminus  \mathbf{T}_\eps^{\rm big} \EEE(u_{\eps})} \frac{|T\cap \Omega|}{\eps}f(\eps|e(u_\eps)_T|_{\C}^2) \\  &  \quad =  \sum_{T\in \mathbf{T}'\setminus  \mathbf{T}_\eps^{\rm big} \EEE(u_{\eps})} |T\cap \Omega| |e(u_\eps)_T|_{\C}^2 + \sum_{T\in \mathbf{T}'\setminus  \mathbf{T}_\eps^{\rm big} \EEE(u_{\eps})} \frac{|T\cap \Omega|}{\eps} \gamma(\eps|e(u_\eps)_T|_{\C}^2).
      \end{aligned}
    \end{equation}     
%    Note that the first term can be expressed as an integral over $\Omega\setminus \Omega^{crack}_{\eps}(u_{\eps})$, i.e.
%    \[\sum_{T\in \mathbf{T}(u_{\eps})\setminus  \mathbf{T}_\eps^{\rm big} \EEE(u_{\eps})} |T\cap \Omega| |e(u_{\eps})_{T}|^2= \int_{\Omega\setminus \Omega^{crack}_{\eps}(u_{\eps})} |e(u_{\eps})|^2\,  {\rm d}x \EEE \,.\]
%    Also the second term can be estimated by an integral
%    % In fact, by the definiton of $\mathcal{T}_{\eps}(\Omega)$ we observe that there exists a $c>0$ such that
%    % \[\sum_{T\in \mathbf{T}(u_{\eps})\setminus  \mathbf{T}_\eps^{\rm big} \EEE(u_{\eps})} \frac{|T\cap \Omega|}{\eps} \gamma(\eps|e(u_{\eps})_{T}|^2)\geq \frac{c}{\eps} \int_{\Omega\setminus \Omega^{crack}_{\eps}(u_{\eps})} \gamma(\eps |e(u_{\eps}(x))|^2)\,  {\rm d}x \EEE\]
%    \[\sum_{T\in \mathbf{T}(u_{\eps})\setminus  \mathbf{T}_\eps^{\rm big} \EEE(u_{\eps})} \frac{|T\cap \Omega|}{\eps} \gamma(\eps|e(u_{\eps})_{T}|^2)= \frac{1}{\eps} \int_{\Omega\setminus \Omega^{crack}_{\eps}(u_{\eps})} \gamma(\eps |e(u_{\eps}(x))|^2)\,  {\rm d}x \EEE\,.\]
    We   define the function $\chi_{\eps}\defas \chi_{[0,\eps^{-1/4})}(  |e(u_\eps)|_{\C} \EEE )$ and observe that,  for $\eps$ small enough, we have $\eps^{-1/4}\leq \sqrt{R} \EEE \eps^{-1/2}$, \EEE i.e., in particular $\chi_{\eps}\leq \chi_{\Omega\setminus   \Omega^{\rm big}_{\eps}(u_{\eps})}\EEE$. Altogether, \MMM from \eqref{elastic-split} \EEE we thus obtain
    \begin{equation}\label{elastic-split2}
      \sum_{T\in \mathbf{T}(u_{\eps})\setminus  \mathbf{T}_\eps^{\rm big} \EEE(u_{\eps})} \frac{|T\cap \Omega|}{\eps}f(\eps|e(u_\eps)_T|_{\C}^2)\geq \int_{\Omega_*} \chi_{\eps} \MMM |e(u_\eps)|_{\C}^2 \EEE\, {\rm d}x \EEE\;+\frac{1}{\eps}\int_{\Omega_*}\, \chi_{\eps}\gamma(\eps \MMM |e(u_\eps)|_{\C}^2)\EEE\,  {\rm d}x \,.
      \end{equation}
  We first deal with the second term in \eqref{elastic-split2}. Note that we can  write \EEE
  \[\frac{1}{\eps}\int_{\Omega_*} \chi_{\eps}\gamma(\eps  |e(u_\eps)|_{\C}^2\EEE)\,  {\rm d}x \EEE  = \EEE \int_{\Omega_*} \chi_{\eps} | e(u_\eps)|_{\C}^2 \EEE \frac{\gamma(\eps|e(u_\eps)|_{\C}^2)}{\eps|e(u_\eps)|_{\C}^2} \,  {\rm d}x \EEE\,.\] 
  By the definition of $\chi_{\eps}$ and $\gamma$ we get that $\chi_{\eps} \frac{\gamma(\eps|e(u_\eps)|_{\C}^2)}{\eps|e(u_\eps)|_{\C}^2}  \EEE $ converges uniformly to zero as $\eps \to 0$. Since by the energy bound $\chi_{\eps}  |e(u_\eps)|_{\C} \EEE $ is bounded in   $L^2(\Omega)$, \EEE we conclude
  \begin{equation}\label{rest-to-zero}
  \lim_{\eps \to 0}\frac{1}{\eps}\int_{\Omega_*}\, \chi_{\eps}\gamma(\eps  |e(u_\eps)|_{\C}^2 \EEE)\,  {\rm d}x \EEE=0\,.\end{equation}
  By the energy bound on $u_{\eps}$ we know that $| \Omega^{\rm big}_{\eps} \EEE(u_{\eps})|\leq \MMM  C \EEE \eps$ and that $  \chi_{\Omega\setminus \Omega^{\rm big}_{\eps}(u_{\eps})} \EEE e(u_{\eps})$ is bounded in  $L^2(\Omega;   \R^{2 \times 2}_{\rm sym} \EEE )$. \EEE We hence conclude that $\chi_{\eps}\to 1$ in measure and   $\chi_\eps e(u_\varepsilon) \rightharpoonup e(u)$ weakly in $L^2(\Omega;  \R^{2 \times 2}_{\rm sym} \EEE)$ by \cite[Theorem 3.5, (3.7)(ii)]{Friedrich-Crismale}.   In view of \eqref{elastic-split2} and \eqref{rest-to-zero}, we obtain
  \begin{equation}\label{liminf-bulk}
  \begin{split}
  \liminf_{\eps \to 0} \sum_{T\in \mathbf{T}(u_{\eps})\setminus  \mathbf{T}_\eps^{\rm big} \EEE(u_{\eps})} \frac{|T\cap \Omega|}{\eps }f(\eps|e(u_\eps)_T|_{\C}^2)&\geq  
 \liminf_{\eps \to 0}\int_{\Omega_*} \chi_{\eps} |e(u_\eps)|_{\C}^2 \EEE \,  {\rm d}x \EEE 
  \geq \int_{\Omega_*} \JJJ |e(u)|_{\C}^2 \EEE \,  {\rm d}x \EEE \,.
\end{split}  
  \end{equation}
  Finally,  combining \EEE  \eqref{first-lb.new}, \eqref{liminf-jump}, and \eqref{liminf-bulk}, we see that, for any   $R>0$, \EEE it holds 
  \[\liminf_{\eps \to 0} E_{\eps}(u_{\eps})\geq \int_{\Omega_*} \JJJ |e(u)|_{\C}^2 \EEE \,  {\rm d}x \EEE + \sin(\theta_{0})f(R)\, \mathcal{H}^1(J_{u} \cap \Omega_*)-C\eta\,.\] Sending $R\to \infty$ and $\eta\to 0$, by  using \eqref{f-assump} and \EEE the arbitrariness of $\Omega_* \subset \subset \Omega$ we conclude $u \in GSBD^2(\Omega)$ and that \eqref{gammaliminf} holds. 
  \end{proof} 

%  \begin{remark}\label{rem:separatelscstatic}
%  Along the proof of Theorem~\ref{thm:liminf} (cf.\ in particular \eqref{liminf-jump} and \eqref{liminf-bulk}) we obtained that the bulk and the jump parts of $E_\eps$ are separately lower semicontinuous, that is
%  \begin{equation*}
%  \kappa \sin(\theta_0) \mathcal{H}^1(J_u)\leq \liminf_{\eps\to 0} \kappa  \frac{| \Omega_{\eps}^{\rm crack}  (u)   |}{\eps}, \quad \int_{\Omega} |e(u)|^2 \dx \leq \liminf_{\eps\to 0} \sum_{T\in \mathbf{T}(u)\setminus  \mathbf{T}_\eps^{\rm big} (u)} |T\cap \Omega| |e(u)_{T}|^2\,. 
%  \end{equation*}
%  \end{remark}
%\RRR Where do we use this remark? \EEE

We mention that with this technique we could   prove \EEE also $\Gamma$-convergence under Dirichlet boundary conditions and the convergence of minimizers, as done in \cite[Section 4]{BonBab}.  We omit this here but refer to  Proposition \ref{liminf-ineq'} and Corollary \ref{cor: stability} \EEE later where this is performed in the evolutionary framework.

\section{Void modification: Proof of Theorem \ref{generaltheorem}}\label{sec: mod}

This section is devoted to the proof of Theorem \ref{generaltheorem}. \MMM In this section, we will also show the following corollary on the inclusion of modified void sets. \EEE

\begin{corollary}\label{monotonicity-corollary}
    Let $A^1,A^2 $ be induced by $\mathbf{T}_{A^1}$ and $\mathbf{T}_{A^2}$.  Suppose that $\mathbf{T}_{A^1} \subset \mathbf{T}_{A^2}$. \EEE Then,  the sets $A_{\rm mod}^1$, $ A_{\rm mod}^2$ in Theorem~\ref{generaltheorem} can be chosen such that \EEE  $A_{\rm mod}^1\subset A_{\rm mod}^2$  and $\mathbf{T}^{\rm mod}_{A^1} \subset \mathbf{T}^{\rm mod}_{A^2}$, where $\mathbf{T}^{\rm mod}_{A^j} := \lbrace T \in \mathbf{T}_{A^j} \colon T \subset A_{\rm mod}^j \rbrace$ for $j=1,2$. \EEE  
    % Let $\{u^k\}_{k=1}^{N}\subset  H^1(\Omega, \R^2)$ be a collection of functions, which are piecewise affine  Let $A^1\subseteq A^2 \subseteq \dots \subseteq A^{N}$ be a finite collection of increasing sets such that each $A_k\subset \Omega$ is induced by $\mathbf{T}_{A^k}$. Then there exists $\{A_{\rm mod}^k\}_{k=1}^{N}$ with $A_{\rm mod}^k\subset A_{\rm mod}^{k+1}$ and $u_{\rm mod}^{k}\in H^1(\Omega, \R^2)$ such that the properties in \eqref{themainthing} and \eqref{themainthing0} still hold. 
\end{corollary}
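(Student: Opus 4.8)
The plan is to show that the construction underlying Theorem~\ref{generaltheorem} can be organised so that its output depends monotonically on the inducing family of triangles; the corollary then follows by performing the construction for $A^2$ first and deriving the one for $A^1$ from it. Observe first that the set-theoretic part of the claim is essentially automatic: once $A^1_{\rm mod}\subset A^2_{\rm mod}$ has been arranged, any $T\in\mathbf{T}_{A^1}$ with $T\subset A^1_{\rm mod}$ also satisfies $T\in\mathbf{T}_{A^2}$ and $T\subset A^2_{\rm mod}$, whence $\mathbf{T}^{\rm mod}_{A^1}\subset\mathbf{T}^{\rm mod}_{A^2}$. So the content is to produce a monotone choice of the two modified void sets.

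To this end I would revisit the proof of Theorem~\ref{generaltheorem}, in which $A_{\rm mod}$ is obtained from $A$ by (i) a planar-graph decomposition of $\partial A$ into arcs of $\mathcal{H}^1$-length of order $\eta$, (ii) a selection of the triangles of $\mathbf{T}_A$ that are \emph{kept}, the remaining ones forming `thin' pieces on which $e(u)$ is reconstructed by a Korn-type extension with $L^2$-norm controlled in terms of $C_0$ and $\eta$, and (iii) the definition of $u_{\rm mod}$ through this extension together with an $H^1$-extension into $A_{\rm mod}$. I would fix all auxiliary data for the larger set $A^2$ and then, since $\mathbf{T}_{A^1}\subset\mathbf{T}_{A^2}$, declare a triangle of $\mathbf{T}_{A^1}$ kept precisely when it is kept in the construction for $A^2$. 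This gives $A^1_{\rm mod}\subset A^2_{\rm mod}$ at once, hence $|A^1_{\rm mod}|\le|A^2_{\rm mod}|\le C_\eta\eps$ and the component bound \eqref{extra-statement}, which may also be re-derived directly from the graph estimate applied to the kept part of $\mathbf{T}_{A^1}$, whose boundary is not longer than that of the kept part of $\mathbf{T}_{A^2}$. The estimate $|\{u\neq u^1_{\rm mod}\}\cap\Omega_{\eps,\eta}|\le C_\eta\eps$ follows since this set is contained, up to the usual boundary bookkeeping, in $A^1\cup A^2_{\rm mod}$, whose measure is $\le C_\eta\eps$ by \eqref{energybound1} and the previous step.

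The main obstacle lies in the perimeter estimate and the strain bound under this constrained choice. First, $\mathcal{H}^1(\partial A^1_{\rm mod})$ must be controlled by $\tfrac{2}{\eps\sin\theta_0}|A^1|+C\eta$, i.e.\ by the a priori \emph{smaller} quantity $|A^1|$ rather than $|A^2|$, so it cannot simply be inherited from $A^2$; this is also where the non-monotone behaviour of connected components enters, as a component of $A^2$ kept as a whole may meet $\mathbf{T}_{A^1}$ in several small pieces. I would resolve this by re-running the local, componentwise part of the argument of Theorem~\ref{generaltheorem} on each connected component of $A^1$ separately — discarding further thin pieces from $A^1_{\rm mod}$ if necessary, which only reinforces $A^1_{\rm mod}\subset A^2_{\rm mod}$ — so that the volume-to-perimeter balance $\tfrac{|A^1|}{\eps}\gtrsim\tfrac{\sin\theta_0}{2}\mathcal{H}^1(\partial A^1_{\rm mod})+O(\eta)$ is recovered on $\mathbf{T}_{A^1}$ exactly as in the theorem. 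Second, one must provide $u^1_{\rm mod}\in H^1(\Omega_{\eps,\eta};\R^2)$ with $\Vert e(u^1_{\rm mod})\Vert_{L^2(\Omega_{\eps,\eta}\setminus A^1_{\rm mod})}\le C_\eta$; this is obtained by applying the same Korn extension to the removed triangles of $\mathbf{T}_{A^1}$ (a sub-collection of those of $\mathbf{T}_{A^2}$) and an $H^1$-extension into $A^1_{\rm mod}\subset A^2_{\rm mod}$, noting that the extra region $A^2\setminus A^1$ on which strain control is now required lies in $\Omega\setminus A^1$ and is therefore handled directly by \eqref{energybound1}, with the boundary-layer estimates unchanged from the proof of Theorem~\ref{generaltheorem}.
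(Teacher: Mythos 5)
Your plan inverts the paper's: rather than running the algorithm of Theorem~\ref{generaltheorem} on $A^1$ and $A^2$ in parallel and checking that each modification step preserves inclusion, you fix $A^2_{\rm mod}$ first and try to derive $A^1_{\rm mod}$ from its trace on $\mathbf{T}_{A^1}$, pruning further as needed. This is a genuinely different route, but as written it has a gap precisely at the point you correctly flag — the perimeter bound $\mathcal{H}^1(\partial A^1_{\rm mod})\le \tfrac{2}{\eps\sin\theta_0}|A^1|+C\eta$.

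The phrase ``re-running the local, componentwise part of the argument\ldots discarding further thin pieces from $A^1_{\rm mod}$ if necessary'' does not actually recover this bound, for two reasons. First, the proof of the perimeter estimate in Theorem~\ref{generaltheorem} is not local or componentwise: it rests on the Euler-formula bound of Lemma~\ref{lemma:afirststep} controlling $\sum_{l}l\,\#\mathcal D_l$ and on the separating-vertex analysis (Modification~2), both of which are global statements about the graph $(\mathcal V,\mathcal E)$ of the set; and it is preceded by Modification~1 (filling small holes), which \emph{adds} triangles and can therefore not be performed on a set constrained to lie inside $A^2_{\rm mod}$. Second, discarding a triangle does not monotonically decrease the boundary — removing a triangle with one exposed edge exposes its two remaining edges, and \eqref{ti} only budgets one of them per triangle. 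That is exactly why Theorem~\ref{generaltheorem} distinguishes $\mathbf M^{\rm heal}_j$ from $\mathbf M^{\rm nh}_j$ and why the whole machinery is needed; ``discard thin pieces until the balance holds'' is not a procedure with a termination guarantee.

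What the paper actually does is run the full construction on both $A^1$ and $A^2$ and verify step by step that $B^1\subset B^2$, $B^1_{\rm sep}\subset B^2_{\rm sep}$, and — here lies the real content you do not address — that although $\widehat B^1_{\rm sep}\subset \widehat B^2_{\rm sep}$ can fail (a component $D\in\mathcal D_2^{\rm small}(B^2_{\rm sep})$ healed for $A^2$ may meet $A^1_{\rm mod}$), the set $D\cap B^1_{\rm sep}$ still touches $\overline{B^1_{\rm sep}}\setminus(D\cap B^1_{\rm sep})$ in at most two points and has small saturation, so it satisfies the hypotheses of Lemma~\ref{heal entire com} and could have been healed in the $A^1$ construction as well. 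This ``could have been healed already'' argument is the crux of the proof; your proposal, by starting from $A^2_{\rm mod}$ and pruning, sidesteps it formally but then must reproduce its content (plus the hole-filling issue) to get the perimeter bound, which is not worked out. To close the gap you would essentially have to re-run all of Theorem~\ref{generaltheorem} on $A^1$ and then compare to $A^2_{\rm mod}$ — at which point you have reproduced the paper's strategy rather than replaced it.
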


  We start by presenting the main idea of the proof of Theorem \ref{generaltheorem}. \GGG For $T\in \mathbf{T}$, we denote by $\mathcal{N}(T)$ the three nodes (corners) of $T$ \EEE and by   $\partial^j T$, $j=1,2,3$,  the three edges of $T$. Using that $\eps$ is the minimal length on an edge and $\theta_0$ is the minimal interior angle, \EEE an elementary computation shows 
\begin{align}\label{ti}
|T| \ge  \frac{1}{2}\eps \sin\theta_0 \max_{j=1,2,3} \mathcal{H}^1(\partial^j T).  
\end{align}
Fixing ideas, for the moment we assume  for simplicity that the sidelength of all triangles is $\sim \eps$, \MMM and that $A \subset \subset \Omega$.  \EEE Then, by using \eqref{ti} one can estimate
\begin{align}\label{mot1}
 \frac{2}{\eps  \sin\theta_0}|A |  \ge  \mathcal{H}^1(\partial A) -C\eps \# \mathbf{T}_A^{\rm ex},
 \end{align}
where  $\mathbf{T}_A^{\rm ex} \subset  \mathbf{T}_A$ denotes the triangles for which more than one side is `exposed' to $\R^2 \setminus A$, see the triangles highlighted in   Figure~\ref{fig:touching-components}.  Now, \JJJ to validate the second inequality in \eqref{themainthing} \EEE it would be enough to show that $\#\mathbf{T}_A^{\rm ex} \le C\eta/\eps$, which however in general does not hold. Another option  is to `heal' the triangles $\mathbf{T}_A^{\rm ex}$, i.e., we define $ A_{\rm mod} = A \setminus \bigcup_{T \in \mathbf{T}^{\rm ex}_A} T$ and observe that indeed it holds
$$ \frac{2}{\eps  \sin\theta_0}|A |  \ge  \mathcal{H}^1(\partial A_{\rm mod }).$$
Yet, in order to do so, we need to ensure that  
$$ \Vert   e(u) \Vert_{L^2(\Omega  \setminus A_{\rm mod })} \le C  \Vert   e(u) \Vert_{L^2(\Omega  \setminus A)}.   $$  
Such a strategy has been implemented in \cite{chambolleBourdain} for a scalar-valued problem. In the present vectorial setting, however, this procedure only works partially. For \VVV `good' \EEE exposed triangles $T \in \mathbf{T}_A^{\rm ex, good}$ (highlighted dark blue in Figure \ref{fig:touching-components}), one can show that $\Vert e(u) \Vert_{L^2(T)} \le  C \EEE \Vert e(u) \Vert_{L^2(N_T)}$ for a suitable neighborhood $N_T \subset \R^2 \setminus A$, see Lemma \ref{healing1}  below.  For \VVV `bad' \EEE exposed triangles $T \in \mathbf{T}_A^{\rm ex, bad}$ (highlighted in light blue in Figure \ref{fig:touching-components}), in contrast to \cite{chambolleBourdain}, the \MMM fact that only the symmetric gradient  $e(u)$ is controlled prohibits  \EEE  to obtain a similar estimate. 

Therefore, roughly speaking, our argument will feature both: (1) healing of triangles in $\mathbf{T}_A^{\rm ex, good}$ and (2) estimating the number $ \# \EEE \mathbf{T}_A^{\rm ex, bad}$ in terms of $C\eta/\eps$ which allows to obtain a small error in \eqref{mot1}. The latter counting argument will borrow some arguments from planar graph theory and will be based on healing small components, see Lemma \ref{heal entire com}.

%\GGG We may assume that $A$ is an open set, and we introduces the family of triangles $(a_i)_i\subset \mathbf{T}$ such that 
%\[
%A=\mathrm{int}\Big( \bigcup_{i} a_i \Big). 
%\]
%\EEE
%\GGG Further, we remark that
%\begin{equation}\label{eq:perA}
%\mathcal{H}^1(\partial A) \leq C
%\end{equation} 
%for $C>0$ independent of $\varepsilon$, using the energy bound \eqref{energybound1} with the fact that $\ol A$ is union of triangles in $\mathbf{T}\in \mathcal{T}_\varepsilon(\Omega)$. (Maybe useless?) \EEE
%For any \GGG $T \in \mathbf{T}_A$, \EEE let $\mathcal{N}(T;A)$ denote the triangles in \GGG $\overline{A \setminus T}$ \EEE sharing an edge with $t$. We let $\mathcal{M}_j(A)$, $j=0,1,2,3$, be the partition of $\mathbf{T}_A$ with $T \in \mathcal{M}_j(A)$ if and only if $\# \mathcal{N}(T;A) = j$. In other words, $\mathcal{M}_j(A)$ consists of triangles \GGG in $\ol A$ \EEE that expose $3-j$ edges to $\Omega\setminus A$ (see Figure \ref{fig:differ-exposing}).      

%\textbf{Components of $B$ and graph:} 

\subsection{Preparations} \label{se: subset prep}
We start by introducing some notions. First, we define the \emph{saturation} of a connected set $Z \subset \R^2$, denoted by  ${\rm sat}(Z)$, as ${\rm int}(  \overline{Z} \EEE \cup h_Z)$, where $h_Z$ denotes  the union of the bounded connected components of $  \R^2\sm \overline{Z}$. Note that ${\rm sat}(Z)$ arises from $Z$ by `filling its holes'. We call a connected set $Z$ \emph{saturated} if it holds ${\rm sat}(Z)=Z$. \EEE In the following, we consider generic sets $\compo$ of the form  \eqref{EEE}. \VIT We extend the notation for specific triangles introduced for $A$ to a generic set $H$ of the form  \eqref{EEE}. Whenever $H$, $K$ are of the form  \eqref{EEE}, with $H \subset K$, we set $\mathbf{T}_H^{\textbf{•}}:=\mathbf{T}_K^{\textbf{•}} \cap \mathbf{T}_H$, with $\textbf{•}$ a standpoint for $\rm ex$, $\rm ex,bad$, $\rm ex,good$.  \EEE \\
\noindent \textbf{Graph related to a set $\compo$:} 
%The above argument shows that we need to control $\mathcal{M}_j(B) $ for $j=0,1$. To this end, we will associate a planar graph related to $B$ as follows. 
We denote the (open) connected components of $\compo$  
by $\mathcal{C}(\compo) = \lbrace \compo_1, \ldots, \compo_n\rbrace$. (Here, note that the connected components of $\compo$ are in general different from the ones of $\overline{\compo}$, see e.g. Figure~\ref{fig:touching-components}.) We introduce a graph related to $\compo$.  We define the \emph{vertices} $\mathcal{V}(\compo)$ and the \emph{edges} $\mathcal{E}(\compo)$ of the graph as
\begin{equation*}
\mathcal{V}(\compo):=\{ v \in \mathcal{N}(T)\colon T \in \mathbf{T}_\compo,\,  \ v \in \bigcup\nolimits_{j=1}^n \partial \compo_j\}, \quad \mathcal{E}(\compo):=\big\{\partial^i T \colon T \in \mathbf{T}_\compo,\, \partial^i T \subset \bigcup\nolimits_{j=1}^n \partial \compo_j\big\}.
\end{equation*} 
%are formed by all vertices of   $\mathbf{T}$ contained in $\bigcup_j \partial B_j$. 
% while   are formed by all edges of triangles contained in $\bigcup_j \partial \compo_j$. 
Note that any  saturated \EEE connected component $\compo_j$ with $\partial \compo_j \cap \partial \compo_k=\emptyset$ for all $k \neq j$ is represented by a closed cycle where each vertex has exactly two edges. Whenever for two components $\compo_j$ and $\compo_k$ the boundaries $\partial \compo_j$ and $ \partial \compo_k$ have a nonempty intersection, this is related to a vertex with four edges, see Figure \ref{fig:touching-components}. More generally, if $l$ different connected components meet at a vertex, this vertex has $2l$  edges. Let us denote
 %introduce some more notation:  given $v \in \mathcal{V}(\compo)$, we denote by $n(v) \in \N_0$ the number of edges in  $\mathcal{E}(\compo)$ containing $v$. We denote for every $l \in \N$
 \begin{equation}\label{1408240018}
 \mathcal{V}_{2l}(\compo):=\{ v \in \mathcal{V}(\compo) \colon n(v) = 2l\}, \quad \text{for } n(v):=\# \{S \in \mathcal{E}(\compo)\colon v \in S\}.
 \end{equation} 
% $\mathcal{V}_{2l}(\compo) \subset \mathcal{V}$, $l \in \N$, we collect the family of vertices with $v \in \mathcal{V}_{2l}(\compo)$ if and only if $n(v) = 2l$. 

\begin{figure}[h]
  \centering

\begin{tikzpicture}[scale=0.3]

% First row 

  \filldraw[fill=blue!40!white, draw=white]{} (8,0)++(60:4) -- ++(60:4) -- ++(-60:4)-- cycle;
  \draw[color=white] (8,0)++(60:4) -- ++(60:4) -- ++(-60:4)-- cycle;

  \filldraw[fill=gray!60!white, draw=white]{} (8,0)++(60:4) ++(60:4) -- ++(4,0) -- ++ (-120:4) -- cycle;
  \draw[color=white] (8,0)++(60:4) ++(60:4) -- ++(4,0) -- ++ (-120:4) -- cycle; 

  \filldraw[fill=gray!60!white, draw=white]{} (12,0)++(60:4) -- ++(60:4) -- ++(-60:4)-- cycle;
  \draw[color=white] (12,0)++(60:4) -- ++(60:4) -- ++(-60:4)-- cycle;

  \filldraw[fill=gray!60!white, draw=white]{} (12,0)++(60:4) ++(60:4) -- ++(4,0) -- ++ (-120:4) -- cycle;
  \draw[color=white] (12,0)++(60:4) ++(60:4) -- ++(4,0) -- ++ (-120:4) -- cycle; 
  
  \filldraw[fill=blue!40!white, draw=white]{} (16,0)++(60:4) -- ++(60:4) -- ++(-60:4)-- cycle;
  \draw[color=white] (16,0)++(60:4) -- ++(60:4) -- ++(-60:4)-- cycle;

% Second row
  \filldraw[fill=blue!70!white, draw=white]{} (0,0) -- ++(60:4) -- ++(-60:4)-- cycle;
  \draw[color=white] (0,0) -- ++(60:4) -- ++(-60:4)-- cycle;

   \filldraw[fill=gray!60!white, draw=white]{} (0,0) ++(60:4) -- ++(4,0) -- ++ (-120:4) -- cycle;
  \draw[color=white] (0,0) ++(60:4) -- ++(4,0) -- ++ (-120:4) -- cycle; 

  \filldraw[fill=gray!60!white, draw=white]{} (4,0) -- ++(60:4) -- ++(-60:4)-- cycle;
  \draw[color=white] (4,0) -- ++(60:4) -- ++(-60:4)-- cycle;
  
   \filldraw[fill=blue!40!white, draw=white]{} (4,0) ++(60:4) -- ++(4,0) -- ++ (-120:4) -- cycle;
  \draw[color=white] (4,0) ++(60:4) -- ++(4,0) -- ++ (-120:4) -- cycle; 

\filldraw[fill=blue!40!white, draw=white]{} (20,0) ++(60:4) -- ++(4,0) -- ++ (-120:4) -- cycle;
  \draw[color=white] (20,0) ++(60:4) -- ++(4,0) -- ++ (-120:4) -- cycle; 

  \filldraw[fill=blue!70!white, draw=white]{} (24,0) -- ++(60:4) -- ++(-60:4)-- cycle;
  \draw[color=white] (24,0) -- ++(60:4) -- ++(-60:4)-- cycle;
  \node[scale=1.2, color=blue!55!white] at (3,5) {$H$};
  
  \node[scale=1.2, color=blue!40!white] at (24,5.5) {$\mathbf{T}_H^{\rm ex, bad}  $};  
  \node[scale=1.2, color=blue!70!white] at (30,2.5) {$\mathbf{T}_H^{\rm ex, good}  $};

\end{tikzpicture}
% First row 
\begin{tikzpicture}[scale=0.3]
  \hspace{-2.3em}

  \filldraw[red] (8,0)+(60:4) circle (4pt);
  \filldraw[black] (8,0)++(60:4) ++ (60:4) circle (4pt);
  \filldraw[black] (8,0)++(60:4)+(4,0) circle (4pt);
  \draw[color=black] (8,0)++(60:4) -- ++(60:4) -- ++(4,0) -- ++ (4,0) -- ++ (-60:4)-- cycle;

   \filldraw[black] (12,0)+(60:4) circle (4pt);
  \filldraw[black] (12,0)++(60:4) ++ (60:4) circle (4pt);
  \filldraw[black] (12,0)++(60:4)+(4,0) circle (4pt);
  %\draw[color=black] (8,0)++(60:4) ++(60:4) -- ++(4,0) -- ++ (-120:4) -- cycle; 

     \filldraw[black] (16,0)+(60:4) circle (4pt);
  \filldraw[black] (16,0)++(60:4) ++ (60:4) circle (4pt);
  \filldraw[red] (16,0)++(60:4)+(4,0) circle (4pt);
  %\draw[color=black] (12,0)++(60:4) -- ++(60:4) -- ++(-60:4)-- cycle;

  %\draw[color=black] (12,0)++(60:4) ++(60:4) -- ++(4,0) -- ++ (-120:4) -- cycle; 

  %\draw[color=black] (16,0)++(60:4) -- ++(60:4) -- ++(-60:4)-- cycle;

% Second row

   \filldraw[black] (0,0) circle (4pt);
  \filldraw[black] (0,0) ++ (60:4) circle (4pt);
  \filldraw[black] (0,0)+(4,0) circle (4pt);
 
  \draw[color=black] (0,0) -- ++(60:4)-- ++(4,0)--++(4,0) -- ++ (-120:4) -- cycle;
  %\draw[color=black] (0,0) -- ++(4,0) ;
  
  %\draw[color=black] (0,0) ++(60:4) -- ++(4,0) -- ++ (-120:4) -- cycle; 

    \filldraw[black] (4,0) circle (4pt);
  \filldraw[black] (4,0) ++ (60:4) circle (4pt);
  \filldraw[black] (4,0)+(4,0) circle (4pt);
  %\draw[color=black] (4,0) -- ++(60:4) -- ++(-60:4)-- cycle;
  %\draw[color=black] (4,0) ++(60:4) -- ++(4,0) -- ++ (-120:4) -- cycle; 

  \draw[color=black] (20,0) ++(60:4) -- ++(4,0) ; 
  \draw[color=black] (20,0) ++(60:4) -- ++(-60:4) ;
  \filldraw[black] (24,0) circle (4pt);
  \filldraw[black] (24,0) ++ (60:4) circle (4pt);
  \filldraw[black] (24,0)+(4,0) circle (4pt);

  \draw[color=black] (24,0)  ++(60:4) -- ++(-60:4)-- (24,0);
  \node[scale=1, color=black] at (3,5) {$(\mathcal{V}(\compo),\mathcal{E}(\compo))$};

% Now the corresponding graph 

\end{tikzpicture}
\caption{An example for \EEE $\compo$ consisting of three components and the corresponding graph $(\mathcal{V}(\compo),\mathcal{E}(\compo))$. Note that $\overline{\compo}$, instead,  is connected. The triangles highlighted in blue are part of $\mathbf{T}^{\rm ex}_{H} = \mathbf{T}_H^{\rm ex, good} \cup \mathbf{T}_H^{\rm ex, bad} \EEE$. The vertices depicted in red are part of $\mathcal{V}_{4}(\compo)$ whereas the vertices depicted in black are all part of $\mathcal{V}_{2}(\compo)$. \GGG $\mathcal{E}(\compo)$ is the set of edges of triangles contained in $\partial \compo$. Here and in the following figures, we always use subsets of a regular triangular lattice for illustration purposes. 
%\VIT $H$ in place of $A$ in $\mathbf{T}_H^{\rm ex, bad/good}$. \EEE 
} \label{fig:touching-components}
\end{figure}
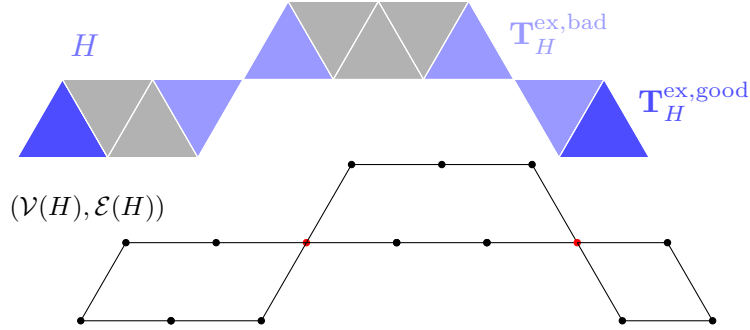
 We can understand $(\mathcal{V}(\compo), \mathcal{E}(\compo))$ as a planar graph. Denoting by $\mathcal{F}(\compo)$ the bounded faces of this graph \GGG (i.e., the bounded planar regions delimited by edges in $\mathcal{E}(\compo)$), \EEE we observe that each component in $\mathcal{C}(\compo)$ corresponds to such a face, and additional faces come from the bounded connected components of $\R^2 \setminus \overline{\compo}$, see Figure~\ref{fig:self-intersection}. \EEE

We recall the \emph{Euler formula} for planar graphs:
\begin{align}\label{Euler}
\#\mathcal{V}(\compo) - \#\mathcal{E}(\compo) + \#\mathcal{F}(\compo) = \nu(\mathcal{V}(\compo), \mathcal{E}(\compo)), 
\end{align}
where $\nu(\mathcal{V}(\compo), \mathcal{E}(\compo))$ denotes the number of connected components of the planar graph $(\mathcal{V}(\compo), \mathcal{E}(\compo))$.

We cover $\partial \compo$ by cycles in the graph. To this end,  fix $\compo_i\JJJ \in \mathcal{C}(H)\EEE$. 
 For two vertices $v_1,v_2\in \mathcal{V}(\compo) \cap \partial \compo_i$, we denote by $\overline{v_1\,v_2}$ the segment with endpoints $v_1$ and $v_2$. We now consider vertices $v_i^j \in \mathcal{V}(\compo) \cap \partial \compo_i$ that fulfill $\{\overline{v_i^j \, v_i^{j+1}}\}_{j=1}^{J-1}  \subset \mathcal{E}(\compo)\,
$ and $ \overline{ v_i^J \, v_i^{1}} \subset \mathcal{E}(\compo)$. If $\partial \compo_i$ is connected, we can choose a tuple in such a way that  the points in the tuple $(v_i^1, \ldots, v_i^J)$ coincide with  $ \mathcal{V}(\compo) \cap \partial \compo_i$. Note that $(v_i^1, \ldots, v_i^J)$  can contain the same vertex multiple times, which corresponds to additional cycles in the graph,  see the first example in Figure \ref{fig:self-intersection}. If $\partial \compo_i$ consists of several connected components (see e.g.\ the second example in Figure \ref{fig:self-intersection}), we repeat the argument for each component, and for simplicity collect all tuples in a single tuple, still denoted by $(v_i^1, \ldots, v_i^J)$. \EEE  For $l\geq 0$, we define 
\begin{equation}\label{Dll}
  \mathcal{D}_l(\compo):=\Big\{\compo_i \in \mathcal{C}(\compo)\colon  \#\lbrace j =1,\ldots, J \colon n(v_i^j) \ge 4 \rbrace  \EEE = l\Big\} \,.
  \end{equation}\EEE 
% \begin{equation*}
% \mathcal{D}_l(E):=\Big\{E_i \in \mathcal{C}(E)\colon \sum_{j=1}^{J}  \frac{1}{2} (n(v_i^j)-2) = l\Big\} \quad\text{for }v_i^j \in \mathcal{V}(E) \cap \partial E_i \colon \bigcup_{j=1}^{J-1}\overline{v_i^j \, v_i^{j+1}} \cup \overline{ v_i^J \, v_i^{1}} \subset \mathcal{E}(E),
% \end{equation*}
% % $\lbrace v_i^j, v_i^{j+1} \rbrace$, 
%$j=1,\ldots, J-1$, and 
%\GGG $\overline{ v_i^J \, v_i^{1}}$ \EEE
%% $\lbrace v_i^J, v_i^{1} \rbrace$ 
%lie in $\mathcal{E}(B)$.
%(v_i^1, \ldots, v_i^J) \in \mathcal{V}= \{
%$\mathcal{D}_l(B) \subset \mathcal{C}(B)$  as the family of components $B_i$ for which we have  
%\begin{align}\label{goodsum} 
%\sum_{j=1}^{J}  \frac{1}{2} (n(v_i^j)-2)   = l,    
%\end{align}
%where $(v_i^1, \ldots, v_i^J)$ denote the vertices contained in $\partial B_i$ such that \GGG the segments $\overline{v_i^j \, v_i^{j+1}}$, \EEE
%% $\lbrace v_i^j, v_i^{j+1} \rbrace$, 
%$j=1,\ldots, J-1$, and 
%\GGG $\overline{ v_i^J \, v_i^{1}}$ \EEE
%% $\lbrace v_i^J, v_i^{1} \rbrace$ 
%lie in $\mathcal{E}(B)$. As before, $n(v)$  denotes the number of edges in  $\mathcal{E}(B)$ containing $v$. 

% \GGG where $ \overline{v_i^j \, v_i^{k}}$ denote the segment with endpoints $v_i^j$ and $v_i^{k}$ (by $\bigcup_{j=1}^{J-1}\overline{v_i^j \, v_i^{j+1}}$ we mean the set of segments $(\overline{v_i^j \, v_i^{j+1}})_{j=1}^{J-1}$ and not the subset of $\R^2$ given by the union of the segments). \EEE
% Note that the tuple $(v_i^1, \ldots, v_i^J)$ can contain the same vertex twice, which corresponds to a cycle in the graph (Figure \ref{fig:self-intersection}).

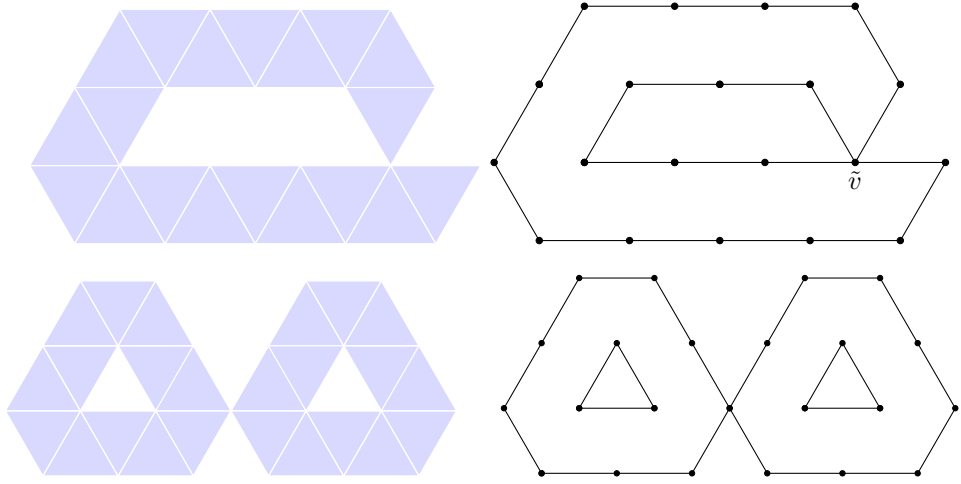
\begin{figure}[h]
\begin{center}
  \begin{tikzpicture}[scale=0.3]
      \filldraw[fill=blue!15!white, draw=white]{} (4,0)++(60:4) -- ++(60:4) -- ++(-60:4)-- cycle;
      \draw[color=white] (4,0)++(60:4) -- ++(60:4) -- ++(-60:4)-- cycle;
  
      \filldraw[fill=blue!15!white, draw=white]{} (4,0)++(60:4) ++(60:4) -- ++(4,0) -- ++ (-120:4) -- cycle;
      \draw[color=white] (4,0)++(60:4) ++(60:4) -- ++(4,0) -- ++ (-120:4) -- cycle; 
  
      \filldraw[fill=blue!15!white, draw=white]{} (8,0)++(60:4) -- ++(60:4) -- ++(-60:4)-- cycle;
      \draw[color=white] (8,0)++(60:4) -- ++(60:4) -- ++(-60:4)-- cycle;
  
      \filldraw[fill=blue!15!white, draw=white]{} (8,0)++(60:4) ++(60:4) -- ++(4,0) -- ++ (-120:4) -- cycle;
      \draw[color=white] (8,0)++(60:4) ++(60:4) -- ++(4,0) -- ++ (-120:4) -- cycle; 
  
      \filldraw[fill=blue!15!white, draw=white]{} (12,0)++(60:4) -- ++(60:4) -- ++(-60:4)-- cycle;
      \draw[color=white] (12,0)++(60:4) -- ++(60:4) -- ++(-60:4)-- cycle;
  
      \filldraw[fill=blue!15!white, draw=white]{} (12,0)++(60:4) ++(60:4) -- ++(4,0) -- ++ (-120:4) -- cycle;
      \draw[color=white] (12,0)++(60:4) ++(60:4) -- ++(4,0) -- ++ (-120:4) -- cycle; 
      
      \filldraw[fill=blue!15!white, draw=white]{} (16,0)++(60:4) -- ++(60:4) -- ++(-60:4)-- cycle;
      \draw[color=white] (16,0)++(60:4) -- ++(60:4) -- ++(-60:4)-- cycle;

   % Second row
      %\filldraw[fill=blue!15!white, draw=white]{} (0,0) -- ++(60:4) -- ++(-60:4)-- cycle;
     % \draw[color=white] (0,0) -- ++(60:4) -- ++(-60:4)-- cycle;
  
      % \filldraw[fill=blue!15!white, draw=white]{} (0,0) ++(60:4) -- ++(4,0) -- ++ (-120:4) -- cycle;
      %\draw[color=white] (0,0) ++(60:4) -- ++(4,0) -- ++ (-120:4) -- cycle; 
  
      \filldraw[fill=blue!15!white, draw=white]{} (4,0) -- ++(60:4) -- ++(-60:4)-- cycle;
      \draw[color=white] (4,0) -- ++(60:4) -- ++(-60:4)-- cycle;
      
       \filldraw[fill=blue!15!white, draw=white]{} (4,0) ++(60:4) -- ++(4,0) -- ++ (-120:4) -- cycle;
      \draw[color=white] (4,0) ++(60:4) -- ++(4,0) -- ++ (-120:4) -- cycle; 
  
  \filldraw[fill=blue!15!white, draw=white]{} (16,0) ++(60:4) -- ++(4,0) -- ++ (-120:4) -- cycle;
      \draw[color=white] (16,0) ++(60:4) -- ++(4,0) -- ++ (-120:4) -- cycle; 
  
     %\filldraw[fill=blue!15!white, draw=white]{} (20,0) -- ++(60:4) -- ++(-60:4)-- cycle;
      %\draw[color=white] (20,0) -- ++(60:4) -- ++(-60:4)-- cycle; 
  
  %Third Row
      \filldraw[fill=blue!15!white, draw=white]{} (0,0)++(-60:4) ++(60:4) -- ++(4,0) -- ++ (-120:4) -- cycle;
      \draw[color=white] (0,0)++(-60:4) ++(60:4) -- ++(4,0) -- ++ (-120:4) -- cycle;
  
   \filldraw[fill=blue!15!white, draw=white]{} (4,0)++(-60:4) -- ++(60:4) -- ++(-60:4)-- cycle;
      \draw[color=white] (4,0)++(-60:4) -- ++(60:4) -- ++(-60:4)-- cycle;
  
      \filldraw[fill=blue!15!white, draw=white]{} (4,0)++(-60:4) ++(60:4) -- ++(4,0) -- ++ (-120:4) -- cycle;
      \draw[color=white] (4,0)++(-60:4) ++(60:4) -- ++(4,0) -- ++ (-120:4) -- cycle;

        \filldraw[fill=blue!15!white, draw=white]{} (8,0)++(-60:4) ++(60:4) -- ++(4,0) -- ++ (-120:4) -- cycle;
      \draw[color=white] (8,0)++(-60:4) ++(60:4) -- ++(4,0) -- ++ (-120:4) -- cycle;
  
   \filldraw[fill=blue!15!white, draw=white]{} (8,0)++(-60:4) -- ++(60:4) -- ++(-60:4)-- cycle;
      \draw[color=white] (8,0)++(-60:4) -- ++(60:4) -- ++(-60:4)-- cycle;
  
      \filldraw[fill=blue!15!white, draw=white]{} (12,0)++(-60:4) ++(60:4) -- ++(4,0) -- ++ (-120:4) -- cycle;
      \draw[color=white] (12,0)++(-60:4) ++(60:4) -- ++(4,0) -- ++ (-120:4) -- cycle; 
  
   \filldraw[fill=blue!15!white, draw=white]{} (12,0)++(-60:4) -- ++(60:4) -- ++(-60:4)-- cycle;
      \draw[color=white] (12,0)++(-60:4) -- ++(60:4) -- ++(-60:4)-- cycle;

      \filldraw[fill=blue!15!white, draw=white]{} (16,0)++(-60:4) ++(60:4) -- ++(4,0) -- ++ (-120:4) -- cycle;
      \draw[color=white] (16,0)++(-60:4) ++(60:4) -- ++(4,0) -- ++ (-120:4) -- cycle; 

      \filldraw[fill=blue!15!white, draw=white]{} (16,0)++(-60:4) -- ++(60:4) -- ++(-60:4)-- cycle;
      \draw[color=white] (16,0)++(-60:4) -- ++(60:4) -- ++(-60:4)-- cycle;
  \end{tikzpicture}
  % Now comes the corresponding graph 
  \begin{tikzpicture}[scale=0.3]
      
      \filldraw[black] (4,0)+(60:4) circle (4pt);
       %node[anchor=east]{$v_9$ };
      \filldraw[black] (4,0)++(60:4) ++ (60:4) circle (4pt);
      %node[anchor=east]{$v_{10}$ };
      %\filldraw[black] (4,0)++(60:4)+(4,0) circle (4pt) node[anchor=south]{$v_1$ };

      \filldraw[black] (8,0)+(60:4) circle (4pt) ;
      \filldraw[black] (8,0)++(60:4) ++ (60:4) circle (4pt) ;
      %node[anchor=south]{$v_{11}$ };
      \filldraw[black] (8,0)++(60:4)+(4,0) circle (4pt) ;
     % \draw[color=black] (8,0)++(60:4) -- ++(60:4) -- ++(-60:4)-- cycle;
  
       \filldraw[black] (12,0)+(60:4) circle (4pt);
      \filldraw[black] (12,0)++(60:4) ++ (60:4) circle (4pt);
      \filldraw[black] (12,0)++(60:4)+(4,0) circle (4pt);
      %\draw[color=black] (8,0)++(60:4) ++(60:4) -- ++(4,0) -- ++ (-120:4) -- cycle; 
  
         \filldraw[black] (16,0)+(60:4) circle (4pt);
      \filldraw[black] (16,0)++(60:4) ++ (60:4) circle (4pt);
      \filldraw[black] (16,0)++(60:4)+(4,0) circle (4pt);
     % \draw[color=black] (12,0)++(60:4) -- ++(60:4) -- ++(-60:4)-- cycle;
  
      %\draw[color=black] (4,0)++(60:4) ++(60:4) -- ++(4,0) -- ++ (-120:4) -- cycle; 
  
      %\draw[color=black] (12,0)++(60:4) ++(60:4) -- ++(4,0) -- ++ (-120:4) -- cycle; 

     % \draw[color=black] (16,0)++(60:4) -- ++(60:4) -- ++(-60:4)-- cycle;

   % Second row
  
   %    \filldraw[black] (0,0) circle (4pt);
    %  \filldraw[black] (0,0) ++ (60:4) circle (4pt);
    %  \filldraw[black] (0,0)+(4,0) circle (4pt);
     
  %    \draw[color=black] (0,0) -- ++(60:4) -- ++(-60:4)-- cycle;

     % \draw[color=black] (0,0) ++(60:4) -- ++(4,0) -- ++ (-120:4) -- cycle; 
  
      \filldraw[black] (4,0) circle (4pt);
      \filldraw[black] (4,0) ++ (60:4) circle (4pt);
      \filldraw[black] (4,0)+(4,0) circle (4pt);
      %\draw[color=black] (4,0) -- ++(60:4) -- ++(-60:4)-- cycle;
      
     % \draw[color=black] (4,0) ++(60:4) -- ++(4,0) -- ++ (-120:4) -- cycle; 
  
      %\draw[color=black] (16,0) ++(60:4) -- ++(4,0) -- ++ (-120:4) -- cycle; 
     % \filldraw[black] (20,0) circle (4pt);
      %\filldraw[black] (20,0) ++ (60:4) circle (4pt);
     % \filldraw[black] (20,0)+(4,0) circle (4pt);

  %Third Row

      %\draw[color=black] (0,0)++(-60:4) ++(60:4) -- ++(4,0) -- ++ (-120:4) -- cycle;
      \filldraw[black] (4,0)++(-60:4)  circle (4pt);
      \filldraw[black] (4,0)++(-60:4)  ++ (60:4) circle (4pt);
      \filldraw[black] (4,0)++(-60:4) ++(60:4)+(4,0) circle (4pt);

      %\draw[color=black] (4,0)++(-60:4) -- ++(60:4) -- ++(-60:4)-- cycle;

     % \draw[color=black] (4,0)++(-60:4) ++(60:4) -- ++(4,0) -- ++ (-120:4) -- cycle; 
     \filldraw[black] (8,0)++(-60:4)  circle (4pt);
      \filldraw[black] (8,0)++(-60:4)  ++ (60:4) circle (4pt);
      \filldraw[black] (8,0)++(-60:4) ++(60:4)+(4,0) circle (4pt);

      %\draw[color=black] (8,0)++(-60:4) ++(60:4) -- ++(4,0) -- ++ (-120:4) -- cycle;

      %\draw[color=black] (8,0)++(-60:4) -- ++(60:4) -- ++(-60:4)-- cycle;
     \filldraw[black] (12,0)++(-60:4)  circle (4pt);
      %\filldraw[black] (12,0)++(-60:4)  ++ (60:4) circle (4pt) node[anchor=north]{$v_1$ };
      %\filldraw[black] (12,0)++(-60:4) ++(60:4)+(4,0) circle (4pt) node[anchor=north]{$v_1$ };
  
      %\draw[color=black] (12,0)++(-60:4) ++(60:4) -- ++(4,0) -- ++ (-120:4) -- cycle; 
  
      %\draw[color=black] (12,0)++(-60:4) -- ++(60:4) -- ++(-60:4)-- cycle;
     \filldraw[black] (16,0)++(-60:4)  circle (4pt);
      \filldraw[black] (16,0)++(-60:4)  ++ (60:4) circle (4pt)node[anchor=north]{$\tilde{v}$ };

      %\draw[color=black] (16,0)++(-60:4) ++(60:4) -- ++(4,0) -- ++ (-120:4) -- cycle; 

      \filldraw[black] (20,0)++(-60:4)  circle (4pt) ; 
      \filldraw[black] (20,0)++(-60:4)  ++ (60:4)  circle (4pt) ;
      %\filldraw[black] (16,0)++(-60:4) ++(60:4)+(4,0) circle (4pt);

      %\draw[color=black] (16,0)++(-60:4) -- ++(60:4) -- ++(-60:4)-- cycle;

      \draw[color=black] (4,0)++(60:4) -- ++(60:4) -- ++(4,0) -- ++(4,0) -- ++(4,0) -- ++(-60:4) -- ++(-120 :4) -- ++ (120:4) -- ++(-4,0) -- ++(-4,0) -- ++(-120:4) -- ++ (4,0) -- ++(4,0)-- ++(4,0)-- ++(4,0) -- ++ (-120:4) -- ++ (-4,0) -- ++ (-4,0) -- ++ (-4,0) -- ++ (-4,0) -- ++ (120:4)-- ++ (60:4);
      
  \end{tikzpicture}
\end{center}
\vspace*{0.3cm}
  \begin{tikzpicture}[scale=0.25]
    \filldraw[fill=blue!15!white, draw=white]{} (4,0)++(60:4) -- ++(60:4) -- ++(-60:4)-- cycle;
      \draw[color=white] (4,0)++(60:4) -- ++(60:4) -- ++(-60:4)-- cycle;
  
      \filldraw[fill=blue!15!white, draw=white]{} (4,0)++(60:4) ++(60:4) -- ++(4,0) -- ++ (-120:4) -- cycle;
      \draw[color=white] (4,0)++(60:4) ++(60:4) -- ++(4,0) -- ++ (-120:4) -- cycle; 
  
      \filldraw[fill=blue!15!white, draw=white]{} (8,0)++(60:4) -- ++(60:4) -- ++(-60:4)-- cycle;
      \draw[color=white] (8,0)++(60:4) -- ++(60:4) -- ++(-60:4)-- cycle;

%%%Second image 

\filldraw[fill=blue!15!white, draw=white]{} (16,0)++(60:4) -- ++(60:4) -- ++(-60:4)-- cycle;
  \draw[color=white] (16,0)++(60:4) -- ++(60:4) -- ++(-60:4)-- cycle;
  
\filldraw[fill=blue!15!white, draw=white]{} (16,0)++(60:4) ++(60:4) -- ++ (4,0) -- ++ (-120:4) -- cycle;
\draw[color=white] (16,0)++(60:4) ++(60:4) -- ++(4,0) -- ++ (-120:4) -- cycle; 
  
\filldraw[fill=blue!15!white, draw=white]{} (20,0)++(60:4) -- ++(60:4) -- ++(-60:4)-- cycle;
\draw[color=white] (20,0)++(60:4) -- ++(60:4) -- ++(-60:4)-- cycle;

% Second row
      %\filldraw[fill=blue!15!white, draw=white]{} (0,0) -- ++(60:4) -- ++(-60:4)-- cycle;
     % \draw[color=white] (0,0) -- ++(60:4) -- ++(-60:4)-- cycle;
  
      % \filldraw[fill=blue!15!white, draw=white]{} (0,0) ++(60:4) -- ++(4,0) -- ++ (-120:4) -- cycle;
      %\draw[color=white] (0,0) ++(60:4) -- ++(4,0) -- ++ (-120:4) -- cycle; 
  
      \filldraw[fill=blue!15!white, draw=white]{} (4,0) -- ++(60:4) -- ++(-60:4)-- cycle;
      \draw[color=white] (4,0) -- ++(60:4) -- ++(-60:4)-- cycle;
      
       \filldraw[fill=blue!15!white, draw=white]{} (4,0) ++(60:4) -- ++(4,0) -- ++ (-120:4) -- cycle;
      \draw[color=white] (4,0) ++(60:4) -- ++(4,0) -- ++ (-120:4) -- cycle; 
  
  \filldraw[fill=blue!15!white, draw=white]{} (8,0) ++(60:4) -- ++(4,0) -- ++ (-120:4) -- cycle;
      \draw[color=white] (8,0) ++(60:4) -- ++(4,0) -- ++ (-120:4) -- cycle; 
  
     \filldraw[fill=blue!15!white, draw=white]{} (12,0) -- ++(60:4) -- ++(-60:4)-- cycle;
      %\draw[color=white] (20,0) -- ++(60:4) -- ++(-60:4)-- cycle; 

%%Second image 

\filldraw[fill=blue!15!white, draw=white]{} (16,0) -- ++(60:4) -- ++(-60:4)-- cycle;
\draw[color=white] (16,0) -- ++(60:4) -- ++(-60:4)-- cycle;

 \filldraw[fill=blue!15!white, draw=white]{} (16,0) ++(60:4) -- ++(4,0) -- ++ (-120:4) -- cycle;
\draw[color=white] (4,0) ++(60:4) -- ++(4,0) -- ++ (-120:4) -- cycle; 

\filldraw[fill=blue!15!white, draw=white]{} (20,0) ++(60:4) -- ++(4,0) -- ++ (-120:4) -- cycle;
\draw[color=white] (20,0) ++(60:4) -- ++(4,0) -- ++ (-120:4) -- cycle; 

\filldraw[fill=blue!15!white, draw=white]{} (24,0) -- ++(60:4) -- ++(-60:4)-- cycle;
%\draw[color=white] (20,0) -- ++(60:4) -- ++(-60:4)-- cycle; 

  %Third Row
      \filldraw[fill=blue!15!white, draw=white]{} (0,0)++(-60:4) ++(60:4) -- ++(4,0) -- ++ (-120:4) -- cycle;
      \draw[color=white] (0,0)++(-60:4) ++(60:4) -- ++(4,0) -- ++ (-120:4) -- cycle;
  
   \filldraw[fill=blue!15!white, draw=white]{} (4,0)++(-60:4) -- ++(60:4) -- ++(-60:4)-- cycle;
      \draw[color=white] (4,0)++(-60:4) -- ++(60:4) -- ++(-60:4)-- cycle;
  
      \filldraw[fill=blue!15!white, draw=white]{} (4,0)++(-60:4) ++(60:4) -- ++(4,0) -- ++ (-120:4) -- cycle;
      \draw[color=white] (4,0)++(-60:4) ++(60:4) -- ++(4,0) -- ++ (-120:4) -- cycle;

        \filldraw[fill=blue!15!white, draw=white]{} (8,0)++(-60:4) ++(60:4) -- ++(4,0) -- ++ (-120:4) -- cycle;
      \draw[color=white] (8,0)++(-60:4) ++(60:4) -- ++(4,0) -- ++ (-120:4) -- cycle;
  
   \filldraw[fill=blue!15!white, draw=white]{} (8,0)++(-60:4) -- ++(60:4) -- ++(-60:4)-- cycle;
      \draw[color=white] (8,0)++(-60:4) -- ++(60:4) -- ++(-60:4)-- cycle;
  
%Third Row--second image
\filldraw[fill=blue!15!white, draw=white]{} (12,0)++(-60:4) ++(60:4) -- ++(4,0) -- ++ (-120:4) -- cycle;
\draw[color=white] (12,0)++(-60:4) ++(60:4) -- ++(4,0) -- ++ (-120:4) -- cycle;

\filldraw[fill=blue!15!white, draw=white]{} (16,0)++(-60:4) -- ++(60:4) -- ++(-60:4)-- cycle;
\draw[color=white] (16,0)++(-60:4) -- ++(60:4) -- ++(-60:4)-- cycle;

\filldraw[fill=blue!15!white, draw=white]{} (16,0)++(-60:4) ++(60:4) -- ++(4,0) -- ++ (-120:4) -- cycle;
\draw[color=white] (16,0)++(-60:4) ++(60:4) -- ++(4,0) -- ++ (-120:4) -- cycle;

  \filldraw[fill=blue!15!white, draw=white]{} (20,0)++(-60:4) ++(60:4) -- ++(4,0) -- ++ (-120:4) -- cycle;
\draw[color=white] (20,0)++(-60:4) ++(60:4) -- ++(4,0) -- ++ (-120:4) -- cycle;

\filldraw[fill=blue!15!white, draw=white]{} (20,0)++(-60:4) -- ++(60:4) -- ++(-60:4)-- cycle;
\draw[color=white] (20,0)++(-60:4) -- ++(60:4) -- ++(-60:4)-- cycle;

  \end{tikzpicture}
  \begin{tikzpicture}[scale=0.25]
    \filldraw[black] (4,0)+(60:4) circle (4pt);
      \filldraw[black] (4,0)++(60:4) ++ (60:4) circle (4pt);
      \filldraw[black] (4,0)++(60:4) ++ (60:4) ++ (60:4)circle (4pt);
      \filldraw[black] (4,0)++(60:4) ++ (60:4) ++ (60:4)+(4,0) circle (4pt);
      \filldraw[black] (4,0)++(60:4)+(4,0) circle (4pt) ;
      \filldraw[black] (8,0)+(60:4) circle (4pt) ;
      \filldraw[black] (8,0)++(60:4) ++ (60:4) circle (4pt) ;

      \filldraw[black] (8,0)++(60:4)+(4,0) circle (4pt) ;
       \filldraw[black] (12,0)+(60:4) circle (4pt);
      \filldraw[black] (12,0)++(60:4) ++ (60:4) circle (4pt);
      \filldraw[black] (12,0)++(60:4)+(4,0) circle (4pt);
      \filldraw[black] (12,0)++(4,0) circle (4pt);
      \filldraw[black] (12,0)  circle (4pt);
      \filldraw[black] (8,0)  circle (4pt);

      \filldraw[black] (16,0)+(60:4) circle (4pt);
      \filldraw[black] (16,0)++(60:4) ++ (60:4) circle (4pt);
      \filldraw[black] (16,0)++(60:4) ++ (60:4) ++ (60:4)circle (4pt);
      \filldraw[black] (16,0)++(60:4) ++ (60:4) ++ (60:4)+(4,0) circle (4pt);
      \filldraw[black] (16,0)++(60:4)+(4,0) circle (4pt) ;
      \filldraw[black] (20,0)+(60:4) circle (4pt) ;
      \filldraw[black] (20,0)++(60:4) ++ (60:4) circle (4pt) ;

      \filldraw[black] (20,0)++(60:4)+(4,0) circle (4pt) ;
       \filldraw[black] (24,0)+(60:4) circle (4pt);
      \filldraw[black] (24,0)++(60:4) ++ (60:4) circle (4pt);
      \filldraw[black] (24,0)++(60:4)+(4,0) circle (4pt);
      \filldraw[black] (24,0)++(4,0) circle (4pt);
      \filldraw[black] (24,0)  circle (4pt);
      \filldraw[black] (8,0)  circle (4pt);
      \filldraw[black] (20,0)  circle (4pt);

      \draw[color=black] (4,0)++(60:4) -- ++(60:4) -- ++(60:4)-- ++(4,0) -- ++(-60:4) -- ++(-60:4) -- ++(60:4) -- ++(60 :4) -- ++ (4,0) -- ++(-60:4) -- ++(-60:4) -- ++(-120:4) -- ++ (-4,0) -- ++(-4,0)-- ++(120:4)-- ++(-120:4)  -- ++ (-4,0) -- ++ (-4,0) -- ++ (120:4);
      \draw[color=black] (8,0)++(60:4) -- ++(60:4)-- ++(-60:4)-- ++(-4,0);
      \draw[color=black] (20,0)++(60:4) -- ++(60:4)-- ++(-60:4)-- ++(-4,0);
  \end{tikzpicture}
  \caption{The first graphic depicts an example for a self-intersecting component $\compo_i\in \mathcal{D}_2(\compo)$ and the corresponding graph. Note that the vertex $\tilde{v}$ is contained twice in the corresponding tuple $(v_i^{1}, ..., v_i^{J})$ and that $\R^2 \setminus \overline{\compo_i}$ consists of two components.
  In the second example, the two components $\compo_1,\compo_2 \in \mathcal{D}_1(H)$ have boundaries $\partial \compo_1, \partial \compo_2$ that consist of \JJJ two \EEE connected components \MMM each. \EEE  
  } \label{fig:self-intersection}
  \end{figure}
% \RRR Old version: It was  not clear that the tuple collects the entire boundary! \EEE \RRR "cycle in the graph" is misleading because the tuple always induces a cycle!\EEE
  Roughly speaking,  $\mathcal{D}_l(\compo)$ collects the set of components which touch  other components at $l$ different vertices. (Also \EEE self-intersections of one component are possible and taken into account  depending on how often the vertex appears in $(v_i^1, \ldots, v_i^J)$ , \EEE see Figure~\ref{fig:self-intersection}.) 
By an elementary computation we have that
\begin{align}\label{iddd}
\sum_{l \ge 1} l \# \mathcal{D}_l(\compo) = \sum_{k \ge 2} k   \# \mathcal{V}_{2k}(\compo). 
\end{align}
 Indeed, the left-hand side can be written as $\sum_{i=1}^n \#\lbrace j =1\ldots J \colon n(v_i^j) \ge 4 \rbrace $, and each $v \in \mathcal{V}_{2k}(\compo)$, $k \ge 2$, appears $k$-times in these cycles. \EEE 
 
\GGG In the proof of Theorem~\ref{generaltheorem}, \EEE our strategy will be to bound $\# \mathcal{V}_{2k}(\compo)$ for $k \ge 2$ since these vertices are related to triangles in $\mathbf{T}_H^{\rm ex, bad}$ (see Figure \ref{fig:touching-components}). In view of \eqref{iddd}, this can be achieved by finding a  suitable bound on \EEE    
%(and thus $\#\mathcal{M}^{\rm nh}_j(B) $ for $j=0,1$, see \eqref{nonhealrealtion}) 
  $\# \mathcal{D}_l(\compo)$, $l \ge 1$. The following lemma shows that, under suitable assumptions on the components  of $\R^2 \setminus \overline{\compo}$, it actually suffices to control $\mathcal{D}_1(\compo)$ and $\mathcal{D}_2(\compo)$. \EEE 
% In particular, as a first step, we will prove
\begin{lemma}\label{lemma:afirststep}
Let $\compo$ satisfy $\mathcal{H}^1(\partial \compo)\leq  M \EEE$ and, denoting by $(F_i)_i$ the connected components of $\R^2\setminus \ol \compo$, assume that $ |F_i|  >  \varepsilon^2/\eta^2$ for every $i$.
Then,
\begin{align}\label{afirststep}
\sum_{l \ge 3} l \# \mathcal{D}_l(\compo) \le  C     \# \mathcal{D}_1(\compo) +  C \frac{\eta}{\eps},  
\end{align}
 where $C>0$ only depends on $M$. \EEE
\end{lemma}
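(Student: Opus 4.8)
The plan is to exploit the Euler formula \eqref{Euler} together with an isoperimetric-type lower bound on the size of the faces $\mathcal{F}(\compo)$. First I would split the bounded faces of the planar graph $(\mathcal{V}(\compo),\mathcal{E}(\compo))$ into two types: those that are (essentially) the connected components $\compo_i$ of $\compo$, and those coming from the bounded connected components $F_i$ of $\R^2 \setminus \overline{\compo}$. The hypothesis $|F_i| > \eps^2/\eta^2$ forces each face of the second type to contain at least a fixed number of edges (since each edge has length $\le \omega(\eps)$ and the triangles have controlled shape, a face of area $> \eps^2/\eta^2$ must be bounded by $\gtrsim 1/\eta$ edges, or more precisely enough edges that its perimeter is large); similarly the constraint $\mathcal{H}^1(\partial\compo)\le M$ bounds $\#\mathcal{E}(\compo)$ from above by $CM/\eps$. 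Counting incidences, $2\#\mathcal{E}(\compo) = \sum_{v} n(v) = \sum_{k\ge 1} 2k\,\#\mathcal{V}_{2k}(\compo)$, so $\sum_{k\ge 2} (k-1)\#\mathcal{V}_{2k}(\compo) = \#\mathcal{E}(\compo) - \#\mathcal{V}(\compo)$, and by \eqref{iddd} the left side of \eqref{afirststep} is comparable to $\sum_{l\ge 1}(l-1)\#\mathcal{D}_l(\compo)$ plus lower order. Thus it suffices to bound $\#\mathcal{E}(\compo) - \#\mathcal{V}(\compo)$, i.e.\ (via Euler) $\#\mathcal{F}(\compo) - \nu(\mathcal{V},\mathcal{E})$, and then separate the ``$l=1,2$'' contributions which are allowed on the right-hand side.

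Concretely, I would argue as follows. By \eqref{Euler}, $\#\mathcal{F}(\compo) - \nu = \#\mathcal{E}(\compo) - \#\mathcal{V}(\compo)$. Write $\#\mathcal{F}(\compo) = \#\mathcal{F}_{\rm comp} + \#\mathcal{F}_{\rm hole}$, where $\mathcal{F}_{\rm comp}$ are the faces corresponding to components of $\compo$ and $\mathcal{F}_{\rm hole}$ those corresponding to the $F_i$. For the hole-faces, the assumption $|F_i| > \eps^2/\eta^2$ together with $|T| \le C\omega(\eps)\eps$ (from \eqref{ti} type bounds, using $\omega(\eps)\le$ the sidelength $\le\omega(\eps)$) shows each such face is delimited by many edges; summing the total edge-length one gets $\#\mathcal{F}_{\rm hole} \le C \eta \,\mathcal{H}^1(\partial\compo)/\eps \le CM\eta/\eps$ — this is where the $C\eta/\eps$ term in \eqref{afirststep} comes from. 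For the component-faces, I would relate each to a tuple $(v_i^1,\dots,v_i^J)$ as in the discussion before \eqref{Dll}: a component $\compo_i \in \mathcal{D}_l(\compo)$ with $l=0$ contributes a simple cycle, so it is ``cheap'' in the Euler count; a component with $l \ge 1$ high-degree vertices is a union of $l+(\text{number of boundary components})$ cycles. The key algebraic identity is that a component $\compo_i\in\mathcal{D}_l(\compo)$ increases $\#\mathcal{E}-\#\mathcal{V}-(\text{components of its boundary graph})$ by exactly $l$ — coming from the $l$ vertices of degree $\ge 4$ lying on it — so summing over $i$ and using \eqref{iddd} yields $\sum_{l\ge 1} l\,\#\mathcal{D}_l(\compo) = \sum_{k\ge 2} k\,\#\mathcal{V}_{2k}(\compo) \le C(\#\mathcal{F}_{\rm hole} + \nu) + (\text{lower order})$.

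Finally I would absorb the genuinely problematic terms. The point of isolating $\mathcal{D}_1$ and $\mathcal{D}_2$ is that a component meeting others at only one or two vertices is still topologically ``almost a cycle'': at such a vertex the four (or more, but the relevant case is exactly four) edges pair up into at most two cycles, so $\mathcal{D}_1$ and $\mathcal{D}_2$ components contribute boundedly many extra cycles each, and these are precisely the ones not controlled by the area argument. One checks $\sum_{l\ge 3} l\,\#\mathcal{D}_l(\compo) = \sum_{l\ge 1} l\,\#\mathcal{D}_l(\compo) - \#\mathcal{D}_1(\compo) - 2\#\mathcal{D}_2(\compo)$, and bounding $\#\mathcal{D}_2(\compo)$ itself requires care — I expect this to be the main obstacle, since a $\mathcal{D}_2$-component need not be small in area, so one cannot directly invoke the hypothesis on the $F_i$; instead one must use a connectivity/planarity argument showing that $\mathcal{D}_2$-components, when they are numerous, force many small faces among the $F_i$ or force a bound via $\mathcal{D}_1$. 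The cleanest route is probably to set up a bijection-type argument between ``excess cycles'' beyond a spanning structure of the component-adjacency graph and the faces in $\mathcal{F}_{\rm hole}$ (each extra cycle not explained by a $\mathcal{D}_1$-attachment encloses, or is enclosed together with, a region that must be one of the $F_i$ and hence has area $>\eps^2/\eta^2$), after which the bound $\#\mathcal{F}_{\rm hole}\le CM\eta/\eps$ closes the estimate. I would carry out the $\mathcal{F}_{\rm hole}$ counting first (easy), then the Euler/incidence bookkeeping, and leave the $\mathcal{D}_2$ absorption for last as the delicate step.
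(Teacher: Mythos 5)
Your strategy—Euler formula, vertex--edge incidence counting, and an area/isoperimetric lower bound forcing $\#\mathcal{F}_{\rm hole}\le C\eta/\eps$—is exactly the one the paper uses. The gap is in the algebraic bookkeeping, and specifically in how you close the argument: you expect a "delicate step" absorbing $\#\mathcal{D}_2$, but in fact no such step exists, and the Euler computation never produces a $\#\mathcal{D}_2$ term that needs bounding.

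Concretely, you write that "summing over $i$ and using \eqref{iddd} yields $\sum_{l\ge 1}l\#\mathcal{D}_l=\sum_{k\ge 2}k\#\mathcal{V}_{2k}\le C(\#\mathcal{F}_{\rm hole}+\nu)+\text{lower order}$." That inequality does not follow from Euler. What Euler together with the incidence count $\#\mathcal{E}(\compo)=\sum_{k\ge 1}k\#\mathcal{V}_{2k}(\compo)$ actually gives is
\[
\sum_{k\ge 2}(k-1)\#\mathcal{V}_{2k}(\compo)=\#\mathcal{E}(\compo)-\#\mathcal{V}(\compo)=\#\mathcal{F}_{\rm hole}+\#\mathcal{C}(\compo)-\nu(\mathcal{V},\mathcal{E}),
\]
and the $\#\mathcal{C}(\compo)$ on the right is the term you cannot ignore (it is generically of size $1/\eps$, not $\eta/\eps$). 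The resolution is not a separate bound on $\#\mathcal{D}_2$ but rather to expand $\#\mathcal{C}(\compo)=\sum_{l\ge 0}\#\mathcal{D}_l(\compo)$ and compare coefficients: using $k-1\ge k/2$ for $k\ge 2$ and then \eqref{iddd}, the above becomes
\[
\#\mathcal{F}_{\rm hole}+\#\mathcal{D}_0+\sum_{l\ge 1}\Bigl(1-\tfrac{l}{2}\Bigr)\#\mathcal{D}_l(\compo)\;\ge\;\nu(\mathcal{V},\mathcal{E})\;\ge\;\#\mathcal{D}_0(\compo),
\]
and the coefficient $1-l/2$ is $\tfrac12$ at $l=1$, vanishes at $l=2$, and is $\le -l/6$ for $l\ge 3$. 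Thus $\mathcal{D}_2$ drops out identically, $\mathcal{D}_0$ is absorbed by $\nu$, and one reads off $\sum_{l\ge 3}\tfrac{l}{6}\#\mathcal{D}_l\le\#\mathcal{F}_{\rm hole}+\tfrac12\#\mathcal{D}_1$. The point you missed is that a $\mathcal{D}_2$-component contributes $2$ to the weighted count $\sum_l l\#\mathcal{D}_l$ but also contributes $1$ to $\#\mathcal{C}$ (one face) and this exactly balances in the Euler identity; there is no residual $\mathcal{D}_2$-contribution to control, and the "bijection-type argument" you sketch is not needed. Your bound on $\#\mathcal{F}_{\rm hole}$ is essentially the paper's isoperimetric argument (the paper phrases it as $\eps/\eta\cdot\#\{F_i\}\le\sum_i|F_i|^{1/2}\le C\sum_i\mathcal{H}^1(\partial F_i)\le C\mathcal{H}^1(\partial\compo)$) and is fine.
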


% \textbf{Proof of \eqref{afirststep}:} 
\begin{proof}
By the assumption on    $ F_i \EEE $, using that $(F_i)_i$ are pairwise disjoint and the isoperimetric inequality, we get 
$ \frac{\eps}{\eta}\# (F_i)_i \MMM \le \sum_i|F_i|^{1/2} \EEE \le C\sum_i \mathcal{H}^1(\partial F_i) \le C \mathcal{H}^1(\partial \compo)$. Noticing that $\mathcal{F}(\compo)=\mathcal{C}(\compo) \cup (F_i)_i$,  we thus have 
\begin{align}\label{comp bound}
\# \mathcal{F}(\compo) \le \# \mathcal{C}(\compo)    +   C \EEE \frac{\eta}{\eps}  \mathcal{H}^1(\partial \compo) \le  \# \mathcal{C}(\compo)    +  C \frac{\eta}{\eps}. 
\end{align}
Further, we note that 
\begin{align}\label{iddd2}
\# \mathcal{E}(\compo)   =   \sum_{k \ge 1}   k  \# \mathcal{V}_{2k}(\compo)    
\end{align}
as each edge is associated to exactly two vertices. Then, from \eqref{Euler} we obtain  
$$ \# \mathcal{V}(\compo)   +  \big( \#\mathcal{F}(\compo) -  \# \mathcal{C}(\compo) \big) +   \# \mathcal{C}(\compo)  =  \nu(\mathcal{V}(\compo), \mathcal{E}(\compo)) +  \# \mathcal{E}(\compo).  $$
Using the definition of $(\mathcal{V}_{2k}(\compo))_k$  and $(\mathcal{D}_{l}(\compo))_l$ in this formula, as well as  \eqref{iddd2}, we get
$$ \sum_{k \ge 1} \#  \mathcal{V}_{2k}(\compo)   +  \big(\#\mathcal{F}(\compo) -  \# \mathcal{C}(\compo) \big) + \sum_{l \ge 0}  \# \mathcal{D}_l(\compo)  =  \nu(\mathcal{V}(\compo), \mathcal{E}(\compo)) +  \sum_{k \ge 1}   k  \# \mathcal{V}_{2k}(\compo).  $$
As  $k-1 \ge k/2$ for $k \ge 2$, this yields
$$      \big(\#\mathcal{F}(\compo) -  \# \mathcal{C}(\compo) \big) + \sum_{l \ge 0}  \# \mathcal{D}_l(\compo)  \ge  \nu(\mathcal{V}(\compo), \mathcal{E}(\compo)) +  \sum_{k \ge 2}   \frac{k}{2}  \# \mathcal{V}_{2k}(\compo).  $$
This along with \eqref{iddd} shows
$$      \big(\#\mathcal{F}(\compo) -  \# \mathcal{C}(\compo) \big) + \sum_{l \ge 0}  \# \mathcal{D}_l(\compo)  \ge  \nu(\mathcal{V}(\compo), \mathcal{E}(\compo)) +  \sum_{l \ge 1}  \frac{l}{2} \# \mathcal{D}_l(\compo),  $$
and thus, as $l/2 - 1 \ge l/6$ for $\l \ge 3$, we deduce   
$$      \big(\#\mathcal{F}(\compo) -  \# \mathcal{C}(\compo) \big) +   \# \mathcal{D}_0(\compo) + \frac{1}{2}  \# \mathcal{D}_1(\compo) \ge  \nu(\mathcal{V}(\compo), \mathcal{E}(\compo)) +    \sum_{l \ge 3}  \frac{l}{6}  \# \mathcal{D}_l(\compo).  $$
Eventually, we observe that clearly $  \nu(\mathcal{V}(\compo), \mathcal{E}(\compo)) \ge \# \mathcal{D}_0(\compo)$ as each component  in   $ \mathcal{D}_0(\compo)$ induces a connected component of the planar graph. 
This along with \eqref{comp bound} shows  \eqref{afirststep}.
\end{proof}

% \textbf{Modification 2: Healing of exposed triangles:}
% Next, we want to start 'healing' triangles in $\mathcal{M}_i(B)$ for $i=0,1$. 
% For any $\mathbf{T}_E \subset \mathbf{T}$ and $E=\mathrm{int}\Big( \bigcup_{T\in\mathbf{T}_E} T \Big)$, we denote 
%\begin{equation}\label{defMheal}
%\mathcal{M}^{\rm heal}_j(B):=\{T\in \mathcal{M}_1(B) \colon \text{there exists }v \in \mathcal{N}(T) \text{ s.t. }v \notin T' \text{ for all }T'\in \mathbf{T}_B \setminus \{T\}\} \quad\text{for }j=0,1.
%\end{equation}
\noindent \textbf{Healing of suitable sets.}
Let $\mathbf{N}(T;\compo)$ denote the triangles in $\overline{\compo \setminus T}$ sharing an edge with $T    \in \mathbf{T}$.  
We let 
\begin{equation}\label{defMj}
\mathbf{M}_j(\compo):=\{T \in \mathbf{T}_\compo \colon \# \mathbf{N}(T;\compo) = j\} \quad \text{for }j=0,1,2,3. 
\end{equation}
In other words, $\mathbf{M}_j(\compo)$ consists of triangles in $\ol \compo$ that expose $3-j$ edges to $\Omega\setminus \compo$ (see Figure \ref{fig:differ-exposing}).
\begin{figure}[h]
  \centering
  \begin{tikzpicture}[scale=0.42]
    \filldraw[fill=blue!15!white, draw=white]{} (0,0) -- ++(60:4) -- ++(-60:4)-- cycle;
    \draw[color=black] (0,0) -- ++(60:4) -- ++(-60:4)-- cycle;

    \filldraw[fill=blue!15!white, draw=white]{} (0,0) ++(60:4) -- ++(4,0) -- ++ (-120:4) -- cycle;
    \draw[color=black] (0,0) ++(60:4) -- ++(4,0) -- ++ (-120:4) -- cycle; 

    \filldraw[fill=blue!15!white, draw=white]{} (4,0) -- ++(60:4) -- ++(-60:4)-- cycle;
    \draw[color=black] (4,0) -- ++(60:4) -- ++(-60:4)-- cycle;

    \filldraw[fill=blue!15!white, draw=white]{} (0,0)++(60:4) -- ++(60:4) -- ++(-60:4)-- cycle;
    \draw[color=black] (0,0)++(60:4) -- ++(60:4) -- ++(-60:4)-- cycle;

    \node[scale=1 ] at (4,2) {$T$};
    \node[scale=1, color=blue!40!white] at (0.5,3) {$\compo$};

    \filldraw[fill=blue!15!white, draw=white]{} (10,0) -- ++(60:4) -- ++(-60:4)-- cycle;
    \draw[color=black] (10,0) -- ++(60:4) -- ++(-60:4)-- cycle;

    \filldraw[fill=blue!15!white, draw=white]{} (10,0) ++(60:4) -- ++(4,0) -- ++ (-120:4) -- cycle;
    \draw[color=black] (10,0) ++(60:4) -- ++(4,0) -- ++ (-120:4) -- cycle; 

    \filldraw[fill=blue!15!white, draw=white]{} (14,0) -- ++(60:4) -- ++(-60:4)-- cycle;
    \draw[color=black] (14,0) -- ++(60:4) -- ++(-60:4)-- cycle;

    \filldraw[fill=red!15!white, draw=white]{} (10,0)++(60:4) -- ++(60:4) -- ++(-60:4)-- cycle;
    \draw[color=black] (10,0)++(60:4) -- ++(60:4) -- ++(-60:4)-- cycle;

    \node[scale=1 ] at (14,2) {$T$};
    \node[scale=1, color=red!60!white] at (16.5,5.5) {$\Omega\setminus \compo$};
  \node[scale=1, color=blue!40!white] at (10.5,2.5) {$\compo$};
  \node[scale=1] at (4,-1.5) {$  T \EEE\in \mathbf{M}_{3}(\compo)$};

    \filldraw[fill=blue!15!white, draw=white]{} (20,0) -- ++(60:4) -- ++(-60:4)-- cycle;
    \draw[color=black] (20,0) -- ++(60:4) -- ++(-60:4)-- cycle;

    \filldraw[fill=blue!15!white, draw=white]{} (20,0) ++(60:4) -- ++(4,0) -- ++ (-120:4) -- cycle;
    \draw[color=black] (20,0) ++(60:4) -- ++(4,0) -- ++ (-120:4) -- cycle; 

    \filldraw[fill=red!15!white, draw=white]{} (24,0) -- ++(60:4) -- ++(-60:4)-- cycle;
    \draw[color=black] (24,0) -- ++(60:4) -- ++(-60:4)-- cycle;

    \filldraw[fill=red!15!white, draw=white]{} (20,0)++(60:4) -- ++(60:4) -- ++(-60:4)-- cycle;
    \draw[color=black] (20,0)++(60:4) -- ++(60:4) -- ++(-60:4)-- cycle;
    \filldraw[green] (24,0)++(60:4) circle (4pt); 

    \node[scale=1 ] at (24,2) {$T$};
    \node[scale=1, color=blue!40!white] at (20.5,2.5) {$\compo$};
    \node[scale=1, color=red!60!white] at (26.5,5.5) {$\Omega\setminus \compo$};
    \node[scale=1] at (14,-1.5) {$  T \EEE\in \mathbf{M}_{2}(\compo)$};

    \filldraw[fill=red!15!white, draw=white]{} (30,0) -- ++(60:4) -- ++(-60:4)-- cycle;
    \draw[color=black] (30,0) -- ++(60:4) -- ++(-60:4)-- cycle;

    \filldraw[fill=blue!15!white, draw=white]{} (30,0) ++(60:4) -- ++(4,0) -- ++ (-120:4) -- cycle;
    \draw[color=black] (30,0) ++(60:4) -- ++(4,0) -- ++ (-120:4) -- cycle; 

    \filldraw[fill=red!15!white, draw=white]{} (34,0) -- ++(60:4) -- ++(-60:4)-- cycle;
    \draw[color=black] (34,0) -- ++(60:4) -- ++(-60:4)-- cycle;

    \filldraw[fill=red!15!white, draw=white]{} (30,0)++(60:4) -- ++(60:4) -- ++(-60:4)-- cycle;
    \draw[color=black] (30,0)++(60:4) -- ++(60:4) -- ++(-60:4)-- cycle;

    \node[scale=1 ] at (34,2) {$T$};
    \node[scale=1, color=blue!40!white] at (30.5,2.5) {$\compo$};
    \node[scale=1, color=red!60!white] at (36.5,5.5) {$\Omega\setminus \compo$};
    \node[scale=1] at (24,-1.5) {$ T \EEE \in \mathbf{M}_{1}(\compo)$};
    \node[scale=1] at (34,-1.5) {$ T \EEE\in \mathbf{M}_{0}(\compo)$};
    \filldraw[green] (30,0)++(60:4) circle (4pt);
    \filldraw[green] (30,0)++(60:4)++(4,0) circle (4pt);\filldraw[green] (30,0)++(4,0) circle (4pt);

\end{tikzpicture}

\caption{Triangles in the sets $\mathbf{M}_{j}(\compo)$. If we suppose that in all four pictures the other triangles which are not depicted are contained in $\Omega \setminus H$, the green vertices correspond to the vertices  given in \eqref{defMheal}.}\label{fig:differ-exposing}
\end{figure}
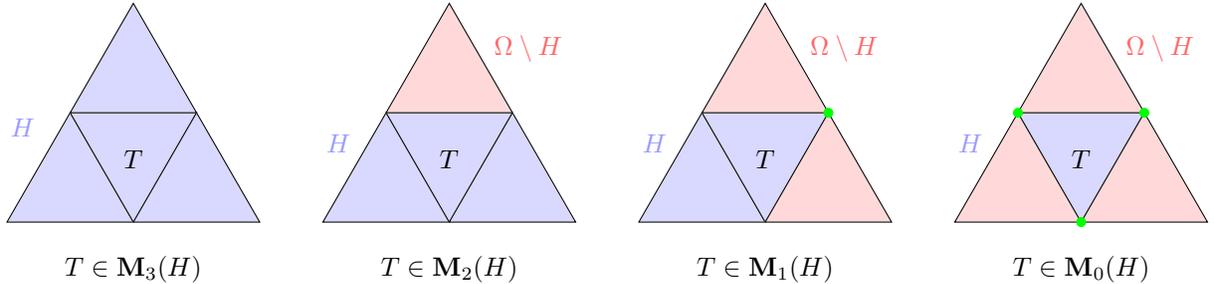
\GGG We let
\begin{equation}\label{defMheal}
\mathbf{M}^{\rm heal}_j(\compo):=\{T\in \mathbf{M}_j(\compo) \colon \text{there exists }v \in \mathcal{N}(T) \text{ s.t. }v \notin \mathcal{N}(T') \EEE \text{ for all }T'\in \mathbf{T}_\compo \setminus \{T\}\} \quad\text{for }j=0,1.
\end{equation}
 We notice that for $T\in \mathbf{M}^{\rm heal}_1(\compo)$ the vertex $v$ as in \eqref{defMheal} is necessarily the one which is not contained in the edge shared with any other (different) triangle in $\mathbf{T}_H$, while for $\mathbf{M}^{\rm heal}_0(\compo)$ there could be more vertices satisfying the condition $v \notin \mathcal{N}(T') \text{ for all }T'\in \mathbf{T}_\compo \setminus \{T\}$, see Figure~\ref{fig:differ-exposing}. \EEE In Figure \ref{fig:touching-components}, there is a set $\compo$ with six triangles in $\mathbf{M}_1(\compo)$, of which two (the external ones  in dark blue) are in $\mathbf{M}^{\rm heal}_1(\compo)$. 

 We also define the triangles which \MMM  `cannot be healed' (in the sense of Lemma \ref{healing1}  below) \EEE by  
%$\mathbf{M}^{\rm nh}_j(B) = \mathbf{M}_j(B) \setminus \mathbf{M}^{\rm heal}_j(B)$  for $j =0,1$. 
\begin{equation}\label{defMnotheal}
\mathbf{M}^{\rm nh}_j(\compo) := \mathbf{M}_j(\compo) \setminus \mathbf{M}^{\rm heal}_j(\compo) \quad\text{for }j=0,1,
\end{equation}
and 
%\begin{align}\label{healoftri} 
%B_{\rm heal} = B \setminus \bigcup_{j=0,1}  \bigcup_{T \in  \mathbf{M}^{\rm heal}_j(B)} T.
%\end{align} 
\begin{align}\label{healoftri} 
\compo_{\rm heal} := \compo \setminus \bigcup_{j=0,1}  \bigcup_{T \in  \mathbf{M}^{\rm heal}_j(\compo)} T.
\end{align}
%not contained in the edge shared with another triangle \GGG $T' \in \mathbf{T}_B$, \EEE $T'\neq T$, while $\mathcal{N}(T) \subset \mathcal{V}^{\rm nh}(B)$ for each $T \in \mathbf{M}^{\rm nh}_0(B)$.
%This  argument yields that a control on $\#\mathbf{M}^{\rm nh}_j(B)$ for $j=0,1$ is needed. 
%In order to bound $\#\mathbf{M}^{\rm nh}_j(B) $ for $j=0,1$ it suffices to control $\# \mathcal{V}_{2l}(B)$ for all $l \ge 2$. In fact, it holds that
% By definitions \eqref{1408240018}, \eqref{defMheal}, \eqref{defMnotheal} it follows that

% where $\mathcal{V}^{\rm nh}(B) \subset \mathcal{V}(B)$ denotes \GGG suitable \EEE vertices related to triangles that cannot be healed: 
\noindent
Each \EEE $T \in \mathbf{M}^{\rm heal}_0(\compo) $ \EEE satisfies $\mathcal{H}^1(\GGG \overline{\compo_{\rm heal}} \cap T \EEE ) = 0$ which yields
\begin{align*}\label{1208241839}
\mathcal{H}^1(\partial \GGG \compo \EEE) & =  \sum_{T \in \mathbf{T}_H}  \mathcal{H}^1(\partial \compo \cap T) \EEE \\ & \ge \mathcal{H}^1(\partial \compo_{\rm heal}) + \sum_{\GGG T \EEE \in \mathbf{M}^{\rm heal}_1(\compo)}  \big(   \mathcal{H}^1(\partial \compo \cap T) -    \mathcal{H}^1(\partial \compo_{\rm heal} \cap T)  \big)  +  \sum_{T \in \mathbf{M}^{\rm heal}_0(\compo)} \mathcal{H}^1(\partial \compo \cap T). 
\end{align*}
For $T \in \mathbf{M}^{\rm heal}_1(\GGG \compo \EEE) $ we have that $T \cap   \overline{\compo_{\rm heal}}$  coincides with one edge of $T$ \MMM or is empty, \EEE and thus by \eqref{ti} we get $\mathcal{H}^1(\partial \compo_{\rm heal}\cap T) \le \frac{2}{\eps \sin\theta_0 }|T|$, so that
\begin{equation}\label{1308241138}
\mathcal{H}^1(\partial \compo ) \ge \mathcal{H}^1(\partial \compo_{\rm heal}) - \sum_{j=0,1} \sum_{T  \in \mathbf{M}^{\rm heal}_j(\compo)  }   \Big( \frac{2}{\eps  \sin\theta_0}|T| -   \mathcal{H}^1( \partial \compo \cap T) \Big) .   
\end{equation}
 Moreover, \EEE we observe that 
\begin{equation}\label{1408240756}
\# \mathbf{M}^{\rm nh}_0(\compo) + \# \mathbf{M}^{\rm nh}_1(\compo) \leq  \sum_{k \ge 2} k \# \mathcal{V}_{2k}(\compo). \EEE
\end{equation}
%\begin{equation}\label{defVnh}
%\mathcal{V}^{\rm nh}(\compo):=\bigcup_{T \in \mathbf{M}^{\rm nh}_0(\compo)} \mathcal{N}(T) \cup \bigcup_{T \in \mathbf{M}^{\rm nh}_1(\compo)} \{v\in  \mathcal{N}(T) \colon  v \in \partial^i T \cap \partial^j T \text{ s.t. } \partial^i T \cup \partial^j T \subset \partial \compo \},
%\end{equation}
In fact, setting $\mathcal{V}^{\rm nh}(\compo) := \bigcup_{l \ge 2} \mathcal{V}_{2l}(\compo)$, for each $T \in \mathbf{M}^{\rm nh}_1(\compo)$, the  set $\mathcal{V}^{\rm nh}(\compo)\cap \mathcal{N}(T)$   contains the unique vertex $v$ \EEE of $T$ which is contained in both edges exposed to $\R^2 \setminus \compo$, otherwise $T$ would lie in \EEE  $\mathbf{M}^{\rm heal}_1(\compo)$ by \eqref{defMheal}. Similarly, any $T \in \mathbf{M}^{\rm nh}_0(\compo)$ necessarily contains at least one vertex in $\mathcal{V}^{\rm nh}(\compo)$, by definition. \EEE

 Recall that in Theorem \ref{generaltheorem} one considers a function $u\in H^1(\Omega;\R^2)$ that is piecewise affine on $\mathbf{T}$ with $\|e(u)\|_{L^2(\Omega\sm \compo)} \le \EEE C$. The symmetric gradient of such a function can still be controlled on $\Omega \setminus \compo_{\rm heal} $, as the following lemma shows.  

\begin{lemma}[Healing of triangles]\label{healing1} 
  Let $u \in H^{1}(\Omega;\R^2)$ be piecewise affine on
  $\mathbf{T} $
  and   let   $H$   be   induced by $\mathbf{T}_H$, for a given subset $\mathbf{T}_H\subset \mathbf{T}$. Suppose that ${\rm dist}(H,\partial\Omega) \ge \omega(\eps)$. Then, there exists a uniform constant $C>0$ such that  
$$  \Vert e(u) \Vert_{L^2(\Omega \setminus \GGG \compo_{\rm heal}\EEE)} \le C  \Vert e(u) \Vert_{L^2(\Omega \setminus \GGG \compo \EEE)}. $$
\end{lemma}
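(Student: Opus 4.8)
The plan is to localise the estimate to the star of a single node and then prove the local statement by a scaling and compactness argument whose core is a piecewise rigidity fact.

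\textbf{Reduction to single triangles.} By \eqref{healoftri} we have $\Omega\setminus \compo_{\rm heal}=(\Omega\setminus \compo)\cup\bigcup_{j=0,1}\bigcup_{T\in\mathbf{M}^{\rm heal}_j(\compo)}T$, so it suffices to control $\|e(u)\|_{L^2(T)}$ for each healed triangle $T$. To $T\in\mathbf{M}^{\rm heal}_j(\compo)$ we associate a node $v=v_T\in\mathcal{N}(T)$ as in \eqref{defMheal}, i.e.\ a node belonging to no triangle of $\mathbf{T}_\compo$ other than $T$. Letting $S_v$ be the union of all triangles of $\mathbf{T}$ having $v$ as a node, we will prove
\begin{align}\label{eq:plan-fan}
\|e(u)\|_{L^2(T)}\le C\,\|e(u)\|_{L^2(S_v\setminus T)},
\end{align}
with $C$ depending only on $\theta_0$, and with $S_v\setminus T$ contained, up to an $\mathcal{L}^2$-null set, in $\Omega\setminus\compo$. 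Granting \eqref{eq:plan-fan}, the lemma follows by squaring and summing over all healed triangles: a given triangle $T'\subset\Omega\setminus\compo$ can belong to $S_{v_T}$ only if $v_T$ is one of its three nodes, and each node is the chosen vertex of at most one healed triangle (being a node of only one triangle of $\mathbf{T}_\compo$), so the right-hand sides of \eqref{eq:plan-fan} have overlap at most three.

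\textbf{Geometry of the star.} The hypothesis ${\rm dist}(\compo,\partial\Omega)\ge\omega(\eps)$ guarantees that $v$ is an interior node of $\mathbf{T}$, hence $S_v$ is a complete fan $T_1,\dots,T_m$ with $T=T_1$, arranged cyclically so that consecutive triangles share a full edge through $v$. Since all interior angles of every triangle are $\ge\theta_0$, each triangle is fat (aspect ratio bounded in terms of $\theta_0$), so its diameter is comparable to any of its edges; as consecutive fan triangles share a full edge, all $T_i$ have mutually comparable diameters, with constants depending only on $\theta_0$, and $m\le 2\pi/\theta_0$. Finally, by the defining property of $v$, all of $T_2,\dots,T_m$ lie in $\mathbf{T}\setminus\mathbf{T}_\compo$ and therefore (up to a null set) in $\Omega\setminus\compo$.

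\textbf{The local fan estimate.} Both sides of \eqref{eq:plan-fan} are invariant under $u\mapsto u(\lambda\,\cdot)$, $S_v\mapsto\lambda^{-1}S_v$, so we may assume ${\rm diam}(S_v)=1$; by the previous step the rescaled fans then range over a compact family of admissible fan shapes. On a fixed such fan $S=T_1\cup\cdots\cup T_m$, the inequality $\|e(u)\|_{L^2(T_1)}\le C(S)\|e(u)\|_{L^2(S\setminus T_1)}$ holds on the finite-dimensional space of continuous piecewise affine $u$ on $S$ as soon as $e(u)=0$ on $T_2,\dots,T_m$ forces $e(u)_{T_1}=0$. This implication is piecewise rigidity: if $\nabla u|_{T_i}$ is skew for $i\ge 2$, then across each shared edge the two affine pieces agree on a segment, so the difference of their skew gradients annihilates a nonzero vector and hence vanishes (a nonzero $2\times2$ skew matrix is invertible), which propagates along the connected chain $T_2,\dots,T_m$ and shows that $u$ is one single rigid motion on $S\setminus T_1$; since $T_1$ shares two full, non-parallel edges (meeting at $v$ with angle $\ge\theta_0$) with $S\setminus T_1$, the affine piece $u|_{T_1}$ must coincide with that rigid motion, whence $e(u)_{T_1}=0$. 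A standard compactness argument then upgrades this to a uniform bound $C=C(\theta_0)$: $C(S)$ is finite for every admissible fan shape, depends upper-semicontinuously on $S$ (the relevant finite-dimensional linear maps vary continuously), and the family of shapes is compact. For $T\in\mathbf{M}^{\rm heal}_0(\compo)$ the argument is identical (now $S_v\setminus T$ is entirely contained, up to a null set, in $\Omega\setminus\compo$), which finishes the proof.

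\textbf{Main obstacle.} The decisive point, and the reason the scalar argument of \cite{chambolleBourdain} does not transfer, is that only $e(u)$, not $\nabla u$, is controlled: one cannot bound the affine piece on $T_1$ via nodal values, since these also see the uncontrolled skew (rotational) part. The piecewise rigidity / Korn-type argument on the full vertex star is what replaces this; and obtaining a \emph{uniform} constant relies on the observation that, although the global ratio $\omega(\eps)/\eps$ may be unbounded, the triangles inside a single star are mutually comparable in size, so the rescaled fans remain in a compact family.
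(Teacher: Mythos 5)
Your proposal is correct, and its core is the same as the paper's: localise to the node $v$ singled out in \eqref{defMheal}, observe that every triangle of the vertex star of $v$ other than $T$ lies (up to an $\mathcal{L}^2$-null set) in $\Omega\setminus\compo$, and use a Korn-type rigidity mechanism to bound $e(u)_T$ by $e(u)$ on those neighbours, with a constant uniform in $\theta_0$ because the admissible local configurations form a compact family. The execution is different, though. You rescale the star to unit diameter and argue by finite-dimensional piecewise rigidity: $e(u)=0$ on $T_2,\dots,T_m$ forces a single rigid motion on $S_v\setminus T$ (a nonzero $2\times2$ skew matrix is invertible), which, matched along the two non-parallel edges of $T$ through $v$, yields $e(u)_T=0$; compactness of the rescaled fan shapes then upgrades this to a uniform constant. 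The paper instead applies Korn's inequality once on the neighbourhood $N_T$ to produce a single skew correction $A$, and uses the trace identities $(\nabla z)_T\cdot l_j=(\nabla z)_{T^j}\cdot l_j$ for $z=u-Ax$ across those same two edges to bound $|(\nabla z)_T|$ explicitly; the only compactness-type input is the asserted $\theta_0$-uniformity of the Korn constant $K_{N_T}$. Your route is a touch more abstract and makes the uniformity of the constant transparent; the paper's is more quantitative and self-contained apart from that one assertion. One point worth tightening in your write-up: the claimed (upper-semi)continuity of $C(S)$ really rests on $\ker\|e(\cdot)\|_{L^2(S\setminus T_1)}$ having constant dimension $3$ (rigid motions) across the shape family, which is precisely what your rigidity step establishes, but the link should be made explicit. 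You also correctly identify why the scalar argument of \cite{chambolleBourdain} does not carry over and what replaces it.
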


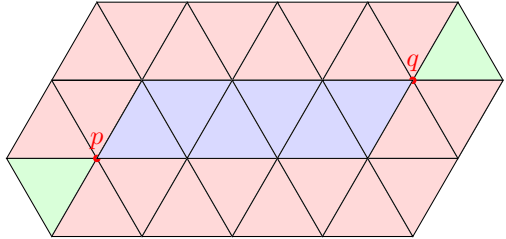
\begin{figure}[h]
  \begin{tikzpicture}[scale=0.3]
    % First row 
        \filldraw[fill=red!15!white, draw=white]{} (0,0)++(60:4) -- ++(60:4) -- ++(-60:4)-- cycle;
        \draw[color=black] (0,0)++(60:4) -- ++(60:4) -- ++(-60:4)-- cycle;
    
        \filldraw[fill=red!15!white, draw=white]{} (0,0)++(60:4) ++(60:4) -- ++(4,0) -- ++ (-120:4) -- cycle;
        \draw[color=black] (0,0)++(60:4) ++(60:4) -- ++(4,0) -- ++ (-120:4) -- cycle;
    
        \filldraw[fill=red!15!white, draw=white]{} (4,0)++(60:4) -- ++(60:4) -- ++(-60:4)-- cycle;
        \draw[color=black] (4,0)++(60:4) -- ++(60:4) -- ++(-60:4)-- cycle;
    
        \filldraw[fill=red!15!white, draw=white]{} (4,0)++(60:4) ++(60:4) -- ++(4,0) -- ++ (-120:4) -- cycle;
        \draw[color=black] (4,0)++(60:4) ++(60:4) -- ++(4,0) -- ++ (-120:4) -- cycle;
    
        \filldraw[fill=red!15!white, draw=white]{} (8,0)++(60:4) -- ++(60:4) -- ++(-60:4)-- cycle;
        \draw[color=black] (8,0)++(60:4) -- ++(60:4) -- ++(-60:4)-- cycle;
    
        \filldraw[fill=red!15!white, draw=white]{} (8,0)++(60:4) ++(60:4) -- ++(4,0) -- ++ (-120:4) -- cycle;
        \draw[color=black] (8,0)++(60:4) ++(60:4) -- ++(4,0) -- ++ (-120:4) -- cycle; 
    
        \filldraw[fill=red!15!white, draw=white]{} (12,0)++(60:4) -- ++(60:4) -- ++(-60:4)-- cycle;
        \draw[color=black] (12,0)++(60:4) -- ++(60:4) -- ++(-60:4)-- cycle;
    
        \filldraw[fill=red!15!white, draw=white]{} (12,0)++(60:4) ++(60:4) -- ++(4,0) -- ++ (-120:4) -- cycle;
        \draw[color=black] (12,0)++(60:4) ++(60:4) -- ++(4,0) -- ++ (-120:4) -- cycle; 
        
        \filldraw[fill=green!15!white, draw=white]{} (16,0)++(60:4) -- ++(60:4) -- ++(-60:4)-- cycle;
        \draw[color=black] (16,0)++(60:4) -- ++(60:4) -- ++(-60:4)-- cycle;

     % Second row
        \filldraw[fill=red!15!white, draw=white]{} (0,0) -- ++(60:4) -- ++(-60:4)-- cycle;
        \draw[color=black] (0,0) -- ++(60:4) -- ++(-60:4)-- cycle;
    
         \filldraw[fill=red!15!white, draw=white]{} (0,0) ++(60:4) -- ++(4,0) -- ++ (-120:4) -- cycle;
        \draw[color=black] (0,0) ++(60:4) -- ++(4,0) -- ++ (-120:4) -- cycle; 
        
        \draw (0,0)++(60:4) -- ++ (4,0);
        
        \draw (4,0)  -- +(60:4);
        \draw  (4,0) -- +(0:4);
        \draw (4,0)++(60:4) -- ++(-60:4);
    
         \draw (4,0)++(60:4) -- ++ (4,0);
       
        \draw (8,0)  -- +(60:4);
        \draw  (8,0) -- +(0:4);
        \draw (8,0)++(60:4) -- ++(-60:4); 
        
        \draw (8,0)++(60:4) -- ++ (4,0);
        
        \draw (12,0)  -- +(60:4);
        \draw  (12,0) -- +(0:4);
        \draw (12,0)++(60:4) -- ++(-60:4);
    
        \draw[color=black] (12,0) ++(60:4) -- ++(4,0) -- ++ (-120:4) -- cycle;
    
        \draw[color=black] (16,0) -- ++(60:4) -- ++(-60:4)-- cycle;
        \draw[color=black] (16,0) ++(60:4) -- ++(4,0) -- ++ (-120:4) -- cycle;

        \filldraw[fill=red!15!white, draw=white]{} (16,0) -- ++(60:4) -- ++(-60:4)-- cycle;
        \draw[color=black] (16,0) -- ++(60:4) -- ++(-60:4)-- cycle;
    
        \filldraw[fill=red!15!white, draw=white]{} (16,0) ++(60:4) -- ++(4,0) -- ++ (-120:4) -- cycle;
        \draw[color=black] (16,0) ++(60:4) -- ++(4,0) -- ++ (-120:4) -- cycle;
    
    %Third row: 
    
        \filldraw[fill=green!15!white, draw=white]{} (0,0) -- ++(-60:4) -- ++(60:4)-- cycle;
        \draw[color=black] (0,0) -- ++(-60:4) -- ++(60:4)-- cycle;
    
        \filldraw[fill=red!15!white, draw=white]{} (0,0) ++(-60:4) -- ++(60:4) -- ++ (-60:4)-- ++ (-4,0)--  cycle;
        \draw[color=black] (0,0) ++(-60:4) -- ++(60:4) -- ++ (-60:4)-- ++ (-4,0)--  cycle;

        \filldraw[fill=red!15!white, draw=white]{} (4,0) -- ++(-60:4) -- ++(60:4)-- cycle;
        \draw[color=black] (4,0) -- ++(-60:4) -- ++(60:4)-- cycle;
    
        \filldraw[fill=red!15!white, draw=white]{} (4,0) ++(-60:4) -- ++(60:4) -- ++ (-60:4)-- ++ (-4,0)--  cycle;
        \draw[color=black] (4,0) ++(-60:4) -- ++(60:4) -- ++ (-60:4)-- ++ (-4,0)--  cycle;
    
        \filldraw[fill=red!15!white, draw=white]{} (8,0) -- ++(-60:4) -- ++(60:4)-- cycle;
        \draw[color=black] (8,0) -- ++(-60:4) -- ++(60:4)-- cycle;
    
         \filldraw[fill=red!15!white, draw=white]{} (8,0) ++(-60:4) -- ++(60:4) -- ++ (-60:4)-- ++ (-4,0)--  cycle;
        \draw[color=black] (8,0) ++(-60:4) -- ++(60:4) -- ++ (-60:4)-- ++ (-4,0)--  cycle;
    
        \filldraw[fill=red!15!white, draw=white]{} (12,0) -- ++(-60:4) -- ++(60:4)-- cycle;
        \draw[color=black] (12,0) -- ++(-60:4) -- ++(60:4)-- cycle;
    
         \filldraw[fill=red!15!white, draw=white]{} (12,0) ++(-60:4) -- ++(60:4) -- ++ (-60:4)-- ++ (-4,0)--  cycle;
        \draw[color=black] (12,0) ++(-60:4) -- ++(60:4) -- ++ (-60:4)-- ++ (-4,0)--  cycle;
    
        \filldraw[fill=red!15!white, draw=white]{} (16,0) -- ++(-60:4) -- ++(60:4)-- cycle;
        \draw[color=black] (16,0) -- ++(-60:4) -- ++(60:4)-- cycle;
    
    % Punkte 
    
    \filldraw[red] (0,0)+(0:4) circle (4pt) node[anchor=south]{$p$};
    
    \filldraw[red] (16,0)+(60:4) circle (4pt) node[anchor=south]{$q$};
    
    % Vito
    
    \filldraw[fill=blue!15!white, draw=white]{} (4,0) -- ++(60:4) -- ++(-60:4)-- cycle;
        \draw[color=black] (4,0) -- ++(60:4) -- ++(-60:4)-- cycle;
        
         \filldraw[fill=blue!15!white, draw=white]{} (8,0) -- ++(60:4) -- ++(180:4)-- cycle;
        \draw[color=black] (8,0) -- ++(60:4) -- ++(180:4)-- cycle;
        
        \filldraw[fill=blue!15!white, draw=white]{} (8,0) -- ++(60:4) -- ++(-60:4)-- cycle;
        \draw[color=black] (8,0) -- ++(60:4) -- ++(-60:4)-- cycle;
        
         \filldraw[fill=blue!15!white, draw=white]{} (12,0) -- ++(60:4) -- ++(180:4)-- cycle;
        \draw[color=black] (12,0) -- ++(60:4) -- ++(180:4)-- cycle;
        
        \filldraw[fill=blue!15!white, draw=white]{} (12,0) -- ++(60:4) -- ++(-60:4)-- cycle;
        \draw[color=black] (12,0) -- ++(60:4) -- ++(-60:4)-- cycle;
        
         \filldraw[fill=blue!15!white, draw=white]{} (16,0) -- ++(60:4) -- ++(180:4)-- cycle;
        \draw[color=black] (16,0) -- ++(60:4) -- ++(180:4)-- cycle;
    
    \end{tikzpicture}
    \caption{The sets $Z$ (violet), $Y$ (green), $N_Z\setminus Y$ (red), and $\{p, \, q\}=\overline{Y}\cap \overline{Z}$.} \label{fig: two-domains-pic}
\end{figure}
  This result will be crucial in the proof of Theorem \ref{generaltheorem} as it enables us to `heal away' the triangles in $H\sm H_{\rm heal}$ without loosing control on the symmetric gradient. The condition on ${\rm dist}(H,\partial\Omega)$ ensures that a suitable neighborhood of  $\compo$ \EEE lies in $\Omega$.  We will also employ the following result, allowing to even \EEE heal suitable unions of triangles. 

\begin{lemma}[Healing of entire components]\label{heal entire com}
Let $\mathbf{T}_Z \subset \mathbf{T}$ be such that the set $Z$ induced by $\mathbf{T}_Z$   is connected with $| Z| \le \eps^2/\eta^2$ and $Z = {\rm sat}(Z)$,  i.e., $Z$ has no holes.  Suppose that ${\rm dist}(Z,\partial\Omega) \ge \omega(\eps)$. \EEE Let $\mathbf{T}_{N_Z} \subset \mathbf{T}$ denote the set of all $T\in \mathbf{T}\setminus \mathbf{T}_Z$ with $T \cap \overline{Z} \neq \emptyset$, and let $N_Z$ be the set induced by $\mathbf{T}_{N_Z}$. Moreover, let \EEE $\mathbf{T}_Y\subset \mathbf{T}_{N_Z}$ be such that  $\overline{Y} \cap \overline{Z}$ consists of at most two points, where $Y$ is induced by $\mathbf{T}_Y$, \EEE see Figure \ref{fig: two-domains-pic}. Then, given $u \in H^1( N_Z \setminus  \overline{Y} \EEE ;\R^2)$   being piecewise affine on $\mathbf{T}_{N_Z} \MMM \setminus \mathbf{T}_{Y} \EEE$, there exists  $u_{\rm heal} \in H^1({\rm int}( \overline{Z \cup (N_Z \setminus {Y})}); \EEE \R^2)$ with $u_{\rm heal}= u$ on $N_{Z}\sm  \ol{Y}$ \EEE such that
$$ \Vert   e(u_{\rm heal})  \Vert_{L^2(Z \cup   (N_Z \setminus Y)  )} \le  \frac{C}{\eta^\alpha} \EEE  \Vert  e(u)  \Vert_{L^2( N_Z \setminus Y  )}$$
 for a universal constant $C>0$ and some $\alpha \in \N$. \EEE    
\end{lemma}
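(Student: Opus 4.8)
\emph{Proof strategy.} The plan is to reduce the statement to a quantitative Korn inequality on the collar $D:=N_Z\setminus\overline Y$ combined with an explicit piecewise affine interpolation across $Z$. Concretely, I would first choose a single infinitesimal rigid motion $a(x)=Wx+b$ that approximates $u$ well on $D$, then set $u_{\rm heal}:=u$ on $D$ and $u_{\rm heal}:=a+\Phi$ on $Z$, where $\Phi$ is the continuous function that is affine on each $T\in\mathbf{T}_Z$ with nodal values $\Phi(v)=(u-a)(v)$ at the nodes $v$ of $\mathbf{T}_Z$ lying in $\overline D$ and $\Phi(v)=0$ at all other nodes of $\mathbf{T}_Z$ (the interior nodes of $Z$ and the at most two points of $\overline Y\cap\overline Z$). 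Since $u-a$ is then continuous across $\partial Z\cap\partial D$, this produces $u_{\rm heal}\in H^1(\mathrm{int}(\overline{Z\cup(N_Z\setminus Y)});\R^2)$ with $u_{\rm heal}=u$ on $N_Z\setminus\overline Y$; and because $e(a)=0$, it remains only to bound $\|e(\Phi)\|_{L^2(Z)}\le\|\nabla\Phi\|_{L^2(Z)}$ by an inverse estimate for $P_1$--elements.

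\emph{Geometric preliminaries.} I would first record three facts. (i) $D$ is connected: since $Z$ is connected and saturated, $\overline Z$ is a closed topological disc bounded by a cycle of edges, $N_Z$ is a one--triangle--thick annular collar around it, and -- since $Y\subset N_Z$ and $\overline Y\cap\overline Z$ has at most two points, so every triangle of $\mathbf{T}_Y$ meets $\overline Z$ only at one such point -- removing $\mathbf{T}_Y$ does not split the collar. (ii) $\#(\mathbf{T}_Z\cup\mathbf{T}_{N_Z})\le C(\theta_0)\eta^{-2}$: every triangle has area $\ge\tfrac12\eps^2\sin\theta_0$, so $|Z|\le\eps^2/\eta^2$ gives $\#\mathbf{T}_Z\le C\eta^{-2}$, while each triangle of $\mathbf{T}_{N_Z}$ shares a node with $\mathbf{T}_Z$, at most $2\pi/\theta_0$ triangles meeting at a node. (iii) -- the crucial one -- every $T\in\mathbf{T}_Z\cup\mathbf{T}_{N_Z}$ has all edge lengths $\le C(\theta_0)\eps/\eta$: each $T\in\mathbf{T}_Z$ has $|T|\le|Z|\le\eps^2/\eta^2$, hence, by the angle bound, edges $\le C\eps/\eta$; and each $T\in\mathbf{T}_{N_Z}$ is joined to some triangle of $\mathbf{T}_Z$ through a fan of at most $2\pi/\theta_0$ triangles about a common node, where consecutive triangles share a whole edge, so by the angle bound their edge lengths change only by a factor $\le 1/\sin^2\theta_0$ at each step. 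In particular $\mathrm{diam}(D)\le\#\mathbf{T}_D\cdot C(\theta_0)\eps/\eta\le C(\theta_0)\eps\eta^{-3}$.

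\emph{The Korn bound via propagation.} To choose $a$, I would iterate the elementary two--triangle estimate: if $T_i,T_j\in\mathbf{T}_D$ share an edge $\ell$ and $u|_{T_i}(x)=E_ix+W_ix+b_i$, $u|_{T_j}(x)=E_jx+W_jx+b_j$ with $E_\cdot\in\R^{2\times2}_{\rm sym}$ and $W_\cdot\in\R^{2\times2}_{\rm skew}$, then continuity on $\ell$ forces $(E_i-E_j+W_i-W_j)$ to annihilate the direction of $\ell$; since a nonzero skew $2\times2$ matrix fixes no line direction, this gives $|W_i-W_j|\le C|E_i-E_j|$ and then $b_i-b_j=Mx_0$ for some $x_0\in\ell$ with $|M|\le C(|E_i|+|E_j|)$, so $|b_i-b_j|\le C(|E_i|+|E_j|)\mathrm{diam}(D)$. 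Fixing a root $T_0\in\mathbf{T}_D$, setting $a(x):=W_0x+b_0$, and propagating these bounds along a spanning tree of the (by (i) connected) edge--adjacency graph of $\mathbf{T}_D$, one obtains $|W_i-W_0|\le C\sqrt{\#\mathbf{T}_D}\,(\sum_j|E_j|^2)^{1/2}$ and $|b_i-b_0|\le C\,\mathrm{diam}(D)\sqrt{\#\mathbf{T}_D}\,(\sum_j|E_j|^2)^{1/2}$ for all $i$. Since $|T_j|\ge\tfrac12\eps^2\sin\theta_0$, we have $\sum_j|E_j|^2\le C(\theta_0)\eps^{-2}\|e(u)\|_{L^2(D)}^2$; evaluating the affine pieces of $u-a$ at the nodes (all within $\mathrm{diam}(D)$ of the origin) and inserting (ii) and (iii) then yields $|(u-a)(v)|\le C(\theta_0)\eta^{-4}\|e(u)\|_{L^2(D)}$ at every node $v$ of $\mathbf{T}_D$.

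\emph{Conclusion and expected difficulty.} Thus $|\Phi(v)|\le C(\theta_0)\eta^{-4}\|e(u)\|_{L^2(D)}$ at every node of $\mathbf{T}_Z$, and the inverse estimate $\|\nabla\Phi\|_{L^2(T)}^2\le C(\theta_0)|T|\eps^{-2}\max_{v\in\mathcal{N}(T)}|\Phi(v)|^2$ (valid for linear elements on triangles with smallest edge $\ge\eps$ and angles $\ge\theta_0$) together with $|T|\le C(\theta_0)\eps^2\eta^{-2}$ and $\#\mathbf{T}_Z\le C\eta^{-2}$ sums to $\|\nabla\Phi\|_{L^2(Z)}^2\le C(\theta_0)\eta^{-12}\|e(u)\|_{L^2(D)}^2$. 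Since $u_{\rm heal}=u$ on $D$ and $e(a)=0$, this gives $\|e(u_{\rm heal})\|_{L^2(Z\cup(N_Z\setminus Y))}^2\le C(\theta_0)\eta^{-12}\|e(u)\|_{L^2(N_Z\setminus Y)}^2$, i.e.\ the claim (with $\alpha=6$; the precise exponent is immaterial), and $\mathrm{int}(\overline{Z\cup(N_Z\setminus Y)})\subset\Omega$ by the distance hypothesis. I expect the main obstacle to be making fact (i) fully rigorous: one must exclude that deleting $\mathbf{T}_Y$ severs the collar into two arcs -- which would make the statement false, since $u$ could then be a different rigid motion on each arc -- and it is exactly here that the restriction $\#(\overline Y\cap\overline Z)\le 2$ (and the precise way $Y$ arises in the application) enters. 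Fact (iii) is the other key point: it is what keeps the Korn constant a fixed power of $\eta^{-1}$ rather than exponential in $\#\mathbf{T}_D$ or dependent on $\omega(\eps)/\eps$.
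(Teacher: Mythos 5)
Your overall skeleton — propagate a discrete Korn estimate through the collar to pin down a rigid motion $a$, then extend into $Z$ by interpolating $u-a$ — is the same mechanism the paper uses (the paper uses McShane extension rather than a $P_1$ interpolant, but that is a cosmetic difference). However, there is a genuine gap in your geometric step (i), and it is not a technicality that can be patched by "making it rigorous": the claim is simply false.

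The set $D=N_Z\setminus\overline{Y}$ is, in general, \emph{disconnected}. Figure \ref{fig: two-domains-pic} depicts exactly this situation: $N_Z$ is an annular collar around the disc $\overline Z$, $Y$ consists of triangles touching $\overline Z$ only at the two vertices $p$ and $q$, and removing them severs the annulus into two arcs. Your argument -- ``every triangle of $\mathbf T_Y$ meets $\overline Z$ only at one such point, so removing $\mathbf T_Y$ does not split the collar'' -- does not follow: a triangle touching $\overline Z$ at a single vertex can still span the one-triangle thickness of the collar at that vertex, and two such removals at $p$ and at $q$ cut the annulus into two components. Your spanning-tree propagation then cannot reach all of $D$, and your single rigid motion $a$ is not controlled across the cut. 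You do flag this at the end, but you (a) leave it unresolved and (b) wrongly conclude that disconnectedness would make the lemma false. It does not — and this is precisely the content of the hypothesis $\#(\overline Y\cap\overline Z)\le 2$.

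What the paper does instead: it first argues that (up to enlarging $Y$) $N_Z\setminus Y$ has exactly two relevant components $N_1,N_2$, both of whose closures contain \emph{both} points $p$ and $q$. It then produces two skew matrices $A_1,A_2$, one per component, by the same kind of local Korn propagation (estimate \eqref{from-korn}). The new idea, and the one your proof is missing, is estimate \eqref{1108241658}: applying the Fundamental Theorem of Calculus to $(u-A_j\cdot)$ along an edge-path from $p$ to $q$ inside $N_j$ (using the uniform $L^\infty$ bound \eqref{infty-1}) gives $|u(q)-u(p)-A_j(q-p)|\le C_\eta\|e(u)\|_{L^2(N_j)}$ for $j=1,2$; subtracting the two yields $|(A_2-A_1)(q-p)|\le C_\eta\|e(u)\|_{L^2(N\setminus Y)}$, and since $A_j$ are skew and $|q-p|\ge\eps$ this controls $|A_2-A_1|$. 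Only then does one have a single rigid motion valid (up to a controlled error) on all of $D$, and the extension into $Z$ can proceed. This is exactly why ``at most two touching points'' is the sharp condition: two distinct points determine an infinitesimal rigid motion, so the rigid motions on the two arcs are forced to agree. With a single shared point the lemma would indeed fail; with three or more, the collar could split into pieces sharing only one point each. If you supply this comparison step, the rest of your construction (the $P_1$ interpolation, the inverse estimate, the counting in $\eta^{-1}$) goes through essentially as you wrote it, though you would also need $\Phi$ to take the trace value of $u-a$ at $p$ and $q$ rather than $0$ to keep $u_{\rm heal}$ continuous where $Z$, $N_1$ and $N_2$ meet.
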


%\VVV Before $\hu(\partial Z) \leq \eps/\delta$, now $\hu(\partial Z) \leq \eps/\delta^2$. Then we have a term $\delta^{-3}$ in the statement. Then also the statement of Theorem~\ref{generaltheorem} should be modified (now computations if $\delta^{-1}$ in Lemma above; maybe, even if $\hu(\partial Z) \leq \eps/\delta$, there should be a $\delta^{-2}$). Do we leave $|Z| \leq \eps^2/\delta^2$ and modify the computations and the power of $\delta$ in statement of Theorem~\ref{generaltheorem} and in \eqref{eq: combined}?  \EEE

 Note that the case $Y = \emptyset$ corresponds to the situation that $e(u)$ is controlled in an entire neighborhood $N_Z$ of $Z$ which allows to extend $u$ inside $Z$ according to classical extension theorems. \EEE The main point of this lemma is that such an extension of $u$ from $N_Z$ to $N_Z \cup Z$ is still possible in the presence of $Y$, as long as  $\overline{Y} \cap \overline{Z}$ consists of \emph{at most two points} (see Figure \ref{fig: two-domains-pic}). Already for $\#(\overline{Y} \cap \overline{Z}) \ge 3$, the situation is less rigid and a statement as in Lemma~\ref{heal entire com} cannot be expected. 
Note that we will apply this result for sets $Z$ which \MMM are  \EEE  connected components $\compo_j \in \mathcal{C}(\compo)$ for suitable $\compo$. 
 % or a suitable union of such connected components. 
 In particular, we observe that the  lemma can be applied for all components in $\compo_j \in \mathcal{D}_l(\compo)$, $l=0,1,2$, which satisfy $| \compo_j| \le \eps^2/\eta^2$ and have no holes. \EEE For convenience, we postpone the proofs of Lemma \ref{healing1} and Lemma \ref{heal entire com} to \MMM Section \ref{sec: proflem} below. \EEE

 \subsection{Proof of  Theorem \ref{generaltheorem} and  Corollary \ref{monotonicity-corollary}\EEE} We proceed with the proof of Theorem \ref{generaltheorem}.

\begin{proof}[Proof of Theorem~\ref{generaltheorem}]
For convenience, we first prove the result under the additional assumption \MMM that \EEE  ${\rm dist}(A,\partial\Omega) \ge \omega(\eps)$. We indicate the necessary adaptations for the general case at the end of the proof. Moreover, it is not restrictive to prove the result only for $\eta \le \eta_0$ for some universal $\eta_0$ chosen in \eqref{manus-blackboard} below. 

\noindent \textbf{First relation between area and boundary of $A$:}
From \eqref{ti} and \eqref{defMj}  we 
%hence 
find 
\begin{align}\label{energy1}
\frac{2}{\eps  \sin\theta_0}|A | &  =  \sum_{T  \in \mathbf{T}_A  }  \frac{2}{\eps  \sin\theta_0}|T|     \ge  \sum_{T  \in \mathbf{M}_2(A)  }     \mathcal{H}^1( \partial A \cap T)   +  \sum_{j=0,1} \sum_{T  \in \mathbf{M}_j(A)  }    \frac{2}{\eps  \sin\theta_0}|T|     \notag   \\ &
 =  \mathcal{H}^1(\partial A) +  \sum_{j=0,1} \sum_{T  \in \mathbf{M}_j(A)  }   \Big( \frac{2}{\eps  \sin\theta_0}|T| -   \mathcal{H}^1( \partial A \cap T)     \Big).       
\end{align}
At this point, the energy bound \eqref{energybound1} and \eqref{ti} already give us the (unsharp) bounds
\begin{align}\label{eq: control on A}
\# \GGG \mathbf{T}_A \EEE \le C/\eps, \quad \quad \quad      \mathcal{H}^1(\partial A) \le C
\end{align}
for some $C  >0 \EEE$ depending on $C_0$, where we used that $|T| \ge  c\eps^2$ for some  $c>0$ \MMM depending on \EEE $\theta_0$.\\

\noindent\textbf{Modification 1: Connected components and filling holes:} We recall that $A$ is an open set by definition. In the sequel, we will consider connected components of $A$ and its complement. We will also consider connected components of $\overline{A}$ and its complement which may lead to different objects: while  in  connected components of $A$ adjacent triangles share an edge, in connected components of $\overline{A}$ triangles may be linked solely by a vertex, see Figure \ref{fig:touching-components}.

For technical reasons related to Lemma \ref{lemma:afirststep}  and  Lemma \ref{heal entire com}, we need to avoid that $\overline{A}$ contains small holes. Therefore, we fill small holes of $\overline{A}$ as follows. We denote by $\mathcal{A}_{\rm small}$ the collection of  connected components $A_c$ of $\R^2 \setminus \overline{A}$ satisfying $  | A_c| \le\eps^2/\eta^2 $. We define
\begin{align}\label{def of bbb}
 B \VIT := \EEE {\rm int}\Big( \overline{A}  \cup  \bigcup_{A_c \in \mathcal{A}_{\rm small}}  A_c\Big),   
 \end{align}
and denote by \GGG $\mathbf{T}_B$ \EEE the triangles contained in $B$.  (For the illustration of a hole, we refer to Figure~\ref{fig:self-intersection}.) \EEE Then, we clearly have  that $B$ is induced by $\mathbf{T}_B$, i.e.,\ $B = {\rm int} (\bigcup_{T \in \mathbf{T}_B} T)$. \EEE  Moreover,   each connected component of $A$ is contained in a connected component of $B$.  
%We define 
%\begin{align}\label{def-w}
% w = \begin{cases}    0  & \text{ on    $B\setminus A $ } \\ u &\text{ else.}  \end{cases}         
%\end{align}
 Note that by the definition of $B$ we have $ \mathbf{M}_j(B) \subset \bigcup_{i=0}^j \mathbf{M}_i(A) $ for $j=0,1,2$.  From \eqref{ti}, $\partial B \subset \partial A$, and \JJJ arguing as in \EEE the first line of \eqref{energy1} we then obtain   
%  $\partial B \subset \partial A$, and the fact that $\bigcup_{j=0,1,2}  \mathbf{M}_j(B)  \subset \bigcup_{j=0,1,2}  \mathbf{M}_j(A) $ as well as  $\bigcup_{j=0,1}  \mathbf{M}_j(B)  \subset \bigcup_{j=0,1}  \mathbf{M}_j(A) $. 
\begin{align}\label{energy3}
\frac{2}{\eps  \sin\theta_0}|A | & \ge  \sum_{T  \in \mathbf{M}_2(B)  }     \mathcal{H}^1( \partial B \cap T )   +  \sum_{j=0,1} \sum_{T  \in \mathbf{M}_j(B)  }    \frac{2}{\eps  \sin\theta_0}|T |     \notag   \\ & =  \mathcal{H}^1(\partial B) +  \sum_{j=0,1} \sum_{T  \in \mathbf{M}_j(B)  }   \Big( \frac{2}{\eps  \sin\theta_0}|T| -   \mathcal{H}^1( \partial B \cap T)     \Big).
\end{align}
%\GGG where the sets $\mathbf{M}_j(B)$ are defined analogously to $\mathbf{M}_j(A)$, using triangles \GGG in $\mathbf{T}_B$
%% partitioning $\ol B$ 
%in place of triangles in $\mathbf{T}_A$. (That is, $\mathbf{M}_j(B)$ is the union of triangles contained in $\ol B$ that expose $3-j$ edges to $\Omega\setminus B$.) \EEE
\noindent
The isoperimetric inequality along with  \eqref{eq: control on A} and $\sqrt{|A_c|} \le \eps/\eta$  \EEE for all $A_c \in \mathcal{A}_{\rm small}$ yield
\begin{align*}
\sum_{A_c \in \mathcal{A}_{\rm small}}   |A_c|  \le \frac{\eps}{\eta} \sum_{A_c \in \mathcal{A}_{\rm small}}   \sqrt{|A_c|} \le    C\frac{\eps}{\eta} \sum_{A_c \in \mathcal{A}_{\rm small}}   \mathcal{H}^1(\partial A_c)     \le    C\frac{\eps}{\eta}.
\end{align*}
Again using \eqref{eq: control on A} \MMM and the fact that $|T| \ge  c\eps^2$, \EEE this shows
\begin{align}\label{global bound on b}
 \# \mathbf{T}_B \le  \# \mathbf{T}_A  +  C \frac{1}{\eps\, \eta} \le C \frac{1}{\eps\, \eta}, \quad \quad    | B  | \le C  \frac{\eps}{\eta},  \quad \quad    \mathcal{H}^1 (\partial B)   \le C \EEE 
\end{align}
  for some $C>0$ depending on $C_0$ \MMM and $\theta_0$.  \EEE  We also note that, by the assumption on $A$, it clearly holds $B \subset \Omega$ with  ${\rm dist}(B,\partial\Omega) \ge \omega(\eps)$. \EEE

\noindent  \textbf{Motivation of next steps:}  For motivating the next steps of the proof, let us also introduce $B_{\rm heal}$ related to  $B$   as defined in \eqref{healoftri}. \EEE \GGG Taking $\compo=B$ in \eqref{1308241138}, \EEE along with   \eqref{defMnotheal} and \eqref{energy3}
% , \GGG and again \eqref{ti} 
we deduce  
\begin{align*}
  \frac{2}{\eps  \sin\theta_0}|A | & \ge  \mathcal{H}^1(\partial B_{\rm heal}) +  \sum_{j=0,1} \sum_{T\in \mathbf{M}^{\rm nh}_j(B)} \Bigl(\frac{2}{\eps  \sin\theta_0} |T|- \mathcal{H}^1(\partial T \cap B)\Bigr)\,.
\end{align*} 
% \RRR Here   we use that sidelengths are controlled in terms of $\eps$? I dont see how  \eqref{defMnotheal} is used... I rather expect an argument as follows: \EEE
We observe that  $\frac{2}{\eps  \sin\theta_0}|T| -  \mathcal{H}^1( \partial  T) \ge -C\eps$ for each $T \in  \mathbf{M}^{\rm nh}_0(B) \cup \mathbf{M}^{\rm nh}_1(B)$. 
In fact, it is elementary to show that $|T| \ge c(\mathcal{H}^1( \partial  T))^2 $ for $c$ only depending on $\theta_0$. Then, it suffices to observe that the minimum of the  function $x\mapsto \frac{2}{\eps  \sin\theta_0} cx^2 - x$ is larger than $-C\eps$. Therefore, we obtain
\begin{align}\label{for later.end}
  \frac{2}{\eps  \sin\theta_0}|A | & \ge  \mathcal{H}^1(\partial B_{\rm heal}) -  C \eps   \big(\#\mathbf{M}^{\rm nh}_0(B ) + \#  \mathbf{M}^{\rm nh}_1(B)\big)\,, 
\end{align}
\GGG and Lemma~\ref{healing1} gives the corresponding control of the function on $\Omega \setminus B_{\rm heal}$. \EEE  
\EEE
\GGG By \eqref{iddd} and \eqref{1408240756}, in order to control $\#\mathbf{M}^{\rm nh}_0(B ) + \#  \mathbf{M}^{\rm nh}_1(B)$, it would be enough to control $l \EEE \# \mathcal{D}_l$ for every $l\in \N$. 
At this stage, 
%The strategy is based on the use of the preparatory lemmas proved above for a suitable subset of $B$: 
one could use Lemma~\ref{lemma:afirststep} (which applies to $B$ in view of Modification 1)  to control $ l \EEE \# \mathcal{D}_l$ for $l\geq 3$ in terms of $\# \mathcal{D}_1$. Concerning the  components in $\mathcal{D}_1 $ and $\mathcal{D}_2 $, we will treat them \EEE differently depending on whether \EEE they are \emph{small} or \EEE  \emph{large}, according to the following definition: \MMM  for a generic set $\compo$ and \EEE  for $l=0,1,2$ we let  \EEE
\begin{equation}\label{defDsmalllarge}
 \mathcal{D}_l^{\rm small}(\compo):=\Big\{ \compo_j\in \mathcal{D}_l(\compo) \EEE \colon  |  {\rm sat}(\compo_j) \EEE |\leq \frac{\varepsilon^2}{\eta^2}\EEE \Big\},\quad   \mathcal{D}_l^{\rm large}(\compo):= \mathcal{D}_l(\compo)\setminus  \mathcal{D}_l^{\rm small}(\compo),
\end{equation}
\MMM where ${\rm sat}(\cdot)$ is defined at the beginning of Section \ref{se: subset prep}. \EEE Now the idea is that,  to bound $\#\mathcal{D}_1 $ and $\#\mathcal{D}_2 $,  it suffices to apply Lemma~\ref{heal entire com} to treat the small components in $\mathcal{D}_1^{\rm small}$, $\mathcal{D}_2^{\rm small}$, since the cardinality of the remaining ones is less than $C \eta/\varepsilon$. 
 In fact, for $H$ with   $\mathcal{H}^1(\partial H)\leq C$  (as it holds for $B$, see \eqref{def of bbb} and \eqref{global bound on b}) \EEE it follows that 
\begin{align}\label{not-many-large-compoenents}
 \#\mathcal{D}_{l}^{\rm large}(H) \leq  C  \frac{\eta}{\eps}, \quad \quad   l=0,1,2. \EEE 
\end{align}
 To see this, for each $\compo_j \in \mathcal{D}_{l}^{\rm large}(H)$,  we apply the isoperimetric inequality to obtain  $\mathcal{H}^1(\partial \compo_j) \ge  \mathcal{H}^1(\partial \,  {\rm sat} (\compo_j)) \ge c |{\rm sat} (\compo_j)|^{1/2} \ge  c\varepsilon/\eta$ for some universal $c>0$.  Then,  $ \#   \mathcal{D}_{l}^{\rm large}(H)  \le C\mathcal{H}^1(\partial H)\, \eta/\eps \le C\eta/\eps \EEE$.  
 \EEE

This program, however, cannot be pursued for $B$ because $\# \mathcal{D}_1 (B) \EEE$ explicitly appears on the right-hand side of \eqref{afirststep}, so simply healing the small components would not be enough.  Unfortunately, in general, there is no direct bound available for $\# \mathcal{D}_1(B)$ and we need to perform another preliminary modification to control \emph{a priori} $\# \mathcal{D}_1$ on a suitable subset of $B$. Roughly speaking, \EEE we need to get rid of $\mathcal{D}_1^{\rm small}$ \emph{before} applying Lemma~\ref{lemma:afirststep}. Therefore, we will never actually use $B_{\rm heal}$ in the proof but only the object \eqref{AAmod} below which is obtained after the   following   preliminary modification.

\vspace{1em}
\noindent \textbf{Modification 2: Dealing with unions of components in $\mathcal{D}_1(B)$ in terms of separating vertices.} A vertex $v$ in  $\bigcup_{k \ge 2} \mathcal{V}_{2k}(B)$ is called a \emph{separating vertex} if removing $v$ and the associated edges  from $(\mathcal{V}(B), \mathcal{E}(B))$ would increase the number of connected components of the graph $(\mathcal{V}(B), \mathcal{E}(B))$. Equivalently, in the topology of subsets  of $\R^2$, this corresponds to considering the connected components of  $\overline{B} \setminus \lbrace v \rbrace$ whose number would increase compared to the number of connected components of $\overline{B}$.  We denote the  set of \emph{separating vertices} in $B$ by  $\mathcal{V}_{\rm sep}(B) \subset \mathcal{V}(B)$,  see Figure~\ref{fig: bridges}.  \EEE

 For $v\in \mathcal{V}_{\rm sep}(B)$ let $\mathcal{G}_{\rm sep}(v)$ be the connected components of $\overline{B} \setminus \lbrace v \rbrace$.  We define 
$$\mathcal{G}_{\rm sep} \defas
\bigcup_{v \in \mathcal{V}_{\rm sep}(B)}  \mathcal{G}_{\rm sep}(v) \cup  
\mathcal{C}(\ol{B}) \,.
$$
Here, we explicitly add  also $\mathcal{C}(\ol{B})$, i.e.,  all connected components of $\ol{B}$. (Note that this is redundant whenever there are at least two vertices in  $\mathcal{V}_{\rm sep}(B)$ in different components of $\mathcal{C}(\ol{B})$.)   

% For $v\in \mathcal{V}_{\rm sep}(B)$ let $\mathcal{G}_{\rm sep}(v)$ be the connected components of $\overline{B} \setminus \lbrace v \rbrace$. We then consider
% $${\mathcal{G}_{\rm sep} =  \bigcup_{v \in \mathcal{V}(B)}  \mathcal{G}_{\rm sep}(v). } $$

\begin{figure}
  \begin{tikzpicture}[scale=0.3]

  %  \draw[thin, decoration={zigzag}] decorate{ (0,3.2) .. controls (1,5)  .. (2,4.7)};
    \draw[fill=blue!15!white, scale=2, decoration={zigzag}] decorate{\myellipse};
      
%      \filldraw[fill=blue!15!white, scale=1,rotate=90, shift={(5.58 cm,5 cm)}] \myellipse;
%      %\filldraw[fill=blue!15!white, scale=1,rotate=90, shift={(5.58 cm,-5 cm)}] \myellipse;
%      \filldraw[fill=green!15!white, scale=1,rotate=90, shift={(-5.58 cm,5 cm)}] \myellipse;
%      \filldraw[fill=green!15!white, scale=0.8,rotate=90, shift={(-6.18 cm,-5 cm)}] \myellipse;
%      \filldraw[red] (-5,1.58) circle (3pt) ;
%      \filldraw[red] (4,-1.72) circle (3pt);
%      \filldraw[red] (-5,-1.58) circle (3pt);
%      %\filldraw[red] (5,1.58) circle (2pt);
%      
%      \filldraw[fill=green!15!white, scale=0.5, shift={(-4.08 cm,14 cm)}] \myellipse;
%      \filldraw[fill=green!15!white, scale=0.5,rotate=0,  shift={(-4.08 cm,8.42 cm)}] \myellipse;
\draw[color=magenta, thick,fill=blue!15!white, scale=1,rotate=90, shift={(5.58 cm,5 cm)}, decoration={zigzag, segment length=5mm}] decorate{\myellipse};
      %\filldraw[fill=blue!15!white, scale=1,rotate=90, shift={(5.58 cm,-5 cm)}] \myellipse;
      \draw[fill=green!15!white, scale=1,rotate=90, shift={(-5.58 cm,5 cm)}, decoration={zigzag, segment length=4mm}] decorate{ \myellipse};
      \draw[fill=green!15!white, scale=0.8,rotate=90, shift={(-6.18 cm,-5 cm)}, decoration={zigzag, segment length=4mm}] decorate{ \myellipse};
      \filldraw[red] (-5,1.58) circle (3pt) ;
      \filldraw[red] (4,-1.72) circle (3pt);
      \filldraw[red] (-5,-1.58) circle (3pt);
      %\filldraw[red] (5,1.58) circle (2pt);
      
      \draw[fill=green!15!white, scale=0.5, shift={(-4.08 cm,14 cm)}, decoration={zigzag, segment length=3mm}] decorate{ \myellipse};
      \draw[fill=green!15!white, scale=0.5,rotate=0,  shift={(-4.08 cm,8.42 cm)}, decoration={zigzag, segment length=4mm}] decorate{ \myellipse};

      \filldraw[red] (4,-1.72) circle (2pt);
      \filldraw[red] (-5,-1.58) circle (2pt);

%      \draw[fill=green!15!white, scale=0.3, decoration={zigzag, segment length=1cm}, rotate=180] decorate{ (10.06,10)+\myellipse};
%     \filldraw[fill=green!15!white, scale=0.3, rotate=180] (10.02,27)+\myellipse;
%      \filldraw[fill=green!15!white, scale=0.3, rotate=180] (9.29,19)+\myellipse;
%  
%      \filldraw[fill=green!15!white, scale=0.3, rotate=0] (-23.23,10)+\myellipse;
%     \filldraw[fill=green!15!white, scale=0.3, rotate=0] (-23.29,27)+\myellipse;
%      \filldraw[fill=green!15!white, scale=0.3, rotate=0] (-24.03,19)+\myellipse;
 \draw[fill=green!15!white, scale=0.3, decoration={zigzag, segment length=7mm}, rotate=180] decorate{ (10.06,10)+\myellipse};
     \draw[fill=green!15!white, scale=0.3, decoration={zigzag, segment length=7mm}, rotate=180] decorate{(10.02,27)+\myellipse};
      \draw[fill=green!15!white, scale=0.3, decoration={zigzag, segment length=7mm},rotate=180] decorate{(9.29,19)+\myellipse};
  
      \draw[fill=green!15!white, scale=0.3, decoration={zigzag}, rotate=0] decorate{(-23.23,10)+\myellipse};
    \draw[fill=green!15!white, scale=0.3, decoration={zigzag}, rotate=0] decorate{(-23.29,27)+\myellipse};
     \draw[fill=green!15!white, scale=0.3, decoration={zigzag}, rotate=0] decorate{(-24.03,19)+\myellipse};
      \filldraw[red] (-5.75,2.98) circle (3pt) ;
      \filldraw[red] (-5.78,8.08) circle (3pt) ;
      \filldraw[red] (-5.99,5.7) circle (3pt) ;
      \filldraw[red] (-4.05,4.2) circle (3pt) ;
      \filldraw[red] (-4.05,7.) circle (3pt) ;
       \filldraw[red] (-4.22,-8.1) circle (3pt) ;
       \filldraw[red] (-4.22,-3.) circle (3pt) ;
       \filldraw[red] (-3.99,-5.7) circle (3pt) ;
  \end{tikzpicture}
  \caption{Schematic illustration of different components of $\ol{B}$. \MMM Note that $\ol{B}$ is connected and $\# \mathcal{C}(B) = 12$. \EEE The red dots depict the separating vertices. \MMM In this example the green parts correspond to  the $7$ elements in \EEE $\mathcal{G}_{\rm sep}^{\rm small}$. \JJJ The component with magenta border depicts a possible component in $\mathcal{D}_1^{\rm small}(B_{\rm sep})$. \EEE  } 
  \label{fig: bridges}
\end{figure}
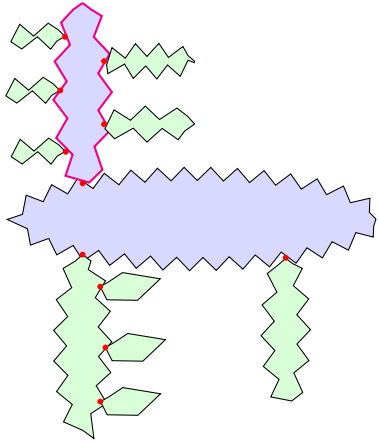

Note that each element in $\mathcal{G}_{\rm sep}$ consists  of unions of components in $\mathcal{C}(B) = (B_j)_j$ (up to \EEE a set of negligible measure). We observe that two elements $G_1 , G_2 \in  \mathcal{G}_{\rm sep}$ satisfy $|G_1 \cap G_2| = 0$ or, up to relabeling, $G_1 \subset G_2$. We \EEE define 
\begin{align}\label{eq: sepsamll}
\mathcal{G}^{\rm small}_{\rm sep}  & \VIT := \EEE  \big\{ G \in \mathcal{G}_{\rm sep} \colon   | {\rm sat}(G) \EEE | \le \eps^2/\eta^2,   \, \text{there is no }  \tilde{G} \in \mathcal{G}_{\rm sep}   \text{ with }   | {\rm sat}(\tilde{G}) \EEE| \le \eps^2/\eta^2  \EEE \text{ and }  G \subset \tilde G  \big\}. 
\end{align} 
For each $G\in \mathcal{G}^{\rm small}_{\rm sep}$, it holds $\mathcal{H}^1(\partial G)\leq \eps/ \eta^3$. In fact, $G = {\rm sat}(G)$  by the definition of $B$,  see \eqref{def of bbb}. \EEE  As $c\eps^2\leq |T|$  for  some $c>0$,   we obtain $\# \mathbf{T}_{G}\leq \frac{1}{c\eta^2}$. Thus,  by the discrete H\"older inequality \EEE
\begin{equation}\label{manus-blackboard}
  \mathcal{H}^1(\partial G)\leq \sum_{T\in \mathbf{T}_G} \mathcal{H}^1(\partial T) \leq C \sum_{T\in \mathbf{T}_G} \sqrt{|T|} \leq C \big(\# \mathbf{T}_G\big)^{1/2} \Big(\sum_{T\in \mathbf{T}_G} |T|\Big)^{1/2} \leq C \frac{\eps}{\eta^2}  \le \frac{\eps}{\eta^3}\,, \EEE  
\end{equation} 
where we used that
$\mathcal{H}^1(\partial T)^2\leq C |T|$ for some $C>0$ only depending on $\theta_{0}$,  and the last step holds for $\eta \le \eta_0$ with $\eta_0$ small enough. \EEE Note that on each element in $\mathcal{G}^{\rm small}_{\rm sep}$ the assumptions of Lemma \ref{heal entire com} are satisfied:  if \EEE $G \in \mathcal{G}^{\rm small}_{\rm sep}\cap \mathcal{G}_{\rm sep}(v)$, in the notation of Lemma~\ref{heal entire com}, $G 	=  {\rm sat}(G) \EEE $ corresponds to $\MMM\overline{Z}\EEE$ and $Y$ is the union of the triangles containing $v$ and included in $(\ol B \setminus G) \cup \{v\}$ (therefore $\overline{Y} \cap \overline{G}=\{v\}$) (see Figure~\ref{fig: bridges}).  The condition ${\rm dist}(B,\partial\Omega) \ge \omega(\eps)$ ensures that $N_G \setminus Y \subset \Omega \setminus B$. \EEE  If $G\in \mathcal{G}^{\rm small}_{\rm sep}\cap \mathcal{C}(\ol{B})$, the assumptions of Lemma \ref{heal entire com} are  fulfilled with $Y=\emptyset$. \EEE We define 
\begin{equation}\label{defBbri}
B_{\rm sep} \VIT := \EEE B     \setminus   \bigcup_{G \in\mathcal{G}^{\rm small}_{\rm sep}} G,  
\end{equation}
and modify \MMM $u$ \EEE on each $G \in \mathcal{G}^{\rm small}_{\rm sep}$ as in Lemma \ref{heal entire com}. 
% \GGG We observe that any $B_j\in\mathcal{D}^{\rm small}_1(B)$ with a self-intersection is such that $B_j \in \mathcal{G}^{\rm small}_{\rm bri}$, even if the vertex $v\in \mathcal{V}(B)\cap B_j$ with $n(v)=4$ is not a bridge. \RRR How can self-intersecting components be in $D_1$? Should be at least $D_2$. \EEE 
This  leads  to a function $ u_{\rm sep} \in H^1(\Omega;\R^2)$ with
\begin{align}\label{vvvheal}
 \Vert e( u_{\rm sep} \EEE)    \Vert_{L^2(\Omega \setminus B_{\rm sep})} \le  \frac{C}{\eta^\alpha} \EEE \Vert e(  u  ) \Vert_{L^2(\Omega \setminus B)}.
 \end{align}
\GGG Above, we used \EEE that the neighborhoods $N_G$ in Lemma \ref{heal entire com} related to different components $G \in \mathcal{G}^{\rm small}_{\rm sep}$ overlap only a  bounded number of times depending on $\theta_0$: \EEE in fact, a triangle belongs to $N_G$ if it has nonempty intersection with some triangle in $G$, and for every $G\neq G' \in \mathcal{G}^{\rm small}_{\rm sep}$ it holds $G\cap G'=\emptyset$. Thus, since any $T\in \mathbf{T}$ has nonempty intersection with at most $c$ ($c$ depending only on $\theta_0$) different triangles in $\mathbf{T}$, then any $T\in \mathbf{T}$ belongs to at most $c$ different neighborhoods $N_G$. \EEE
% \RRR more details? \EEE   

Moreover, we have $\partial B_{\rm sep} \subset \partial B$ and  by using \eqref{energy3} we get  
\begin{align}\label{energy4}
\frac{2}{\eps  \sin\theta_0}|A | & \ge \mathcal{H}^1(\partial B_{\rm sep}) +  \sum_{j=0,1} \sum_{\GGG T \EEE  \in \mathbf{M}_j(B_{\rm sep})  }   \Big( \frac{2}{\eps  \sin\theta_0}|\GGG T \EEE | -   \mathcal{H}^1( \partial B_{\rm sep} \cap \GGG T \EEE ) \Big).
\end{align}
This simply follows from the fact that we remove entire components from $B$: indeed,   it holds that $\mathbf{M}_j(B_{\rm sep})=\mathbf{M}_j(B) \cap \mathbf{T}_{\rm sep}$ for $\mathbf{T}_{\rm sep}:=\{T\in \mathbf{T}\colon T\subset \ol{B_{\rm sep}}\}$. Moreover, for $j=0,1,2$, we have that $\mathcal{H}^1(\partial B \cap T)= \mathcal{H}^1( \partial B_{\rm sep} \cap T)$ for every $T\in \mathbf{M}_j(B_{\rm sep})$ and  $\mathcal{H}^1( \partial B \cap T)=\mathcal{H}^1((\partial B \setminus \partial B_{\rm sep}) \cap T)$ for every $T\in \mathbf{M}_j(B)\setminus \mathbf{M}_j(B_{\rm sep})$. Therefore, $\mathcal{H}^1( \partial B)-\mathcal{H}^1( \partial B_{\rm sep})=\mathcal{H}^1( \partial B \setminus \partial B_{\rm sep})$ and hence 
\begin{equation*}
\begin{split}
& \mathcal{H}^1(\partial B) +  \sum_{j=0,1} \sum_{ T   \in \mathbf{M}_j(B)  }   \Big( \frac{2}{\eps  \sin\theta_0}| T  | -   \mathcal{H}^1( \partial B \cap  T  ) \Big) \\& \hspace{2em}-  \Big( \mathcal{H}^1(\partial B_{\rm sep}) +  \sum_{j=0,1}  \sum_{ T   \in \mathbf{M}_j(B_{\rm sep})  }   \Big( \frac{2}{\eps  \sin\theta_0}| T | -   \mathcal{H}^1( \partial B_{\rm sep} \cap  T ) \Big)  \Big)
\\& \hspace{1em}
= \mathcal{H}^1( \partial B \setminus \partial B_{\rm sep}) + \sum_{j=0,1}\sum_{ T  \in \mathbf{M}_j(B)\setminus \mathbf{M}_j(B_{\rm sep}) }   \Big( \frac{2}{\eps  \sin\theta_0}| T  | -   \mathcal{H}^1( (\partial B \setminus\partial B_{\rm sep}) \cap  T  ) \Big) 
\\& \hspace{1em}
= \sum_{ T  \in \mathbf{M}_2(B)\setminus \mathbf{M}_2(B_{\rm sep}) } \mathcal{H}^1( (\partial B \setminus\partial B_{\rm sep}) \cap  T  ) + \sum_{j=0,1}\sum_{ T  \in \mathbf{M}_j(B)\setminus \mathbf{M}_j(B_{\rm sep}) }   \frac{2}{\eps  \sin\theta_0}| T  | \geq 0.
\end{split}
\end{equation*} \EEE
%Denoting the connected components of  $B_{\rm sep}$ by $(B^j_{\rm sep})_j$, building the associated graph, and repeating the above reasoning for $B_{\rm sep}$ instead of $B$, we find as in \eqref{afirststep} that  
 This along with \eqref{energy3} shows \eqref{energy4}. \EEE By Lemma~\ref{lemma:afirststep} applied to $\compo=B_{\rm sep}$ we get \EEE
\begin{align}\label{lllge3}
  \sum_{l \ge 3} l \# \mathcal{D}_l(B_{\rm sep}) \le    C   \# \mathcal{D}_1(B_{\rm sep}) +  C \frac{\eta}{\eps}. 
  \end{align}
   Here, by using \eqref{global bound on b},  the constant $C$ only depends on $C_0$. \EEE The \GGG crucial point is that, differently from $\mathcal{D}_1(B)$, \EEE now $\mathcal{D}_1(B_{\rm sep})$ satisfies the additional fundamental property  
\begin{align}\label{eq: YYYYYY}
 \# \mathcal{D}_1(B_{\rm sep}) \le  C\eta/\eps.
 \end{align}
Indeed, \GGG recalling definition \eqref{defDsmalllarge}, 
%similar to before we partition $\mathcal{D}_j(B_{\rm sep}) = \mathcal{D}^{\rm small}_j(B_{\rm sep})  \cup  \mathcal{D}^{\rm large}_j(B_{\rm sep})$ for $j=1,2$, where the first family corresponds to the components $B^i_{\rm sep}$ with $\mathcal{H}^1(\partial B^i_{\rm sep}) \le \eps/\eta$. Clearly, 
% $\partial B_{\rm sep} \subset \partial B \subset \EEE \partial A$ and
 by \eqref{global bound on b} and \eqref{not-many-large-compoenents} we have
\begin{align}\label{eq: also2}
\# \mathcal{D}^{\rm large}_l(B_{\rm sep}) \le C\eta/\eps \quad  \text{for } l =0,1,2.\EEE
\end{align} 
Now, consider a component $B^i_{\rm sep} \in \mathcal{D}^{\rm small}_1(B_{\rm sep})$ and the corresponding $v \in \mathcal{V}(B^i_{\rm sep})$ with $v \in \partial B^i_{\rm sep}$   and  $n(v) \ge 4$, see \eqref{Dll}.  Note that $v$ is a separating vertex of the set $B$ considered above,  i.e., $v\in \mathcal{V}_{\rm sep}(B)$. \EEE
% (\GGG Indeed, $v$ cannot correspond to a self intersection of the component $B^i_{\rm bri}$ - case in which it would not be a bridge - because otherwise $B^i_{\rm bri}\in \mathcal{D}^{\rm small}_1(B)$, but this is in contradiction with \eqref{defBbri}, see remark below \eqref{defBbri}.) 
 Then, the only reason why this component has not been removed from $B$ in the construction of $B_{\rm sep}$  \MMM  is the fact that there is some $G_i \in \mathcal{G}_{\rm sep}(v)$ with $G_i \supset B^i_{\rm sep}$ and $| {\rm sat}(G_i)  | > \eps^2/\eta^2$. \EEE \JJJ For an example of such $B_{\rm sep}^i$, see the magenta-bordered component in Figure \ref{fig: bridges}. \EEE As the sets $(G_i)_i$ are pairwise disjoint for different $B^i_{\rm sep} \in \mathcal{D}^{\rm small}_1(B_{\rm sep})$,  by  \eqref{global bound on b} and repeating the argument below   \eqref{not-many-large-compoenents}  we can compute 
 \[
 \#  \mathcal{D}^{\rm small}_1(B_{\rm sep}) \frac{\eps}{\eta} \leq   \sum_{B_{\rm sep}^{i}\in \mathcal{D}^{\rm small}_1(B_{\rm sep})} |{\rm sat}  (G_i) \EEE|^{1/2} \le C \sum_{B_{\rm sep}^{i}\in \mathcal{D}^{\rm small}_1(B_{\rm sep})} \mathcal{H}^1( \partial  G_i \EEE )  \leq C\mathcal{H}^1(\partial B) \MMM \le C \EEE  \, ,
 \] 
 and thus, \JJJ together with \eqref{eq: also2} we get \EEE \eqref{eq: YYYYYY}. \EEE  Summarizing, in view of \eqref{lllge3}--\eqref{eq: also2}, we obtain \EEE  
\begin{align}\label{counti}
 \# \mathcal{D}^{\rm large}_0(B_{\rm sep}) + \EEE   \# \mathcal{D}_1(B_{\rm sep})  +   \# \mathcal{D}^{\rm large}_2(B_{\rm sep}) +  \sum_{l \ge 3}  l\# \mathcal{D}_l(B_{\rm sep}) \le    C \frac{\eta}{\eps}.
  \end{align}
% where for the second term we argue as in \eqref{thelargecomp}.  

%\textbf{Final part of the construction:}
\noindent\textbf{Modification 3: Healing.}
Eventually, we define 
\GGG \begin{equation*}
\widehat{B}_{\rm sep}:=B_{\rm sep} \setminus  \bigcup_{B^j_{\rm sep}\in \mathcal{D}^{\rm small}_2( B_{\rm sep})  }   B^j_{\rm sep},
\end{equation*} \EEE
i.e., we remove the small components  $\mathcal{D}^{\rm small}_2( B_{\rm sep})$, and,   recalling \eqref{healoftri}, 
\begin{equation}\label{AAmod}
A_{\rm mod}:=\Big(\widehat{B}_{\rm sep}\Big)_{\rm heal}.
\end{equation}
%$$A_{\rm mod} = B_{\rm sep} \setminus \Big( \bigcup_{B^j_{\rm sep}  \in \mathcal{D}^{\rm small}_2( B_{\rm sep})  }   B^j_{\rm sep} \cup \bigcup_{j=0,1}  \bigcup_{b_i \in  \mathbf{M}^{\rm heal}_j(B_{\rm sep})} b_i\Big),  $$
%  and we remove some further triangles, as motivated already above in \eqref{healoftri}. 
 Note that by construction $A_{\rm mod}$ cannot have `holes' smaller than $\eps^2/\eta^2$. \EEE Hence, we can use \GGG first  Lemma~\ref{heal entire com} 
% \RRR Attention: Here, it is important that components healed this way have no holes: Clear or comment? \EEE 
for   components in \EEE $\mathcal{D}^{\rm small}_2(B_{\rm sep})$ and then Lemma \ref{healing1} for  triangles in \EEE$\mathbf{M}^{\rm heal}_j(\widehat{B}_{\rm sep})$, $j=0,1$, \EEE to find a function $u_{\rm mod} \in H^1(\Omega;\R^2)$ with
 $$\Vert e(u_{\rm mod })  \Vert_{L^2(\Omega \setminus A_{\rm mod})} \le  \frac{C}{\eta^\alpha}   \Vert e( u_{\rm sep } )  \Vert_{L^2(\Omega \setminus B_{\rm sep})}.$$
  (Here, for Lemma \ref{heal entire com}, we again use that neighborhoods only overlap a finite number of times, see \JJJ the argument \EEE below \eqref{vvvheal}.)
% Because of \eqref{def-w} we have $\|e(w)\|_{L^2(\Omega\sm B)}= \|e(u)\|_{L^2(\Omega\sm A)}$. \EEE 
Using that $A\subset B$ and then $\|e(u)\|_{L^2(\Omega\sm B)}\leq \|e(u)\|_{L^2(\Omega\sm A)}$, \EEE with \eqref{vvvheal} we get
\begin{align}\label{eq. finaaal}
\Vert e(u_{\rm mod })  \Vert_{L^2(\Omega \setminus A_{\rm mod})}  \le  \frac{C}{\eta^{2\alpha}} \EEE\|e(u)\|_{L^2(\Omega\sm A)}.
\end{align}
 Since we assumed \eqref{energybound1}, this gives the first part of \eqref{themainthing}.

Let us now confirm the second part:  in view of \eqref{counti},   the main property of $\widehat{B}_{\rm sep}$ is that
 % point of the construction above is that by  \eqref{counti} we find  
\begin{align}\label{counti2}
 \sum_{l \ge 1}  l\# \mathcal{D}_l(\widehat{B}_{\rm sep}) \le    C \frac{\eta}{\eps}.  
  \end{align}
  \GGG Therefore, taking $\compo=\widehat{B}_{\rm sep}$ in \eqref{1308241138}, \EEE along with \eqref{energy4} and \eqref{defMnotheal}, by arguing as in \eqref{for later.end}, \EEE we deduce
  \begin{align}\label{energy2}
\frac{2}{\eps  \sin\theta_0}|A| & \ge  \mathcal{H}^1(\partial A_{\rm mod}) -  C \eps   \big(\#\mathbf{M}^{\rm nh}_0(\widehat{B}_{\rm sep}) + \#  \mathbf{M}^{\rm nh}_1(\widehat{B}_{\rm sep})\big).
\end{align}
%For the second part, we use \eqref{energy4} and the argument in \eqref{energy2} to find 
%\begin{align*}
%\frac{2}{\eps  \sin\theta_0}|A | & \ge \mathcal{H}^1(\partial A_{\rm mod}) -  C \eps   \big(\#\mathbf{M}^{\rm nh}_0(A_{\rm mod}) + \#  \mathbf{M}^{\rm nh}_1(A_{\rm mod})\big).
%\end{align*}
 \GGG Taking $\widehat{B}_{\rm sep}$ as $\compo$ in \eqref{iddd} and \eqref{1408240756},  \EEE
% By \eqref{nonhealrealtion} and \eqref{iddd} (applied for $A_{\rm mod}$ in place of $B$) 
we then get
\begin{align}\label{finalstep}
\frac{2}{\eps  \sin\theta_0}|A | & \ge \mathcal{H}^1(\partial A_{\rm mod}) -  C \eps   \sum_{l \ge 1} l \# \mathcal{D}_l(  \widehat{B}_{\rm sep}\EEE).
\end{align}
This along with \eqref{counti2} shows the second part of \eqref{themainthing}. \EEE Next,  \eqref{themainthing0} follows from the fact that $\lbrace u \neq u_{\rm mod} \rbrace \subset  (B \setminus A) \cup A = B$, $A_{\rm mod} \subset B$,  and \eqref{global bound on b}. Finally,  to validate \eqref{extra-statement}, \EEE in view of \eqref{counti2}, we are left to estimate the number of components $\mathcal{D}_{0}(\widehat{B}_{\rm sep}) $.  First, $ \# \mathcal{D}_{0}^{\rm large}(\widehat{B}_{\rm sep})$ is already controlled by \eqref{counti}. \MMM Each $A\in \mathcal{D}_{0}^{\rm small}(\widehat{B}_{\rm sep})$ is either  some element of $\mathcal{G}_{\rm sep}^{\rm small}$, see the definition in \eqref{eq: sepsamll}, or a component \JJJ in \MMM $\bigcup_{l \ge 1}\mathcal{D}_l(B_{\rm sep})\setminus \mathcal{D}^{\rm small}_2(B_{\rm sep})$. In the first case,   such small isolated components were already removed in Modification ~2 (see \eqref{defBbri}), i.e., \JJJ it holds \MMM $\mathcal{D}_{0}^{\rm small}(\widehat{B}_{\rm sep}) \subset \bigcup_{l \ge 1}\mathcal{D}_l(B_{\rm sep}) \setminus \mathcal{D}^{\rm small}_2(B_{\rm sep})$. Then, the desired control follows from \eqref{counti}. \EEE 

\noindent \textbf{General case: Components close to $\partial \Omega$\VIT .\EEE} Recall   that,   so far,    we assumed \MMM that \EEE   ${\rm dist}(A,\partial\Omega) \ge \omega(\eps)$ as this allowed us to apply  Lemmas \ref{healing1}--\ref{heal entire com} throughout the proof.  In particular, we have healed the components $\mathcal{G}^{\rm small}_{\rm sep}$ in Modification  2 and the components  $\mathcal{D}_2^{\rm small}(B_{\rm sep})$ in Modification~3. If such components have distance from $\partial \Omega$ smaller than $\omega(\eps)$, the extension in  Lemma \ref{heal entire com} cannot be performed. Yet, we observe that all such components have diameter smaller than $\eps/\eta^3$, see \eqref{manus-blackboard} for $\mathcal{G}^{\rm small}_{\rm sep}$ (the computation for $\mathcal{D}_2^{\rm small}(B_{\rm sep})$ is exactly the same). Thus, such components are  contained in $\lbrace x \in \Omega \colon {\rm dist}(x,\partial \Omega) \le \omega(\eps) + \eps/\eta^3 \rbrace$, i.e., have empty intersection with $\Omega_{\eps,\eta}$. In a similar fashion, triangles in Lemma \ref{healing1} cannot be healed if their distance from $\partial \Omega$ is smaller than $\omega(\eps)$ which means they are contained in  $\lbrace x \in \Omega \colon {\rm dist}(x,\partial \Omega) \le 2\omega(\eps) \rbrace \subset \Omega \setminus \Omega_{\eps,\eta}$.  Summarizing,  for sets $A \subset \Omega$ which do not satisfy  ${\rm dist}(A,\partial\Omega) \ge \omega(\eps)$ we get that  \eqref{eq. finaaal} holds with $\Omega_{\eps,\eta} \setminus A_{\rm mod}$ in place of $\Omega \setminus A_{\rm mod}$ which shows   \eqref{themainthing}.   
\end{proof}

% \RRR {\tt maybe this could be added somewhere as a comment in Section 3 already to be referred to easily. Something like: }

\begin{remark}\label{remark-on-holes}\MMM 
    The construction implies that  $\partial T\cap \partial A_{\rm mod}\,\tilde{=}\,\emptyset$ for all $T \subset A_{\rm mod}$ with $T \notin \mathbf{T_A}$. In fact, for the set $B$ defined in Modification 1 (see \eqref{def of bbb}) it clearly holds  $\partial T\cap \partial B\,\tilde{=}\,\emptyset$ for all $T \in \mathbf{T}$ with $T \subset B\setminus A$. Then, we recall that in Modifications 2 and 3 we only remove entire saturated components or heal away triangles. \EEE
\end{remark} 

\noindent
We now proceed with the proof of \EEE  Corollary \ref{monotonicity-corollary}. 
\begin{proof}[Proof of Corollary \ref{monotonicity-corollary}]
% First, we apply Theorem \ref{generaltheorem} to each $A^{k}$ and obtain $A_{\rm mod}^k$ as well as $u_{\rm mod}^{1}$   
% We denote $C_{k}\defas A_{\rm mod}^{k-1}\sm A_{\rm mod}^{k}$ for each $k\in \{2,\dots, N\}$ and observe that 
% However, one can define \begin{equation*}
%   \hat{A}_{\rm mod}^{l}\defas \bigcap_{m\geq l} A_{\rm mod}^{m}
% \end{equation*}
% with 
Let  $\mathbf{T}_{A^1} \subset \mathbf{T}_{A^2}$. \EEE First, we apply Theorem \ref{generaltheorem} to $A^{1}$ and $A^2$ and obtain $A_{\rm mod}^1,A_{\rm mod}^2$ as well as $u_{\rm mod}^{1}$ and $u_{\rm mod}^{2}$. We  note that in general the monotonicity is not preserved in all the different modification steps carried out in the proof of Theorem \ref{generaltheorem}. In fact, as will will explain below, this is the case for Modifications 1--2 of the construction above,  but in Modification  3 one might remove components $D\in \mathcal{D}_{2}^{\rm small}( B^2_{\rm sep}\EEE)$ with $A_{\rm mod}^1 \cap D\neq \emptyset$. However, as we will point out, such components could have been `healed away' already before in the construction of $A_{\rm mod}^1$.

  Before we start, we observe that it suffices to show    $A_{\rm mod}^1\subset A_{\rm mod}^2$. Indeed,  due to $\mathbf{T}_{A^1} \subset \mathbf{T}_{A^2}$ and $A_{\rm mod}^1\subset A_{\rm mod}^2$, $T \in \mathbf{T}^{\rm mod}_{A^1}$ implies $T  \in \mathbf{T}^{\rm mod}_{A^2}$, and thus  $\mathbf{T}^{\rm mod}_{A^1} \subset \mathbf{T}^{\rm mod}_{A^2}$ directly follows. \EEE

\noindent \textbf{Modification 1}: For each connected component $A_c\in \mathcal{A}_{\rm small}^1$ of $\R^2\sm \overline{A^1}$ \MMM with $ | A_c|\leq \eps^2/\eta^2$, \EEE  we have that either $A_c\subset A^2$ or $A_c\sm \overline{A^2}$ is a connected component of $\R^2\sm \overline{A^2}$ with $ | A_c\sm \overline{A^2}|\leq \eps^2/\eta^2$. Thus, $A_c\sm \overline{A^2}\in \mathcal{A}^2_{\rm small}$. By the definition of $B^1$ and $B^2$ \EEE in Modification 1, \JJJ see \eqref{def of bbb}\EEE, we hence obtain $B^1\subset B^2$.  

\noindent \textbf{Modification 2}:
Next, we need to show that $B^1_{\rm sep}\subset B^2_{\rm sep}$. Recalling the definition in \eqref{defBbri} it is enough to show that \begin{equation}\label{crucial-implication}
  \bigcup_{G\in \mathcal{G}^{{\rm small},2}_{\rm sep}} G \cap \overline{B^1} \subset \bigcup_{G\in \mathcal{G}^{{\rm small},1}_{\rm sep}} G 
\end{equation} 
 For notational convenience, \EEE we formally denote the connected components  \MMM $\mathcal{C}(\ol{B^i})$ of $\ol{B^i}= \ol{B^i}\sm \emptyset$ with $\mathcal{G}^i_{\rm sep}(\emptyset)$. \EEE Then, by a slight abuse of notation, we add $\{\emptyset\}$ as a placeholder for a vertex to the set of separating vertices, i.e., $\mathcal{\tilde{V}}_{\rm sep}(B^{i})= \mathcal{V}_{\rm sep}(B^{i}) \cup \{\emptyset\}$ and write $G\in   \mathcal{G}^{i}_{\rm sep} \EEE= \bigcup_{v\in \mathcal{\tilde{V}}_{\rm sep}(B^{i})} \mathcal{G}^i_{\rm sep}(v)$,  and accordingly $ \mathcal{G}^{{\rm small},i}_{\rm sep}$ as in \eqref{eq: sepsamll}. \EEE

Let $x\in G\cap \overline{B^1} $ for some $G\in \mathcal{G}^{{\rm small},2}_{\rm sep}$. By definition we know that $G = {\rm sat}(G) \in \mathcal{G}^2_{\rm sep} (v)$ for some $v\in \mathcal{\tilde{V}}_{\rm sep}(B^2)$, i.e., $G$ is a connected component of $\overline{B^2}\sm \{v\}$ with $|G|\leq \eps^2/\eta^2$. Note that, as $B^1\subset B^2$, the set $G\cap \overline{B^1}$ possibly consists of different connected components of $\overline{B^1}\sm \{v\}$, i.e., $G\cap \overline{B^1}= \bigcup_{j}   G_j$ for suitable   components $(G_j)_j$. 

If $v\in \mathcal{\tilde{V}}_{\rm sep}(B^1)$, i.e., $v$ is also a separating vertex of $B^1$, we can conclude by definition that $G_j\in \mathcal{G}^1_{\rm sep}$ for all $j$.  As $| {\rm sat}({G}_j)|\leq |{\rm sat}(G\cap B^1)| \leq |G| \leq \eps^2/\eta^2$, we get ${G}_j \in \mathcal{G}^{{\rm small},1}_{\rm sep}$ for all $j$.  Since $G\cap \overline{B^1} =\bigcup_{j} {G}_j$, we have $x\in  {G}_j $ for some $j$.

  If instead \EEE $v\notin \mathcal{\tilde{V}}_{\rm sep}(B^1)$, the connected components of $\overline{B^1}$ intersected with $\R^2 \setminus \lbrace v\rbrace$ are exactly the components of $\overline{B^1}\sm \{v\}$, i.e., $G_j$  \MMM as above \EEE are components of $\overline{B^1}$ intersected with $\R^2 \setminus \lbrace v\rbrace$. 
 Hence, by setting accordingly $\tilde{G}_j=G_j$ or $\tilde{G}_j=G_j\cup \{v\}$,   for all $j$ we find a set $\tilde{G}_j\in \mathcal{G}^1_{\rm sep}(\emptyset)\subset \mathcal{G}_{\rm sep}^{1}$.   
%In particular, since $x\in \bigcup_j G_j $, we can assume that $x\in \bigcup_{\tilde{G}\in \mathcal{G}_{\rm sep}(\emptyset)} \tilde{G}$. Hence, for some further $v'\in \mathcal{\tilde{V}}_{\rm sep}(B^1)$, all $G_j$ %or $G_j\cup \{v\}$ are components of $\overline{B^1}\sm {v'}$ intersected with $\R^2 \setminus \lbrace v\rbrace$. In particular $G_j\in \mathcal{G}^1_{\rm sep}(v')$ for some further $v'\in \mathcal{\tilde{V}}_{\rm sep}(B^1)$ and hence $G_j\in \mathcal{G}^1_{\rm sep}$.  
Since $|G\cup \{v\}|= |G|$, we have in particular that $| {\rm sat}(\tilde{G}_j) \EEE | \leq |G| \leq \eps^2/\eta^2$ and thus $\tilde{G}_j \in \mathcal{G}^{{\rm small},1}_{\rm sep}$ for all $j$.  As  $G\cap \overline{B^1}  = \bigcup_{j} \tilde{G}_j \setminus \lbrace v \rbrace \EEE $, we have $x\in  G_j \EEE $ for some $j$, which concludes the proof of \eqref{crucial-implication}.  
\EEE

\noindent \textbf{Modification 3}: Note that $\widehat{B}_{\rm sep}^1\subset \widehat{B}^2_{\rm sep}$ is  in principle \EEE not true as for a component $ D \EEE \in \mathcal{D}^{\rm small}_2( B^2_{\rm sep})$ we might have $A_{\rm mod}^1 \cap  D \EEE \neq \emptyset$. However, because $D\in \mathcal{D}^{\rm small}_2(B^2_{\rm sep})$ we know that  $\overline{D\cap B_{\rm sep}^1}$ touches $\overline{B^1_{\rm sep}}\sm (D\cap B_{\rm sep}^1)$ \EEE at most at two points.   Since $D = {\rm sat}(D)$, we further get  $|{\rm sat}(D\cap B_{\rm sep}^1)|\leq |D| \EEE \leq \frac{\eps^2}{\eta^2}$. In particular, ${\rm sat}(D\cap B_{\rm sep}^1) = D\cap B_{\rm sep}^1$ due to the construction of $B^1$, and  \EEE the set $D\cap B_{\rm sep}^1$ (or, respectively, all its connected components) fulfill the assumptions of Lemma \ref{heal entire com}. This means we could have healed $D\cap B_{\rm sep}^1$ already in the construction of $\widehat{B}^1_{\rm sep}$, which then ensures $\widehat{B}_{\rm sep}^1\subset \widehat{B}^2_{\rm sep}$. Since the healing of triangles as in \eqref{AAmod} also  preserves \EEE the monotonicity, we have $A^1_{\rm mod}\subset A^2_{\rm mod}$.
% i.e., we know that there exists a connected component $\tilde{G}$ of $\overline{B^1}\sm \{v\}$ such that $\tilde{G}\subseteq G\cap \overline{B^1}$. In particular all 
% Therefore, since $\tilde{G}\in \mathcal{G}_{\rm bri,1}$ and $|G\cap \overline{B^1}|\leq \eps^2/\eta^2$, it follows that $\tilde{G}\in \mathcal{G}^{\rm small}_{\rm bri, 1}$. In the second case we have $v\notin\mathcal{V}_{\rm bri}(B^1)$, i.e. $v$ is not a bridge of $B^1$. Then the set $G\cap \overline{B^1} \subset G$ does not touch       
\end{proof}

% \begin{equation*}
%   \mathcal{H}^1(\partial A_{\rm mod } \RRR \cap \Omega_{\eps,\delta} \EEE ) \le  \frac{2}{\eps  \sin\theta_0}|A |  +  C\delta.  
% \end{equation*}
\EEE

\subsection{Proof of healing lemmas}\label{sec: proflem} It \EEE now remains to prove Lemma \ref{healing1} and Lemma \ref{heal entire com}. 

\begin{proof}[Proof of Lemma \ref{healing1}]
  For each $T\in \mathbf{M}^{\rm heal}_0(\compo) \cup \mathbf{M}^{\rm heal}_1(\compo)$, let $N^*_T$ be the  union of triangles in $\Omega\setminus  {\compo}$ having nonempty intersection with $T$. Then, let $N_T$ be the connected component of $N^*_T$ containing the vertex in the definition \eqref{defMheal}. Moreover, let $T^1$ and $T^2$ be two adjacent triangles to $T$ (i.e., sharing an edge with $T$)  that are contained in $N_{T}$. (Notice that the choice is unique for $\mathbf{M}^{\rm heal}_1(\compo)$, and that there are up to three different choices of pairs $\lbrace T^1,T^2\rbrace $ for $\mathbf{M}^{\rm heal}_0(\compo)$.)  \EEE
  For every $u$ affine on any triangle $T$, we denote by  $e(u)_T$ and \EEE $(\nabla u)_T$ the constant matrices $e(u)$ and \EEE $\nabla u$ on $T$, respectively. 
  
 \MMM By Korn's inequality we  get \EEE a function $z(x)=u(x)-Ax$,  where  $A \in \R^{2 \times 2}_{\rm skew}$, \EEE   such that 
  \begin{equation}\label{1008241052}
  \|\nabla z\|^2_{L^2(N_T)} \leq \GGG K_{N_T} \EEE \|e(u)\|^2_{L^2(N_T)}\,.
  \end{equation}  
  We notice that the Korn constant $K_{N_T}$ corresponding to $N_T$ depends only on the parameter $\theta_0$ associated to the family of admissible triangulations.
  %Since $u$ is piecewise affine, we know that also $z$ is piecewise affine in each triangle $T\in \mathbf{T}(u)$ 
  %%and thus the gradient is given by two constant matrices $(\nabla u)_{T^j}$ on $T^j$, $j=1,2$,
  %%and $(\nabla u)_{T''}$ on $T''$, 
  %and
  %%\begin{equation}
  %%(\nabla u)_{T'} = e(u)_{T'} + A_{T'} \quad\text{on }T',\qquad \nabla u = e(u) + A_{T''}\quad\text{on }T''
  %%\end{equation}
  %%\begin{equation*}
  %%(\nabla u)_{T^j} = e(u)_{T^j} + A_{T^j} \quad\text{on }T',\qquad \nabla u = e(u) + A_{T''}\quad\text{on }T''
  %%\end{equation*}
  %\begin{equation*}
  %(\nabla u)_{T^j} = e(u)_{T^j} + A^{j}
  %\end{equation*}
  %for suitable $A^{j}$, skew-symmetric matrices and $e(u)_{T^j}$ the constant symmetrized gradients, for $j=1,\, 2$. 
  Defining \EEE  
  \[
  A^{j}:=(\nabla u)_{T^j} - e(u)_{T^j}
  \] 
  for $j=1,2$, \EEE we deduce 
  \begin{equation}\label{eq: AAj}
  |A-A^{j}|^2\,|T^j|= \|A-A^{j}\|^2_{L^2(T^j)}\leq  2 \EEE  \|\nabla z\|^2_{L^2(T^j)}+ 2 \EEE  \|e(u)\|^2_{L^2(T^j)} \leq 2 \EEE  (K_{N_T} +1) \|e(u)\|^2_{L^2(N_T)},
  \end{equation}
    by  the identity \EEE  $\|e(u)\|^2_{L^2(T^j)}= |T^j| |e(u)_{T^j}|^2$,   the \EEE triangle inequality, and \eqref{1008241052}.
  Denoting by $l_j$ the unit vectors parallel to the edge in common between $T$ and $T^j$ for $j=1,2$, we get
  \begin{equation*}
  \begin{split}
  \|e(u)\|^2_{L^2(T)}& =|T||e(u)_T|^2  =|T||e(z)_T|^2\leq |T| |(\nabla z)_T|^2 \leq  \bar{C} \EEE  |T| \Big( |(\nabla z)_T \cdot l_1|^2 + |(\nabla z)_T \cdot l_2|^2 \Big) 
  \\
  &=  \bar{C} \EEE |T| \Big( |(\nabla z)_{T^1} \cdot l_1|^2 + |(\nabla z)_{T^2} \cdot l_2|^2 \Big) 
  \end{split}
  \end{equation*}
  for $ \bar{C}>0 \EEE$ depending only on $\theta_0$, where we used that by the continuity of $z$ it holds $(\nabla z)_T \cdot l_j= (\nabla z)_{T^j} \cdot l_j$ for $j=1,2$.   Since  $(\nabla z)_{T^j}=(\nabla u)_{T^j}-A= e(u)_{T^j}+A^j-A$, we hence obtain by \eqref{eq: AAj} \EEE
  \begin{equation*}
    \begin{aligned}
       \|e(u)\|^2_{L^2(T)} & \leq   \bar{C} \EEE |T| \Big( |(\nabla z)_{T^1} \cdot l_1|^2 + |(\nabla z)_{T^2} \cdot l_2|^2 \Big)\leq   \bar{C} \EEE |T| \sum_{j=1,2}\Big( |e(u)_{T^j}|^2 + |A-A^j|^2 \Big)
  \\&  \leq  \bar{C} \EEE  \sum_{j=1,2}\frac{|T|}{|T^j|} \Big( \|e(u)\|^2_{L^2(T_j)} +  2(K_{N_T}+1)\|e(u)\|^2_{L^2(N_T)}\Big) \leq  \bar{C}' \EEE \|e(u)\|^2_{L^2(N_T)} ,
    \end{aligned}
  \end{equation*}
  where $ \bar{C}' >0 \EEE$ just depends on $\theta_0$, recalling that both $K_{N_T}$ and the volume ratio between adjacent triangles   depend only on $\theta_0$. 
  % \RRR not entirely clear. Could also depend on $\omega(\eps)/\eps$, but locally it is for sure true. Joscha: Why exactly?  Don't we have to assume $\omega(\eps_n)\leq C\eps_n$ anyway! \EEE
  We conclude by summing over \EEE $T\in \mathbf{M}^{\rm heal}_0(\compo) \cup \mathbf{M}^{\rm heal}_1(\compo)$, observing that each triangle in $\MMM \Omega\setminus \compo \EEE $ could belong at most to a bounded number (depending on $\theta_0$) of different $N_T$.
  % for $T  $T\in \mathbf{M}^{\rm heal}_0(B) \cup \mathbf{M}^{\rm heal}_1(B)$
  \end{proof} 

  \EEE
 
\begin{proof}[Proof of Lemma \ref{heal entire com}]
% Along the proof we denote by $N$, $\mathbf{T}_N$ the sets $N_Z$, $\mathbf{T}_{N_Z}$ and by $c$, $C_\delta$ suitable positive constant, possibly varying from line to line, depending only  on the parameter of the triangulation $\theta_0$, and only on $\delta$ and $\theta_0$, respectively. 
 Along the proof we denote by $N$, $\mathbf{T}_N$ the sets $N_Z$, $\mathbf{T}_{N_Z}$ and \MMM by $C$  a universal  positive   constant, \EEE possibly varying from line to line,     depending only  on the parameter of the triangulation $\theta_0$.  Moreover, $C_\eta$ denotes a generic constant of the form $C \eta^{-\alpha}$ \JJJ for some $\alpha\in \N$. \EEE   
 
  We first show that \EEE
 \begin{equation}\label{1108241555}
\# \mathbf{T}_{N} \leq  \frac{C}{\eta^2}, \quad \quad  |T| \le C\eps^2/\eta^2 \ \text{for all $T \in \mathbf{T}_{N}$}\,. \EEE
 \end{equation} 
  Indeed, \EEE recall that each edge in $\mathbf{T}$ has \MMM at least length \EEE  $\eps$,  so that we obtain $\eps \#\mathcal{V}(Z)\leq \, \mathcal{H}^1(\partial Z)$.  Using  $| Z| \le \eps^2/\eta^2$, by repeating the calculation in \eqref{manus-blackboard}, we get $\mathcal{H}^1(\partial Z) \le C\eps/\eta^2$, and thus $  \#\mathcal{V}(Z)\leq  \eps^{-1} \EEE \mathcal{H}^1(\partial Z) \le C/\eta^2$.  Since each vertex in $\mathcal{V}(Z)$ is contained in only a bounded number of triangles in $\mathbf{T}_N$ (depending on $\theta_{0}$), we obtain the estimate $\# \mathbf{T}_{N} \le  C/\eta^2$. Each $T \in \mathbf{T}$, $T \subset Z$, satisfies $|T| \le \eps^2/\eta^2$.  Thus, as the area of adjacent triangles is comparable by a constant depending on $\theta_0$, we conclude the second part of \eqref{1108241555}. \EEE

%  \begin{equation}\label{1108241555}
%  c \eps^2 \# \mathbf{T}_N \leq  |N|\leq \eps \mathcal{H}^1(\partial Z)\leq \frac{\eps^2}{\eta}   \Rightarrow \# \mathbf{T}_N \leq \frac{1}{c\,\eta} \,.
%  \end{equation} 
Since $Z$ has no holes, we 
 % observe
 can suppose (up to enlarging $Y$) that  
 %$\tilde{N}=
 $N\setminus Y$ consists of two \MMM connected \EEE components $N_1$ and $N_2$ whose closures contain the two touching points $p,\,q\in   \overline{Y}\cap \overline{Z} \EEE$ (see Figure \ref{fig: two-domains-pic}). In fact, if $N\sm Y$ had further components, their closure would only intersect with $Z$ at one of the touching points and not share an edge with a triangle in $\overline{Z}$. 
We now claim that 
 % By applying first Korn Inequality on 
 %in correspondence to $N_j$, $j=1,2$,  
 there are $\JJJ A_1,A_2 \EEE \in \R^{2 \times 2}_{\rm skew}$  such that 
 %\begin{subequations}
% \begin{equation}\label{from-korn}
% \|\nabla u- \tilde{A}_j\|_{L^{2}(N_j)}\leq K_{N_j} \|e(u)\|_{L^2(N_j)}\quad \quad \text{for}\; j=1,2.
% \end{equation}
%\begin{equation}\label{from-korn}
% \|\nabla u- A_j\|_{L^{2}(N_j)}\leq \VVV \frac{C}{\eta} \EEE \|e(u)\|_{L^2(N_j)}\quad \quad \text{for}\; j=1,2.
% \end{equation}
\begin{equation}\label{from-korn}
  \|\nabla u- A_j\|_{L^{2}(T)}\leq   C_\eta \EEE  \|e(u)\|_{L^2(N_j)}\quad \quad \text{for \MMM each triangle \EEE } T\subset \ol N_j,\,\; j=1,2.
 \end{equation}
 % We notice that the Korn constants $K_{N_j}$ depend only on $\eta$ and on the parameter of the triangulation $\theta_0$. 
 To see this, \EEE for every $T \subset \ol N_j$, let 
 \begin{equation}\label{1108241258}
 (\nabla u)_T=e(u)_T + A_T,
 \end{equation}
 for $(\nabla u)_T$, $e(u)_T$, and $A_T$ suitable matrices representing the constant values of $\nabla u$, $e(u)$, and $\nabla u-e(u)$ on $T$.
 Given two adjacent triangles $T_1,T_2 \in \EEE \mathbf{T}_N$ in $N_j$, i.e., sharing an edge, let us consider the circle $C_{1,2}$ with the maximal radius among those \JJJ circles \EEE centered on a point of the common edge and included in $T_1\cup T_2$.  The radius of any $C_{1,2}$ is larger than $ \varepsilon / \MMM C $, where $C$ \EEE  depends only on $\theta_0$. Applying Korn's inequality on $C_{1,2}$, we find $A_{1,2} \in \R^{2 \times 2}_{\rm skew}$ and $K>0$ (the Korn constant for a circle) such that \EEE
 \begin{equation*}
 \|\nabla u - A_{1,2}\|_{L^2(C_{1,2})}\leq K \|e(u)\|_{L^2(C_{1,2})},
 \end{equation*}
 and then
 \begin{equation*}
 \|A_{T_j} - A_{1,2}\|_{L^2(C_{1,2}\cap T_j)} \leq (K+1) \|e(u)\|_{L^2(C_{1,2})}.
 \end{equation*}
Since $A_{T_j}$ are   constant matrices,  noticing that $\|A_{T_j} - A_{1,2}\|^2_{L^2(C_{1,2}\cap T_j)}= \frac{|C_{1,2}|}{2}|A_{T_j} - A_{1,2}|^2$,   we deduce by the triangle inequality that  
 \begin{equation}\label{1108241254}
 |A_{T_1} - A_{T_2}|^2\leq    \frac{8}{|C_{1,2}|} (K+1)^2 \|e(u)\|^2_{L^2(C_{1,2})}.
 \end{equation}
 Being $N_j$ connected, given any $\tilde{T}_1$, $\tilde{T}_2 \subset \ol N_j$, there are triangles $\tilde{\mathbf{T}}  := (\hat{T}_j)_{j=1}^n \subset \mathbf{T}$    included in $\overline{N_j}$  with $\hat{T}_1 = \tilde{T}_1$, $\hat{T}_n = \tilde{T}_2$, such that  $\hat{T}_j$, $\hat{T}_{j+1}$ are adjacent for all $j=1, \ldots, n-1$. \EEE  We   apply  the estimate \eqref{1108241254}   for  \EEE pairs of adjacent triangles  a finite number of times (less than $\# \mathbf{T}_N$). Then, using the \EEE triangle inequality  and also noting  that \MMM the \EEE sets $C_{1,2}$ are such that $|C_{1,2}|\geq \varepsilon^2/C$ and that circles corresponding to different  pairs \EEE overlap at most twice,
% $|N_j|/|C_{1,2}|\leq \VVV \frac{C}{\eta^2} \EEE$ by \eqref{1108241555}, 
 \EEE we find that 
 % for a constant $\tilde{C}_\eta$ depending only on $\eta$ and $\theta_0$, 
% \begin{equation*}
% \|A_{\tilde{T}_1}-A_{\tilde{T}_2}\|_{L^2(N_j)}\leq \VVV \frac{C}{\eta} \EEE \|e(u)\|_{L^2(N_j)}.
% \end{equation*}
\begin{equation}\label{newbyjoscha}
 |A_{\tilde{T}_1}-A_{\tilde{T}_2}|\leq   \frac{C}{\eps}  \sqrt{\# \mathbf{T}_N}   \|e(u)\|_{L^2(N_j)} \leq  \frac{C_\eta}{ \eps}   \|e(u)\|_{L^2(N_j)},
 \end{equation}
  where in the last step we used \eqref{1108241555}. \EEE We now confirm \eqref{from-korn} by fixing $A_j$ as one of the $A_T$, $T\subset \overline{N_j}$. Notice that for every $T \subset \ol N_j$,  recalling the notation \eqref{1108241258}, we  indeed  have  by \eqref{1108241555}  and \EEE   \eqref{newbyjoscha} \EEE
 \begin{equation*}
 \|\nabla u - A_j\|_{L^2(T)}\leq \|\nabla u - A_T\|_{L^2(T)} + \|A_j - A_T\|_{L^2(T)}= \|e(u)\|_{L^2(T)} + \|A_j - A_T\|_{L^2(T)} \leq   C_\eta  \EEE  \|e(u)\|_{L^2(N_j)}.
 \end{equation*}
% Then again using \eqref{1108241555} 
This concludes the proof of \eqref{from-korn}. 
 % \RRR Dont understand where  \eqref{1108241555} is used. \EEE 

By the fact that  $u$ is  affine \EEE on each $T \in \mathbf{T}_N$, $|T| \ge \eps^2/C$, and by \eqref{from-korn} it also follows that 
%    \begin{subequations}
%    \begin{align}\label{infty-1}&\|\nabla u- A_j\|_{L^{\infty}(N_j)}\leq \frac{C_{\eta}}{\eps} \|e(u)\|_{L^2(N_j)}\quad \quad \text{for}\; j=1,2.
%      \\ \label{infty-2}
%    &\|u- (A_j x+ \tilde{b}_j)\|_{L^{\infty}(N_j)}\leq C \|e(u)\|_{L^2(N_j)}\quad \quad \text{for}\; j=1,2.
%    \end{align}      
%    \end{subequations}
%    \begin{equation}\label{infty-1}
%    \|\nabla u- A_j\|_{L^{\infty}(N_j)}\leq \VVV \frac{C}{\eta \eps} \EEE \|e(u)\|_{L^2(N_j)}\quad \quad \text{for}\; j=1,2.    
%    \end{equation}
 \begin{equation}\label{infty-1}
   \JJJ |(\nabla u)_{T}-A_j| = \EEE \|\nabla u- A_j\|_{L^{\infty}( T )}\leq  \frac{C_\eta}{\eps}  \|e(u)\|_{L^2(N_j)}\quad \quad \text{for}\; j=1,2.    
    \end{equation}
%     \RRR Explanation in next two lines could be removed to my taste. \EEE
%     In fact, for $\widehat{T}_j \in \mathbf{T}_N$, $\widehat{T}_j\subset N_j$ such that $\|\nabla u- A_j\|_{L^{\infty}(N_j)}=|(\nabla u)_{\widehat{T}_j}- A_j|$, it holds that
%     \begin{equation*}
%  c \,\varepsilon^2  \|\nabla u- A_j\|_{L^{\infty}(N_j)}^2\leq  |\widehat{T}_j| |(\nabla u)_{\widehat{T}_j}- A_j|^2\leq \sum_{T \in \mathbf{T}_N, T \subset N_j} |T| |(\nabla u)_T- A_j|^2= \|\nabla u- A_j\|^2_{L^2(N_j)}
%     \end{equation*}
%    \begin{equation*}
%    \|\nabla u- A_j\|^2_{L^{\infty}(N_j)}\leq \# \mathbf{T}_N 
%    \end{equation*}
The fundamental point in the proof is now that \EEE 
\begin{equation}\label{1108241658}
|A_2-A_1|\leq  \frac{C_\eta}{ \eps} \EEE \|e(u)\|_{L^2(N\setminus Y)}.
\end{equation}    
In fact, recall \EEE  that $p$ and $q$ are connected by a path consisting of \EEE less than $\# \mathbf{T}_N$  edges of triangles in $\mathbf{T}_N$. Then, by \eqref{infty-1} and the Fundamental Theorem of Calculus applied on any edge \EEE of the path connecting $p$ and $q$ to the scalar-valued functions $(u-A_j \cdot)_i$, $j=1,2$, $i=1,2$ \MMM (here, \EEE $(u-A_j \cdot)_i$ are the two components of $x\mapsto u(x)-A_j x$ for $j=1,2$), we get 
\begin{equation}\label{1108241703}
\big|\big(u(q)-u(p)-A_j(q-p)\big)_i \big|\leq  C_\eta \EEE \|e(u)\|_{L^2(N_j)},
\end{equation}
where we used that the path of edges has a length of order   $\sim \frac{\eps}{\eta^2}$,  because $\mathcal{H}^1(\partial Z)\leq   C \eps/\eta^2$,  see \MMM below \EEE \eqref{1108241555}. \EEE 
Subtracting the two \JJJ terms  in \eqref{1108241703} \EEE for $j=1,2$ and for fixed $i$, we get
\begin{equation*}
|\big((A_2-A_1)(q-p)\big)_i|\leq  C_\eta \EEE \|e(u)\|_{L^2(N\setminus Y)},
\end{equation*}
which confirms \eqref{1108241658} since $A_j$ are  skew symmetric    \EEE and $|q-p| \in [\varepsilon,  C \eta^{-2} \eps]$.
    
Combining \eqref{infty-1} and \eqref{1108241658} we deduce that
 \begin{equation}\label{1108241717}
    \|\nabla u- A_1\|_{L^{\infty}(N\setminus Y)}\leq  \frac{C_\eta}{ \eps} \EEE \|e(u)\|_{L^2(N\setminus Y)}.    
    \end{equation}
 By McShane's theorem we find a Lipschitz extension $\widetilde{w}$ of $u-A_1 \cdot$ from $N\setminus  \overline{Y}\EEE$ to $ {\rm int}(\overline{Z\cup(N\setminus Y)}) \EEE$ whose components have the same Lipschitz constant as  $u-A_1 \cdot$. In particular,
 \begin{equation}\label{1108241721}
 \|\nabla \widetilde{w}\|_{L^\infty((N\setminus Y) \cup Z)}\leq  \frac{C_\eta}{\eps} \EEE\|e(u)\|_{L^2(N\setminus Y)}, \qquad \widetilde{w}= u-A_1 \cdot \text{ on }N\setminus  \ol{Y}. \EEE
 \end{equation}
 We set
 \begin{equation*}
 u_{\rm heal}:=\widetilde{w} + A_1 \cdot.
 \end{equation*}
The second condition in \eqref{1108241721} immediately gives that $u_{\rm heal}= u$ on $N\setminus  \ol{Y} \EEE $.  Moreover, \EEE
\begin{equation*}
\begin{split}
\|e(u_{\rm heal})\|_{L^2((N\setminus Y) \cup Z)}&=\|e(\widetilde{w})\|_{L^2((N\setminus Y) \cup Z)} \leq \|\nabla \widetilde{w}\|_{L^2((N\setminus Y) \cup Z)} \\& \leq |(N\setminus Y) \cup Z|^{1/2} \|\nabla \widetilde{w}\|_{L^\infty((N\setminus Y) \cup Z)} \leq  C_\eta  \EEE \|e(u)\|_{L^2(N\setminus Y)},
\end{split}
\end{equation*}
where in the last inequality we used \eqref{1108241721} and the fact that 
%$|Z|$ is controlled by $c (\mathcal{H}^1(\partial Z))^2 \leq c (\varepsilon/\eta)^2$ by the isoperimetric inequality  
  $|Z|\leq  \varepsilon^2/\eta^2$ by 
assumption as well as  that $|N|$ is controlled by  $C\varepsilon^2/\eta^4$ \EEE due to \eqref{1108241555}, respectively. \EEE This concludes the proof.
\end{proof}

%\RRR I moved the remark on $p$ here, let's see if we want to keep it: \EEE
%
% 
%\VVV
%\begin{remark}\label{rem:boundLpmod}
%Given $p \in (1,2)$, taking 
%\[
%\eta=\varepsilon^{\alpha} \quad \text{ for }\alpha \in \left(0, \frac{2-p}{2+5p}\right),
%\]
%by the estimate $\|g\|_{L^p(\Omega)}\leq |\Omega|^{\frac{2-p}{2p}} \|g\|_{L^2(\Omega)}$ we deduce that
%\begin{equation*}
% \Vert   e(u_{\rm mod}\,\chi_{\Omega\sm A_{\rm mod}}) \Vert_{L^p(\Omega)}\leq C. 
%\end{equation*}
%\end{remark} \EEE  
%
% 
%\RRR All  applications of the modification result need to be rechecked as I slightly changed the formulation. I dont want to change the next section but I found a typo: In the definition of $\Omega_{n}^{\rm crack}$ we need to replace $\Omega$ by $\Omega'$.  \EEE   

\section{Approximation of quasi-static crack growth}\label{sec: evo}

This section is devoted to the formulation of our main result. We present a convergence result for an evolutionary problem with respect to the adaptive finite-element approximation introduced in Section~\ref{sec: gamma}.  More precisely, we set up a time-incremental minimization scheme and prove the convergence to a continuum quasi-static crack growth in the spirit of {\sc Francfort and Larsen} \cite{Francfort-Larsen:2003}.  In particular, \JJJ we \EEE recover the existence result of a fracture evolution in linearized elasticity  \cite{FriedrichSolombrino}. The main issue compared to the $\Gamma$-convergence result in Section~\ref{subsec: Gamma-conv} consists in dealing with the    irreversibility of the fracture process.

\subsection{Quasi-static adaptive finite element model}\label{sec: QS FEM}

We  define an arbitrary sequence $(\eps_n)_n \subset (0,\infty)$ with $\eps_n \to 0$ as $n \to \infty$.   Instead of considering general densities $f$ with properties \eqref{f-assump}, we consider  for simplicity  only the  special case  $f(t)=  t\wedge \kappa$  and $\C=\mathrm{Id}_{2{\times}2 \times 2 \times 2}$.  The case of general $\C$ can be treated in the same way, adjusting the notation accordingly. Moreover, we assume that 
the function $\omega \colon \R^+ \to \R^+$ is given by
\begin{align}\label{eq: 10.6}
\omega(\eps_n)  = 10^6\eps_n.
\end{align}
The constant $10^6$ is chosen for definiteness only. This assumption could be removed at the expense of additional estimates which we omit for simplicity. \EEE
 
  In order to introduce boundary conditions on a part $\partial_D \Omega \subset \partial \Omega$ of the boundary, we impose boundary conditions in a \emph{neighborhood} of the boundary. More precisely, we suppose that there exists another Lipschitz set ${\Omega'} \supset \Omega$ with  $\partial_D \Omega = \partial \Omega \cap {\Omega'}$ such that also  $\Omega' \setminus \overline{\Omega}$ is Lipschitz. For a given boundary datum $g\in W^{2,\infty}( \Omega' \EEE; \R^2)$ and  a triangulation $ \mathbf{T}_n \EEE\in \mathcal{T}_{\eps_n}(\Omega')$, we define 
\begin{align}\label{triangul}
\text{$g_{{ \mathbf{T}_n \EEE}}$ as the piecewise affine \JJJ interpolation \EEE of $g$ on $ \mathbf{T}_n \EEE$.}
\end{align}
 Recalling \eqref{energy-static}, we then consider the energy  
\begin{align}\label{energy-static-new}
E_{n}(u)\defas  \int_{\Omega}|e(u)|^2\wedge \frac{\kappa}{\eps_n} \,    {\rm d}x 
\end{align}
if $u\in \mathcal{A}_{\varepsilon_n}( \Omega' \EEE )$ and if for  the (possibly non uniquely chosen) triangulation $ \mathbf{T}_n \EEE(u)\in \mathcal{T}_{\eps_n}(\Omega')$ (see \eqref{Veps}) it holds   $u = g_{ \mathbf{T}_n \EEE(u)}$ on each triangle $T \in \mathbf{T}_n \EEE(u)$ such that \MMM $T \cap  \overline{\Omega} = \emptyset$. \EEE Otherwise, we set $E_{n}(u) = + \infty$.  We emphasize that the energy is still defined as an integral over $\Omega$  although the functions $u$ are defined on the larger set $\Omega'$.

Now we introduce a time discrete evolution which is driven by time-dependent boundary conditions $g\in W^{1,1}(0,T;W^{2,\infty}( \Omega'; \R^2 \EEE))$. \VIT Given \EEE a sequence $(\delta_n)_n \subset (0,\infty)$ with $\delta_n \to 0$ and for each $\delta_n$, we consider the subdivision $0=t^0_n <\dots<t_n^{ T /\delta_n}=T$  of the interval $[0,T]$ with step size $\delta_n$. (Without restriction, we assume that $T /\delta_n \in \N$.) Correspondingly, let $(g(t^k_n))_k$ be the sequence of boundary data at different time steps $k\in \{0,\dots, T/\delta_n \}$.

We assume for the moment that a displacement history $(u^j_n)_{j<k}$ at time steps  $(t^j_n)_{j<k}$ is given, and introduce admissible competitors and the energy for the next time step, taking into account the  irreversibility of the process.

 Consider $u\in \mathcal{A}_{\eps_n}(\Omega')$ and the corresponding \EEE  triangulation  $  \mathbf{T}_n (u) $.   Recall the definition of  $\MMM \mathbf{T}_{\eps_n}^{\rm big}  \EEE  =   \{T \in  \mathbf{T}_n  (u)\colon \eps_n |e(u)_{T}|^2\geq \kappa  \}$ \MMM and the definition of $\Omega^{\rm big}_{\eps_n} (u) $ \EEE  in \eqref{omegacrack}. \EEE  In place of $\Omega^{\rm big}_{\eps_n} (u)$,  we   now define a  possibly \EEE  larger `crack set' $\Omega_{n}^{\rm crack}(u)$ by   considering also \EEE triangles that are very far away from a regular background mesh. More precisely, let $\mathbf{Z}_n$ be the triangulation that is based on the square grid of size $\eps'_n \defas 2\eps_n \cos(\theta_{0})$ with nodes contained in $\eps_n' \Z^2 \cap \Omega'$, where each square is then cut into two triangles along the diagonal (see  \cite[Figure~5.11]{ChamboDalMaso}). \EEE For a function $u\in \mathcal{A}_{\eps_n}(\Omega')$  with triangulation $\mathbf{T}_n (u)$, \EEE we define $\mathbf{Z}_n(u)\defas  \mathbf{T}_n \EEE(u)\cap \mathbf{Z}_n$ as the part of the triangulation that belongs to this regular background mesh and let $\dist(T, \mathbf{Z}_n(u))\defas \min\{\dist(T,\tilde{T})\colon \tilde{T}\in \mathbf{Z}_n(u)\}$. We then  define \EEE
\begin{equation}\label{def: new-crackset}
    \mathbf{T}^{\rm crack}_n(u)\defas \big\{T \in  \mathbf{T}_n \EEE(u)\colon \eps_n |e(u)_{T}|^2\geq \kappa  \quad \text{or} \quad \dist(T,\mathbf{Z}_n(u))\geq 10^6 \eps_n \big\}.
\end{equation}
% \JJJ We note that the second condition is inspired by the construction of recovery sequences in \cite[Appendix]{ChamboDalMaso}. We mention that the background triangulation $\mathbf{Z}_n$ could be further coarsened away from $\Omega_{n,k-1}^{\rm crack}$ as long as the length of the triangles does not exceed $\omega(\eps_n)$. For simplicity we use a uniform background mesh away from  $\Omega_{n,k-1}^{\rm crack}$. \EEE
The associated crack set is then defined as the union of all such triangles  in $\Omega'$, this means
\begin{equation}\label{def: new-cracksetXXXXX}
    \Omega_{n}^{\rm crack}(u)\defas \mathrm{int}\Big(\bigcup_{T\in   \mathbf{T}^{\rm crack}_n \EEE(u)} T\Big) \cap \Omega' \,.\EEE
\end{equation} 
Note that, additionally to the condition on the gradient, we also  regard \EEE triangles as `cracked' if they are  far away from a fixed background mesh. This means in particular that, if $\mathbf{Z}_n(u)=\emptyset$, we would have $\Omega_n^{\rm crack}(u)=\Omega'$.  This condition is inspired by the construction of recovery sequences in \cite[Appendix]{ChamboDalMaso} where all triangles are close to a background mesh, i.e., in that situation  the additional condition is not active. In our evolutionary setting, we expect the same, and thus the condition is merely of technical nature. Let us also emphasize that the constant $10^6$ is chosen for definiteness only and could be chosen arbitrarily large, but \EEE   fixed.   Both requirements in \eqref{def: new-crackset} will  turn out to be crucial  for our proof of the stability of the static equilibrium property, see Theorem \ref{stability} below. \EEE  

\MMM Given a displacement history $(u^j_n)_{j<k}$, we \EEE  define 
 \begin{equation}\label{Tcrack}\mathbf{T}_{n,k-1}^{\rm crack}\defas \bigcup_{j<k}  \mathbf{T}^{\rm crack}_n \EEE(u^{j}_n) \quad \quad \Omega_{n,k-1}^{\rm crack}\defas \bigcup_{j<k}  \Omega^{\rm crack}_n \EEE(u^{j}_n)\,. \end{equation}
 For a given displacement $u\in \mathcal{A}_{\eps_n}(\Omega')$ we also  set \EEE
  \[\mathbf{T}_{n,k-1}^{\rm crack}(u)\defas \mathbf{T}_{n,k-1}^{\rm crack}\cup \mathbf{T}^{\rm crack}_{n}(u) \quad \quad \Omega_{n,k-1}^{\rm crack}(u)\defas \Omega_{n,k-1}^{\rm crack} \cup  \Omega_{n}^{\rm crack} \EEE(u)\,.\]
%  Similarly to previous approaches, we consider the \emph{maximum memory variable} \[M_{k}(x)\defas \sup_{j<k} |e(u_{j}(x))|\quad \quad M_{k}(u,x)\defas \sup_{j<k} |e(u_{j}(x))| \vee |e(u(x))| \,.\] 
%  Note that we can express the set $\Omega_{\eps,k}^{\rm crack}$ now in terms of this memory variable, namely 
%  \[\Omega_{\eps,k}^{\rm crack}=\{x\in \Omega\colon \eps M_{k}(x)^2>R\} \quad \quad \Omega_{\eps,k}^{\rm crack}(u)=\{x\in \Omega\colon \eps M_{k}(u,x)^2>R\}\,.\]
\MMM Similar to  the splitting in  \eqref{first-lb}, we \EEE   define the corresponding history-dependent energy by
 \begin{align}\label{energy-split}
    \mathcal{E}_{n}(u, (u^j_n)_{j<k}):=
      \int_{\Omega\setminus  \Omega_{ n ,k-1}^{\rm crack}(u) \EEE } |e(u)|^2 \, +\,\kappa \,\frac{| \Omega_{n,k-1}^{\rm crack}(u) \EEE  |}{ \eps_n \EEE} =: \EEE
         \mathcal{E}^{\rm elast}_{n}(u,(u^j_n)_{j<k})+ \mathcal{E}^{\rm crack}_{n}(u,(u^j_n)_{j<k}).
  \end{align}
   Note that the set $\Omega_{n,k-1}^{\rm crack}$  and thus the energy take the `cracked triangles' of all previous time steps into account. \EEE   In general, the triangulations at each time could be different and without additional requirements it is not guaranteed that the union $\Omega_{n,k-1}^{\rm crack}$ is consistent with an admissible triangulation. In particular, $\mathbf{T}_{n, k-1}^{\rm crack}$ is not necessarily a triangulation \GGG partitioning \EEE $\Omega_{n,k-1}^{\rm crack}$. For this reason, we introduce a further restriction as we set up the time-incremental \MMM minimization \EEE scheme. More precisely, recalling \eqref{Veps}, we set $\hat{\mathcal{A}}_n^0(\Omega'):= \mathcal{A}_{\varepsilon_n}(\Omega')$ and for $k \ge 1$ we introduce the set $\hat{\mathcal{A}}^{k}_{n}(\Omega')\subset \mathcal{A}_{\eps_n}(\Omega')$ that depends on the displacement history $(u^{j}_n)_{j<k}$ and consists of all functions $u\colon\Omega'\to \R^2$ such that there exists a triangulation $ \mathbf{T}_n \EEE\in \mathcal{T}_{\eps_n}(\Omega')$ with $u$ being piecewise affine on $ \mathbf{T}_n \EEE$ and  such that $ \mathbf{T}_n \EEE$ fulfills  
  \begin{align}\label{eq: admis tria}
\mathbf{T}_{n,k-1}^{\rm crack}\subset  \mathbf{T}_n. \EEE
\end{align}
% \begin{enumerate}[labelindent=0pt,labelwidth=\widthof{\ref{eq:W_zero_on_sod}},label=(V\arabic*),ref=(V\arabic*)]
%     \item  \label{V1} $\mathbf{T}_{n,k-1}^{\rm crack}\subset \mathbf{T}$. 
%   \end{enumerate}
We then define 
\begin{align}\label{akeps}
\mathcal{A}^{k}_{n}\defas \big\{v\in   \hat{\mathcal{A}}^{k}_{n}(\Omega') \EEE \; \text{and} \;   v=g(t^k_n)_{ \mathbf{T}_n \EEE(v)} \;\text{on all $T \in \mathbf{T}_n \EEE(v)$ with }  \MMM T \cap  \overline{\Omega} =\emptyset \EEE
\big\}.
\end{align}
Inductively,  provided that $u^j_n \in \mathcal{A}^{j}_{n}$ for $0 \le j \le k-1$, \EEE we  see that $ \hat{\mathcal{A}}^{k}_{n}(\Omega') \neq \emptyset$ and then also $  {\mathcal{A}}^{k}_{n} \neq \emptyset$ for all $k \ge 1$ since the triangulation $ \mathbf{T}_n \EEE(u^{k-1}_n)$ satisfies $ \mathbf{T}_{n,k-1}^{\rm crack}\subset \mathbf{T}_n \EEE(u^{k-1}_n)\EEE$.

 We suppose that the \emph{initial value} $u_n^0 \in \mathcal{A}^{0}_n$ is a minimum configuration in the sense that 
  \begin{equation}\label{minimizing-scheme0}
  u_n^{0}\in  {\rm  argmin} \big\{ E_n(v) \colon v \in \mathcal{A}^{0}_n  \big\} \,,
  \end{equation}
  with $E_n$ given as in \eqref{energy-static-new}. We inductively define an evolution as follows: given $(u_n^j)_{0 \le j \le k-1}$, we let 
  \begin{equation}\label{minimizing-scheme}
    u_{n}^{k}\in \argmin \big\{\mathcal{E}_{n}(v,(u_n^{j})_{j< k})\, \colon v\in \mathcal{A}^{k}_{n}\big\},
  \end{equation}  
  i.e., the minimization problem involves the previous time steps, according to the definition in \eqref{energy-split}. The existence of minimizers in \eqref{minimizing-scheme} immediately follows from the direct method since  $\mathcal{A}^{k}_{n} \neq \emptyset$, the problem is finite dimensional for a fixed triangulation, \JJJ $\mathcal{E}_{n}(\cdot, (u^j_n)_{j<k})$ is continuous, \EEE  and the set of admissible \JJJ interpolations \EEE $\mathcal{A}^{k}_{n}$ is compact.

\subsection{Quasi-static fracture evolution}

We consider the  \emph{Griffith energy} \EEE 
  \begin{align}\label{eq: lim-en}
  \mathcal{E}(u,K):=   \int_{\Omega}|e(u)|^2\,{\rm d}x+ \kappa \,\GGG \sin(\theta_0) \EEE \, \mathcal{H}^{1}(K)\,,
  \end{align}
  for each $u \in GSBD^2 (\Omega') \EEE $ and each rectifiable set $K\subset  \Omega \cup  \partial_D \Omega \EEE$ with $\mathcal{H}^1(K) < + \infty$, where $e(u)$ denotes the approximate symmetric gradient and \GGG $J_u$ \EEE is the jump set of $u$, which is subject to the constraint $\GGG J_u \EEE \, \tilde{\subset} \,  K$. (Here and in the following, $\, \tilde{\subset} \, $ stands for inclusions up to $\mathcal{H}^1$-negligible sets.) We highlight that, although the elastic energy is defined on $\Omega$, the functions are defined on the larger set $\Omega'$ and the crack sets $K$ may intersect the Dirichlet boundary $\partial_D \Omega$. \EEE  
  
By $AD(g,H)$ we denote all functions $v\in GSBD^2(\Omega') \EEE$ such that   
\begin{equation}\label{def:ADgH}
  v=g \text{ on } \Omega' \setminus \overline{\Omega}, \quad J_v \ \tilde{\subset} \ H.
  \end{equation}

 \begin{definition}\label{main def}
  We define an \emph{irreversible quasi-static crack evolution} with respect to the boundary condition $g \MMM \in W^{1,1}(0,T;W^{2,\infty}( \Omega'; \R^2)) \EEE $  as any \EEE mapping  $t\to (u(t),\Gamma(t))$ with $u(t) \in AD(g(t),\Gamma(t))$ for all $t \in [0,T]$ such that the following  four \EEE conditions hold:
  
  \begin{itemize}
   \item[(a)] \emph{Initial condition}: $u(0)$ minimizes $\mathcal{E}(u,J_u)$ given in \eqref{eq: lim-en} among all $v \in GSBD^2(\Omega')$ with
$v = g(0)$ on $\Omega' \setminus \overline{\Omega}$. 
  \item[(b)] \emph{Irreversibility}:   $\Gamma(t_1) \, \tilde{\subset} \,  \Gamma(t_2)$   for all $0\leq t_1\leq t_2\leq T$.
  \item[(c)] \emph{Global stability}:  \BBB For every $t \in (0,T]$,   for  every  $H$ with $\Gamma(t) \, \tilde{\subset} \, H $, and  for \EEE every $v\in AD(g(t),H)$    it holds that 
      \begin{equation}\label{finalstability}
      \GGG \mathcal{E}(u(t),\Gamma(t)) \EEE\leq \mathcal{E}(v,H)\,.
      \end{equation}
      \item[(d)]   \emph{Energy balance}:    The function $t\mapsto \mathcal{E}(u(t),\Gamma(t))$ is absolutely continuous and it holds that
      \begin{equation}\label{energybalance}
          \frac{\rm d}{{\rm d}t} \mathcal{E}(u(t),\Gamma(t)) =\int_{\Omega}  e(u  (t) \EEE ) : \nabla \partial_{t}g(t)\, {\rm d}x \BBB \quad \text{for a.e.\ $t \in [0,T]$}\,, \EEE 
      \end{equation} 
      where by $\partial_t$ we denote the time derivative of $g$.
  \end{itemize}
\end{definition}

   In \cite{FriedrichSolombrino} \EEE (see \cite{Francfort-Larsen:2003} for the scalar case), the existence of an  {irreversible quasi-static crack evolution} with respect to the boundary displacement $g$ has been shown. In this present work, our goal consists in approximating such an evolution with the time-discretized evolution defined in Section \ref{sec: QS FEM}.

\subsection{Main result: Approximation of quasi-static crack growth}

To formulate our main result, we need to introduce some further notation. First, we associate a `crack surface' to the finite element evolution $(u_n^k)_k$.  To this end,
  we choose $\eta_n\to 0$ depending on $\eps_n$ such that 
  \begin{equation}\label{3112241116}
   (C_{\eta_n})^7 \eps_n \to 0\quad \text{as }n\to \infty, 
   \end{equation}
  where $C_{\eta_n}\geq 1$ denotes the constant in Theorem~\ref{generaltheorem}, and  
 we apply Theorem \ref{generaltheorem} to \MMM $\eta_n$, \EEE $u_n^k$, and $  A   =\Omega^{\rm crack}_{n,k}$.  (Here, we consider $\Omega'$ as the ambient space in place of $\Omega$, in particular we replace $\Omega$ with $\Omega'$ in   \eqref{EEE}, see also \eqref{def: new-cracksetXXXXX}.) 
 %  \cap \Omega' \EEE \RRR too sloppy? \EEE
% \RRR Remove: for $\delta=\eta_n$\EEE. 
% such that 
%\begin{equation*}
%\lim_{n\to +\infty}\eta_n =0,\quad C_{\eta_n}\eps_n \leq \sqrt{\eps_n} 
%\end{equation*} 
%% on \RRR check notation \EEE  to 
%we 
 We obtain a set $\Omega^{\rm mod}_{n,k} $ and a function $u^{\rm mod}_{n,k} $  \EEE satisfying
\begin{equation}\label{eq: combined}
\begin{split}
&|\Omega^{\rm mod}_{n,k}| \le C_{\eta_n} \eps_n,   \qquad   | \lbrace u_n^{k} \neq  u^{\rm mod}_{n,k} \rbrace  \cap \MMM \Omega'_{\eps_n, \eta_n}\EEE  | \le C_{\eta_n} \eps_n, \quad \\
 \Vert   e(u^{\rm mod}_{n,k}) &  \Vert_{L^2(\MMM \Omega'_{\eps_n, \eta_n} \EEE  \setminus \Omega^{\rm mod}_{n,k})} \le C_{\eta_n},  \quad  \mathcal{H}^1(\partial \Omega^{\rm mod}_{n,k}) \EEE \le  \frac{2}{\eps_n  \sin\theta_0}|   \Omega^{\rm crack}_{n,k} \EEE  |  +  C  \eta_n, 
\end{split}
\end{equation}
 where   the constants $C$ and $C_{\eta_n}$ also depend   on $\max_{0 \le k \le T/\delta_n} \EEE \mathcal{E}_{n}(u_n^k,(u_n^{j})_{j< k})$. \EEE

 We define the evolution $u_n\colon [0,T]\times  \Omega' \EEE \to \R$, piecewise affine in space and piecewise constant in time, by
  \begin{equation}\label{definitionofun}
  u_n(t):= u^k_n \chi_{\Omega' \setminus  \Omega^{\rm crack}_{n,k}  } \; \text{for}\; t\in[t^{k}_n,t^{k+1}_n)\,.
  \end{equation}
The crack set $K_n$ is defined by
 \begin{equation}\label{definition-crackset}
    K_n(t)\defas \partial{\Omega}^{\rm mod}_{n,k}\quad  \text{for $t\in [t^{k}_n,t^{k+1}_n)$}\,.
    \end{equation}  
   For the crack sets, we use the notion of \emph{$\sigma$-convergence} recalled in Definition \ref{def:sigmaconv} below, which is a suitable notion of convergence for crack sets.  In particular, below we will obtain the existence of $K(t)  \tsubset \EEE \overline{\Omega} \cap \Omega'$  for \EEE $t \in [0,T]$  such \EEE 
   % we will see below 
   that $K_n(t)$  $\sigma$-converges to $K(t)$ for $t \in [0,T]$. \EEE

 As a final preparation, we identify sets on which convergence of displacement fields can be guaranteed. For a crack set $\Gamma(t)  \subset  \overline{\Omega} \cap \Omega' \EEE $ with $\mathcal{H}^1(\Gamma(t) \EEE)<\infty$, by $B(t) \subset \Omega$ we denote the largest set of finite perimeter (with respect to set inclusion) which satisfies $  \partial^* B(t)  \cap \Omega' \EEE \, \tilde{\subset} \,   \Gamma(t) \EEE  $. \EEE This set represents the `broken off pieces', and by $G(t) := \Omega' \setminus B(t)$ instead we denote the `good set', {which}   in particular satisfies $\Omega' \setminus \overline{\Omega} \subset G(t)$. Note that  convergence of the displacements can only be expected on $G(t)$, see  \cite[Subsection 2.4]{steinke} for a \MMM thorough \EEE discussion. Moreover, we note that for an evolution $t\mapsto (u(t),\Gamma(t))$ in the sense of Definition~\ref{main def} it holds that 
\begin{equation}\label{b(t)}
 e(u(t) ) = 0 \quad \text{ on }  B(t) \quad \text{ for all $t \in \VVV [0,T]$}. \EEE   
\end{equation}
In fact, this follows by   applying \eqref{finalstability} with test set $H= \JJJ \Gamma(t) \EEE$ and test function $v = u(t) \chi_{G(t)}$. \EEE

  The main result of this paper is the following convergence theorem. 
\begin{theorem}[Approximation of quasi-static \EEE crack growth]\label{maintheorem}
     \GGG There exists \VIT a \EEE quasi-static crack evolution \EEE $t\to (u(t), K(t) \EEE)$ with respect to the boundary condition $g$ such that, up to a subsequence, we have \VIT that \EEE
  \begin{align}\label{eq: sigma coniiiii}
   K_n(t) \ \sigma\text{-converges to }   K(t) \text{ for all }t \in [0,T]
  \end{align}
  as $n \to \infty$, \VVV $K(0)=J_{u(0)}$, \EEE  and \VIT that, \EEE   for each $t \in [0,T]$, 
      % there exists a $t$-dependent subsequence $(n_k)_k$ such that  
      \begin{align}\label{eq: l1convi}
      \text{ $\GGG u_{n}(t) \to u(t)\EEE$ as $k \to \infty$ in measure on $G(t)$,} \quad e(u_n(t))\to e(u(t)) \text{ in }L^2(\Omega; \R^{2 \times 2}_{\rm sym}),
      \end{align}
        where $G(t)$ is the  set corresponding to $K(t)$  defined before \eqref{b(t)}. \EEE
      Moreover, for all $t \in [0,T]$ we have \VIT that \EEE
%  \begin{align}\label{energ convi}
%  \mathcal{E}_n(u_n(t), (u_{n}^{j})_{j <  k(t)}) \to   \mathcal{E}(u(t),K(t))=\mathcal{E}(u(t),\hatc{K(t)}) \quad \text{as $n \to \infty$},    
%  \end{align}
 \begin{align}\label{energ convi}
  \mathcal{E}_n(u_n(t), (u_{n}^{j})_{j <  k(t)}) \to  \GGG \mathcal{E}(u(t), K(t) \EEE ) \EEE \quad \text{as $n \to \infty$},    
  \end{align}
      where for each $n$ the \BBB ($n$-dependent)  \EEE index  $k(t)  \in \N \EEE $ is chosen such that $t \in [t^{k(t)}_n,t^{k(t)+1}_n)$. 
\end{theorem}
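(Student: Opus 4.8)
The proof follows the standard Francfort–Larsen scheme adapted to the discrete-to-continuum passage, splitting naturally into four stages: (1) a priori estimates and compactness for the time-discrete evolutions, (2) identification of the limit crack via $\sigma$-convergence, (3) verification of the three defining properties (irreversibility, global stability, energy balance), and (4) convergence of energies and strains. The main obstacle is stage (3), specifically the stability of unilateral minimality, which—as the introduction announces—is deferred to the separate Theorem on stability and relies on a discrete jump-transfer construction; here I would treat it as a black box and focus on how it feeds into the Francfort–Larsen argument.

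\textbf{Step 1: A priori bounds and compactness.} Starting from the incremental minimization \eqref{minimizing-scheme}, the first task is the discrete energy inequality: testing the minimizer $u_n^k$ against a competitor built from $u_n^{k-1}$ with boundary datum updated from $g(t_n^{k-1})$ to $g(t_n^k)$ (using that the triangulation of $u_n^{k-1}$ is admissible at step $k$ since $\mathbf{T}_{n,k-1}^{\rm crack} \subset \mathbf{T}_n(u_n^{k-1})$), one obtains the estimate
\begin{equation*}
\mathcal{E}_n(u_n^k,(u_n^j)_{j<k}) \le \mathcal{E}_n(u_n^{k-1},(u_n^j)_{j<k-1}) + \int_{t_n^{k-1}}^{t_n^k}\!\!\int_\Omega e(\tilde u_n^{k-1}):\nabla\partial_t g(s)\,\mathrm{d}x\,\mathrm{d}s + o(1),
\end{equation*}
summing which gives a uniform bound $\max_k \mathcal{E}_n(u_n^k,\cdot) \le C$ via Grönwall. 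This bound, together with \eqref{3112241116} and the void-modification Theorem~\ref{generaltheorem} applied with $A = \Omega_{n,k}^{\rm crack}$, yields \eqref{eq: combined}; in particular $\sup_k(\|e(u_{n,k}^{\rm mod})\|_{L^2} + \mathcal{H}^1(\partial\Omega_{n,k}^{\rm mod})) < \infty$, so the compactness result from \cite{Friedrich-Crismale,CC-JEMS} gives, for each fixed $t$, a subsequence with $u_n(t) \to u(t)$ in measure on a good set and $e(u_n(t)) \weakly e(u(t))$. A Helly-type selection argument (diagonalizing over a countable dense set of times, monotonicity of discrete cracks from the admissibility constraint \eqref{eq: admis tria}) extends this to all $t \in [0,T]$.

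\textbf{Step 2 and Step 3: $\sigma$-convergence, irreversibility, and stability.} Using the $\sigma$-compactness and $\sigma$-irreversibility statements recalled/proven in Section~\ref{sec: preps}, I would extract $K(t)$ with $K_n(t)$ $\sigma$-converging to $K(t)$ for all $t$, giving \eqref{eq: sigma coniiiii} and the irreversibility (b); the initial condition (a) and $K(0) = J_{u(0)}$ follow from the $\Gamma$-convergence analysis of Section~\ref{subsec: Gamma-conv} (Proposition~\ref{liminf-ineq'}, Corollary~\ref{cor: stability}) applied at $t=0$. The heart is global stability (c): fixing $t$, $H \tilde\supset \Gamma(t)$, and a competitor $v \in AD(g(t),H)$, I invoke the stability-of-unilateral-minimality theorem (Theorem~\ref{stability}) to produce, for the discrete time step $k(t)$, a competitor $\hat v_n \in \mathcal{A}_n^{k(t)}$ whose triangulation contains $\mathbf{T}_{n,k(t)-1}^{\rm crack}$, with $\mathcal{E}_n(\hat v_n,(u_n^j)_{j<k(t)}) \le \mathcal{E}(v,H) + o(1)$—this is the mutual recovery sequence for displacement and crack. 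Minimality of $u_n^{k(t)}$ then gives $\mathcal{E}_n(u_n^{k(t)},\cdot) \le \mathcal{E}(v,H) + o(1)$, and combining with the $\liminf$ inequality (the $\Gamma$-liminf of Theorem~\ref{thm:liminf} for the bulk and the surface lower-semicontinuity under $\sigma$-convergence for the crack, as in \cite{GiacPonsi,Friedrich-Crismale}) yields $\mathcal{E}(u(t),K(t)) \le \mathcal{E}(v,H)$.

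\textbf{Step 4: Energy balance and strong convergence.} The upper energy bound $\mathcal{E}(u(t),K(t)) \le \mathcal{E}(u(0),K(0)) + \int_0^t \int_\Omega e(u(s)):\nabla\partial_t g(s)\,\mathrm{d}x\,\mathrm{d}s$ follows by passing to the limit in the discrete energy inequality of Step 1 (lower semicontinuity on the left, the work term converging once one knows $e(u_n(s)) \weakly e(u(s))$ and controls the interpolation error in $\tilde u_n$). The matching lower bound is the classical argument: global stability (c) at times $s < t$ gives, via a Riemann-sum comparison using $u(s)$ as a competitor at time $t$ with a crack-augmented test function, the reverse inequality $\frac{d}{dt}\mathcal{E}(u(t),K(t)) \ge \int_\Omega e(u(t)):\nabla\partial_t g(t)\,\mathrm{d}x$ in integrated form; absolute continuity of $t \mapsto \mathcal{E}(u(t),K(t))$ then closes (d) and simultaneously forces \eqref{energ convi}. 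Finally, the convergence of elastic energies $\int_\Omega|e(u_n(t))|^2 \to \int_\Omega|e(u(t))|^2$ (obtained by subtracting the now-convergent surface parts from \eqref{energ convi}) upgrades weak to strong convergence $e(u_n(t)) \to e(u(t))$ in $L^2$, giving \eqref{eq: l1convi}. The delicate point throughout is ensuring that the modification from $\Omega_{n,k}^{\rm crack}$ to $\Omega_{n,k}^{\rm mod}$—needed to make the surface energy sharp—is compatible with the irreversibility structure and the $\sigma$-convergence framework simultaneously, which is precisely why Corollary~\ref{monotonicity-corollary} on nested modified voids is needed.
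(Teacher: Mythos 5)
Your roadmap matches the paper's five-step structure in outline, but there is a genuine gap in how you use Theorem~\ref{stability}. You write that the recovery sequence $\hat v_n$ satisfies the absolute bound $\mathcal{E}_n(\hat v_n,(u_n^j)_{j<k(t)}) \le \mathcal{E}(v,H) + o(1)$, deduce $\mathcal{E}_n(u_n^{k(t)},\cdot) \le \mathcal{E}(v,H) + o(1)$ by minimality, and close by a liminf inequality. The middle step cannot be established at this point in the argument. The crack part $\mathcal{E}_n^{\rm crack}(\hat v_n;t) = \kappa|\Omega_{n,k-1}^{\rm crack}(\hat v_n)|/\eps_n$ contains the full accumulated discrete crack $\Omega_{n,k-1}^{\rm crack}$, and the void-modification theorem only yields $|\Omega_{n,k-1}^{\rm crack}|/\eps_n \ge \tfrac{\sin\theta_0}{2}\mathcal{H}^1(\partial\Omega_{n,k-1}^{\rm mod}) - O(\eta_n)$; there is \emph{a priori} no matching upper bound showing that this quantity converges to $\tfrac{\sin\theta_0}{2}\mathcal{H}^1(K(t))$. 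In fact, that asymptotic equality is precisely the content of \eqref{energ convi} and Remark~\ref{man: remark}(i), which the paper proves \emph{after} stability and energy balance are in place—so invoking it here is circular. This is exactly why Theorem~\ref{stability} is formulated as a \emph{difference} estimate, $\limsup_n\big(\mathcal{E}_n^{\rm crack}(\psi_n;t)-\mathcal{E}_n^{\rm crack}(u_n(t);t)\big) \le \kappa\sin\theta_0\,\mathcal{H}^1(J_\psi\setminus K(t))$, together with a separate bulk bound. The paper then rearranges the discrete minimality as $\mathcal{E}_n^{\rm elast}(u_n(t);t) \le \mathcal{E}_n^{\rm elast}(\psi_n;t) + \big[\mathcal{E}_n^{\rm crack}(\psi_n;t)-\mathcal{E}_n^{\rm crack}(u_n(t);t)\big]$, so that the unknown quantity $\mathcal{E}_n^{\rm crack}(u_n(t);t)$ never appears in isolation, passes to the limit, and only at the end adds $\kappa\sin\theta_0\mathcal{H}^1(K(t))$ to both sides (using $K(t)\tsubset H$) to reach \eqref{finalstability}. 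This rearrangement is the essential idea your proposal is missing.

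A smaller but still real omission: your Helly-type selection applies to the crack sets, but you still need the limit displacements to converge along a single subsequence independent of $t$, and you need to characterize $u(t)$ on $\Omega'\setminus G(t)$. In the paper this is a separate uniqueness step: once stability is established, $u(t)$ is the unique minimizer of the strictly convex bulk energy over $AD(g(t),K(t))$ (up to piecewise rigid motions supported in the broken-off part $B(t)$, which are fixed by setting $u(t)=0$ there), and left-continuity outside a countable set is invoked for the energy balance. Without this, the passage from $t$-dependent to $t$-independent subsequences in \eqref{eq: l1convi} has no justification. Finally, note that $K_n(t)=\partial\Omega_{n,k}^{\rm mod}$ is not itself increasing in $t$; the monotone surrogate $\tilde K_n(t)$ (built from Corollary~\ref{monotonicity-corollary}) must be introduced before Helly's theorem applies, and one must then verify $\tilde K(t)\,\teq\,K(t)$—you flag the corollary but not this intermediate construction.
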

 
  \begin{remark}\label{man: remark}
  {\normalfont
  We proceed with \MMM two \EEE comments on the result:
  
\begin{itemize}
\item[(i)] The energy convergence \eqref{energ convi} can be improved to separate energy convergence in the sense that, for all $t \in [0,T]$, 
$$\mathcal{E}^{\rm elast}_{n}\big( \MMM u_n(t), \EEE (u^j_n)_{j<k(t)}\big) \to  \int_{\Omega}|e(u(t))|^2\,{\rm d}x  $$
(recall \eqref {energy-split}) and  
\begin{align}\label{convvvi}\lim_{n \to \infty} \mathcal{E}^{\rm crack}_{n}\big( \MMM u_n(t), \EEE (u^j_n)_{j<k(t)}\big) = \frac{ \kappa \sin(\theta_0)  }{2} \EEE \lim_{n \to \infty} \mathcal{H}^1(K_n(t))    =   \kappa \sin(\theta_0) \EEE \, \mathcal{H}^{1}(K(t))\,.
\end{align}
\item[(ii)] The identity \eqref{convvvi} is the reason why \EEE crack sets along the sequence are defined in terms of $\partial{\Omega}^{\rm mod}_{n,k}$ and not in terms of $\partial \Omega^{\rm crack}_{n,k} $. In fact, \EEE for the latter the identity \eqref{convvvi}  does not hold in general. 
\end{itemize}

  }
  \end{remark}
  \section{Preparations}\label{sec: preps}
  In this section, we collect some tools and remarks that we will need to prove Theorem \ref{maintheorem} in the next section.  Before providing \EEE the necessary compactness and semicontinuity statements, we  recall \EEE a suitable notion of convergence for  crack sets and prove, respectively recall, the necessary compactness and irreversiblity results.

\subsection{Convergence of sets}\label{subsec:sigmaconv}

 Let $\Omega \subset \R^d$ for $d \ge 2$ and denote by $e_1$ the first unit vector. We let \EEE 
\begin{equation}\label{defPC}
{\rm PC}( \Omega \EEE ):=\{v \in L^1( \Omega; \EEE \R^d) \colon v= e_1 \chi_T \text{ with } T\subset \MMM \Omega \EEE \text{ of finite perimeter}\}
\end{equation}
 be the collection of piecewise constant functions taking values in $\lbrace 0, e_1  \rbrace$. \MMM In this subsection, \EEE by $\mathcal{A}(\Omega)$ we denote the family of open subsets of $\Omega$. \EEE 
We recall the notion of $\sigma$-convergence from \cite{GiacPonsi}.
\begin{definition}{\cite[Definition~5.1]{GiacPonsi}}\label{def:sigmaconv}
A sequence of rectifiable sets $(K_n)_n$ in  $\Omega$ \EEE $\sigma$-converges in $\Omega$ to $K$  if \EEE the functionals $\mathcal{H}_n^-\colon {\rm PC}(\Omega) {\times} \mathcal{A}(\Omega) \to [0,+\infty)$ defined by
\begin{equation}\label{1708240952}
\mathcal{H}_n^-(u, A):=\mathcal{H}^{d-1}\big( (J_u \sm K_n)\cap A)
\end{equation}
are such that, for every $A \in \mathcal{A}(\Omega)$, $\mathcal{H}_n^-(\cdot, A)$ $\Gamma$-converges with respect to the strong topology of $L^1(\Omega)$  to $\mathcal{H}^-(\cdot, A)$, where \EEE  $\mathcal{H}^-\colon {\rm PC}(\Omega) {\times} \mathcal{A}(\Omega) \to [0,+\infty)$  is \EEE given by
\begin{equation}\label{1708240957}
\mathcal{H}^-(u, A):=\int_{J_u \cap A} h^-(x,   \nu_u) \EEE \,  \mathrm{d}\mathcal{H}^{d-1}(x),
\end{equation}
and $K$ is the maximal (with respect to  $\tilde{\subset}$) rectifiable set in $\Omega$ such that
\begin{equation}\label{maxK}
h^-(x, \nu_K(x))=0 \quad\text{for } \mathcal{H}^{d-1}\text{-a.e.\ }x \in K.
\end{equation}
\end{definition}

%\RRR still improve notation from here on. \EEE
%

% \RRR Moved remark (i) from later to here. \EEE
\begin{remark}\label{rem: sigma1}
 (i) More precisely, \VIT if $\mathcal{H}^{d-1}(K_n)\leq C$ \MMM for all $n \in \N$,  by   \cite[Proposition 3.3]{GiacPonsi} \EEE  the density $h^-$ in  \eqref{1708240957} is characterized by
\begin{equation}\label{0812242041}
h^-(x, \nu)=\limsup_{\varrho\to 0^+}\liminf_{n\to +\infty} \frac{\mathbf{m}^{\mathrm{PC}}_{\mathcal{H}_n^-}(\ol u_{x,\nu}, Q_\varrho^\nu(x))}{\varrho^{d-1}},  
\end{equation}
for $x \in \Omega$ and $\nu \in \VIT \mathbb{S}^{d-1} \EEE = \lbrace y\in \R^d \colon |y| = 1\rbrace$, where $Q_\varrho^\nu(x)$ denotes a suitable cube with sidelength $\varrho$ and two sides orthogonal to $\nu$,  $\ol u_{x,\nu}:= e_1 \chi_{Q_\varrho^{\nu,-}(x)}$ (with $Q_\varrho^{\nu,-}(x) = \lbrace y\in Q_\varrho^{\nu}(x) \colon (y- x)\cdot \nu < 0\rbrace$),   and
\begin{equation}\label{0812242009}
\mathbf{m}^{\mathrm{PC}}_{\mathcal{H}}(\ol v, A):=\inf_{v \in \mathrm{PC}(A)}\{\mathcal{H}(v, A)\colon v=\ol v \text{ in a neighborhood of } \partial A\} \ \text{ for }\ol v \in \mathrm{PC}(A), \, A \in \mathcal{A}(\Omega).
\end{equation}
In particular, using $\ol u_{x,\nu}$ as competitor, we get  $h^- \le 1$. \EEE

(ii)  Let $(K_n)_{n}$ be a sequence of rectifiable sets in $\Omega$ such that $K_n$ $\sigma$-converges to $K$ in $\Omega$ and let $(\tilde{K}_n)_{n}$ be another sequence of rectifiable sets such that $\tilde{K}_n$ $\sigma$-converges to $\tilde{K}$ in $\Omega$, and $\tilde{K}_n \VVV \tsubset \EEE K_n$ \MMM for all $n \in \N$. \EEE Then, we have $\tilde{K}\VVV \tsubset \EEE K$. 

(iii) If $F$ is a closed set such that $K_n \tsubset F$, then the $\sigma$-limit $K$ \JJJ of $(K_n)_n$\EEE,  satisfies $K \JJJ \, \tilde{\subset}\, \EEE F$: in fact, $\mathcal{H}_n^-(u,\Omega\sm F)=\hn(J_u \cap (\Omega\sm F))$ for every $n \in \N$,  and thus \EEE $   \mathcal{H}^-(u, \Omega\sm F) =  \EEE \hn(J_u \cap (\Omega\sm F))$,  i.e., \EEE  $h^-$ cannot be 0 on a subset of $\Omega\sm F$ of positive $\hn$-measure.

\end{remark}

In the following, we  need  the  compactness and lower semicontinuity properties of $\sigma$-convergence,  see  \cite[Propositions 5.3, proof of Theorem 8.1]{GiacPonsi}.  \EEE

\begin{proposition}[Compactness and lower semicontinuity\EEE]\label{comp-sigma}
  Let $(K_n)_{n}$ be a sequence of rectifiable sets in $\Omega$  with $\hn(K_n)\leq C$. Then there exists a subsequence $(n_k)_{k}$ and a rectifiable set $K$  in $\Omega$ such that $K_{n_k}$  $\sigma$-converges in $\Omega$ to $K$. Moreover,
\begin{equation*}
  \hn(K)\leq \liminf_{n\to \infty} \hn(K_n)\,. 
\end{equation*}
\end{proposition}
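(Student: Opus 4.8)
\textbf{Proof plan for Proposition~\ref{comp-sigma}.}

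The plan is to prove the compactness and lower-semicontinuity statement for $\sigma$-convergence essentially by following \cite{GiacPonsi}, adapting the argument to the present ambient domain. First I would set up the framework: for each $n$, consider the functional $\mathcal{H}_n^-(\cdot, \cdot)$ on ${\rm PC}(\Omega) \times \mathcal{A}(\Omega)$ defined in \eqref{1708240952}. The key observation is that these are non-negative functionals of a uniformly controlled structure — each is bounded above by the perimeter functional $u \mapsto \hn(J_u \cap A)$, which is independent of $n$. By the compactness theorem for $\Gamma$-convergence of functionals defined on $L^1$ (in the separable setting, using a suitable metrizable topology on the pair, as in \cite{dalmasoIntroductionGConvergence1993} or the localization method of De~Giorgi–Letta), one extracts a subsequence $(n_k)_k$ such that for every $A \in \mathcal{A}(\Omega)$ the functionals $\mathcal{H}_{n_k}^-(\cdot, A)$ $\Gamma$-converge in the strong $L^1(\Omega)$-topology to some limit functional $\mathcal{H}^-(\cdot, A)$. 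Here one must be careful to perform the extraction simultaneously for all open sets: one does this first for a countable dense (with respect to a suitable set-convergence) subfamily of $\mathcal{A}(\Omega)$, and then verifies via an inner-regularity/measure-theoretic argument that the limit extends to all of $\mathcal{A}(\Omega)$ with the right structure.

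Next I would identify the structure of the $\Gamma$-limit. The point is that $\mathcal{H}^-(\cdot, A)$ is again a surface-type functional on ${\rm PC}(\Omega)$, i.e.\ it admits an integral representation $\mathcal{H}^-(u, A) = \int_{J_u \cap A} h^-(x, \nu_u)\, \dhn$ for some Borel density $h^-$ with $0 \le h^- \le 1$; the formula for $h^-$ is precisely \eqref{0812242041}, obtained via the blow-up/localization argument of \cite[Proposition~3.3]{GiacPonsi} (which applies verbatim under the uniform bound $\hn(K_n) \le C$). One then \emph{defines} $K$ to be the maximal rectifiable set with respect to $\tilde\subset$ on which $h^-(x, \nu_K(x)) = 0$ for $\hn$-a.e.\ $x$; the existence of such a maximal set follows from a standard exhaustion argument (take a countable union of rectifiable sets with this property realizing the supremum of $\hn$-measure, which is finite because $h^- \le 1$ forces any such set to carry no more measure than $\liminf_n \hn(K_n) \le C$). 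By construction $K_{n_k}$ $\sigma$-converges to $K$ in $\Omega$.

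Finally, for the lower-semicontinuity bound $\hn(K) \le \liminf_n \hn(K_n)$, I would argue as in the proof of \cite[Theorem~8.1]{GiacPonsi}: roughly, one tests the $\Gamma$-convergence against a sequence of piecewise-constant functions whose jump sets approximate $K$ (more precisely, one covers $K$ up to small $\hn$-error by finitely many small cubes $Q_\varrho^{\nu_K(x_i)}(x_i)$ with $h^-(x_i, \nu_K(x_i)) = 0$ and uses the characterization \eqref{0812242041} of $h^-$ via $\mathbf{m}^{\rm PC}_{\mathcal{H}_n^-}$), producing recovery configurations $v_n \in {\rm PC}(\Omega)$ with $\hn(J_{v_n} \setminus K_n) \to 0$ while $\hn(J_{v_n})$ is bounded below by (essentially) $\hn(K)$ minus an error. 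Comparing $\hn(J_{v_n}) \le \hn(J_{v_n} \setminus K_n) + \hn(K_n)$ and passing to the limit yields the claim after letting the approximation error tend to zero. The main obstacle — and the part requiring the most care — is the simultaneous-in-$A$ extraction of the $\Gamma$-convergent subsequence together with the proof that the limit has the required local integral-representation form; this is exactly where the abstract localization machinery (De~Giorgi–Letta criterion, fundamental estimate) from \cite{GiacPonsi} does the heavy lifting, and in the write-up I would simply invoke those results rather than reproving them, since the present setting differs from \cite{GiacPonsi} only in inessential notation.
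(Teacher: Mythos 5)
The paper does not prove Proposition~\ref{comp-sigma}: it attributes the statement to \cite[Propositions~5.3 and proof of Theorem~8.1]{GiacPonsi} and uses it as a black box, so the comparison here is with that reference's argument rather than with a proof inside the paper. Your sketch is a faithful reconstruction of that argument: compactness of the localized $\Gamma$-limits via the abstract De~Giorgi--Letta/localization machinery, integral representation of the limit surface density $h^-$ as in \eqref{0812242041}, definition of $K$ as the maximal rectifiable set on which $h^-$ vanishes $\hn$-a.e.\ (with finiteness of $\hn(K)$ coming from $h^-\le 1$ and the uniform bound on $\hn(K_n)$), and a covering/recovery argument for the measure bound.

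The one point worth sharpening is the phrase ``while $\hn(J_{v_n})$ is bounded below by (essentially) $\hn(K)$ minus an error'': that inequality holds only at the $\liminf$ level and hinges on lower semicontinuity of perimeter, which you leave implicit. To close the argument cleanly, pick $v\in{\rm PC}(\Omega)$ with $\hn(J_v\,\triangle\,K)\le\epsilon$, take a recovery sequence $v_n\to v$ in $L^1(\Omega)$ for $\mathcal{H}^-(\cdot,\Omega)$ so that $\hn(J_{v_n}\setminus K_n)\to\mathcal{H}^-(v,\Omega)\le\epsilon$ (using $h^-\le 1$ and $h^-=0$ on $K$), and invoke lower semicontinuity of perimeter to get $\liminf_n\hn(J_{v_n})\ge\hn(J_v)\ge\hn(K)-\epsilon$. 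Combined with $\hn(J_{v_n})\le\hn(J_{v_n}\setminus K_n)+\hn(K_n)$ this yields $\hn(K)-\epsilon\le\epsilon+\liminf_n\hn(K_n)$, and letting $\epsilon\to 0$ finishes; your cube-by-cube construction of $v_n$ via $\mathbf{m}^{\rm PC}_{\mathcal{H}_n^-}$ is also viable but requires the gluing (fundamental estimate) step you mention. This covering/recovery scheme is exactly what the paper itself carries out explicitly later in Lemma~\ref{le:semicontfactortwo} for the factor-two variant.
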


\begin{theorem}[Variant of Helly's theorem for $\sigma$-convergence]\label{helly-sigma}
    Let $t\mapsto \JJJ K \EEE_n(t)$ be a sequence of increasing set functions defined on an interval $I\subset \R$ with values contained in $\Omega$, i.e., $\JJJ K \EEE(s)\subset \JJJ K \EEE(t)\subset \Omega$ for every $s,t\in I$ with $s<t$. Assume that $\hn(\JJJ K \EEE_n(t))$ is bounded uniformly with respect to $n$ and $t$. Then, there exist a subsequence $(\JJJ K \EEE_{n_k})_k$ and an increasing set function $t\mapsto \JJJ K \EEE(t)$ on $I$ such that for every $t\in I$ we have
    \begin{enumerate}[label=(\alph*)]
        \item $\JJJ K \EEE_{n_k}(t)\to \JJJ K \EEE(t)$ in the sense of $\sigma$-convergence,
        \item  $\mathcal{H}^{d-1}(\JJJ K \EEE(t))   \leq \liminf_{k\to \infty} \mathcal{H}^{d-1}(\JJJ K \EEE_{n_k}(t))$. \EEE
\end{enumerate}
\end{theorem}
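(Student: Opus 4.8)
\textbf{Plan of proof of Theorem~\ref{helly-sigma}.}
The strategy is the classical Helly selection argument combined with the compactness result for $\sigma$-convergence (Proposition~\ref{comp-sigma}), applied in a diagonal fashion over a countable dense subset of $I$. First I would fix a countable dense set $D = \{s_1, s_2, \dots\} \subset I$ (for instance $D = I \cap \mathbb{Q}$, together with the endpoints of $I$ if they belong to $I$). Since $\mathcal{H}^{d-1}(K_n(s_i)) \le C$ uniformly in $n$ and $i$, Proposition~\ref{comp-sigma} applied at $s_1$ yields a subsequence along which $K_n(s_1)$ $\sigma$-converges; applying it again at $s_2$ along that subsequence, and so on, a standard diagonal extraction produces a single subsequence $(n_k)_k$ such that $K_{n_k}(s)$ $\sigma$-converges to some rectifiable set, call it $K(s)$, for \emph{every} $s \in D$. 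By Remark~\ref{rem: sigma1}(ii) applied to the pair of sequences $(K_{n_k}(s))_k$ and $(K_{n_k}(t))_k$ for $s < t$ in $D$ (which satisfy $K_{n_k}(s) \tsubset K_{n_k}(t)$ by monotonicity), we get $K(s) \tsubset K(t)$ for all $s \le t$ in $D$; so $t \mapsto K(t)$ is an increasing set function on $D$, and by Proposition~\ref{comp-sigma} it also satisfies $\mathcal{H}^{d-1}(K(s)) \le \liminf_k \mathcal{H}^{d-1}(K_{n_k}(s)) \le C$ for $s \in D$.

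Next I would extend $K$ from $D$ to all of $I$ by setting, for $t \in I \setminus D$,
\begin{equation*}
K(t) := \bigcup_{s \in D,\, s < t} K(s),
\end{equation*}
interpreted as a rectifiable set: since the union is increasing in $s$ and the measures are uniformly bounded, this union can be realized (up to $\mathcal{H}^{d-1}$-null sets) as a rectifiable set with $\mathcal{H}^{d-1}(K(t)) \le C$, e.g.\ by taking a countable increasing sequence $s^{(j)} \uparrow t$ in $D$ and setting $K(t) = \bigcup_j K(s^{(j)})$; a monotone convergence argument shows this is independent of the chosen sequence up to null sets. Monotonicity of the extended function $t \mapsto K(t)$ on all of $I$ is then immediate from the definition and the monotonicity on $D$. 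It remains to prove assertions (a) and (b) at an arbitrary $t \in I$, which is the genuine content and the main obstacle: one must upgrade $\sigma$-convergence from the dense set $D$ to every $t$, using only monotonicity and the uniform measure bound.

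For the upgrade, fix $t \in I$ and choose sequences $s^-_j \uparrow t$ and $s^+_j \downarrow t$ in $D$. By monotonicity of $K_{n_k}(\cdot)$ we have $K_{n_k}(s^-_j) \tsubset K_{n_k}(t) \tsubset K_{n_k}(s^+_j)$ for all $k$. The plan is to first extract (by another diagonal argument over $k$, using Proposition~\ref{comp-sigma}) a further subsequence — but note this would depend on $t$, so instead the cleaner route is to argue as in the proof of \cite[Theorem 8.1]{GiacPonsi}: one shows that the set $\mathcal{N} \subset I$ of points $t$ at which $(K_{n_k}(t))_k$ fails to $\sigma$-converge, or at which the $\sigma$-limit differs from $K(t)$, is at most countable, because it is contained in the set of discontinuity points of the nondecreasing bounded function $t \mapsto \mathcal{H}^{d-1}(K(t))$ (jumps of total mass $\le C$, hence countably many); one then enlarges $D$ to include $\mathcal{N}$ and re-runs the diagonal extraction. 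Concretely, for $t \notin \mathcal{N}$ one sandwiches: any $\sigma$-limit point $\tilde{K}$ of $(K_{n_k}(t))_k$ satisfies $K(s^-_j) \tsubset \tilde{K} \tsubset K(s^+_j)$ for all $j$ (again by Remark~\ref{rem: sigma1}(ii)), hence $\bigcup_j K(s^-_j) \tsubset \tilde{K} \tsubset \bigcap_j K(s^+_j)$; continuity of $\mathcal{H}^{d-1}(K(\cdot))$ at $t$ forces the outer sets to agree up to null sets with $K(t)$, so $\tilde{K} \,\tilde{=}\, K(t)$, and since every subsequence has this as its only limit point, the whole sequence $K_{n_k}(t)$ $\sigma$-converges to $K(t)$, giving (a). Finally (b) follows directly from the lower semicontinuity statement in Proposition~\ref{comp-sigma} applied at each $t$. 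The main obstacle, as indicated, is the careful handling of the exceptional set $\mathcal{N}$ and making the diagonal extraction $t$-independent; the sandwich step itself relies crucially on Remark~\ref{rem: sigma1}(ii) and on the characterization of $K$ as a maximal set, which ensures the $\sigma$-limit is uniquely determined.
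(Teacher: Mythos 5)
The paper does not supply a proof of this statement: it is quoted directly from \cite[Prop.~5.3 and proof of Thm.~8.1]{GiacPonsi}, so there is no in-text argument to compare yours against. Your plan --- diagonalize over a countable dense $D\subset I$ via Proposition~\ref{comp-sigma}, deduce monotonicity of the limit on $D$ from Remark~\ref{rem: sigma1}(ii), extend by one-sided limits, and upgrade convergence to all $t$ via a sandwich and a countable exceptional set --- is precisely the standard Helly-type argument that the cited reference carries out, and the set-level sandwich $K(s_j^-)\,\tilde\subset\,\tilde K\,\tilde\subset\,K(s_j^+)$ together with Proposition~\ref{comp-sigma} for part (b) is handled correctly.

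There is one place, though, where your sketch is more optimistic than what actually works. You claim the exceptional set $\mathcal{N}$ is contained in the discontinuity set of the single monotone function $t\mapsto\mathcal{H}^{d-1}(K(t))$. But $\sigma$-convergence (Definition~\ref{def:sigmaconv}) asks for $\Gamma$-convergence of the exclusion functionals $\mathcal{H}_n^-(\cdot,A)$ for every $A$, and the limit density $h^-$ is \emph{not} determined by its zero set $K$: since $h^-$ can take values strictly in $(0,1)$ (think of $K_n$ approximating a ``dashed'' hyperplane of asymptotic length fraction $\alpha$, which gives $h^-\equiv 1-\alpha$ there), two sub-subsequences of $(K_{n_k}(t))_k$ may have the same $\sigma$-limit \emph{set} $K(t)$ while producing different densities away from $K(t)$, in which case the whole sequence does not $\Gamma$-converge and hence does not $\sigma$-converge. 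Continuity of $\mathcal{H}^{d-1}(K(\cdot))$ at $t$ closes the sandwich at the level of sets, but from $\mathcal{H}^{d-1}(K(s_j^+)\setminus K(s_j^-))\to 0$ you cannot infer that $\int_{J_u\cap A}\big(h^-_{s_j^-}-h^-_{s_j^+}\big)\,\mathrm{d}\mathcal{H}^{d-1}\to 0$, since the integrand may be positive on the common positivity set of the two densities, which the measures of the zero sets say nothing about. The fix --- and this is what the proof of \cite[Thm.~8.1]{GiacPonsi} effectively does --- is to define $\mathcal{N}$ as the union, over a countable dense family of pairs $(u,A)\in {\rm PC}(\Omega)\times\mathcal{A}(\Omega)$, of the discontinuity sets of the nonincreasing bounded functions $t\mapsto\mathcal{H}^-(u,A;t)$; each such set is countable and so is the union. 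At $t\notin\mathcal{N}$ the sandwich then pinches at the level of the functionals on the dense family, which determines the $\Gamma$-limit. Apart from replacing your ``$\mathcal{H}^{d-1}(K(\cdot))$'' exceptional set by this richer one, the rest of your argument goes through.
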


 For definition and properties of $SBV^{p}(\Omega)$, $p \in (1,\infty)$,  we refer the reader to \cite{Ambrosio-Fusco-Pallara:2000}. In the following, we say $v_n\rightharpoonup v$ weakly in $SBV^{p}(\Omega)$ if $v_n \to v$ in $L^1(\Omega)$ and $\sup_n (\Vert \nabla v_n \Vert_{L^p(\Omega)} + \mathcal{H}^{d-1}(J_{v_n})) < + \infty$. \EEE 
We will also make use of the following property of $\sigma$-convergence   (see \cite[Proposition 5.8]{GiacPonsi}). 
\begin{proposition}\label{sigma-property}
Let $(K_n)_{n}$ be a sequence of rectifiable sets in $\Omega$ such that $K_n$ $\sigma$-converges to $K$ in $\Omega$. Let $(v_n)_{n}$ be a sequence $SBV^{p}(\Omega)$ with $v_n\rightharpoonup v$ weakly in $SBV^{p}(\Omega)$ and $\hn(J_{v_n}\setminus K_n)\to 0$. Then $J_{v} \tsubset \EEE K$.      
\end{proposition}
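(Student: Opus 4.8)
\textbf{Proof plan for Proposition~\ref{sigma-property}.}
The statement to prove is a classical stability property of $\sigma$-convergence: if $K_n$ $\sigma$-converges to $K$, and $v_n \rightharpoonup v$ weakly in $SBV^p(\Omega)$ with $\mathcal{H}^{d-1}(J_{v_n}\setminus K_n)\to 0$, then $J_v \, \tilde{\subset}\, K$. Since this is stated as a known result from \cite[Proposition 5.8]{GiacPonsi}, the natural plan is to recall its proof rather than invent a new argument. The idea is to exploit the $\Gamma$-convergence of the surface functionals $\mathcal{H}_n^-(\cdot,A)$ to $\mathcal{H}^-(\cdot,A)$ built into the definition of $\sigma$-convergence, and then to characterize $J_v$ as contained in the zero set of the limiting density $h^-(\cdot,\nu_K)$.

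The first step would be to reduce to the scalar/piecewise-constant setting in which $\mathcal{H}_n^-$ is defined. Given $v_n \rightharpoonup v$ weakly in $SBV^p$, one truncates and composes with suitable Lipschitz functions to produce, for each direction, functions in $\mathrm{PC}(\Omega)$ (or approximations thereof) whose jump sets are controlled by $J_{v_n}$; alternatively one works directly with $e_1\chi_{\{v_n^{(i)} > s\}}$ for components $v_n^{(i)}$ and almost every level $s$, using the coarea formula to transfer the jump-set information. The key quantitative input is $\mathcal{H}^{d-1}(J_{v_n}\setminus K_n)\to 0$, which says $\mathcal{H}_n^-(w_n,\Omega)\to 0$ for the associated piecewise-constant test functions $w_n$. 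The second step is then to pass this to the limit: by the $\Gamma$-liminf inequality for $\mathcal{H}_n^-(\cdot,\Omega)$ and the strong $L^1$-convergence $w_n\to w$ (inherited from $v_n\to v$ in $L^1$), one obtains $\mathcal{H}^-(w,\Omega)=\int_{J_w} h^-(x,\nu_w)\,\mathrm{d}\mathcal{H}^{d-1}(x) = 0$. Since $h^-\ge 0$, this forces $h^-(x,\nu_w(x))=0$ for $\mathcal{H}^{d-1}$-a.e.\ $x\in J_w$. The third step invokes the maximality property \eqref{maxK} of $K$: because $K$ is, by definition, the maximal rectifiable set on which $h^-(x,\nu_K(x))=0$, and the jump set $J_w$ is rectifiable with $h^-$ vanishing on it, one concludes $J_w \, \tilde{\subset}\, K$. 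Finally, ranging over the countably many level sets and directions needed to recover $J_v$ (the jump set of $v$ is, up to $\mathcal{H}^{d-1}$-null sets, the union over components and a dense set of levels of the corresponding level-set boundaries), one deduces $J_v \, \tilde{\subset}\, K$.

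The main obstacle, and the point that requires care, is the reduction from the vector-valued $SBV^p$ setting to the piecewise-constant functions on which $\sigma$-convergence is phrased: one must ensure that the approximating sequence $w_n$ converges \emph{strongly} in $L^1$ (needed to apply the $\Gamma$-liminf inequality at the right function), that its jump set still satisfies $\mathcal{H}^{d-1}(J_{w_n}\setminus K_n)\to 0$, and that in the limit $J_w$ really does detect $J_v$ for a sufficient family of truncations/levels so that the inclusion $J_w\, \tilde{\subset}\, K$ propagates to $J_v\, \tilde{\subset}\, K$. This is handled in \cite{GiacPonsi} by a slicing/coarea argument combined with the locality of $\Gamma$-convergence in the set variable $A$, and since the statement is quoted verbatim from there, we simply refer to \cite[Proposition 5.8]{GiacPonsi} for the details.
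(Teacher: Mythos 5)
Your proposal is correct and takes essentially the same approach as the paper: the paper also gives no proof of this proposition, but simply cites \cite[Proposition~5.8]{GiacPonsi}, and your outline of the underlying argument (coarea/level-set reduction to ${\rm PC}(\Omega)$, the $\Gamma$-liminf inequality for $\mathcal{H}_n^-$, vanishing of $h^-$ on $J_v$, and the maximality of $K$) is a faithful sketch of what that reference does.
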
 
\EEE 

\GGG

 We close this section with a lower semicontinuity result for the boundaries of void sets.

\begin{lemma}[Lower semicontinuity for void sets\EEE]\label{le:semicontfactortwo}
Let $K_n$ be a sequence of rectifiable sets of the form $K_n = \partial V_n$ for closed sets $ V_n \EEE \subset \R^d$, $n \in \N$,  with 
finite perimeter   and \EEE $|V_n| \to 0$. Suppose that $K_n $ $\sigma$-converges to $K$. Then
$$\liminf_{n \to \infty} \hn(K_n) \ge 2 \hn(K). $$
\end{lemma}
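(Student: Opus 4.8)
The statement says that the $\sigma$-limit $K$ of a sequence of void boundaries $K_n = \partial V_n$ with $|V_n| \to 0$ picks up a factor $2$, i.e.\ $\liminf_n \hn(K_n) \ge 2\hn(K)$. The heuristic is clear: since the enclosed volume vanishes, each piece of $K$ is approached ``from both sides'' by $K_n$, so it is covered twice in the limit. The plan is to exploit the characterization \eqref{0812242041} of the $\sigma$-limit density $h^-$ together with a lower bound showing $h^-(x,\nu) \ge 2$ for $\hn$-a.e.\ $x \in K$, and then conclude by a standard blow-up / Radon--Nikod\'ym argument. Actually, since $h^-\le 1$ always (Remark~\ref{rem: sigma1}(i)), the density $h^-$ cannot literally be $\ge 2$; the factor $2$ must instead come from estimating $\mathbf{m}^{\mathrm{PC}}_{\mathcal{H}_n^-}$ directly against $\hn(K_n)$, not just against $\hn(K)$. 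So the right route is: localize, and show that for every open $A$, $\liminf_n \hn(K_n \cap A) \ge 2\hn(K\cap A)$ by comparing the local ``capacity'' $\mathbf{m}^{\mathrm{PC}}_{\mathcal{H}_n^-}(\bar u_{x,\nu},Q_\varrho^\nu(x))$ with the amount of $K_n$ available.

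First I would set up the blow-up. Fix $x_0 \in K$ a point where $K$ has an approximate tangent plane with normal $\nu = \nu_K(x_0)$ and where the density of $\hn\mres K$ is $1$ (this holds $\hn$-a.e.\ on $K$ by rectifiability). By \eqref{maxK}, $h^-(x_0,\nu)=0$, so by \eqref{0812242041} for every $\theta>0$ there is $\varrho$ arbitrarily small with $\liminf_n \mathbf{m}^{\mathrm{PC}}_{\mathcal{H}_n^-}(\bar u_{x_0,\nu},Q_\varrho^\nu(x_0)) \le \theta \varrho^{d-1}$. This means: for $n$ large there is a competitor $v_n = e_1\chi_{T_n} \in {\rm PC}(Q_\varrho^\nu(x_0))$ equal to $\bar u_{x_0,\nu} = e_1\chi_{Q^{\nu,-}_\varrho(x_0)}$ near $\partial Q_\varrho^\nu(x_0)$, with $\hn\big((J_{v_n}\setminus K_n)\cap Q_\varrho^\nu(x_0)\big) \le 2\theta\varrho^{d-1}$. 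So $J_{v_n}$ essentially lies inside $K_n$ up to small error. Now the crucial geometric point: $\partial^* T_n$ coincides with the flat face $Q^{\nu,0}_\varrho(x_0) = \{(y-x_0)\cdot\nu = 0\}\cap Q_\varrho^\nu(x_0)$ near the lateral boundary, so by the relative isoperimetric inequality (or a slicing argument along lines parallel to $\nu$) $T_n$ must ``mostly'' fill the lower half-cube; since $|V_n|\to 0$, the set $V_n$ contributes no volume, and therefore $\partial T_n$ inside the cube must separate the lower half from the upper half while $K_n$ — being a \emph{boundary} $\partial V_n$ of a \emph{small-volume} set — must wrap around, contributing $\hn$-measure at least roughly $2\varrho^{d-1}$ in the cube to ``enclose and release'' along the slice. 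The point is that $K_n$ locally looks like two nearly-parallel sheets (the two sides of a thin void), each of area $\approx \varrho^{d-1}$, whereas $J_{v_n}$ needs to sit between them.

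More carefully, the argument I would run is a slicing argument. Integrate along the family of segments $\sigma_y = \{y + t\nu : |t| < \varrho/2\}$ for $y$ in the $(d-1)$-cube transverse to $\nu$. For $\hn$-a.e.\ such $y$: the competitor slice $v_n|_{\sigma_y}$ jumps from $0$ to $e_1$ somewhere (since it matches $\bar u$ at the two ends), so $J_{v_n}\cap\sigma_y\ne\emptyset$; after removing $K_n$, if $(J_{v_n}\setminus K_n)\cap\sigma_y = \emptyset$ then \emph{all} jumps of $v_n$ on $\sigma_y$ lie on $K_n$, hence $K_n\cap\sigma_y\ne\emptyset$. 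Meanwhile the slice $\chi_{V_n}|_{\sigma_y}$ of the small void: since $\int |\chi_{V_n}\cap\sigma_y|\,d\hn(y) = |V_n\cap Q|\to 0$, for ``most'' $y$ the segment meets $V_n$ in arbitrarily small length, so $V_n$ cannot account for the required jump of $v_n$ on that segment by being a thick region; hence on most slices $K_n = \partial V_n$ meets $\sigma_y$ in \emph{at least two} points (enter and exit $V_n$, or the void-sheet structure) — this is where the factor $2$ enters, via $\#(\partial V_n\cap\sigma_y)\ge 2$ whenever $V_n\cap\sigma_y$ is a nonempty proper subset of the segment, which happens on a set of $y$ of measure $\ge (1-o(1))\varrho^{d-1}$. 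Integrating $\#(K_n\cap\sigma_y)\ge 2$ and using the coarea/slicing inequality $\hn(K_n\cap Q)\ge \int \#(K_n\cap\sigma_y)|\nu\cdot\nu_{K_n}|\,d\hn(y)$... here I need to be slightly careful that the normal of $K_n$ is nearly parallel to $\nu$, which follows because $J_{v_n}$ (nearly inside $K_n$) has normal $\nu$ up to the $\theta$-error. This yields $\liminf_n \hn(K_n\cap Q_\varrho^\nu(x_0)) \ge (2 - C\theta)\varrho^{d-1} = (2-C\theta)\hn(K\cap \text{(neighborhood)})$ in the blow-up limit. A Besicovitch covering / Radon--Nikod\'ym argument over $x_0\in K$, together with the lower semicontinuity $\hn(K)\le\liminf_n\hn(K_n)$ already available, then upgrades this local estimate to the global inequality $\liminf_n\hn(K_n)\ge 2\hn(K)$.

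\textbf{Main obstacle.} The hard part is making the slicing argument quantitative and honest: one must control the set of ``bad'' transverse directions $y$ where (i) $V_n\cap\sigma_y$ is too large (handled by $|V_n|\to 0$ and Chebyshev), (ii) the competitor $v_n$ has extra jumps not on $K_n$ (handled by the $2\theta\varrho^{d-1}$ bound on $\hn(J_{v_n}\setminus K_n)$ via Fubini), and (iii) the normal $\nu_{K_n}$ deviates substantially from $\nu$ on the relevant part of $K_n$ (this needs an argument that the portion of $K_n$ with $|\nu_{K_n}\cdot\nu|$ small has small $\hn$-measure, which again should follow from the near-inclusion $J_{v_n}\,\tilde\subset\,K_n$ together with $J_{v_n}$ being flat — but controlling this requires knowing that $K_n$ doesn't have large ``vertical'' parts, which is not automatic and may require using that $K_n = \partial V_n$ with small volume more cleverly, e.g.\ that a thin void has nearly-horizontal boundary where it is thin). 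Getting (iii) right — essentially a lower bound that says a \emph{boundary} of a \emph{small} set near a flat jump set must consist of (at least) two nearly-flat sheets — is the technical crux; one safe way is to bound $\hn(K_n\cap Q)$ from below by $2|\pi_\nu(\{V_n\cap Q \text{ nonempty slice}\})|$ using the fact that the projection onto the tangent plane of $\partial V_n$ covers each ``occupied'' base point at least twice (once from the top sheet, once from the bottom), which is a clean statement provable by the area formula for the projection and needs no control on $\nu_{K_n}$ at all — this is probably the cleanest path and the one I would actually pursue.
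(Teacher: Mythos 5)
You correctly diagnose the key obstruction: since $h^-\le 1$ always (Remark~\ref{rem: sigma1}(i)), the factor $2$ cannot come from a pointwise density lower bound $h^-\ge 2$, and so the argument must run through $\hn(K_n)$ directly rather than through the $\sigma$-limit density. Your blow-up/slicing proposal is a genuinely different route from the paper's. The paper never blows up: it fixes $\mu>0$, approximates $K$ inside a smooth open set $U$ by a function $v\in\mathrm{PC}(U)$ (the covering in \eqref{sigma-covering'}), takes a recovery sequence $(v_n)$ for the $\sigma$-convergence functionals so that $\hn((J_{v_n}\setminus K_n)\cap U)$ is eventually $\le\mu$, truncates to $\tilde v_n=\chi_{U\setminus V_n}v_n$, and then invokes two off-the-shelf results for pairs set-function from \cite{Braides-Chambolle-Solci}: Proposition~9 there approximates $\tilde v_n$ by pairs $(z_h,F_h)$ with smooth small voids $F_h$ and $\limsup_h\hn(\partial F_h)=2\hn(J_{\tilde v_n}\cap\tilde\Omega_n)$ --- this is where the factor $2$ is produced, essentially by the very "two-sheets" intuition you describe, but carried out once and for all in that reference --- while Proposition~3 there supplies the lower semicontinuity $2\hn(J_v\cap U)\le\liminf_n\hn(\partial G_n)$ for the combined voids $G_n=F_{h_n}\cup(V_n\cap U)$. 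Splitting $\hn(\partial G_n)\le\hn(K_n\cap U)+\hn(\partial F_{h_n})$ and using that the last term is $O(\mu)$ then gives the claim. The advantage of this route is that all the geometric difficulty you flag under ``main obstacle'' is outsourced; the proof is short because it plugs into the exactly-matching abstract machinery.

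Your slicing argument, while in the right spirit, does have a genuine gap at the place you suspect. The count $\#(K_n\cap\sigma_y)\ge 2$ can fail on slices $y$ where the interval $V_n\cap\sigma_y$ is short but touches an endpoint of $\sigma_y$ (e.g., $V_n\cap\sigma_y=[c,\varrho/2]$ with $c$ close to $\varrho/2$); then $\partial^*V_n\cap\sigma_y$ is a single point, and the projection argument you propose as the ``safe path'' runs into the same obstruction --- the area formula gives $\hn(K_n\cap Q)\ge\int\#(K_n\cap\sigma_y)\,dy$, which is fine, but the integrand is not $\ge 2$ on the bad set. Making the measure of this bad set small requires combining the boundary condition on the competitor with a nontrivial isoperimetric/slicing estimate (one must show that having many bad slices would force $J_{v_n}\setminus K_n$ to be $\gtrsim\varrho^{d-1}$, contradicting near-optimality), and you have not carried this out. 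There is also a subtler issue of order of limits: the characterization \eqref{0812242041} of $h^-$ involves $\limsup_{\varrho\to 0}\liminf_{n\to\infty}$, so extracting from it a clean estimate on $\liminf_n\hn(K_n)$ after blowing up at $\hn$-a.e.\ point of $K$ and then summing a Besicovitch cover requires the cubes (and their scales $\varrho$) to be chosen before the $n\to\infty$ limit is taken; this interchange is exactly what the paper's fixed-finite-covering argument is designed to sidestep. So the proposal is a legitimate alternative strategy whose core intuition is correct and is indeed the one encoded in \cite{Braides-Chambolle-Solci}, but as written it leaves the technical crux --- quantifying that thin void boundaries locally double the surface area in a way compatible with the $\sigma$-convergence blow-up --- unresolved.
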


\begin{proof}

Given $\mu>0$, by a covering argument there exist an open and smooth set $U\subset \Omega$ and a function $v\in {\rm PC}(\Omega)$ such that 
\begin{equation}\label{sigma-covering'}
  \hn(K\setminus U )\leq \mu, \quad \quad \hn((K\, \triangle \, J_{v})\cap U )\leq \mu \,. 
\end{equation} 
 Let $(v_n)_{n}\subset  \EEE {\rm PC}(U)$ \EEE be a recovery sequence  in $L^1(U; \VIT \R^d \EEE)$ \EEE for the restriction of $v$ to $U$ with respect to the functionals   \eqref{1708240952} and \eqref{1708240957}. \EEE Since $h^{-} \leq 1$,  see Remark \ref{rem: sigma1}(i), \EEE  we have
\begin{equation}\label{almost-recovery-seq'}
  \limsup_{n\to +\infty} \hn((J_{v_n}\setminus K_n)\cap U)\leq  \int_{(J_{v}\setminus K)\cap U} h^{-}(x,\MMM \nu_v \EEE )\, \dhn(x)\leq \hn((J_{v}\setminus K)\cap U)\leq \mu \,. 
\end{equation} 
We notice that the function 
\[
\tilde{v}_n\defas \MMM  \chi_{U\setminus {V}_n} \EEE v_n
\] 
is such that
%Note that in view of the first inequality in \eqref{mt} we have 
\begin{equation}\label{0812240755}
\tilde{v}_n\to v \quad \text{in }L^1(U; \VIT \R^d \EEE),
\end{equation}
since $|V_n|\to 0$.  \JJJ Notice that the function $\tilde{v}_n$ might also jump outside of $\partial V_n$ \MMM  because $J_{v_n}\cap (U\sm {V_n})\neq \emptyset$ is in general possible. \EEE 
% \JJJ In other words, $\tilde{v}_n$ might also jump outside of $\partial V_n$, if $J_{v_n}\cap \tilde{\Omega}_n\neq \emptyset$. 
\EEE Therefore, we apply \EEE
 \cite[Proposition~9]{Braides-Chambolle-Solci} to the open set $\tilde{\Omega}_n:= U\sm V_n $ (in place of $\Omega$ therein) to obtain  a sequence of approximating pairs $ ( (z_h, F_h) )_{h }$, with $z_h \in SBV^2(\tilde\Omega_n; \MMM \R^d \EEE )$    and $F_h$ of class $C^\infty$, \EEE  such that 
\begin{equation*}
J_{z_h}\tsubset \partial F_h,\quad z_h=0 \text{ in }F_h, \quad z_h\to \tilde{v}_n=v_n \ \MMM \text{ in } \EEE L^1(\tilde\Omega_n;\R^d), \quad F_h \to \emptyset, 
\end{equation*} 
and 
\begin{equation}\label{void: approxi'}
\limsup_{h\to \infty} \hn(\partial F_h)=  2 \hn(J_{\tilde{v}_n}\cap \tilde{\Omega}_n) \MMM \le \EEE 2\hn((J_{v_n}\setminus K_n) \cap U).
\end{equation} 
For $h_n \in \N$ such that 
%$w_n\defas (z_{j_n})_{n\in \N}$ such that 
\begin{equation}\label{0812241143}
\|z_{h_n}- v_n\|_{L^1(\tilde{\Omega}_n)}+ |F_{h_n}|+ \MMM  \hn(\partial F_{h_n}) -   2\hn((J_{v_n}\setminus K_n) \cap U)   \EEE  \leq \frac{1}{n},
\end{equation}
 setting
 \begin{equation*}
 w_n:= z_{h_n} \chi_{\tilde{\Omega}_n} \in SBV^2(U;\R^d), \qquad G_n:= F_{h_n}\cup   (   V_n  \cap U ),  
 \end{equation*}
 it holds that
 \begin{equation*}
 w_n= 0  \text{ in } G_n, \quad J_{w_n}\tsubset \partial G_n, \quad  w_n \to v \quad\text{ in }L^1(U; \VIT \R^d \EEE).
 \end{equation*}
 Therefore, \cite[Proposition 3]{Braides-Chambolle-Solci} gives  
 \begin{equation}\label{0812241138}
 2 \hn(J_v \cap U)\leq \liminf_{n\to +\infty} \hn(\partial G_n).
 \end{equation}
 By  \EEE \eqref{almost-recovery-seq'} and \eqref{0812241143} it holds  that
 %for $n>\mu^{-1}$
 \begin{equation*}
 \limsup_{n\to +\infty} \hn(\partial F_{h_n}) \leq \limsup_{n\to +\infty} 2 \hn((J_{v_n}\setminus K_n)\cap U) \EEE \leq  2\mu.  \EEE
\end{equation*}  
  Since $\hn(\partial G_n) \MMM \le \EEE \hn(K_n\cap U)+ \hn(\partial F_{h_n})$, 
  %\RRR {\tt it seems safer to put inequality here.} \EEE 
  we get
\begin{equation}\label{0812241159}
\liminf_{n\to +\infty} \hn(K_n\cap U)\geq \liminf_{n\to +\infty} \hn(\partial G_n)- 2 \EEE \mu.
\end{equation}
By \eqref{sigma-covering'}, \eqref{0812241138},  and \EEE \eqref{0812241159} we deduce
\begin{equation*}
\liminf_{n\to +\infty} \hn(K_n)\geq \liminf_{n\to +\infty} \hn(K_n\cap U)\geq  2 \hn(J_v \cap U) -  2 \EEE \mu \geq 2 \hn(K) -  6 \EEE \mu,
\end{equation*}
and we conclude by the arbitrariness  of $\mu >0$. \EEE  
\end{proof}

\subsection{Compactness}\label{subsec:comp}

% \begin{definition}
%   Let $U\subset R^{d}$ be open and consider the sequence $(u_n)_{n\in \N} \subset  L^{0}(\Omega,\R^2)$. We say that $u_n$ $GSBD^2(U)$-converges to $u$ if     For a function $v_n\in GSBD^{p}(U)$ 
% \end{definition}

% \RRR To which extent should be recall $GSBD_\infty$ here? \EEE

\JJJ From now on we restrict ourselves to the case $d=2$. \EEE For technical reasons, we need to  consider the space of $GSBD^p$ functions (with $p \in (1,\infty)$) which may also attain  a limit  value $\infty$. Following \cite[Subsection 4.4]{Friedrich-Crismale}, we define  $\bar{\R}^2 := \R^2 \cup \lbrace \infty \rbrace$, and for $U \subset \R^2$ open, we let 
\begin{align}\label{eq: compact extension}
GSBD^p_\infty(U) := \Big\{ &u \in L^0(U;\bar{\R}^2)\colon \,   A^\infty_u  := \lbrace u = \infty \rbrace \text{ satisfies } \mathcal{H}^{1}(\partial^* A^\infty_u)< +\infty, \notag \\
&  \ \ \ \ \ \ \ \ \ \  \ \ \ \ \ \tilde{u}_t := u \chi_{U \setminus A^\infty_u} + t \chi_{A^\infty_u} \in GSBD^p(U) \ \text{ for all $t \in  \R^2 \EEE $} \Big\}. 
\end{align}
Symbolically, we  also write
$$u = u \chi_{U \setminus A^\infty_u} + \infty \chi_{A^\infty_u},$$
and for any $u \in GSBD^p_\infty(U)$, we   set  \EEE
\begin{align}\label{eq: general jump}
e(u) = 0 \text{ in } A^\infty_u, \quad \quad \quad 
J_u = J_{u \chi_{\Omega \setminus A^\infty_u}} \cup (\partial^*A^\infty_u \cap U).
\end{align}
We also define the subspaces  $PR(U) \subset GSBD^p(U)$ and  $PR_\infty(U) \subset GSBD^p_\infty(U)$ as the functions in $u \in GSBD^p(U)$ and  $u \in GSBD^p_\infty(U)$,  respectively, \EEE with $e(u) \equiv 0$. Functions $a \in PR_\infty(U)$ are \emph{piecewise rigid} in the sense that they can be represented as    
$$a=\sum_{j \in \N} a^j \chi_{P^j} + \infty \chi_{A_a^\infty},  $$
where $a^j $ are affine mappings with $e(a^j) = 0$ and $(P^j)_{j \in \N}$ is a Caccioppoli partition of $U \setminus A_a^\infty$.  If $a \in PR(U)$, then $A_a^\infty = \emptyset$.

There exists a metric $\bar{d}$ on $GSBD^p_\infty(U)$, see \cite[Equation (3.13)]{Friedrich-Crismale}, which induces the following convergence: $\bar{d}(u_n,u) \to 0$ if and only if  
$$  u_n \to u  \quad \text{ in measure on }  U \setminus A_u^\infty, \quad \quad  |u_n| \to \infty \quad \text{ on $A_u^\infty$}.$$ 
 In the following, we say that a sequence $(u_n)_n \subset GSBD^p_\infty(U)$ \emph{converges weakly} to $u \in GSBD^p_\infty(U)$ if 
\begin{align}\label{eq: weak gsbd convergence}
 \sup\nolimits_{n\in \N} \big( \Vert e(u_n) \Vert_{L^p(U)} +  \mathcal{H}^{1} \EEE (J_{u_n})\big) < + \infty  
 \end{align}
and 
$$   \text{$\bar{d}(u_n,u) \to 0$, \quad \quad \quad $e(u_n)\rightharpoonup e(u)$ weakly in $L^p(U\setminus A_u^\infty; \R_{\rm sym}^{2 \times 2} \EEE)$}.$$

\begin{proposition}[Compactness]\label{prop: compactness}
Let $(u_n)_n \subset GSBD^p(U)$   satisfy \EEE \eqref{eq: weak gsbd convergence}. Then there exists $u \in GSBD^p_\infty(U)$ such that, up to a subsequence, $u_n$ converges weakly to $u$ in $GSBD^p_\infty(U)$. If additionally $u_n \in PR_\infty(U)$ for all $n \in \N$, we get $u \in PR_\infty(U)$. 
\end{proposition}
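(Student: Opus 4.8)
The plan is to reduce the statement to the compactness theorem for $GSBD^p_\infty$ functions proved in \cite{Friedrich-Crismale} (cf.\ \cite[Theorem~3.5 and Subsection~4.4]{Friedrich-Crismale}), and then to upgrade the conclusion to weak convergence in the sense made precise before the proposition. So the first step is simply to invoke the $GSBD^p_\infty$-compactness result directly: given $(u_n)_n \subset GSBD^p(U) \subset GSBD^p_\infty(U)$ with the uniform bound \eqref{eq: weak gsbd convergence} on $\Vert e(u_n)\Vert_{L^p(U)}$ and $\mathcal{H}^1(J_{u_n})$, one obtains a subsequence (not relabeled) and a limit $u \in GSBD^p_\infty(U)$ with $\bar d(u_n,u)\to 0$, i.e.\ $u_n\to u$ in measure on $U\setminus A_u^\infty$ and $|u_n|\to\infty$ on $A_u^\infty$, and moreover $\mathcal{H}^1(\partial^* A_u^\infty)<+\infty$. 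This already gives membership of $u$ in $GSBD^p_\infty(U)$ and the convergence part of ``weak convergence'' except the weak $L^p$ convergence of the strains.

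For the second step I would extract the weak limit of the strains. By \eqref{eq: weak gsbd convergence}, $(e(u_n))_n$ is bounded in $L^p(U;\R^{2\times2}_{\rm sym})$; after a further subsequence $e(u_n)\rightharpoonup \Sigma$ weakly in $L^p$. One then has to identify $\Sigma$ with $e(u)$ on $U\setminus A_u^\infty$. This is exactly the content of the second part of \cite[Theorem~3.5]{Friedrich-Crismale} (the statement referenced as ``(3.7)(ii)'' in the proof of Theorem~\ref{thm:liminf}): the in-measure convergence $u_n\to u$ on $U\setminus A_u^\infty$ together with the $L^p$ bound on the symmetrized gradients forces $\Sigma = e(u)$ a.e.\ on $U\setminus A_u^\infty$; recalling the convention $e(u)=0$ on $A_u^\infty$ from \eqref{eq: general jump}, this yields $e(u_n)\rightharpoonup e(u)$ weakly in $L^p(U\setminus A_u^\infty;\R^{2\times2}_{\rm sym})$. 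Combining the two steps gives weak convergence $u_n\rightharpoonup u$ in $GSBD^p_\infty(U)$ in the prescribed sense.

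For the last assertion, suppose in addition $u_n\in PR_\infty(U)$, i.e.\ $e(u_n)\equiv 0$ for every $n$. Then trivially $e(u_n)\rightharpoonup 0$, so by the identification just obtained $e(u)=0$ a.e.\ on $U\setminus A_u^\infty$, and hence $u\in PR_\infty(U)$ by definition of that space. (One can add, if desired, a remark that the limit then has the piecewise-rigid structure $a=\sum_j a^j\chi_{P^j}+\infty\chi_{A^\infty_a}$ with $(P^j)_j$ a Caccioppoli partition of $U\setminus A^\infty_a$, which follows from the structure theorem for functions with vanishing symmetrized gradient, e.g.\ the piecewise rigidity result underlying \cite[Subsection~4.4]{Friedrich-Crismale}; but this is not strictly needed for the statement as phrased.)

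I do not expect a genuine obstacle here: the proposition is essentially a bookkeeping corollary of the compactness and strain-convergence statements in \cite{Friedrich-Crismale}, whose hypotheses match \eqref{eq: weak gsbd convergence} verbatim. The only mildly delicate point is to be careful that weak $L^p$ convergence of the strains is only claimed, and only makes sense, away from $A_u^\infty$ (since $e(u)$ is set to $0$ there by fiat and the $u_n$ blow up there), and to cite the precise part of \cite[Theorem~3.5]{Friedrich-Crismale} that pins down the weak limit of the symmetrized gradients; everything else is immediate.
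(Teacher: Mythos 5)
Your proposal is correct and follows essentially the same route as the paper: the paper's proof is a two-line citation of \cite[Theorem~3.5 and Lemma~3.6]{Friedrich-Crismale} for the compactness and strain-identification (itself a consequence of \cite[Theorem~1.1]{CC-JEMS}), plus a citation of \cite[Lemma~3.3]{FM} for the closedness of $PR_\infty$. You spell out the same two steps explicitly, and your handling of the $PR_\infty$ part is a slight streamlining — once weak $L^p$ convergence of the strains on $U\setminus A_u^\infty$ is in hand, $e(u_n)\equiv 0$ forces $e(u)=0$ there, and $e(u)=0$ on $A_u^\infty$ holds by the convention in \eqref{eq: general jump}, so $u\in PR_\infty(U)$ by definition without needing to repeat the argument of \cite[Lemma~3.3]{FM}. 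Both approaches are fine; yours avoids one citation by relying on the already-established weak convergence of strains, while the paper's citation is a safety net in case one wishes to prove $PR_\infty$-closedness independently of the strain identification.
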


\begin{proof}
The result follows from \cite[Theorem 3.5 and Lemma 3.6]{Friedrich-Crismale} which itself is a consequence of \cite[Theorem 1.1]{CC-JEMS}. The closedness of $PR_\infty$ follows by repeating the argument in  \cite[Lemma 3.3]{FM}.
\end{proof}

For the next result, we again consider Lipschitz sets $\Omega \subset \Omega'  \subset \R^2 \EEE$ such that $\Omega' \setminus \overline{\Omega}$ is Lipschitz, as considered in Section \ref{sec: evo}.

\begin{proposition}\label{prop:sigmasigmap}
Let $(K_n)_n$ be a sequence of rectifiable sets in $\Omega' \cap \overline{\Omega}$, \JJJ with $\mathcal{H}^1(K_n)\leq C$ \MMM which \EEE  $\sigma$-converges to $K \subset \Omega'$. \EEE
% $K \subset \Omega' \cap \overline{\Omega}$. 
Let $(v_n)_n$ be a sequence converging weakly in $GSBD^p_\infty(\Omega')$ to $v \in GSBD^p_\infty(\Omega')$  such that \EEE  $\mathcal{H}^1(J_{v_n} \setminus K_n) \to 0$ as $n \to \infty$  and $v_n|_{\Omega' \setminus \overline{\Omega}}$ is a bounded sequence in $W^{1,p}(\Omega' \setminus \overline{\Omega};\R^2)$.  \EEE Then the following hold:
\begin{itemize}
\item[(i)]  $J_{v} \, \tilde{\subset} \, K \VVV \tsubset \Omega' \cap \overline{\Omega}$.  
\item[(ii)]  It holds that \EEE $|A^\infty_v \cap G_K|=0$, \EEE where $G_K$ denotes the smallest set of finite perimeter with \VVV $| \Omega' \setminus (\overline{\Omega} \cup G_K)|=0$ \EEE and  $\partial^* G_K \cap \Omega' \, \tilde{\subset} \, K$.
\end{itemize}
\end{proposition}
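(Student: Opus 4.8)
\textbf{Proof plan for Proposition \ref{prop:sigmasigmap}.} The plan is to deduce both statements from the abstract machinery on $\sigma$-convergence recalled in Section \ref{subsec:sigmaconv} (in particular Proposition \ref{sigma-property} and Remark \ref{rem: sigma1}), after replacing the $GSBD^p_\infty$ data $v_n$ by suitable $SBV^p$ "slices" to which that machinery directly applies.

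\medskip
\noindent\emph{Step 1: Reduction to $SBV^p$ via truncation/composition.} First I would recall the standard device (already used in \cite{Friedrich-Crismale} and \cite{FriPerSol20}) that, given $u \in GSBD^p(\Omega')$, one can produce $SBV^p$ functions by composing with a bounded Lipschitz map: for a one-to-one smooth $\phi\colon \R^2 \to \R^2$ with bounded image one has $\phi\circ u \in SBV^p \cap L^\infty$, with $J_{\phi \circ u} = J_u$ (up to $\mathcal{H}^1$-null sets) and $\nabla(\phi\circ u) = D\phi(u)\,e(u) + \text{skew part}$ bounded in $L^p$ once $e(u_n)$ is bounded in $L^p$. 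For $GSBD^p_\infty$ functions one uses instead $\psi\colon \bar\R^2 \to \R^2$ mapping $\infty$ to a fixed point, so that $\psi\circ v_n$ is $SBV^p$, its jump set is $J_{v_n}$ (which by \eqref{eq: general jump} already includes $\partial^* A^\infty_{v_n}\cap\Omega'$), and $\psi \circ v_n \rightharpoonup \psi\circ v$ weakly in $SBV^p(\Omega')$ because weak $GSBD^p_\infty$ convergence gives $\bar d(v_n,v)\to 0$ hence convergence in measure of $\psi\circ v_n$ to $\psi\circ v$, together with the uniform bound \eqref{eq: weak gsbd convergence}. I would record that $\mathcal{H}^1(J_{\psi\circ v_n}\setminus K_n) = \mathcal{H}^1(J_{v_n}\setminus K_n)\to 0$.

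\medskip
\noindent\emph{Step 2: Proof of (i).} Apply Proposition \ref{sigma-property} to $w_n := \psi\circ v_n \rightharpoonup \psi \circ v =: w$ in $SBV^p(\Omega')$ with the sequence $K_n \,\sigma$-converging to $K$: it yields $J_w \,\tilde\subset\, K$. Since $J_w = J_{\psi\circ v} = J_v$ (the map $\psi$ being injective off $\infty$ and $J_v$ in the sense of \eqref{eq: general jump} being exactly $J_{v\chi_{\Omega'\setminus A^\infty_v}}\cup(\partial^* A^\infty_v\cap\Omega')$, which is what the $SBV^p$ jump set of $\psi\circ v$ detects), we obtain $J_v \,\tilde\subset\, K$. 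To get $K \,\tilde\subset\, \Omega'\cap\overline\Omega$ I would invoke Remark \ref{rem: sigma1}(iii): the closed set $F := \Omega'\cap\overline\Omega$ contains every $K_n$ by hypothesis, hence contains the $\sigma$-limit $K$ up to $\mathcal{H}^1$-null sets. This gives (i).

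\medskip
\noindent\emph{Step 3: Proof of (ii).} Here the point is that on the "good set" $G_K$ the limit $v$ cannot take the value $\infty$. I would argue by contradiction: if $|A^\infty_v \cap G_K| > 0$, then, since $\partial^* G_K \cap \Omega' \,\tilde\subset\, K$ and $J_v \,\tilde\subset\, K$, the set $A^\infty_v\cap G_K$ is, up to $\mathcal{H}^1$-null boundary pieces, a set of finite perimeter whose essential boundary inside $\Omega'$ lies in $K$; this forces $A^\infty_v$ to contain a nontrivial chunk of $G_K$ separated from the rest only across $K$. On the other hand, the constraint $v_n|_{\Omega'\setminus\overline\Omega}$ bounded in $W^{1,p}$ means the limit $v$ equals a fixed $W^{1,p}$ function (hence is finite a.e.) on $\Omega'\setminus\overline\Omega$, so $|A^\infty_v \cap (\Omega'\setminus\overline\Omega)| = 0$; by the minimality/maximality property defining $G_K$ (smallest set of finite perimeter with $|\Omega'\setminus(\overline\Omega\cup G_K)|=0$ and $\partial^* G_K\cap\Omega'\,\tilde\subset\,K$), any connected piece of $A^\infty_v$ bounded by $K$ and disjoint from $\Omega'\setminus\overline\Omega$ can be removed from $G_K$, contradicting minimality unless that piece is $\mathcal{L}^2$-null. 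Making this precise uses the structure of sets of finite perimeter (indecomposable components, relative isoperimetric inequality) together with the fact that $K$ has finite $\mathcal{H}^1$-measure, so that $A^\infty_v\cap G_K$ with $\mathcal{H}^1(\partial^*(A^\infty_v\cap G_K)\cap\Omega')<\infty$ and the separation property indeed yields $|A^\infty_v\cap G_K|=0$.

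\medskip
\noindent I expect Step 3 to be the main obstacle: one has to track carefully the interplay between the constraint coming from the Dirichlet strip $\Omega'\setminus\overline\Omega$, the definition of $G_K$ as the \emph{smallest} such set, and the finite-perimeter structure of $A^\infty_v$, whereas Steps 1–2 are essentially a direct application of the cited results once the $SBV^p$ reduction is set up. A subtlety worth flagging is that $\psi\circ v_n$ must be shown to converge \emph{weakly} in $SBV^p$ (not merely in $L^1$), which requires the uniform bound on $e(v_n)$ in $L^p$ to pass through the chain rule for $\nabla(\psi\circ v_n)$; this is routine but should be stated.
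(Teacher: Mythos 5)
Your proposal has a fatal gap at Step~1, and this gap is precisely the central difficulty that the paper's proof is designed to circumvent. For a function $u \in GSBD^p(\Omega')$ only the symmetric gradient $e(u)$ is controlled in $L^p$; the full approximate gradient $\nabla u$ is not controlled in any $L^q$ norm (this is exactly the failure of Korn's inequality that motivates the $(G)SBD$ framework). Consequently, composing with a bounded Lipschitz map $\psi$ produces $\nabla(\psi\circ u)=D\psi(u)\,\nabla u$, which is \emph{not} bounded in $L^p$; the decomposition you write, ``$D\psi(u)e(u)+$ skew part bounded in $L^p$,'' is incorrect, since $D\psi(u)\nabla u$ does not split that way and the skew part of $\nabla u$ carries no control. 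Hence $\psi\circ v_n$ is not a bounded sequence in $SBV^p(\Omega')$ (it may not even be $BV$), so $\psi\circ v_n \rightharpoonup \psi\circ v$ weakly in $SBV^p$ cannot be established, and Proposition~\ref{sigma-property} cannot be applied in Step~2. This is why the paper explicitly presents the result as ``the $GSBD$-analog of \cite[Proposition~5.8]{GiacPonsi}'': the $SBV$ statement does not transfer by a truncation trick.

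The paper's actual proof uses the compactness theorem of \cite{FriPerSol20} (adapted from \cite{FriedrichSolombrino, steinke}), which produces a corrected sequence $y_n = v_n + r_n + a_n$ with $a_n$ piecewise rigid, $|\{r_n\neq 0\}|\to 0$, and $\mathcal{H}^1\big((J_{a_n}\cup J_{r_n})\setminus J_{v_n}\big)\to 0$, converging to some $u\in GSBD^p(\Omega')$ \emph{without} passing through $SBV^p$. It then identifies $A^\infty_v=A^\infty_a$, perturbs by a constant $b$ on the set $B_n$ approximating $A^\infty_v$ so that $\partial^*A^\infty_v\,\tilde\subset\, J_{u^b}$, and applies a $\Gamma$-liminf argument (via the integral representation from Lemma~\ref{le:0812241813} and \cite[Lemma~7.1]{FriPerSol20}) to conclude $\int_{J_{u^b}} h^-\,\mathrm{d}\mathcal{H}^1=0$, hence $J_{u^b}\,\tilde\subset\,K$ and so $J_v\,\tilde\subset\,K$. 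None of this machinery appears in your proposal, and there is no shortcut replacing it: the piecewise-rigid correction is what compensates for the uncontrolled skew part of $\nabla v_n$.

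Your Step~3 is essentially right in its core idea: once one knows $|A^\infty_v\cap(\Omega'\setminus\overline\Omega)|=0$ (from the $W^{1,p}$ bound on the Dirichlet strip) and $\partial^*A^\infty_v\cap\Omega'\,\tilde\subset\,K$ (which comes out of the proof of (i)), the set $G_K\setminus A^\infty_v$ is a competitor in the minimality property defining $G_K$, which forces $|A^\infty_v\cap G_K|=0$. The paper's argument for (ii) is exactly this, and in fact is considerably shorter than you anticipate; the appeals to indecomposable components and the relative isoperimetric inequality are unnecessary. You have misidentified the main obstacle: it lies in Steps~1--2, not Step~3.
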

 Later we will apply this proposition \VIT to \EEE sequences satisfying Dirichlet boundary conditions which guarantees the boundedness assumption on $\Omega' \setminus \overline{\Omega}$. \MMM The set $G(t)$ introduced before \eqref{b(t)} will play the role of $G_K$. \EEE The result \EEE ensures the compatibility of the notion of $\sigma$-convergence with the weak notion of $GSBD^p_{\infty}$-convergence for the displacement fields,  and can be seen as the $GSBD$-analog of \cite[Proposition~5.8]{GiacPonsi}. \EEE More precisely, it enables us to show that   limiting evolutions $t\mapsto u(t)$ fulfill $J_{u(t)} \, \tilde{\subset} \,  K(t)$ and are thus admissible with respect to $t\mapsto K(t)$. Before we come to the proof, we show the following preliminary result. \EEE    
\begin{lemma}\label{le:0812241813}
 Let   $(K_n)_n$ be a sequence of rectifiable sets in $\Omega' \cap \overline{\Omega}$, \JJJ with $\mathcal{H}^1(K_n)\leq C$ \MMM which \EEE $\sigma$-converges to  $K \subset \Omega'$. \EEE Consider the functionals 
\begin{equation}\label{eq: e''}
\mathcal{E}'_n(u):=\int_{\Omega'} |e(u)|^p \,   {\rm d}x + \mathcal{H}^1(J_u \sm K_n)
\end{equation}
if $u \in GSBD^p(\Omega')$  and \EEE $+\infty$ otherwise in $L^0(\Omega'; \R^2)$.  Then $\mathcal{E}'_n(u)$   $\Gamma$-converges, with respect to the convergence in measure in $\Omega'$, to the functional
\begin{equation}\label{nocheinefleichung}
\mathcal{E}'(u):=\int_{\Omega'} |e(u)|^p \,  {\rm d}x + \int_{J_u} h^-(x,  \nu_u) \EEE \, \dhu(x)
\end{equation}
if $u \in GSBD^p(\Omega')$   and \EEE $+\infty$ otherwise in $L^0(\Omega'; \R^2)$, where   $h^-$ is the density given in Remark \ref{rem: sigma1}(i) for $d=2$. \EEE 
%\begin{equation}\label{0812242009}
%\mathbf{m}^{\mathrm{PC}}_{\mathcal{H}_n^-}(\ol u_{x,\nu}, Q_\varrho^\nu(x)):=\inf_{v \in \mathrm{PC}(\Omega')}\{\mathcal{H}_n^-(v, Q_\varrho^\nu(x))\colon v=\ol u_{x,\nu} \text{ in a neighbourhood of } \partial Q_\varrho^\nu(x)\}
%\end{equation}  
\end{lemma}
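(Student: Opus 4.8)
The plan is to establish the $\Gamma$-convergence of $\mathcal{E}'_n$ to $\mathcal{E}'$ by proving the $\liminf$-inequality and the existence of recovery sequences separately, in each case reducing the surface terms to the already available $\sigma$-convergence of $(K_n)_n$ and treating the (continuous, weakly lower semicontinuous) bulk term $\int_{\Omega'}|e(u)|^p\,\di x$ routinely. For the $\liminf$-inequality, I would take $u_n \to u$ in measure on $\Omega'$ with $\liminf_n \mathcal{E}'_n(u_n)$ finite; the energy bound gives $\sup_n(\|e(u_n)\|_{L^p(\Omega')} + \mathcal{H}^1(J_{u_n})) < \infty$, so by Proposition~\ref{prop: compactness} (applied along a subsequence realizing the $\liminf$) we obtain $u \in GSBD^p_\infty(\Omega')$ with $u_n$ converging weakly to $u$; since $u_n \to u$ in measure in $\R^2$ (not $\bar\R^2$) the limit value $\infty$ is not attained, so in fact $u \in GSBD^p(\Omega')$. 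The bulk part passes to the limit by weak lower semicontinuity of $v \mapsto \int|e(v)|^p$. For the surface part, the point is that $J_{u_n}\sm K_n = J_{v_n}\sm K_n$ where the sets over which we measure are those of a $GSBD^p$ function, but the density $h^-$ in \eqref{1708240957} and \eqref{0812242041} is defined via the $\mathrm{PC}$-class; here I would invoke the characterization and the lower-bound part of the $\sigma$-convergence machinery, precisely as in \cite[Proposition~5.8]{GiacPonsi} and \cite[Proposition 3.3]{GiacPonsi}, to conclude $\int_{J_u} h^-(x,\nu_u)\,\dhu \le \liminf_n \mathcal{H}^1(J_{u_n}\sm K_n)$, which is the content needed since $h^- \le 1$.

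For the upper bound (recovery sequence), given $u \in GSBD^p(\Omega')$ with $\mathcal{E}'(u) < \infty$, I would first use a density argument (e.g.\ \cite{ChaCri19,Cortesanietal} as quoted in Subsection~\ref{subsec: Gamma-conv}) to reduce to $u$ with $J_u$ contained in a finite union of (pieces of) hyperplanes and $u$ smooth away from $J_u$, so that $\mathcal{H}^1(J_u) < \infty$ and the jump set is essentially polyhedral; the diagonal argument then upgrades a recovery sequence for such $u$ to general $u$. For polyhedral $u$, one builds the recovery sequence by working locally on small cubes $Q_\varrho^\nu(x)$ centered on $J_u$: on each such cube one uses the $\mathrm{PC}$-minimizers $\mathbf{m}^{\mathrm{PC}}_{\mathcal{H}_n^-}$ realizing the density $h^-$ (again via \cite[Proposition 3.3]{GiacPonsi}) to create a competitor whose jump outside $K_n$ has $\mathcal{H}^1$-measure close to $h^-(x,\nu_u(x))\varrho$, glued to $u$ along the boundary of the cube; summing over a finite family of such cubes covering $J_u$ gives a competitor with the correct surface cost up to an error controlled by $\varrho$, and its bulk energy converges to that of $u$ because the modification is supported in a neighborhood of $J_u$ of vanishing volume. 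The key technical input is the localization/blow-up identity for $h^-$ which is exactly \cite[Proposition 3.3]{GiacPonsi}.

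The main obstacle, and the step I would spend the most care on, is the transfer between the $\mathrm{PC}$-class in which $\sigma$-convergence and $h^-$ are defined and the $GSBD^p$-class in which $\mathcal{E}'_n$ lives: the densities $h^-$ computed with $\mathrm{PC}$-competitors must be shown to coincide with the effective surface densities seen by $GSBD^p$-functions. For the $\liminf$-inequality this is handled by the abstract localization method (the functionals $A\mapsto \liminf_n \mathcal{H}^1(J_{u_n}\sm K_n \cap A)$ are shown to be superadditive measures with density bounded below by $h^-$, using the De~Giorgi--Letta criterion), while for the recovery sequence the cube-by-cube construction produces $\mathrm{PC}$-type (hence $SBV^p$, hence $GSBD^p$) competitors directly. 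I would structure the proof so that the bulk term is separated off at the outset (it $\Gamma$-converges to itself, trivially, and is continuous along the relevant convergences), and so that all the work concerns the purely surface functional $u \mapsto \mathcal{H}^1(J_u \sm K_n)$, for which I would cite the results of \cite{GiacPonsi} as black boxes wherever possible, adapting only the minor points needed to pass from their $\mathrm{PC}$/$SBV$ setting to ours. I expect the final write-up to be short precisely because it is a combination of Proposition~\ref{prop: compactness} with \cite[Propositions 3.3, 5.3, 5.8]{GiacPonsi} and a standard density/diagonal argument.
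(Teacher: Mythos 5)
Your approach (split into $\liminf$ and recovery sequence, localize, invoke $\sigma$-convergence machinery) is genuinely different from the paper's, which proceeds via a perturbation argument: the paper replaces $\mathcal{E}'_n$ by $\mathcal{E}'_{n,\varepsilon}(u):=\mathcal{E}'_n(u)+\varepsilon\mathcal{H}^1(J_u\cap K_n)$, applies the integral-representation $\Gamma$-convergence theorem of \cite[Theorem~2.4, Remark~3.15]{FriPerSol20} to the perturbed functionals, and then carefully sends $\varepsilon\to 0$ using that $\limsup_{\varrho\to 0^+}\liminf_n \varrho^{-1}\mathcal{H}^1(K_n\cap Q^\nu_\varrho(x))$ is finite $\mathcal{H}^1$-a.e.

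There is a genuine gap in your proposal. Your $\liminf$-inequality relies on an abstract localization/integral-representation step in $GSBD^p$ (you invoke De~Giorgi--Letta and the idea that $A\mapsto\liminf_n\mathcal{H}^1((J_{u_n}\setminus K_n)\cap A)$ is a measure with density bounded below by $h^-$). The only results of this type available in $GSBD^p$ (notably \cite{FriPerSol20}, cf.\ also the remark around \cite[Assumption~(2.12)]{FriPerSol20}) require a \emph{pointwise lower bound} on the surface density, and the density here is degenerate: $h^-\le 1$ always, and $h^-$ can vanish on sets of positive $\mathcal{H}^1$-measure (indeed $h^-=0$ on $K$ by definition), while $\mathcal{H}^1(J_u\setminus K_n)$ assigns zero cost to jumps on $K_n$. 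So the representation theorem you would need simply does not apply as stated. You also cite \cite[Proposition~5.8]{GiacPonsi}, but that is an $SBV^p$-result; the $GSBD^p$-analog is Proposition~\ref{prop:sigmasigmap} of the present paper, whose proof \emph{uses} the lemma you are trying to prove, so that route would be circular. The paper's $\varepsilon$-perturbation is exactly the device that restores the lower bound (pointwise $\varepsilon$) and makes \cite{FriPerSol20} applicable; the content of the proof is then in removing the perturbation, which requires the observation that the measures $\mathcal{H}^1\mres K_n$ have an $\mathcal{H}^1$-a.e.\ finite upper density in the $\limsup_{\varrho}\liminf_n$ sense, so that $h^-_\varepsilon\le h^-+\varepsilon H$ with $H<\infty$ $\mathcal{H}^1$-a.e. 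Without some such perturbation or an alternative argument replacing the lower-bound hypothesis, your $\liminf$-step does not close. Your recovery-sequence construction is less problematic in spirit, but note that gluing PC-type local competitors to a general $GSBD^p$ function across cube boundaries is not straightforward in $GSBD$ (no trace/gluing lemma is cited), so that step would also need substantial elaboration; the paper avoids this entirely by letting the blackbox integral-representation theorem produce both inequalities.
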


\begin{proof}[Proof of Lemma~\ref{le:0812241813}]
 Our strategy is to apply the abstract $\Gamma$-convergence result in \cite{FriPerSol20} to the sequence $\mathcal{E}'_n$. As a pointwise lower bound on the density of the surface integral is needed, cf.\ \cite[Assumption~(2.12)]{FriPerSol20}, we consider a suitable perturbation. 
% \RRR {\tt watch out: here was a mistake, the new function also needs to satisfy the upper bound on the density. Please check } \EEE 
 Precisely,   \EEE given $\varepsilon>0$, let us define the functionals $\mathcal{E}'_{n,\varepsilon}(u):= \mathcal{E}'_{n}(u)+\varepsilon \hu(\MMM J_u \cap \EEE K_n)$,  i.e., \EEE
\begin{equation*}
\mathcal{E}'_{n,\varepsilon}(u)=\int_{\Omega'} |e(u)|^p \,  {\rm d}x  + \mathcal{H}^1(J_u \sm K_n)+\varepsilon \hu(\MMM J_u \cap \EEE K_n)
\end{equation*}
if $u \in GSBD^p(\Omega')$  and \EEE $+\infty$ otherwise in $L^0(\Omega'; \R^2)$. The characterization of the $\Gamma$-limit of $\mathcal E'_{n,\varepsilon}$ with respect to the convergence in measure follows from \cite[Theorem 2.4, \EEE Remark~3.15]{FriPerSol20}: 
%denoting by $h^-$ the density corresponding to the functional $\mathcal{H}^-$ defined in \eqref{1708240957} as the $\Gamma$-limit of $\mathcal{H}_n^-$ (wrt $L^1(U)$ convergence, localizing on any open subset), defined in \eqref{1708240952} in correspondence to $K_n$, it holds that 
the $\Gamma$-limit of $\mathcal E'_{n,\varepsilon}$ is
\begin{equation}\label{0812241718}
\mathcal{E}_\varepsilon'(u):=\int_{\Omega'} |e(u)|^p \,  {\rm d}x + \int_{J_u} h_\varepsilon^-(x,  \nu_u \EEE ) \, \dhu(x),
\end{equation}
if $u \in GSBD^p(\Omega')$   and \EEE $+\infty$ otherwise in $L^0(\Omega'; \R^2)$,
 with \EEE
\begin{equation}\label{0812242057}
h_\varepsilon^-(x, \nu)=\limsup_{\varrho\to 0^+}\liminf_{n\to +\infty} \frac{\mathbf{m}^{\mathrm{PC}}_{\mathcal{H}_{n,\varepsilon}^-}(\ol u_{x,\nu}, Q_\varrho^\nu(x))}{\varrho}
\end{equation}
 for $x \in \Omega'$ and $\nu \in \VIT \mathbb{S}^1$, \EEE where $\mathcal{H}_{n,\varepsilon}^-(u,A):= \mathcal{H}_n^-(u, A) \EEE + \varepsilon \hu(\MMM J_u \cap \EEE K_n \cap A)$. Here,  we recall the definition of $\mathcal{H}_n^-$ and $\mathbf{m}^{\mathrm{PC}}_{\mathcal{H}_{n,\varepsilon}^-}$ in \eqref{1708240952}
 and  \EEE \eqref{0812242009}, respectively.

We notice that \cite[Theorem~2.4]{FriPerSol20} proves an integral representation result for the $\Gamma$-limit of $\mathcal{E}'_{n,\varepsilon}$,  and \EEE \cite[Remark~3.15]{FriPerSol20}  shows \EEE that the surface density is exactly $h_\varepsilon^-$.  Strictly speaking, \EEE the result has been explicitly detailed for $p=2$, but it holds with minor changes in the proof also for any $p>1$, cf.\  \cite[Remark~5.3]{FriPerSol20}. 

As $\mathcal{H}_n^-\leq \mathcal{H}_{n,\varepsilon}^-$ and $\mathcal{H}_{n,\varepsilon}^-$  is \EEE increasing in $\varepsilon$, it is immediate that 
\[
h^-\leq \lim_{\varepsilon\to 0}h_\varepsilon^-,
\]
 for $h^-$ defined in \eqref{0812242041}.  We observe that \EEE $H(x):=\limsup_{\varrho\to 0^+}  \liminf_{n\to +\infty}  \EEE \varrho^{-1}\hu(K_n \cap Q_\varrho^\nu(x)) $ is finite up to a set of negligible $\hu$-measure.  Indeed, \EEE if $H(x)=+\infty$ on $B$ with $\hu(B)>0$, then the  weak$^*$ limit of $\mu_n:=\hu\mres_{K_n}$,  denoted by $\mu$,  \EEE   would satisfy  $\mu(B)=+\infty$  by  \cite[Theorem 2.56]{Ambrosio-Fusco-Pallara:2000}, \EEE which contradicts the  weak$^*$ lower semicontinuity of the total variation  $\mu(\Omega') \le \liminf_{n \to \infty}\mu_n(\Omega')<+\infty$.  Recalling \EEE \eqref{0812242041} and \eqref{0812242057}  this yields \EEE
 \begin{equation*}
h_\varepsilon^-(x,\nu)\leq h^-(x,\nu) + \varepsilon H(x) \quad  \text{for all $x \in \Omega'$ and $\nu \in \VIT \mathbb{S}^1$}, \EEE
 \end{equation*}
 and then
 \begin{equation*}
 h^-\geq \lim_{\varepsilon\to 0} h_\varepsilon^-.
 \end{equation*}
 Therefore, for any $A \in \mathcal{A}(\Omega')$, the functionals $\mathcal{E}'_\varepsilon(\cdot, A)$ pointwise (and monotonically) converge to $\mathcal{E}'(\cdot, A)$, $\mathcal{E}'_{n,\varepsilon}-\mathcal{E}'_n \in (0, M\varepsilon)$ for $M:=\sup_n \hu(K_n)$, and then the representation result holds true also for  the functional $\mathcal{E}'$ given in \eqref{nocheinefleichung}. \EEE  
\end{proof}
\begin{proof}[Proof of Proposition~\ref{prop:sigmasigmap}]
(i) We first remark that $K \tsubset \Omega'\cap \ol \Omega$ by  Remark~\ref{rem: sigma1}(iii). By assumption we get  $v|_{\Omega'\setminus \overline{\Omega}} \in W^{1,p}(\Omega'\setminus \overline{\Omega};\R^2)$ and $v_n \to v \in L^p(\Omega'\setminus \overline{\Omega};\R^2)$, up to a subsequence.   \EEE Let us apply the compactness result \cite[Theorem~3.8]{FriPerSol20} to $v_n$ (which, similarly to the integral representation result employed in the proof of Lemma~\ref{le:0812241813}, has been proven only for $p=2$ but holds for every $p>1$, with minor changes in the proof). Recalling the precise form of the modifications,  see \cite[Theorem 6.1]{FriedrichSolombrino} and also  \cite[Theorem~3.1]{steinke}, \EEE   there are functions $y_n$ with $y_n= v_n$ on $\Omega'\setminus   \overline{\Omega} \EEE $, $r_n$ with $|\{r_n\neq 0\}|\le \frac{1}{n}$, $a_n=\sum_{j} a_n^j \chi_{P_n^j} \in {\rm PR}(\Omega')$, and  $u \in GSBD^p(\Omega')$  such that 
\begin{equation}\label{1708241612}
y_n=v_n + r_n + a_n,
\end{equation}
and
\begin{equation}\label{1708241613}
\mathcal{H}^1\big(( J_{a_n} \cup J_{r_n})\sm J_{v_n}\big) \le \frac{1}{n}, \qquad \mathcal{E}'_n(y_n) \le \mathcal{E}'_n(v_n)+\frac{1}{n}, \qquad y_n \to u \text{  in measure on $\Omega'$},
\end{equation}
 where $\mathcal{E}'_n$ is defined as in \eqref{eq: e''}. \EEE Since $a_n \in {\rm PR}(\Omega')$ and \eqref{1708241613} holds, there exists $a= \sum_j a_j \chi_{P_j} \in {\rm PR}_\infty(\Omega')$ such that $\bar{\rm d}(a_n,a)\to 0$ by Proposition \ref{prop: compactness}.    \EEE We notice that $a=0$ on $\Omega'\setminus \ol\Omega$ by \eqref{1708241612} and since $y_n=v_n $  on $\Omega'\setminus \ol\Omega$ \EEE as well as  $|\{r_n\neq 0\}| \to 0$. Next, we observe that
\begin{equation}\label{1708241817}
A^\infty_v = A^\infty_a.
\end{equation}
 Indeed, \EEE otherwise $|y_n|\to +\infty$ a.e.\ on $A^\infty_v\triangle A^\infty_a$ since $A^\infty_v = \lbrace |v_n|\to +\infty\rbrace $, $A^\infty_a = \lbrace |a_n|\to +\infty\rbrace $, and $|\{r_n\neq 0\}| \to 0$.  This  contradicts the third property in  \EEE   \eqref{1708241613}.
Since $a_n$, $a$ are piecewise rigid functions, denoting by $B_n:=\bigcup\{P^j_n \colon P^j_n \cap  A^\infty_v \EEE \neq \emptyset\}$, it holds that  
\begin{equation}\label{2108240713}
|B_n\triangle A^\infty_v|\to 0, \qquad  \partial^*B_n \cap \Omega'  \,  \tilde{\subset} \, \EEE J_{a_n}, \EEE
\end{equation}
 where for the second property we assumed without restriction that the affine mappings $(a_n^j)_j$ are pairwise distinct, cf.\ \cite[(3.1)]{FM}. \EEE
Then, for every $b\in \R^2$,
\[
{y}^b_n:=y_n+b\chi_{B_n}-a_n\chi_{\Omega' \EEE \sm B_n}
\] 
 is \EEE such that,  by \eqref{1708241612}--\eqref{2108240713} and the fact that $v_n $ converges weakly in $GSBD^{p}_\infty(\Omega')$ to $v$, \EEE
\begin{equation}\label{2108240729}
{y}^b_n \to {u}^b:=v\chi_{\Omega'\sm A^\infty_v}+(u+b)\chi_{A^\infty_v} \quad \text{a.e.\ in } \Omega',
\end{equation} 
and 
\begin{equation}\label{2108240732}
\partial^*A^\infty_v  \cap \Omega' \,  \tilde{\subset} \,  \EEE J_{{u}^b} \text{ for a.e.\ }b \in \R^2.
\end{equation} 
Let us fix  some $b$ satisfying \eqref{2108240732}.   Recalling \eqref{eq: general jump} and \EEE combining  \eqref{2108240729}--\eqref{2108240732} we get 
\begin{align}\label{eventually} 
J_v \tsubset J_{{u}^b}.
\end{align}
 \VVV 
We now apply the result \cite[Lemma~7.1]{FriPerSol20} to (the surface parts of) $\mathcal{E}'_n$  and \EEE $\mathcal{E}$ as in Lemma~\ref{le:0812241813}, and to the converging sequence ${y}^b_n\to u^b$,  see \EEE \eqref{2108240729}: this gives that \EEE
%the bulk and the surface part of $\mathcal{E}'_n$ are both lower semicontinuous with respect to their respective parts in the limit $\mathcal{E}'$ in \eqref{0812241718}.
%% together with the characterizations proven in \cite[Theorems~2.4 and 2.9,  Remark 3.15]{FriPerSol20} \EEE 
% \EEE
% %\RRR This is hard to understand, additional explanation might help \EEE 
% We obtain that
\begin{equation*}
\int_{J_{u^b}} h^-(x,  \nu_{u^b}) \EEE \,\mathrm{d}\mathcal{H}^1(x) \leq \liminf_{n\to \infty} \mathcal{H}^1(J_{y^b_n} \sm  K_n)  .
\end{equation*}
Using the definition of $y^b_n$,  by \EEE the first property \EEE in \eqref{1708241613},  \eqref{2108240713}, \EEE and the assumption that $\hu(J_{v_n}\sm K_n)\to 0$, \EEE  we get 
$$ \int_{J_{{u}^b}} h^-(x,  \nu_{u^b}) \EEE \,\mathrm{d}\mathcal{H}^1(x) = 0.$$
By \VVV Definition~\ref{def:sigmaconv} \EEE we find $J_{{u}^b} \tsubset K$ and \VVV by \eqref{eventually} \EEE we conclude the proof of (i). 

(ii) Since $v_n \to v$ in $ L^p \EEE (\Omega'\sm \ol \Omega; \R^2)$, up to a subsequence we have $v_n \to v$ a.e.\ in $\Omega'\sm \ol \Omega$ and $|v_n|\to +\infty$ a.e.\ in $A_v^\infty$, by definition of weak convergence in $GSBD^p_\infty(\Omega')$.  This shows \EEE $|(\Omega'\sm \ol \Omega) \cap A_v^\infty|=0$.  Moreover, the fact that $J_{{u}^b} \tsubset K$ and \eqref{2108240732} give $\partial^*A^\infty_v  \cap \Omega' \,  \tilde{\subset} \,  K$. 

Let $G_K$ be the smallest set with (a)  $|\Omega'\sm (\ol \Omega \cup G_k)|=0$ and  (b) $\partial^* G_k \cap \Omega' \, \tilde{\subset} \, K$. As  \MMM $|(\Omega'\sm \ol \Omega) \cap A_v^\infty|=0$, \EEE also $G_K \setminus A_v^\infty$ is a set satisfying (a) and (b). Therefore, the minimality of $G_K$ implies $|A^\infty_v \cap G_K|=0$.
\end{proof}

\section{Proof of  the \EEE main result}\label{sec: main}

This section is devoted to the proof of Theorem  \ref{maintheorem}.  We start by establishing  a \EEE uniform energy bound on $u_n(t)$, defined in \eqref{definitionofun}, which will   enable us to pass to the limit by the compactness and semicontinuity results   that were established in the previous section.   In order to conclude that \eqref{finalstability} holds, we need a result on the stability of unilateral minimizers (Theorem \ref{stability}) whose proof is deferred to Section \ref{sec: stability}. With this result at hand, we are finally able to conclude the proof of Theorem \ref{maintheorem}. 

\EEE

\subsection{Energy bound}

In this section, our goal is to show that the energy of the evolution is bounded uniformly in time. For this, we need to prove an energy estimate on the time-discrete level, which will be also crucial to establish the energy balance. \EEE 

We recall the notation for the time discretization $\{0=t^{0}_n< t^{1}_n<\dots< t^{T /\delta_n}_n =T\}$ of the interval $[0,T]$ with step size $\delta_n$, and introduce the following shorthand notation.   Given an arbitrary $v\in  \MMM \mathcal{A}^{k}_n \EEE $, we write 
\begin{equation}\label{1908241552}
\mathcal{E}^{0}_n(v)=E_n(v)\, \quad \text{and} \quad \mathcal{E}^{k}_n(v) \defas \mathcal{E}_n(v, (u^j_{n})_{j<k})\quad \text{for} \, { k\ge 1, \EEE }
\end{equation}
where $E_n$ and $\mathcal{E}_n$ are defined in  \eqref{energy-static-new} and \eqref{energy-split}, respectively, and $(u^j_{n})_{j<k}$ denote the displacements that have been found  at previous time steps $(t^j_n)_{j<k}$.
%Furthermore, we also discretize the boundary condition- 
We start by proving a bound on the elastic part of the energy.  
\begin{proposition}[Elastic energy]\label{prop-control-on-ela}
  Let $t \mapsto u_n(t)$ be the discrete evolution defined in \eqref{definitionofun}.   \MMM There exists \EEE a constant  $C>0$ depending on $g$ such that   $\mathcal{E}_n^{0}(u^0_n) \le C$ and 
  \begin{align*}
   \int_{ \Omega'} |e({u}_n(t))|^2  \, {\rm d} x \le C \quad \text{ for all $t \in [0,T]$ and for all $n \in \N$.}
   \end{align*}
      
\end{proposition}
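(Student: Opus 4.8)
The plan is to follow the standard energy-comparison argument for time-incremental minimization schemes, adapted to the history-dependent discrete energy \eqref{energy-split}. First I would establish the bound on the initial datum: since $u_n^0$ is a minimizer of $E_n$ over $\mathcal{A}_n^0$, I compare it against the competitor $g(0)_{\mathbf{T}_n}$ (the piecewise affine interpolation of $g(0)$ on a fixed admissible triangulation, which is trivially admissible at time $0$); using $g \in W^{1,1}(0,T;W^{2,\infty}(\Omega';\R^2))$ and the fact that $|e(g(0)_{\mathbf{T}_n})|$ is bounded uniformly in $n$ by $\|g(0)\|_{W^{2,\infty}}$ (interpolation estimates on shape-regular triangulations), we get $\mathcal{E}_n^0(u_n^0) = E_n(u_n^0) \le E_n(g(0)_{\mathbf{T}_n}) \le C\|g(0)\|^2_{W^{2,\infty}}|\Omega| \le C$.

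Next, the inductive step. The key observation is that at each time step one can build an admissible competitor for $u_n^k$ out of $u_n^{k-1}$ by composing with the change of boundary data. Concretely, since $g(t)$ is $W^{2,\infty}$ in space and the increment $g(t_n^k) - g(t_n^{k-1})$ is small (controlled by $\int_{t_n^{k-1}}^{t_n^k}\|\partial_t g(s)\|_{W^{2,\infty}}\,ds$), one replaces $u_n^{k-1}$ on the triangles near $\partial\Omega$ by the interpolation of the new boundary datum, or more cleanly, one tests with $v := u_n^{k-1} + (g(t_n^k))_{\mathbf{T}_n(u_n^{k-1})} - (g(t_n^{k-1}))_{\mathbf{T}_n(u_n^{k-1})}$, which lies in $\mathcal{A}_n^k$ because it uses the same triangulation $\mathbf{T}_n(u_n^{k-1})$ (which already contains $\mathbf{T}_{n,k-1}^{\rm crack}$) and matches $g(t_n^k)$ near the boundary. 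The crucial point here is that this modification does not create new "cracked" triangles: since $e$ of the correction term is bounded by $\|g(t_n^k)-g(t_n^{k-1})\|_{W^{2,\infty}} \le C\delta_n \sup\|\partial_t g\|$, which is $\ll \sqrt{\kappa/\eps_n}$ for $n$ large, the set of big triangles of $v$ equals that of $u_n^{k-1}$; moreover $v$ is still close to the background mesh $\mathbf{Z}_n$ wherever $u_n^{k-1}$ was. Hence $\Omega_{n,k-1}^{\rm crack}(v) = \Omega_{n,k-1}^{\rm crack}$, so minimality gives
\begin{align*}
\mathcal{E}_n^k(u_n^k) \le \mathcal{E}_n^k(v) &= \int_{\Omega\setminus\Omega_{n,k-1}^{\rm crack}}|e(v)|^2\,{\rm d}x + \kappa\frac{|\Omega_{n,k-1}^{\rm crack}|}{\eps_n}\\
&\le \int_{\Omega\setminus\Omega_{n,k-2}^{\rm crack}(u_n^{k-1})}|e(u_n^{k-1})|^2\,{\rm d}x + \kappa\frac{|\Omega_{n,k-1}^{\rm crack}|}{\eps_n} + C\delta_n^{1/2}\Big(\int\|\partial_t g\|^2\Big)^{1/2} + C\delta_n.
\end{align*}
Here one uses $|a+b|^2 \le |a|^2 + 2|a||b| + |b|^2$ and Cauchy--Schwarz on the cross term, absorbing the elastic energy of $u_n^{k-1}$. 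Since $\Omega_{n,k-1}^{\rm crack} \supset \Omega_{n,k-2}^{\rm crack}$ (the crack only grows), the surface term is monotone; combining with the elastic part one gets $\mathcal{E}_n^k(u_n^k) \le \mathcal{E}_n^{k-1}(u_n^{k-1}) + C_k$, where $\sum_k C_k \le C\int_0^T(\|\partial_t g(s)\|_{W^{2,\infty}} + 1)\,ds$ by a discrete Cauchy--Schwarz estimate (as $\sum_k \delta_n^{1/2}(\int_{t_n^{k-1}}^{t_n^k}\|\partial_t g\|^2)^{1/2} \le (\sum_k \delta_n)^{1/2}(\sum_k \int\|\partial_t g\|^2)^{1/2} \le T^{1/2}\|\partial_t g\|_{L^2}$, and similarly for the lower-order terms with the assumed $W^{1,1}$ regularity handled more carefully). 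Iterating from $k=0$ and using the bound on $\mathcal{E}_n^0(u_n^0)$ yields $\max_k \mathcal{E}_n^k(u_n^k) \le C$ uniformly in $n$, and in particular $\int_{\Omega\setminus\Omega_{n,k-1}^{\rm crack}(u_n^k)}|e(u_n^k)|^2 \le C$.

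Finally, to conclude the stated $L^2$ bound on $e(u_n(t))$ over $\Omega'$: recall $u_n(t) = u_n^k\chi_{\Omega'\setminus\Omega_{n,k}^{\rm crack}}$ for $t \in [t_n^k, t_n^{k+1})$, so $e(u_n(t)) = e(u_n^k)$ on $\Omega'\setminus\Omega_{n,k}^{\rm crack}$ and $e(u_n(t)) = 0$ on $\Omega_{n,k}^{\rm crack}$ (the characteristic function kills it, modulo the jump on $\partial\Omega_{n,k}^{\rm crack}$ which contributes nothing to the absolutely continuous part $e$). On $\Omega'\setminus\overline{\Omega}$ the boundary condition $u_n^k = g(t_n^k)_{\mathbf{T}_n}$ gives $\|e(u_n^k)\|_{L^2(\Omega'\setminus\overline\Omega)} \le C\|g\|_{W^{2,\infty}}$. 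On $\Omega$, split the integral over $\Omega\setminus\Omega_{n,k}^{\rm crack}$ — controlled by the energy bound just established — and the bound is complete. The main obstacle I anticipate is the bookkeeping in the inductive step: verifying carefully that the test competitor $v$ genuinely belongs to $\mathcal{A}_n^k$ (in particular that its crack set relative to the history does not grow, so that the surface term telescopes cleanly) and that the boundary-correction term is small enough, both in the $\sqrt{\kappa/\eps_n}$ threshold comparison and in the summable cross-term estimate; the $W^{1,1}$-in-time regularity (rather than $W^{1,2}$) requires the cross term to be estimated by $\|\partial_t g\|_{L^1}$ times a uniform $L^\infty$ bound on $\|e(u_n^{k-1})\|$-type quantities, which is not available a priori, so one should instead keep the $\delta_n^{1/2}$-weighted form and invoke an approximation/density argument or simply note that the relevant estimate $\sum_k (\int_{I_k}\|\partial_t g\|)\cdot\big(1 + \|e(u_n^{k-1})\|_{L^2(\Omega)}\big)$ combined with a discrete Grönwall inequality suffices.
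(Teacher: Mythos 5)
Your bound on $\mathcal{E}_n^0(u_n^0)$ matches the paper's argument. For the inductive step, however, the paper takes a genuinely simpler and different route: instead of building a correction competitor out of $u_n^{k-1}$, it tests at each $k$ directly with $g_n^k := g(t_n^k)_{\mathbf{T}_n(u_n^{k-1})}$, the interpolated boundary datum on the previous triangulation. Since $e(g_n^k)$ is bounded uniformly in $n$, one has $\eps_n |e(g_n^k)_T|^2 < \kappa$ for $n$ large, so $g_n^k$ adds \emph{no new} cracked triangles and $\Omega_{n,k-1}^{\rm crack}(g_n^k) \subset \Omega_{n,k-1}^{\rm crack}(u_n^k)$. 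Then $\mathcal{E}_n^k(u_n^k) \le \mathcal{E}_n^k(g_n^k)$ immediately gives $\int_{\Omega\setminus\Omega_{n,k-1}^{\rm crack}(u_n^k)}|e(u_n^k)|^2 \le \int|e(g_n^k)|^2 \le C$, because the surface terms compare in the favorable direction and can be dropped. No iteration, no telescoping, no Gr\"onwall: each step is estimated independently. Your correction competitor $v = u_n^{k-1} + \tilde{g}(t_n^k) - \tilde{g}(t_n^{k-1})$ is exactly the test function the paper reserves for the \emph{discrete energy estimate} (Lemma~\ref{e-balance-lemma}), which is a different statement proven \emph{after} Proposition~\ref{prop-control-on-ela} and which actually relies on it (to control the cross term $\int e(u_n^{k-1}):\zeta_n^{k-1}$). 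Using that competitor here inverts the logical order and forces the Gr\"onwall repair you sketch at the end.

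There is also a genuine gap in your step: the claim that ``the set of big triangles of $v$ equals that of $u_n^{k-1}$'' is false in general. If $\eps_n|e(u_n^{k-1})_T|^2$ sits just below $\kappa$, the (small but nonzero) correction can push it over the threshold, so $\Omega_{n,k-1}^{\rm crack}(v)$ may strictly contain $\Omega_{n,k-1}^{\rm crack}$ and the surface terms do not cancel cleanly. The paper's Lemma~\ref{e-balance-lemma} handles this precisely: on any newly cracked triangle $T$ one has $\eps_n|e(v)_T|^2 \ge \kappa$, so the elastic contribution $\int_T|e(v)|^2 \ge \kappa|T|/\eps_n$ compensates the extra surface term, yielding \eqref{preliminary-estimate}. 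Without this observation your telescoping estimate does not close. Even with it, you still need the discrete Gr\"onwall argument (roughly $\sqrt{a_k} \le \sqrt{a_{k-1}} + C\sigma_k$ with $\sum_k\sigma_k \le C$ from $W^{1,1}$-in-time regularity), which you identify as the main obstacle but do not carry out. All of this is avoidable by choosing the cleaner competitor $g_n^k$.
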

\EEE
\begin{proof}
For the time step $t_{n}^{0}=0$, we consider   the background mesh $\mathbf{Z}_{n}\in \mathcal{T}_n(\Omega')$ as introduced before \eqref{def: new-crackset} and \EEE  the test function $g_n^{0} :=g(0)_{\mathbf{Z}_n}$, see \eqref{triangul}.   
  We then have  $g_n^{0}\in \mathcal{A}_n^{0}$, see \eqref{akeps}. Since $g\in W^{1,1}(0,T;W^{2,\infty}(\Omega;  \R^2  ))$, we can deduce that $e(g_n^{0})_{T}$ is uniformly bounded on each triangle $T$, hence we obtain $\mathcal{E}_n^{0}(g_n^{0})  \le C$ for some $C>0$   independently of $n$. By \eqref{minimizing-scheme0} this implies that $\mathcal{E}_n^{0}(u_n^{0})\le C$ uniformly in $n$.

In a similar fashion, we can test with a function $g_n^{k}\in \mathcal{A}^{k}_n$, namely $g_n^{k} = g(t_n^{k})_{\mathbf{T}_n}$, where $\mathbf{T}_n$ is \JJJ given by  $\mathbf{T}_{n}(u_n^{k-1})$. \EEE As before, the regularity of $g$ implies that \EEE $e(g_n^{k})_{T}$ is uniformly bounded, and thus $\eps_n\,|e(g_n^{k})_{T}|^2<  \kappa $ for all $T\in \mathbf{T}_n$ for $n$ large enough.  In particular, recalling  \eqref{def: new-crackset}--\eqref{def: new-cracksetXXXXX} this means   $ \Omega_{n,k-1}^{\rm crack}(g_n^{k}) 
\subset \EEE \Omega_{n,k-1}^{\rm crack}(u_n^{k})$. 
%%\Omega_{n}^{\rm crack} 
\EEE As $\mathcal{E}_n^{k}(u_n^{k})\leq \mathcal{E}_n^{k}(g_n^{k})$  by \eqref{minimizing-scheme}, using \eqref{energy-split} we can deduce for all $t\in  [ \EEE t_n^k,t_n^{k+1})$  
  \begin{align*}
    \int_{\Omega}|e({u}_n(t))|^2\, {\rm d}x \EEE & = \int_{\Omega \setminus \Omega_{n,k-1}^{\rm crack}(u_n^{k})} |e(u_n^{k})|^2\, {\rm d}x \EEE\leq \int_{\Omega \setminus \Omega_{n,k-1}^{\rm crack}(g_n^{k})} |e(g_n^{k})|^2\, {\rm d}x \EEE  +     \kappa \,\frac{|\Omega_{n,k-1}^{\rm crack}(g_n^k)|}{\eps_n} - \kappa \,\frac{|\Omega_{n,k-1}^{\rm crack}(u_n^k)|}{\eps_n}  \\
    & \le \int_{ \Omega \setminus \Omega_{n,k-1}^{\rm crack}(g_n^{k}) \EEE } |e(g_n^{k})|^2\,  {\rm d}x \EEE   \le C \,.
  \end{align*} 
Eventually, we have $\| e(u_n(t)) \|_{L^2(\Omega' \setminus \overline{\Omega})}   \le \EEE C $ by the definition  of   $\mathcal{A}^{k}_{n}$ \EEE in \eqref{akeps}   and the regularity of $g$. 
In fact, if \MMM   $T \cap \overline{\Omega} = \emptyset$, \EEE then $u_n(t)=g(t_n^k)_{\mathbf{T}_n(u_n(t))}$ in $T$ by the definition  of  $\mathcal{A}^{k}_{n}$. Instead, for  $\MMM \mathbf{T}^{\rm bdy}_n(u_n(t)) \EEE :=\{T \in \mathbf{T}_n(u_n(t)) \colon T \cap (\Omega'\sm \ol \Omega) \neq \emptyset,\, T \cap \Omega\neq \emptyset\}$, it holds $\# \mathbf{T}^{\rm bdy}_n(u_n(t))\leq \frac{\tilde{C}}{\eps_n}$  with $\tilde{C}$ depending on the Lipschitz constant of $\JJJ \partial \Omega \EEE$. \MMM Therefore,  in view of \eqref{def: new-crackset}, \EEE $\|e(u_n(t))\|^2_{L^2(\Omega^{\rm bdy}_n(u_n(t)))}\leq \MMM \tilde{C}  \EEE $, \JJJ where \begin{equation*}\Omega^{\rm bdy}_n(u_n(t))\defas 
 \mathrm{int}\Big(
  \bigcup_{T\in \mathbf{T}^{\rm bdy}_n(u_n(t))} T \Big)\,. \end{equation*}\EEE  This concludes the proof.  
\end{proof}
\noindent
As an immediate consequence, we obtain the following corollary. 
\begin{corollary}\label{cor: full-ela-control}
    Let $t \mapsto u_n(t)$ be the discrete evolution defined in \eqref{definitionofun}.
    Then, there exists a constant $C>0$ depending on $g$, but independent of $k$ and $n$, such that 
\begin{align}\label{vio}
\int_{0}^{t_{n}^{k}} \int_{\Omega} e ({u}_n(\tau)) :  e(\partial_{t}g(\tau))\,{\rm d} x\,{\rm d}\tau \le   C \quad \MMM \text{for all $k=0,\ldots,  T/\delta_n$}. \EEE
\end{align} 
\end{corollary}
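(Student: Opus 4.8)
The plan is to prove Corollary~\ref{cor: full-ela-control} by a discrete Gr\"onwall-type argument on the time-discrete energies, exploiting the minimality property \eqref{minimizing-scheme} together with the uniform elastic bound from Proposition~\ref{prop-control-on-ela}. First I would write, for each fixed $k$, the Riemann-sum approximation of the time integral: since $u_n(\tau) = u_n^j$ on $[t_n^j, t_n^{j+1})$ (modulo the characteristic function, which does not affect the strain on $\Omega\setminus\Omega_{n,j}^{\rm crack}$ where $e(u_n(\tau))$ is read off), we have
\begin{equation*}
\int_0^{t_n^k}\int_\Omega e(u_n(\tau)):e(\partial_t g(\tau))\,{\rm d}x\,{\rm d}\tau = \sum_{j=0}^{k-1}\int_\Omega e(u_n^j):\Big(\int_{t_n^j}^{t_n^{j+1}} e(\partial_t g(\tau))\,{\rm d}\tau\Big){\rm d}x = \sum_{j=0}^{k-1}\int_\Omega e(u_n^j):\big(e(g(t_n^{j+1}))-e(g(t_n^j))\big)\,{\rm d}x.
\end{equation*}
By Proposition~\ref{prop-control-on-ela}, $\|e(u_n^j)\|_{L^2(\Omega)}\le C$ uniformly, and $\sum_{j=0}^{k-1}\|e(g(t_n^{j+1}))-e(g(t_n^j))\|_{L^2(\Omega)} \le \int_0^T \|e(\partial_t g(\tau))\|_{L^2(\Omega)}\,{\rm d}\tau \le C$ by the assumption $g\in W^{1,1}(0,T;W^{2,\infty}(\Omega';\R^2))$. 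A Cauchy--Schwarz estimate term by term then gives the bound $C$ directly, without even needing a Gr\"onwall iteration.

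More precisely, I would estimate
\begin{equation*}
\Big|\sum_{j=0}^{k-1}\int_\Omega e(u_n^j):\big(e(g(t_n^{j+1}))-e(g(t_n^j))\big)\,{\rm d}x\Big| \le \sum_{j=0}^{k-1}\|e(u_n^j)\|_{L^2(\Omega)}\,\|e(g(t_n^{j+1}))-e(g(t_n^j))\|_{L^2(\Omega)} \le C\sum_{j=0}^{k-1}\int_{t_n^j}^{t_n^{j+1}}\|e(\partial_t g(\tau))\|_{L^2(\Omega)}\,{\rm d}\tau,
\end{equation*}
and the last sum telescopes to $C\int_0^{t_n^k}\|e(\partial_t g(\tau))\|_{L^2(\Omega)}\,{\rm d}\tau \le C\|g\|_{W^{1,1}(0,T;W^{2,\infty}(\Omega'))}$, which is the desired uniform bound. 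One technical point worth spelling out carefully is the identification of $e(u_n(\tau))$ with $e(u_n^k)$ on $\Omega$: by \eqref{definitionofun} we have $u_n(\tau)=u_n^k\chi_{\Omega'\setminus\Omega_{n,k}^{\rm crack}}$, so $e(u_n(\tau))=e(u_n^k)$ on $\Omega\setminus\Omega_{n,k}^{\rm crack}$ and $e(u_n(\tau))=0$ on $\Omega_{n,k}^{\rm crack}$; since $e(u_n^k)$ is not used outside $\Omega\setminus\Omega_{n,k-1}^{\rm crack}(u_n^k)$ in the energy and the integrand $e(u_n(\tau)):e(\partial_t g(\tau))$ is measured against $e(u_n(\tau))$ itself, this causes no loss, and in any case $\|e(u_n(\tau))\|_{L^2(\Omega)}\le\|e(u_n^k)\|_{L^2(\Omega)}\le C$ suffices.

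I do not anticipate a genuine obstacle here: the only mild subtlety is bookkeeping with the piecewise-constant-in-time structure and making sure the telescoping of the boundary-data increments is handled cleanly (the step-size $\delta_n$ evenly divides $T$, so $t_n^k$ is an exact node and no truncation error at the endpoint appears). The absolute-continuity hypothesis on $g$ is exactly what makes the sum of increments bounded by the $W^{1,1}$-seminorm. Thus the corollary follows as an immediate consequence of Proposition~\ref{prop-control-on-ela} and Cauchy--Schwarz, as the statement of the corollary already advertises.
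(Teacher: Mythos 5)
Your proof is correct and uses exactly the same ingredients as the paper: Hölder/Cauchy--Schwarz, the uniform $L^2$-bound on $e(u_n(\tau))$ from Proposition~\ref{prop-control-on-ela}, and the $W^{1,1}$-in-time regularity of $g$. The paper applies Hölder's inequality directly to the space-time integral (pairing $L^\infty(L^2)$ with $L^1(L^2)$) rather than first passing to the Riemann-sum decomposition, but the argument is the same; the Grönwall framing you mention at the outset is, as you note yourself, unnecessary.
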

\begin{proof}
    By H\"older's inequality we find 
$$ \int_{0}^{t_{n}^{k}} \int_{\Omega} e (u_n(\tau)) :  e(\partial_{t}g(\tau))\,{\rm d} x\,{\rm d}\tau  \leq    \, \Vert e(\partial_t  g)\Vert_{L^1(0,T; L^2(\Omega))}\; \|e({u}_n)\|_{L^{\infty}([0,t_n^k); L^2(\Omega))} \,.$$
% \RRR T  \text{ I dont unsderstand this $T$. Should be removed. } \EEE 
 Using \EEE Proposition~\ref{prop-control-on-ela}
and   $g\in W^{1,1}(0,T; W^{2,\infty}(\Omega; \R^2 ))$ we deduce \eqref{vio}. 
\end{proof}
%  We now want to prove an estimate of the energy, which will be crucial to obtain compactness on the one hand as well as for the proof of the energy balance law. \EEE  
We continue with the following discrete energy estimate that will be fundamental to establish a uniform bound on the energy and to prove the energy-balance law.   
\begin{lemma}[Discrete energy estimate]\label{e-balance-lemma}
  Let $t \mapsto u_n(t)$ be the discrete evolution defined in \eqref{definitionofun}. Let $k=0,\ldots,  T/\delta_n  $. Then, there exists  $(\beta_n)_n$   independent \EEE of $k$ with $\beta_n \to 0$  \MMM as $n \to \infty$ \EEE such that 
    \begin{equation}\label{eq: for en bal}
{        \mathcal{E}_n^k( u_n^k ) - \mathcal{E}_{n}^{0}(u^0_n) \leq 2  \int_{0}^{t_{n}^{k}} \int_{\Omega}  e({u}_n(\tau)) :  e(\partial_{t} {g}(\tau)  )\,{\rm d} x\,{\rm d}\tau  +  \beta_n  .   }
    \end{equation}
\end{lemma}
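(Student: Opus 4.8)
\textbf{Proof plan for Lemma \ref{e-balance-lemma}.} The plan is to derive the estimate by a telescoping argument over the time steps, where at each step one compares the minimizer $u_n^k$ with a suitable competitor built from $u_n^{k-1}$ by transporting the previous displacement via the updated boundary datum. First I would introduce, for $1 \le j \le k$, the competitor $v_n^j := u_n^{j-1} + (g(t_n^j) - g(t_n^{j-1}))_{\mathbf{T}_n(u_n^{j-1})}$, the piecewise affine interpolation of the boundary increment on the triangulation $\mathbf{T}_n(u_n^{j-1})$ of the previous solution. Since $\mathbf{T}_{n,j-1}^{\rm crack} \subset \mathbf{T}_n(u_n^{j-1})$, this $v_n^j$ is admissible for the $j$-th minimization problem, i.e.\ $v_n^j \in \mathcal{A}^j_n$, and — because $e(v_n^j)$ differs from $e(u_n^{j-1})$ only by the bounded quantity $e(g(t_n^j)-g(t_n^{j-1}))_T$ which is small for $n$ large — no new triangle becomes `big', so $\Omega^{\rm crack}_n(v_n^j) \subset \Omega^{\rm crack}_n(u_n^{j-1})$ and hence $\Omega^{\rm crack}_{n,j-1}(v_n^j) = \Omega^{\rm crack}_{n,j-1}$ (the crack set does not grow). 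Consequently the crack part of the energy is unchanged, $\mathcal{E}^{\rm crack}_n(v_n^j,(u_n^i)_{i<j}) = \mathcal{E}^{\rm crack}_n(u_n^{j-1},(u_n^i)_{i<j})$, and only the elastic part must be compared.

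Next, using the minimality \eqref{minimizing-scheme}, $\mathcal{E}_n^j(u_n^j) \le \mathcal{E}_n^j(v_n^j)$, and the observation above, I would obtain
\begin{align}\label{eq: plan step}
\mathcal{E}_n^j(u_n^j) - \mathcal{E}_n^{j-1}(u_n^{j-1}) \le \int_{\Omega \setminus \Omega^{\rm crack}_{n,j-1}} \big( |e(v_n^j)|^2 - |e(u_n^{j-1})|^2 \big)\, {\rm d}x,
\end{align}
where I also need the elementary consistency $\mathcal{E}^{\rm crack}_n(u_n^{j-1},(u_n^i)_{i<j}) = \mathcal{E}^{\rm crack}_n(u_n^{j-1},(u_n^i)_{i<j-1})$ (the $(j-1)$-th crack set equals $\Omega^{\rm crack}_{n,j-1}$ restricted appropriately, since $u_n^{j-1}$'s own cracked triangles were already included), so that the left-hand side telescopes correctly. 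Expanding the square, $|e(v_n^j)|^2 - |e(u_n^{j-1})|^2 = 2\, e(u_n^{j-1}) : e(h_n^j) + |e(h_n^j)|^2$ with $h_n^j := (g(t_n^j)-g(t_n^{j-1}))_{\mathbf{T}_n(u_n^{j-1})}$, and the quadratic term is controlled by $\|e(h_n^j)\|_{L^2}^2 \le C \| \partial_t g\|_{L^1(t_n^{j-1},t_n^j;W^{2,\infty})}^2$, whose sum over $j$ is bounded by $C \delta_n \|\partial_t g\|_{L^1(0,T;W^{2,\infty})}^2 \to 0$; this contributes to $\beta_n$. For the linear term, I would write $e(h_n^j) \approx \int_{t_n^{j-1}}^{t_n^j} e(\partial_t g(\tau))\, {\rm d}\tau$ up to an interpolation error (the difference between $g(t)$ and its piecewise affine interpolation on $\mathbf{T}_n$, controlled in $W^{1,\infty}$ by $C\omega(\eps_n)\|\partial_t g\|_{W^{2,\infty}}$, again summable to something $o(1)$), and recognize $u_n^{j-1} = u_n(\tau)$ on $\Omega \setminus \Omega^{\rm crack}_{n,j-1}$ for $\tau \in [t_n^{j-1},t_n^j)$. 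Summing \eqref{eq: plan step} over $j = 1, \ldots, k$ then telescopes the left-hand side to $\mathcal{E}_n^k(u_n^k) - \mathcal{E}_n^0(u_n^0)$ and assembles the right-hand side into $2\int_0^{t_n^k}\int_\Omega e(u_n(\tau)) : e(\partial_t g(\tau))\,{\rm d}x\,{\rm d}\tau + \beta_n$, where $\beta_n$ collects all the quadratic and interpolation errors and is independent of $k$ because all bounds are in terms of $\|\partial_t g\|_{L^1(0,T;W^{2,\infty})}$ and $\omega(\eps_n)$, $\delta_n$.

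The main obstacle I anticipate is the careful bookkeeping of the crack sets: one must verify that the test function $v_n^j$ genuinely does not enlarge the discrete crack — both that no triangle's symmetrized gradient crosses the threshold $\sqrt{\kappa/\eps_n}$ (true for $n$ large since $e(g(t_n^j)-g(t_n^{j-1}))$ is uniformly bounded while $\sqrt{\kappa/\eps_n}\to\infty$, using $g \in W^{1,1}(0,T;W^{2,\infty})$ so the increments are bounded in $W^{2,\infty}$ hence in $L^\infty$ for the gradient) and that the background-mesh condition $\mathrm{dist}(T,\mathbf{Z}_n(v_n^j)) \ge 10^6\eps_n$ is not newly triggered (which holds because $v_n^j$ and $u_n^{j-1}$ share the same triangulation $\mathbf{T}_n(u_n^{j-1})$, hence the same $\mathbf{Z}_n(\cdot)$). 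Once this is secured, the crack contributions cancel exactly in the telescoping sum and the estimate reduces to the standard elastic comparison plus harmless higher-order errors; the interpolation-error estimates for $g_{\mathbf{T}_n}$ are routine given the $W^{2,\infty}$ regularity and the mesh size bound $\omega(\eps_n) = 10^6\eps_n \to 0$.
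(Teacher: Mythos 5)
Your overall strategy — the competitor $v_n^j = u_n^{j-1} + (g(t_n^j)-g(t_n^{j-1}))_{\mathbf{T}_n(u_n^{j-1})}$, the telescoping sum, and the reduction to an elastic comparison — coincides with the paper's. However, there is one genuine gap. You assert that the competitor does not enlarge the discrete crack, i.e.\ $\Omega^{\rm crack}_{n,j-1}(v_n^j) = \Omega^{\rm crack}_{n,j-1}$, on the grounds that $e(h_n^j)_T$ is uniformly bounded while $\sqrt{\kappa/\eps_n}\to\infty$. This does not follow: the quantity $|e(u_n^{j-1})_T|$ can itself be just below $\sqrt{\kappa/\eps_n}$ (the energy bound only controls $\|e(u_n^{j-1})\|_{L^2(\Omega\setminus\Omega^{\rm crack}_{n,j-1})}$, not pointwise proximity to the threshold), so an $O(1)$ perturbation can push a triangle across $\eps_n|e(\cdot)_T|^2 \ge \kappa$. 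In general the competitor \emph{does} create new cracked triangles, and your claimed identity of crack energies fails.

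The fix — which is what the paper actually does in \eqref{energy-difference}--\eqref{preliminary-estimate} — is to allow the crack to grow and cancel the extra crack energy against the elastic energy of $v_n^j$ on the newly cracked triangles. Your correct observation that $\mathbf{T}_n(v_n^j)=\mathbf{T}_n(u_n^{j-1})$, hence $\mathbf{Z}_n(v_n^j)=\mathbf{Z}_n(u_n^{j-1})$, is exactly what is needed: any triangle in $\mathbf{T}^{\rm crack}_{n,j-1}(v_n^j)\setminus\mathbf{T}^{\rm crack}_{n,j-1}$ cannot be cracked through the background-mesh clause and therefore must satisfy $\eps_n|e(v_n^j)_T|^2\ge\kappa$, so
\begin{equation*}
\int_{\Omega^{\rm crack}_{n,j-1}(v_n^j)\setminus\Omega^{\rm crack}_{n,j-1}} |e(v_n^j)|^2\,{\rm d}x \ \ge\ \kappa\,\frac{|\Omega^{\rm crack}_{n,j-1}(v_n^j)\setminus\Omega^{\rm crack}_{n,j-1}|}{\eps_n}\,,
\end{equation*}
and your step estimate becomes \eqref{preliminary-estimate} as desired. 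A secondary, minor point: the bound $\sum_j\|e(h_n^j)\|_{L^2}^2\le C\delta_n\|\partial_t g\|_{L^1(0,T;W^{2,\infty})}^2$ is not valid under only $g\in W^{1,1}(0,T;W^{2,\infty})$; instead one estimates $\sum_j\big(\int_{t_n^{j-1}}^{t_n^j}\|\partial_t g\|_{W^{2,\infty}}\big)^2\le \max_j\big(\int_{t_n^{j-1}}^{t_n^j}\|\partial_t g\|_{W^{2,\infty}}\big)\cdot\|\partial_t g\|_{L^1(0,T;W^{2,\infty})}\to 0$ by absolute continuity of the $L^1$ integral (the paper avoids this altogether via its mean-value-theorem device defining $\vartheta_n$).
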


\begin{proof}
The argumentation follows a well-known strategy, see e.g.\   \cite[Section~3.2]{Francfort-Larsen:2003}, \EEE \cite[Lemma 6.1]{dMasoFranToad}, or   \cite[Lemma 4.3]{FriedrichSeutter} \EEE for  an \EEE  application in a discrete setting.   We notice that \MMM we have \EEE a quadratic bulk energy as in \cite{Francfort-Larsen:2003}, which simplifies the computations. \EEE
%argument in the choice of the partition, in particular we do not need for suitable Riemann sums as in \cite{dMasoFranToad}.  \RRR discuss \VIT ok? \EEE \RRR Manuel: I dont understand this comment here: The Riemann sum is usually used to obtain the other inequality, on the level of the continuous solution. So it is more Step 4 of the proof of Theorem 4.2 \EEE  
We start by introducing a notation: for the boundary function $g\in W^{1,1}( 0, T; W^{2,\infty}(\Omega;\R^2))\EEE$ and a time step $0\leq l\leq T/\delta_n$,    we define $\tilde{g}_n^{l}(t):= g(t)_{\mathbf{T}_n(u_n^{l})}$ \EEE as the piecewise affine interpolation of $g(t)$ with respect to $\mathbf{T}_n(u_n^{l})$.
% For this we define the function $\tilde{g}_n\in L^1([0,T], W^{1,\infty}(\Omega))$, which is an interpolation of $g(t)$ that is piecewise constant in time and piecewise affine in space. More precisely, for $t\in[t_n^{k},t_n^{k+1})$ we define $\tilde{g}_n(t)$ as the piecewise affine interpolation of $g(t_n^{k})$ with respect to $\mathbf{T}_n(u_n(t))=\mathbf{T}_n(u_n^{k})$.

Given the time $t_n^l$, we define the test function $\xi_n^{l}\defas u_n^{l-1}+\tilde{g}_n^{l-1}(t_n^{l})-\tilde{g}_n^{l-1}(t_n^{l-1})$. Note that $\xi_n^{l}$ is piecewise affine with respect to $\mathbf{T}_n(u_n^{l-1})$ and by definition we have $\xi_n^{l}\in \mathcal{A}^{l}_n$. In view of \eqref{minimizing-scheme}, we obtain $\mathcal{E}_n^{l}(u_n^{l})\leq \mathcal{E}_n^{l}(\xi_n^{l})$. Our goal is   to prove that there exists a bounded sequence $(\vartheta_n)_n$ in $L^\infty([0,T] \times \Omega )$ with $ \Vert \vartheta_n(\tau) \Vert_{ L^2(\Omega) } \to 0 $  uniformly in $\tau$ such that, for each $l$, we have 
\begin{equation}\label{wanttoprove}
\mathcal{E}_n^{l}(\xi_n^{l})-\mathcal{E}_n^{l}(u_n^{l-1})\leq 2 \,\int^{t^{l}_{n}}_{t^{l-1}_{n}}\int_{\Omega} (e(  {u}_n(\tau) \EEE )+\vartheta_n(\tau)) : \partial_{t} e(g(\tau)) \,  {\rm d}x \EEE\,  {\rm d}\tau \EEE +\,  C \sigma^l_n     {\eps_n} \EEE    \,,
\end{equation}
where $  \sigma^l_n \EEE :=\int_{t^{l-1}_n}^{t^l_n}\Vert  \partial_t g \Vert_{W^{2,\infty}(\Omega)} \, {\rm d}s$. Once this is shown, \EEE  we can deduce \eqref{eq: for en bal} as follows. Note first that $\mathcal{E}^{l-1}_{n}(u_{n}^{l-1}) = \mathcal{E}^l_{n}(u_{n}^{l-1})$ for each step $l$. Since $\mathcal{E}_n^{l}(u_n^{l})\leq \mathcal{E}_n^{l}(\xi_n^{l})$, we can sum up over all time steps $  1 \EEE \le l \le k$ to obtain a telescopic sum on the left-hand side of \eqref{wanttoprove}  which leads to \EEE
  \begin{equation}\label{eq: a number}
      \begin{aligned}
          \mathcal{E}^k_{n}(u_n^k)- \mathcal{E}_{n}^{0}(u_n^0)
   \leq  2\,\int_{0}^{t_{n}^{k}} \int_{\Omega} (e(u_n(\tau))+\vartheta_n(\tau)) : \partial_{t} e(g(\tau)) \,  {\rm d}x \EEE\,  {\rm d}\tau \EEE  +   C    {\eps_n} \EEE ,
      \end{aligned}
  \end{equation}
 where we used $\sum_l \sigma^l_n \le C$. Since $\|\vartheta_n(\tau)\|_{L^2  (\Omega) \EEE}\to 0$ uniformly in $\tau\in [0,T]$, we get by Hölder's inequality \[\int_{0}^{t_{n}^{k}} \int_{\Omega} |\vartheta_n(\tau) : \partial_{t} e(g(\tau))|\,  {\rm d}x \EEE\,  {\rm d}\tau \EEE \leq \int_{0}^{t_{n}^{k}}  \|\vartheta_n(\tau)\|_{L^2(\Omega)}  \|\partial_{t}e(g(\tau))\|_{L^2(\Omega)}  \EEE \,  {\rm d}\tau \EEE\to 0\,. \]
% We furthermore know from  Proposition \ref{prop-control-on-ela} and \EEE \emph{Step 1} that $e( \hat{u}_n \EEE (\tau))\in L^2(\Omega; \R^{2\times 2})$ and $e(  \hat{u}_n \EEE (\tau))\rightharpoonup e(u(\tau))$ in $L^2(\Omega;  \R^{2\times 2})$ for all $\tau \in [0,T]$. \RRR Not so precise because we were always saying converging in $GSBD_\infty$, but we never said that this includes weak convergence of gradients. Shuold be said in a preliminary section... \EEE Furthermore, the function $\varphi_n\defas\nabla \partial_{t} \tilde{g}_{n}(\tau)$ converges strongly to $\varphi\defas\nabla \partial_{t}  {g} \EEE (\tau)$ in  $L^1(0,T;L^{\infty}(\Omega;\R^{2\times 2}))$. \EEE  
% %\RRR more details? \EEE 
% We therefore can deduce 
% \[\lim_{n\to \infty} \int_{0}^{t} \int_{\Omega}  e( \hat{u}_n \EEE (\tau)) : (\varphi_n(x,\tau)-\varphi(x,\tau))\,{\rm d} x\,{\rm d}\tau = 0 \,.\] 
In view of \eqref{eq: a number}, setting 
\[\beta_n\defas C   {\eps_n} \EEE + \MMM \int_{0}^T \EEE \int_{\Omega} |\vartheta_n(\tau) : \partial_{t} e(g(\tau))|\,  {\rm d}x \,  {\rm d}\tau,  \] 
 we obtain \EEE  \eqref{eq: for en bal}.

It now remains to prove \eqref{wanttoprove}. By definition we have $\Omega^{\rm crack}_{n,l-1}(u_n^{l-1})=\Omega^{\rm crack}_{n,l-1}\subset \Omega^{\rm crack}_{n,l-1}(\xi_n^{l})$ and hence  \GGG (recall \eqref{energy-split}) \EEE
  \begin{equation}\label{energy-difference}
    \mathcal{E}_n^{l}(\xi_n^{l})-\mathcal{E}_n^{l}(u_n^{l-1})= \int_{\Omega\setminus \Omega^{\rm crack}_{n,l-1}(\xi_n^{l})}  |e(\xi_n^{l})|^2\,  {\rm d}x \EEE - \int_{\Omega\setminus\Omega^{\rm crack}_{n,l-1}} |e(u_n^{l-1})|^2\,  {\rm d}x \EEE + \frac{\GGG \kappa\, \EEE |\Omega^{\rm crack}_{n,l-1}(\xi_n^{l})\setminus \Omega^{\rm crack}_{n,l-1}|}{\eps_n}\,.
  \end{equation} 
  We can split the  first \EEE term into
   \begin{align}\label{energy: elasticpart}
 \int_{\Omega\setminus\Omega^{\rm crack}_{n,l-1}(\xi_n^{l})}     |e(\xi_n^{l})|^2 \EEE\,  {\rm d}x \EEE  =      \int_{\Omega\setminus\Omega^{\rm crack}_{n,l-1}}   |e(\xi_n^{l})|^2 \EEE \,  {\rm d}x  - \int_{\Omega^{\rm crack}_{n,l-1}(\xi_n^{l})\setminus\Omega^{\rm crack}_{n,l-1}} \,   |e(\xi_n^{l})|^2 \EEE \,  {\rm d}x \,.
    \end{align} 
    Note that $\Omega^{\rm crack}_{n,l-1}(\xi_n^{l})\setminus\Omega^{\rm crack}_{n,l-1}$ consists of all triangles $T\in \mathbf{T}^{\rm crack}_{n,l-1}(\xi_n^{l})\setminus \mathbf{T}^{\rm crack}_{n,l-1}$, i.e., it holds that $ \eps_n   |e(u_n^{j})_{T}|^2 < \kappa $ and $ \dist(T,\mathbf{Z}_n(u_n^{j}))\MMM < \EEE 10^6 \eps'_n $ for all $ 0 \le \EEE j\leq l-1$ \MMM such that $T \in \mathbf{T}_n(u_n^j)$. \VIT Since \JOS $\mathbf{T}_n(\xi_n^{l})= \mathbf{T}_n(u_n^{l-1})$, we hence have \EEE $\dist(T,\mathbf{Z}_n(\xi_n^{l}))= \dist(T,\mathbf{Z}_n(u_n^{l-1})) <  10^6 \eps'_n $ for all $T\in \mathbf{T}^{\rm crack}_{n,l-1}(\xi_n^{l})\setminus \mathbf{T}^{\rm crack}_{n,l-1}$.  
    Therefore we conclude by \eqref{def: new-crackset} that $\eps_n |e(\xi_n^{l})_{T}|^2\ge \kappa $ \JJJ for all $T\in \mathbf{T}^{\rm crack}_{n,l-1}(\xi_n^{l})\setminus \mathbf{T}^{\rm crack}_{n,l-1}$.  % \RRR {\tt Please check this change which seems important to me. } \EEE 
    % \JOS in view of $\mathbf{T}_n(\xi_n^{l})= \mathbf{T}_n(u_n^{l-1})$ it holds in particular that $ \eps_n |e(u_n^{l-1})_{T}|^2 < \kappa $ and $ \dist(T,\mathbf{Z}_n(u_n^{l-1}))\MMM < \EEE 10^6 \eps'_n $.  \RRR {\tt Please check this change which seems important to me. } \EEE On the other hand, since $\mathbf{T}_n(\xi_n^{l})= \mathbf{T}_n(u_n^{l-1})$, we deduce $\dist(T,\mathbf{Z}_n(\xi_n^{l}))= \dist(T,\mathbf{Z}_n(u_n^{l-1})) < \EEE 10^6 \eps'_n $, and therefore we conclude by \eqref{def: new-crackset} that $\eps_n |e(\xi_n^{l})_{T}|^2\ge \kappa $ \JJJ for all $T\in \mathbf{T}^{\rm crack}_{n,l-1}(\xi_n^{l})\setminus \mathbf{T}^{\rm crack}_{n,l-1}$.   
    This leads to
   \[    \int_{\Omega^{\rm crack}_{n,l-1}(\xi_n^{l})\setminus\Omega^{\rm crack}_{n,l-1}} \, |e(\xi_n^{l})|^2  \,  {\rm d}x \EEE \geq \kappa \frac{|\Omega^{\rm crack}_{n,l-1}(\xi_n^{l})\setminus\Omega^{\rm crack}_{n,l-1}|}{\eps_n} \,.\] \EEE
Combining this with \eqref{energy-difference} and \eqref{energy: elasticpart} we obtain 
% \RRR and   using   Proposition~\ref{prop-control-on-ela} together with H\"older's inequality,   we obtain {\tt the thing in red seems an artefact to me and should be deleted?} \EEE
  \begin{equation}\label{preliminary-estimate}
  \begin{split}
    \mathcal{E}_n^{l}(\xi_n^{l})-\mathcal{E}_n^{l}(u_n^{l-1}) \leq  &  \int_{\Omega\setminus \Omega^{\rm crack}_{n,l-1} }   (|e(\xi_n^{l})|^{2}-|e(u_n^{l-1})|^2 )  \, {\rm d}x \,. \EEE  
    \end{split}
  \end{equation}
   Next, recalling the definition $\xi_n^{l}\defas u_n^{l-1}+\tilde{g}_n^{l-1}(t_n^{l})-\tilde{g}_n^{l-1}(t_n^{l-1})$, we apply the mean value theorem   to the function $h\colon [0,1]\to \R; \, s\mapsto |e(u_n^{l-1})+\, s\,  (e(\tilde{g}_n^{l-1}(t_n^l)-\tilde{g}_n^{l-1}(t_n^{l-1})))|^2$ to obtain some $\rho_n^{l-1}\in [0,1]$ with 
   \begin{equation}\label{meanvalue1}
   \begin{aligned}
    &  \int_{\Omega\setminus \Omega^{\rm crack}_{n,l-1} } \EEE   (|e(\xi_n^{l})|^{2}-|e(u_n^{l-1})|^2 )   \, {\rm d}x \EEE \\  = 2 &  \int_{\Omega\setminus \Omega^{\rm crack}_{n,l-1} } \EEE   \bigl(e(u_n^{l-1})+\, \rho_n^{l-1}\,  e(\tilde{g}_n^{l-1}(t_n^l)-\tilde{g}_n^{l-1}(t_n^{l-1}))\bigr):\big(e(\tilde{g}_n^{l-1}(t_n^l)-\tilde{g}_n^{l-1}(t_n^{l-1}))\big)\,  {\rm d}x \EEE\,.  
   \end{aligned}
  \end{equation}
We can now define the function $ \vartheta_n \EEE  \colon \EEE [0,T]\to L^2(\Omega; \R_{\rm sym}^{2 \times 2} \EEE)$   by setting, for $s\in [t_n^{l-1},t_n^{l} )$, \EEE  
  \begin{equation}\label{meanvalue2}\vartheta_n(s)\defas \rho_n^{l-1}\,\bigl(  e(\tilde{g}_n^{l-1}(t_n^l))-e(\tilde{g}_n^{l-1}(t_n^{l-1}))\bigr) =  \rho_n^{l-1}\,  \int_{t_n^{l-1}}^{t_n^{l}}\partial_{t} e(\tilde{g}_n^{l-1}(\tau))\, {\rm d}\tau 
  \quad \text{   in $\Omega\setminus  \Omega^{\rm crack}_{n,l-1} \EEE $} \,,\end{equation} 
 and $0$ outside of it.    Since $g\in W^{1,1}(0,T; W^{2,\infty}(\Omega; \R^2))$, the function $\tau\mapsto  \partial_{t} e(\tilde{g}^{l-1}_{n}(\tau))$ \EEE belongs to $L^1(0,T; \MMM L^{\infty}\EEE(\Omega;  \R_{\rm sym}^{2 \times 2} \EEE ))$ and we can use the absolute continuity of the integral and $|t_{n}^{l}-t_{n}^{l-1}| =\delta_n \to 0$ to conclude that $\|\vartheta_n(s)\|_{L^2(\Omega)}\to 0$ uniformly in $s$.
% Since $g\in W^{1,1}(0,T; W^{2,\infty}(\Omega, \R^2))$,
Moreover, we write  
\begin{align}\label{meanvalue3} e(\tilde{g}_n^{l-1}(t_n^l))-e(\tilde{g}_n^{l-1}(t_n^{l-1})) = \int_{t_n^{l-1}}^{t_n^{l}}\partial_{t} e(g(\tau))\, {\rm d}\tau +  \zeta_n^{l-1} 
 \,, \end{align}
  with
  % where $\zeta_n^{l-1}\in W^{1,\infty}(\Omega; \R^{2\times 2})$ is defined by 
  $ \zeta_n^{l-1}\defas \int_{t_n^{l-1}}^{t_n^{l}}\partial_{t} e(\tilde{g}_n^{l-1}(\tau)-g(\tau)) \EEE \, {\rm d}\tau\, $. 
  % \RRR Maybe move this part? Anyways the argument needs to be improved.  \EEE 
  \JJJ By the regularity of $g$ and the definition of $\tilde{g}_n^{l}(t)$ we obtain  
%   \[
%   \tilde{g}_n^{l-1}(\tau)(x)-g(\tau)(x)= \sum_{i=1}^3 \lambda_i (\nabla g(\tau)(x_i)) \, (x-v_i)  \quad \text{for every }x \in T
%  \]  for any triangle $T \in \mathbf{T}_n(u_n^{l-1})$ with vertexes $(v_i)_{i=1}^3$,
%  and suitable $x_i \in \overline{x\, v_i}$, $\lambda_i \in (0,1)$, \JJJ where $\sum_{i}\lambda_i=1$. \EEE
% %  can be expressed as a linear combination of 3 differences of $g(\tau)$ evaluated at points in same triangles (in particular at distance less than $\omega(\eps_n)$), thus as a linear combination of $\nabla g$ evaluated in 3 points times  \|\nabla g(\tau)\|_{L^\infty(\Omega)} \varepsilon_n$ and \EEE
% Therefore, recalling that the diameter of any $T\in \mathbf{T}_n(u_n^{l-1})$ is less than $\omega(\eps_n)$, we deduce that $\zeta_n^{l-1}\in L^{\infty}(\Omega; \R_{\rm sym}^{2 \times 2} \EEE)$ with 
  \begin{equation}\label{rest-estimate}
    \|\zeta_n^{l-1}\|_{L^{\infty}(\Omega)}  \leq  C \sigma^l_n\,  \omega( \eps_n) \leq  C \sigma^l_n\,   \eps_n , 
  \end{equation} 
  where \MMM  $\sigma^l_n$ is given in \eqref{wanttoprove}, and \EEE the last step follows from \eqref{eq: 10.6}. \EEE   Noting  that $\vartheta_n$ is piecewise constant in time, we deduce from \eqref{meanvalue1}--\eqref{meanvalue3} and by Fubini's theorem
  \begin{equation}\label{with-rest-term}
    \begin{aligned}
   \int_{\Omega\setminus \Omega^{\rm crack}_{n,l-1} } \EEE   (|e(\xi_n^{l})|^{2}-|e(u_n^{l-1})|^2 )  \,  {\rm d}x  = 2 &\int_{t_n^{l-1}}^{t_n^{l}}  \int_{\Omega\setminus \Omega^{\rm crack}_{n,l-1} } \EEE  (e(u_n^{l-1})+\vartheta_n(\tau)) :  e(\partial_{t} g(\tau)) \, {\rm d}x \,  {\rm d}\tau\\
    + & 2 \EEE  \int_{\Omega\setminus \Omega^{\rm crack}_{n,l-1} } \EEE   (e(u_n^{l-1})+\vartheta_n(\tau)) : \VVV \zeta_n^{l-1} \EEE \, {\rm d}x
    \,.
    \end{aligned}
  \end{equation}
By Proposition \ref{prop-control-on-ela}, the definition in \eqref{definitionofun}, \EEE and the fact that $\|\vartheta_n(s)\|_{L^{2}(\Omega)}\to 0$, we can estimate the last term by \eqref{rest-estimate} and obtain 
\begin{equation}\label{1001251948}
 \int_{\Omega\setminus \Omega^{\rm crack}_{n,l-1} } \EEE   (e(u_n^{l-1})+\vartheta_n(\tau)) : \zeta_n^{l-1} \EEE \, {\rm d}x \leq C \sigma^l_n \,  \eps_n \,. \EEE
\end{equation}\EEE 
 By \EEE \eqref{definitionofun},  \eqref{preliminary-estimate},   \EEE  \eqref{with-rest-term}, and \eqref{1001251948} we obtain \[\mathcal{E}_n^{l}(\xi_n^{l})-\mathcal{E}_n^{l}(u_n^{l-1}) \leq 2  \int_{t_n^{l-1}}^{t_n^{l}}\int_{\Omega}(e({u}_n(\tau))+\vartheta_n(\MMM \tau \EEE )): 
 e(\partial_{t} g(\tau))\, \, {\rm d}x \EEE \,  {\rm d}\tau  + \VVV C \sigma^l_n    \eps_n \EEE \,.\]
  This \EEE yields \eqref{wanttoprove} and concludes the proof. 
% \RRR note that in this argument already  \eqref{vio} below is used so we should improve the order. \EEE 
\end{proof}

As a direct consequence, we obtain the following bound on the energy. 

\begin{corollary}[Energy bound]\label{cor: energy bound}
 Let $t \mapsto u_n(t)$ be the discrete evolution defined in \eqref{definitionofun}. Then, there exists a constant $C>0$ only depending on $g$ such that 
\[\mathcal{E}^k_n(u^k_n) \le C \quad \text{for all $k = 0, \ldots, T/\delta_n  $ and $n \in \N$.} \]
\end{corollary}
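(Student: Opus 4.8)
The plan is to deduce Corollary~\ref{cor: energy bound} directly from the discrete energy estimate in Lemma~\ref{e-balance-lemma} together with the control on the elastic part obtained in Corollary~\ref{cor: full-ela-control}. First I would recall that by \eqref{eq: for en bal} we have, for every $k = 0, \ldots, T/\delta_n$,
\begin{equation*}
\mathcal{E}_n^k(u_n^k) \le \mathcal{E}_n^0(u_n^0) + 2 \int_0^{t_n^k} \int_\Omega e(u_n(\tau)) : e(\partial_t g(\tau)) \, {\rm d}x \, {\rm d}\tau + \beta_n .
\end{equation*}
Here $\beta_n \to 0$, so in particular $\sup_n \beta_n \le C$; moreover, Proposition~\ref{prop-control-on-ela} gives $\mathcal{E}_n^0(u_n^0) \le C$ with $C$ depending only on $g$. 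It therefore remains only to bound the time-integral term uniformly.

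For that term I would invoke Corollary~\ref{cor: full-ela-control}: the estimate \eqref{vio} states precisely that
\begin{equation*}
\int_0^{t_n^k} \int_\Omega e(u_n(\tau)) : e(\partial_t g(\tau)) \, {\rm d}x \, {\rm d}\tau \le C
\end{equation*}
with $C$ depending only on $g$ and independent of $k$ and $n$. (Strictly speaking \eqref{vio} is written with $\nabla \partial_t g$ or $e(\partial_t g)$ appearing as the second factor; since the integrand is contracted against the symmetric matrix $e(u_n)$, only the symmetric part matters, so the two forms agree.) Combining the three bounds yields $\mathcal{E}_n^k(u_n^k) \le C$ with $C$ depending only on $g$, as claimed.

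There is essentially no obstacle here: the corollary is a bookkeeping consequence of results already established, and its only role is to make explicit the uniform-in-time, uniform-in-$n$ bound that will feed into the subsequent compactness arguments (via Proposition~\ref{prop: compactness} and the $\sigma$-compactness results of Section~\ref{subsec:sigmaconv}). The one point worth a sentence of care is to note that the bound on $\mathcal{E}_n^k(u_n^k)$ automatically bounds both the elastic part $\mathcal{E}_n^{\rm elast}$ and the crack part $\mathcal{E}_n^{\rm crack}$ separately, since both are nonnegative; this is what is needed when one later applies Theorem~\ref{generaltheorem} to $\Omega^{\rm crack}_{n,k}$, because the constants $C_{\eta_n}$ there depend on $\max_{0 \le k \le T/\delta_n} \mathcal{E}_n(u_n^k,(u_n^j)_{j<k})$, cf.\ the discussion around \eqref{eq: combined}.
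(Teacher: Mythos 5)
Your proof is correct and takes essentially the same route as the paper's, which simply cites Proposition~\ref{prop-control-on-ela} for $\mathcal{E}_n^0(u_n^0)\le C$, \eqref{vio} for the time-integral, and Lemma~\ref{e-balance-lemma} to combine them. The parenthetical worry about $\nabla\partial_t g$ versus $e(\partial_t g)$ is unnecessary here since both \eqref{eq: for en bal} and \eqref{vio} already use $e(\partial_t g)$, but it does no harm.
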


\begin{proof}
   The proof follows by combining $\mathcal{E}_n^{0}(u^0_n) \le C$ (see Proposition \ref{prop-control-on-ela}), \eqref{vio}, and Lemma \ref{e-balance-lemma}.
\end{proof}

\subsection{Compactness and lower semicontinuity}
Based on the energy bound in Corollary \ref{cor: energy bound}, we can pass to the limit in the crack sets and displacements by compactness arguments. We start with the crack sets.

\begin{proposition}[Convergence of crack sets\EEE]\label{limit-crack-evo}
  \GGG Let $t\mapsto (u_n(t), K_n(t))$ be the evolution defined in \eqref{definitionofun}  and \eqref{definition-crackset}. There exists an increasing set function $t\mapsto K(t)$ in $\Omega' \cap \overline{\Omega}$ \EEE 
  and a subsequence (not relabeled) such that, for every $t\in [0,T]$, the set $K_n(t)$ $\sigma$-converges to $K(t)$. \EEE
\end{proposition}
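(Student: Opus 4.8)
The plan is to deduce a uniform $\mathcal{H}^1$-bound on the crack sets from the energy bound of Corollary~\ref{cor: energy bound}, to upgrade the (obvious) monotonicity of the crack to monotonicity of the \emph{modified} void sets via Corollary~\ref{monotonicity-corollary}, and then to invoke the Helly-type theorem for $\sigma$-convergence (Theorem~\ref{helly-sigma}). The only genuinely delicate point is that $K_n(t)=\partial\Omega^{\rm mod}_{n,k(t)}$ need \emph{not} be monotone in $t$ even though the void sets $\Omega^{\rm mod}_{n,k}$ are.

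First I would record the uniform bounds. By \eqref{Tcrack} one has $\Omega^{\rm crack}_{n,k}=\Omega^{\rm crack}_{n,k-1}(u^k_n)$, so Corollary~\ref{cor: energy bound} together with \eqref{energy-split} gives $\kappa\,|\Omega^{\rm crack}_{n,k}|/\eps_n\le\mathcal{E}^k_n(u^k_n)\le C$, i.e.\ $|\Omega^{\rm crack}_{n,k}|\le C\eps_n$; inserting this into the last estimate of \eqref{eq: combined} yields $\mathcal{H}^1(K_n(t))=\mathcal{H}^1(\partial\Omega^{\rm mod}_{n,k(t)})\le\frac{2}{\eps_n\sin\theta_0}|\Omega^{\rm crack}_{n,k(t)}|+C\eta_n\le C$ uniformly in $n$ and $t\in[0,T]$, and $|\Omega^{\rm mod}_{n,k(t)}|\le C_{\eta_n}\eps_n\to0$ by \eqref{3112241116}.

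Next I would fix, for each $n$, a coherent choice of the sets produced by Theorem~\ref{generaltheorem}. The collections $\mathbf{T}^{\rm crack}_{n,k}$ are nondecreasing in $k$ by \eqref{Tcrack} and, by the admissibility constraint \eqref{eq: admis tria}, are all contained in the single triangulation $\mathbf{T}_n(u_n^{T/\delta_n})$; hence Corollary~\ref{monotonicity-corollary} applies successively, giving $\Omega^{\rm mod}_{n,0}\subset\Omega^{\rm mod}_{n,1}\subset\cdots$ and, writing $\mathbf{T}^{\rm mod}_{n,k}:=\{T\in\mathbf{T}^{\rm crack}_{n,k}\colon T\subset\Omega^{\rm mod}_{n,k}\}$, also $\mathbf{T}^{\rm mod}_{n,0}\subset\mathbf{T}^{\rm mod}_{n,1}\subset\cdots$. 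By Remark~\ref{remark-on-holes}, up to $\mathcal{H}^1$-null sets $\partial\Omega^{\rm mod}_{n,j}$ consists only of edges of triangles in $\mathbf{T}^{\rm mod}_{n,j}$, so with $\#\mathbf{T}^{\rm mod}_{n,k}\le\#\mathbf{T}^{\rm crack}_{n,k}\le C/\eps_n$ (by the same argument as for \eqref{eq: control on A}) and \eqref{eq: 10.6} the \emph{increasing} set function
\begin{equation*}
\widehat K_n(t):=\bigcup_{s\le t}K_n(s)=\bigcup_{j\le k(t)}\partial\Omega^{\rm mod}_{n,j}\ \tsubset\ \bigcup_{T\in\mathbf{T}^{\rm mod}_{n,k(t)}}\partial T
\end{equation*}
satisfies $\mathcal{H}^1(\widehat K_n(t))\le 3\,\omega(\eps_n)\,\#\mathbf{T}^{\rm mod}_{n,k(t)}\le C$, uniformly in $n,t$. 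Theorem~\ref{helly-sigma} then provides a subsequence and an increasing set function $t\mapsto\widehat K(t)$ with $\widehat K_n(t)$ $\sigma$-converging to $\widehat K(t)$ for every $t$; moreover $\widehat K(t)\tsubset\Omega'\cap\overline\Omega$ by Remark~\ref{rem: sigma1}(iii), since the minimality \eqref{minimizing-scheme} and the boundary condition in \eqref{akeps}, together with the regularity of $g$, rule out cracking in $\Omega'\setminus\overline\Omega$ (so $\widehat K_n(t)\tsubset\overline\Omega$), exactly as in the treatment of boundary triangles in the proof of Proposition~\ref{prop-control-on-ela}.

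The hard part is to show that $K_n(t)$ itself $\sigma$-converges to $\widehat K(t)$ along this subsequence, so that $K(t):=\widehat K(t)$ answers the claim. Fixing $t$, by Proposition~\ref{comp-sigma} it suffices that every subsequential $\sigma$-limit $K^*(t)$ of $K_n(t)$ equals $\widehat K(t)$; since $K_n(t)\subset\widehat K_n(t)$, Remark~\ref{rem: sigma1}(ii) already gives $K^*(t)\tsubset\widehat K(t)$. For the reverse inclusion one notes that the ``lost'' edges $\widehat K_n(t)\setminus K_n(t)=\bigcup_{j<k(t)}\big(\partial\Omega^{\rm mod}_{n,j}\setminus\partial\Omega^{\rm mod}_{n,k(t)}\big)$ lie in the \emph{open} void $\Omega^{\rm mod}_{n,k(t)}$ (because $\partial\Omega^{\rm mod}_{n,j}\subset\overline{\Omega^{\rm mod}_{n,k(t)}}$ for $j\le k(t)$, and the part outside $\partial\Omega^{\rm mod}_{n,k(t)}$ must be interior), a set of vanishing Lebesgue measure. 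The crucial — and, I expect, most delicate — step is that such ``buried free edges'' cannot enlarge the $\sigma$-limit: starting from near-optimal competitors $v$ in the cell formula \eqref{0812242041} for the density of $\widehat K_n(t)$ on a cube $Q^\nu_\varrho(x)$, one replaces $v$ by $v\,\chi_{\,\cdot\,\setminus\Omega^{\rm mod}_{n,k(t)}}$ — the new jumps lying on $\partial\Omega^{\rm mod}_{n,k(t)}=K_n(t)$, hence free — with a fix-up on a slice $\partial Q^\nu_r(x)$ ($r<\varrho$) chosen so that $\mathcal{H}^1(\partial Q^\nu_r(x)\cap\Omega^{\rm mod}_{n,k(t)})\to0$ (possible since $|\Omega^{\rm mod}_{n,k(t)}|\to0$), producing competitors $\tilde v$ admissible for $K_n(t)$ with $\mathcal{H}^1(J_{\tilde v}\setminus K_n(t))\le\mathcal{H}^1(J_v\setminus\widehat K_n(t))+o(1)$; via \eqref{0812242041} this gives $h^-_{K(t)}\le h^-_{\widehat K(t)}$ pointwise and therefore $\widehat K(t)\tsubset K^*(t)$ by maximality. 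This argument is in the spirit of the relative-isoperimetric manipulations of $\sigma$-convergence in \cite{GiacPonsi} and the void-set estimates of \cite{Friedrich-Crismale}. Having $K^*(t)=\widehat K(t)$ for every subsequential limit and every $t$, the whole subsequence $K_n(t)$ $\sigma$-converges to the increasing set function $t\mapsto K(t):=\widehat K(t)$, which is what was to be shown.
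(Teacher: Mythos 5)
Your proposal follows essentially the same route as the paper: derive the uniform $\mathcal{H}^1$-bound from Corollary~\ref{cor: energy bound}, introduce an auxiliary \emph{increasing} crack process whose monotonicity rests on Corollary~\ref{monotonicity-corollary} and Remark~\ref{remark-on-holes}, invoke Theorem~\ref{helly-sigma}, and then show the $\sigma$-limit of the auxiliary process coincides with that of $K_n(t)$ by modifying near-optimal competitors via multiplication by $\chi_{\Omega'\setminus\Omega^{\rm mod}_{n,k}}$ — precisely the device used in the paper, which works because the newly created jumps of the modified function lie on $\partial\Omega^{\rm mod}_{n,k}=K_n(t)$ and are therefore free, while the ``buried'' edges of $\widehat K_n\setminus K_n$ lie in the interior of the void. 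The only difference is organizational: you carry out the final comparison at the level of the cell density $h^-$ in \eqref{0812242041}, which forces the slicing/cutoff fix-up near $\partial Q^\nu_r(x)$ to restore the admissibility constraint of $\mathbf{m}^{\rm PC}$, whereas the paper performs the same modification on a global recovery sequence $(v_n)\subset{\rm PC}(U)$ over an open set $U$ from a covering argument (see \eqref{sigma-covering1}) and then applies the $\Gamma$-liminf inequality for the $\sigma$-convergence of $K_n(t)$; this sidesteps the cell-level boundary-condition bookkeeping and is technically cleaner, but the underlying idea is identical. Your coarea/Fubini argument for choosing the slice radius $r$ so that $\mathcal{H}^1(\partial Q^\nu_r\cap\Omega^{\rm mod}_{n,k})$ vanishes (using $|\Omega^{\rm mod}_{n,k}|\to 0$) is the right ingredient to make the cell-formula version rigorous, so this is a sound variant rather than a gap.
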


\begin{proof}
By \eqref{eq: combined}, \eqref{definition-crackset}, and the energy bound  in  Corollary \ref{cor: energy bound} \EEE we have that $\mathcal{H}^1(K_n(t))\leq C$ for all $t \in [0,T]$. We therefore can apply Proposition \ref{comp-sigma} for each $t\in [0,T]$ to obtain the limiting set function $K(t)  \tsubset \Omega'$ such that $K_n(t)$ $\sigma$-converges to $K(t)$ for a $t$-dependent subsequence. Since $K_n(t) \tsubset \Omega' \cap \overline{\Omega}$, \EEE the definition of $\sigma$-convergence directly implies $K(t) \tsubset \Omega' \cap \overline{\Omega}$, see  Remark~\ref{rem: sigma1}(iii).  \EEE Our goal is to apply Helly's theorem in the version of Theorem \ref{helly-sigma} to show that the subsequence can be chosen  independently \EEE of $t$ and that $t\mapsto K(t)$ is increasing. This is however impeded by the fact that  \EEE  $K_n(t)$ is not an increasing set function. As a remedy, we define  
 \[ \tilde{K}_n(t)  \defas \bigcup_{ T  \in  \mathbf{T}_{n,k}^{\rm mod}} \partial  T  \,\quad \text{for}\; t\in[t_n^{k},t_n^{k+1}) \,,\] 
 where $\mathbf{T}_{n,k}^{\rm mod} := \lbrace T \in \mathbf{T}_{n,k}^{\rm crack} \colon  T \subset \Omega_{n,k}^{\rm mod}\rbrace$. \EEE As   $\Omega_{n,k-1}^{\rm crack} \subset \Omega_{n,k}^{\rm crack}$, see \eqref{Tcrack}, we get   $ \mathbf{T}_{n,k-1}^{\rm mod} \subset   \mathbf{T}_{n,k}^{\rm mod}$ \EEE by Corollary \ref{monotonicity-corollary}, and thus the set function $t\mapsto  \tilde{K}_n(t) $ in fact fulfills $  \tilde{K}_n(s)   \tsubset \EEE \tilde{K}_n(t)  $  for $s \le t$. Moreover, $\mathcal{H}^1(  \tilde{K}_n(t) ) \le C$ for all $t \in [0,T]$, again  by the energy bound in  Corollary \ref{cor: energy bound}. (Use   $|T| \ge c\eps \mathcal{H}^1(\partial T)$ for all triangles $T$ for $c$ depending on $\theta_0$  to get \EEE $\# \mathbf{T}_{n,k}^{\rm mod}\leq  C/ \eps_n \EEE $,  as well as \eqref{eq: 10.6}.) \EEE  Hence, we can apply Theorem~\ref{helly-sigma} to obtain an increasing set function $t\mapsto  \tilde{K}(t) \EEE$ such that $ \tilde{K}_n(t) \EEE$ $\sigma$-converges to $ \tilde{K}(t) \EEE$ for every $t\in [0,T]$, up to extracting a subsequence \MMM (not relabeled). \EEE We now want to show that $ \tilde{K}(t) \EEE\,\tilde{=}\, K(t)$ for each $t\in [0,T]$. 
  % \RRR I understand that $K$ and $\Gamma$ need to be differed but this notation was confusing before... Something to think about. \EEE 
 \JJJ Note that each triangle $T\in \{T\notin \mathbf{T}_{n,k}^{\rm crack}\colon T \subset \Omega_{n,k}^{\rm mod}\}$ fulfills $\partial T\cap K_n\,\tilde{=}\,\emptyset$, by Remark \ref{remark-on-holes}. Therefore, we have $K_n(t) \tsubset  \tilde{K}_n(t) $ because
\begin{equation*}
  K_n(t)=\partial \Omega_{n,k}^{\rm mod} \sm \bigcup_{\substack{T\subset \Omega_{n,k}^{\rm mod}\\T\notin \mathbf{T}_{n,k}^{\rm crack}} }\partial T \subset  \bigcup_{\substack{T\subset \Omega_{n,k}^{\rm mod}\\ T\in \mathbf{T}_{n,k}^{\rm crack}}} \partial T = \tilde{K}_n(t)\,.
\end{equation*}

 \EEE By  Remark \ref{rem: sigma1}(ii),  this implies $ K(t) \tsubset \EEE  \tilde{K}(t) \EEE$. To show the reverse inclusion, we \EEE fix a time $t\in [0,T]$.   We denote by $\mathcal{H}_{\tilde{K}(t)}^{-}$   the $\Gamma$-limit of $\mathcal{H}^-_{\tilde{K}_n(t)}$ given by $\mathcal{H}^-_{\tilde{K}_n(t)}(u, A)=\mathcal{H}^1((J_u \sm \tilde{K}_n(t)) \cap A)$,  and \EEE let $h^-_{\tilde{K}(t)}$    be the corresponding density function from \eqref{1708240957}.  In the analogous way, we denote $\mathcal{H}_{ {K}(t)}^{-}$ and  $h^-_{ {K}(t)}$. \EEE 

 We suppose by contradiction that $\mathcal{H}^1(\tilde{K}(t) \sm  K(t)) >0$.  As $K(t)$ is the maximal set   on which $h^-_{K(t)}$ is $\mathcal{H}^1$-a.e.\ equal to $0$ (cf.\ Definition~\ref{def:sigmaconv}), we find  $\mu >0$ such that
\begin{align}\label{formycontra}
\int_{\tilde{K}(t)} h^-_{K(t)}(x, \nu_{\tilde{K}(t)}(x)) \, \mathrm{d} \mathcal{H}^1(x) > 3\mu. 
\end{align}
\EEE  We use a covering argument to find an  open set  $U  \EEE \subset \Omega' \EEE $ and a function $v\in {\rm PC}( U \EEE )$ such that 
  \begin{equation}\label{sigma-covering1}
    \mathcal{H}^1( \tilde{K}(t) \EEE\setminus U )\leq \mu \quad \quad \mathcal{H}^1\big(( \tilde{K}(t) \EEE\, \triangle \, J_{v})\cap U \big)\leq \mu \,.
  \end{equation}  
 Let $(v_n)_{n}$ be a recovery sequence for $v$ with respect to the functional $\mathcal{H}_{ \tilde{K}(t) }^{-}(\cdot, U)$, i.e.,  particularly 
 $${\limsup_{n \to \infty } \mathcal{H}^1((J_{v_n} \setminus   \tilde{K}_n(t))  \cap U ) \le \MMM \int_{J_v \cap U} h^-_{ \tilde{K}(t)}(x,   \nu_v(x)) \,  \mathrm{d}\mathcal{H}^{1}(x) \le \EEE \mu}$$
   by \eqref{sigma-covering1} and the fact that $h^-_{\tilde{K}(t)} \le 1$, see Remark~\ref{rem: sigma1}(i). \EEE 
We define a modified sequence by 
  \begin{equation}
    \tilde{v}_n \MMM (x) \EEE \defas \begin{cases}
      v_n(x) & x\in \Omega' \setminus \Omega_{n,k}^{\rm mod},\\
      0 & x\in \Omega_{n,k}^{\rm mod}\,.
    \end{cases}
  \end{equation}  
Then, recalling \eqref{definition-crackset}   we have 
% $J_{\tilde{v}_n}\sm J_{v_n}\tsubset K_n(t)$ and thus 
$J_{\tilde{v}_n}\sm K_n(t)\tsubset J_{v_n}\sm \tilde{K}_n(t)$ and \EEE thus $\limsup_{n \to \infty } \mathcal{H}^1((J_{ \tilde{v}_n } \setminus K_n(t)\EEE) \cap U \EEE) \le \mu$. In view of \eqref{3112241116}--\eqref{eq: combined}, we obtain $|\{\tilde{v}_n\neq v_n\}|  \le \EEE |\Omega_{n,k}^{\rm mod}| \leq  C_{\eta_n} \EEE \eps_n \to 0 $.  As ${v}_n\to v$ in $ L^1(U; \VIT \R^2 \EEE)$, we thus \EEE  get  $\tilde{v}_n\to v$ in $ L^1(U; \VIT \R^2 \EEE) \EEE $. Using the $\Gamma$-liminf inequality for the $\sigma$-convergence of $K_n(t)$ to $K(t)$ we get 
\begin{equation}\label{0802252022}
 \mathcal{H}_{K(t)}^{-}(v,U)=\int_{J_v \cap U} h^-_{K(t)}(x,  \nu_v \EEE (x)) \, \mathrm{d} \mathcal{H}^1(x) \EEE \leq \limsup_{n \to \infty } \mathcal{H}^1((J_{ \tilde{v}_n \EEE} \setminus K_n(t)\EEE)  \cap U \EEE) \le \mu.   
\end{equation}
 Now, \eqref{sigma-covering1} and the fact that $h^-_{K(t)}\le 1$ show
$$\int_{\tilde{K}(t)} h^-_{K(t)}(x, \nu_{\tilde{K}(t)}(x)) \, \mathrm{d} \mathcal{H}^1(x) \le \mathcal{H}_{K(t)}^{-}(v,U) + 2\mu \le 3\mu, $$
which contradicts \eqref{formycontra} and concludes the proof. \EEE
\end{proof}

% \GGG \begin{corollary}\label{cor:2108240818}
% Up to passing to a subsequence,  we can \EEE assume that $K_n(t)$ $\sigma$-converges to $ \Psi \EEE (t)$ for every $t\in [0,T]$, for $t\mapsto\Psi(t)$ an increasing set function.   It holds that
% \[
% \bigcup_{s\in [0,t]}K(s) \subset \Psi(t)=\bigcup_{s\in [0,t]}\Psi(s).
% \]
% \end{corollary} 

% \begin{proof}
%  The statement follows by an   application of Theorem \ref{helly-sigma} to $t\mapsto K_n(t)$ and Proposition~\ref{prop:sigmasigmap}. \EEE
% \end{proof}

In the sequel, it will be  convenient to express some of the quantities considered so far in terms of the time $t$ in place of the iteration step.  As before,  let $\{0=t^{0}_n< t^{1}_n<\dots<t^{T/\delta_n}_n=T\}$ be the  time discretization of the interval $[0,T]$, and let $(u^j_{n})_{j<k}$ be a  corresponding  displacement history. We define the set of functions $\mathcal{A}_n(t)\defas \mathcal{A}_n^{k} $ for $t \in [t^k_n,t_n^{k+1})$, see \eqref{akeps}. Recalling \eqref{energy-split} and \eqref{1908241552}, we define
\begin{align}\label{energy along evo}
   \mathcal{E}_n(v_n;t)  := \mathcal{E}_n^{k}(v_n) =     \mathcal{E}_n(v_n ;(u_n^j)_{j<k})\quad \text{for}\; t \in [t^k_n,t_n^{k+1})
\end{align}
for each $v_n \in \mathcal{A}_n(t)$. We use similar notation for the parts of the energy introduced in \eqref{energy-split}. Recall also the Griffith energy defined in \eqref{eq: lim-en}.

\begin{proposition}\label{liminf-ineq'}
 \GGG Let $t\mapsto (u_n(t), K_n(t))$ be the evolution defined in \eqref{definitionofun}  and \eqref{definition-crackset}. \EEE Let  $K(t)$ be the $\sigma$-limit of $K_n(t)$ given by Proposition \ref{limit-crack-evo}, and let $G(t)$ be the  set  corresponding to $K(t)$ \EEE defined before \eqref{b(t)}. \EEE   Then,  there exists a function $ u(t) \EEE \in AD(g(t),K(t))$  with $u(t) = 0$ on $\Omega' \setminus G(t)$, \EEE \EEE    and a subsequence $(n_l)_l$ depending on $t$, such that $u_{n_l}(t)\to u(t)$ in measure on \EEE  $G(t) \EEE $ and $e(u_{n_l}(t)) \rightharpoonup e(u(t))$ weakly in $L^2( G(t); \EEE \R^{2 \times 2}_{\rm sym} )$.
 %\R^{2 \times 2}_{\rm skew}
    Moreover,  \EEE
  %$\GGG\lim_{l\to \infty}  \EEE \mathcal{E}_{n_l}(u_{n_l}(t);t)= \liminf_{n\to \infty}\mathcal{E}_{n}(u_{n}(t);t)$, and         
%  \begin{equation}\label{enlobo}
%  {    \liminf_{\VVV n\to \infty \EEE} \mathcal{E}_{n}(u_{n}(t);t) \geq \liminf_{n\to \infty} \VVV \mathcal{E}^{\rm elast}_{n}\EEE(u_{n}(t);t) + \liminf_{n\to \infty} \mathcal{E}^{\rm crack}_{n}(u_{n}(t);t)\geq\mathcal{E}(u(t),K(t) ) \,. }
%  \end{equation}
 \begin{equation}\label{enlobo}
  {   
  \liminf_{n\to \infty} \mathcal{E}^{\rm crack}_{n}(u_{n}(t);t)\geq \kappa \sin \theta_0 \hu(K(t)) \,. }
  \end{equation}  
\end{proposition}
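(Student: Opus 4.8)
The statement has two parts: first, the compactness claim producing a limiting displacement $u(t)\in AD(g(t),K(t))$ with the stated convergences, and second, the lower bound \eqref{enlobo} on the crack part of the energy. The plan is to treat $t$ as fixed throughout, so that all subsequences may depend on $t$; later (in the proof of the main theorem) Helly-type arguments will make the choice uniform. For the compactness part, I would start from the displacement-void representation developed in Section~\ref{sec: gamma}: by Corollary~\ref{cor: energy bound} and the construction in \eqref{eq: combined}, the modified set $\Omega^{\rm mod}_{n,k(t)}$ satisfies $|\Omega^{\rm mod}_{n,k(t)}|\le C_{\eta_n}\eps_n\to 0$ by the choice \eqref{3112241116}, and $\Vert e(u^{\rm mod}_{n,k(t)})\Vert_{L^2(\Omega'_{\eps_n,\eta_n}\setminus\Omega^{\rm mod}_{n,k(t)})}\le C_{\eta_n}$, while $\mathcal{H}^1(\partial\Omega^{\rm mod}_{n,k(t)})=\mathcal{H}^1(K_n(t))\le C$. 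Hence I would define $v_n:=u^{\rm mod}_{n,k(t)}\chi_{\Omega'\setminus\Omega^{\rm mod}_{n,k(t)}}$ (set to $0$ on the modified void and equal to $g$ near $\partial_D\Omega$ via the boundary-layer argument as in Proposition~\ref{prop-control-on-ela}), which lies in $GSBD^2_\infty(\Omega')$ with $\sup_n(\Vert e(v_n)\Vert_{L^2}+\mathcal{H}^1(J_{v_n}))<\infty$. Applying the compactness result Proposition~\ref{prop: compactness} yields a weak $GSBD^2_\infty$-limit $\bar u$; since $|\{v_n\neq u_n(t)\}|\le|\Omega^{\rm mod}_{n,k(t)}|+|\{u_n^{k(t)}\neq u^{\rm mod}_{n,k(t)}\}|\le C_{\eta_n}\eps_n\to 0$ and $u_n(t)$ equals $0$ outside $\Omega'\setminus\Omega^{\rm crack}_{n,k(t)}$, the convergence $u_n(t)\to u(t)$ in measure transfers to $u_n(t)$ on the good set.

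For identifying the limit with the crack geometry, I would invoke Proposition~\ref{prop:sigmasigmap}: we have $K_n(t)\sigma$-converges to $K(t)$ (Proposition~\ref{limit-crack-evo}), and $\mathcal{H}^1(J_{v_n}\setminus K_n(t))\to 0$ — this last point follows because, by Remark~\ref{remark-on-holes}, $\partial T\cap\partial\Omega^{\rm mod}_{n,k(t)}\,\tilde=\,\emptyset$ for the added triangles, so $J_{v_n}\,\tilde\subset\,\partial\Omega^{\rm mod}_{n,k(t)}=K_n(t)$ up to the $\mathcal{H}^1$-null boundary-layer interface and a null set. Moreover $v_n|_{\Omega'\setminus\overline\Omega}$ is bounded in $W^{1,2}$ by the Dirichlet boundary construction and the regularity of $g$. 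Proposition~\ref{prop:sigmasigmap}(i) then gives $J_{\bar u}\,\tilde\subset\,K(t)\,\tilde\subset\,\Omega'\cap\overline\Omega$, and part~(ii) gives $|A^\infty_{\bar u}\cap G_K|=0$ where $G_K=G(t)$. So setting $u(t):=\bar u\chi_{G(t)}$ (which coincides with $\bar u$ a.e.\ on $G(t)$ since $A^\infty$ is negligible there, and is $0$ on $\Omega'\setminus G(t)$), we get $u(t)\in GSBD^2(\Omega')$ with $J_{u(t)}\,\tilde\subset\,K(t)$ and $u(t)=g(t)$ on $\Omega'\setminus\overline\Omega\subset G(t)$, i.e.\ $u(t)\in AD(g(t),K(t))$. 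The weak convergence $e(u_n(t))\rightharpoonup e(u(t))$ in $L^2(G(t))$ follows from the weak $GSBD^2_\infty$-convergence restricted to $G(t)$ together with the fact that the elastic energy is supported away from the shrinking sets $\Omega^{\rm mod}_{n,k(t)}$ and $\Omega^{\rm crack}_{n,k(t)}$.

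For the lower bound \eqref{enlobo}, I would write $\mathcal{E}^{\rm crack}_n(u_n(t);t)=\kappa|\Omega^{\rm crack}_{n,k(t)}|/\eps_n$ and use the last estimate of \eqref{eq: combined}, namely $|\Omega^{\rm crack}_{n,k(t)}|/\eps_n\ge \tfrac{\sin\theta_0}{2}\mathcal{H}^1(\partial\Omega^{\rm mod}_{n,k(t)})-C\eta_n=\tfrac{\sin\theta_0}{2}\mathcal{H}^1(K_n(t))-C\eta_n$. Since $K_n(t)=\partial\Omega^{\rm mod}_{n,k(t)}$ with $\Omega^{\rm mod}_{n,k(t)}$ a set of finite perimeter whose Lebesgue measure tends to $0$, and $K_n(t)\sigma$-converges to $K(t)$, the lower semicontinuity result for void sets, Lemma~\ref{le:semicontfactortwo}, yields $\liminf_n\mathcal{H}^1(K_n(t))\ge 2\mathcal{H}^1(K(t))$. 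Combining, $\liminf_n\mathcal{E}^{\rm crack}_n(u_n(t);t)\ge\kappa\cdot\tfrac{\sin\theta_0}{2}\cdot 2\mathcal{H}^1(K(t))-\kappa C\eta_n\to\kappa\sin\theta_0\,\mathcal{H}^1(K(t))$ as $\eta_n\to 0$, which is \eqref{enlobo}. The main obstacle I anticipate is the careful bookkeeping in the compactness step: ensuring that the boundary-layer modification near $\partial_D\Omega$ is compatible with membership in $\mathcal{A}^{k}_n$ (so that the Dirichlet datum is respected and $v_n$ stays bounded in $W^{1,2}(\Omega'\setminus\overline\Omega)$), and verifying precisely that $\mathcal{H}^1(J_{v_n}\setminus K_n(t))\to 0$ despite $v_n$ possibly jumping on triangle interfaces interior to $\Omega^{\rm mod}_{n,k(t)}$ — this is exactly what Remark~\ref{remark-on-holes} is designed to handle, but it must be applied with care to the added-triangle edges versus the original void boundary.
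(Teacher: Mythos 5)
Your overall architecture matches the paper's: define $v_n$ by setting $u^{\rm mod}_{n,k(t)}$ to zero on the modified void, apply the $GSBD_\infty$-compactness of Proposition~\ref{prop: compactness} together with Proposition~\ref{prop:sigmasigmap} to identify a limit in $AD(g(t),K(t))$, and then deduce \eqref{enlobo} from the sharp surface estimate in \eqref{eq: combined} combined with Lemma~\ref{le:semicontfactortwo}. The lower-bound part of your argument is correct and is exactly how the paper proceeds.

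There is, however, a genuine gap in the compactness step. You assert that $\sup_n\big(\Vert e(v_n)\Vert_{L^2}+\mathcal{H}^1(J_{v_n})\big)<\infty$, but the first bound does not hold: Theorem~\ref{generaltheorem} only gives $\Vert e(u^{\rm mod}_{n,k(t)})\Vert_{L^2(\Omega'_{\eps_n,\eta_n}\setminus \Omega^{\rm mod}_{n,k(t)})}\le C_{\eta_n}$, where $C_{\eta_n}$ is (a constant times) a negative power of $\eta_n\to 0$ and hence diverges. On the exceptional set $D_n$ (where $v_n$ differs from $u_n(t)$) the only control on $e(v_n)$ is this diverging $L^2$ bound; the set is small ($|D_n|\le C_{\eta_n}\eps_n$), but smallness alone does not give a uniform $L^2$ bound, so the hypothesis \eqref{eq: weak gsbd convergence} with $p=2$ is not verified. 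This is precisely why the paper passes to $p=3/2$: by H\"older, $\Vert e(v_n)\Vert_{L^{3/2}(D_n)}\le |D_n|^{1/6}\Vert e(v_n)\Vert_{L^2(D_n)}\le C_{\eta_n}^{7/6}\,\eps_n^{1/6}=\big((C_{\eta_n})^7\eps_n\big)^{1/6}\to 0$ by the choice \eqref{3112241116}, while $\Vert e(v_n)\Vert_{L^{3/2}(\Omega_*\setminus D_n)}$ is controlled by the uniform elastic energy bound of Proposition~\ref{prop-control-on-ela}. One then applies Propositions~\ref{prop: compactness} and~\ref{prop:sigmasigmap} in $GSBD^{3/2}_\infty$, recovers $u(t)$ in measure, and only afterwards upgrades the strain convergence to weak $L^2$ using that $\chi_{\Omega'\setminus \Omega^{\rm crack}_{n,k(t)}}e(u_n(t))$ is uniformly $L^2$-bounded with the complementary sets shrinking to zero measure. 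A further, secondary, point you gloss over is that Theorem~\ref{generaltheorem} only controls $u^{\rm mod}$ on the shrunken domain $\Omega'_{\eps_n,\eta_n}$, so the compactness argument must first be run on a fixed $\Omega_*\subset\subset\Omega'$ and concluded by a diagonal argument as $\Omega_*\nearrow \Omega'$.
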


\begin{proof}
We fix $t \in [0,T]$ and for each $n \in \N$ we choose $k_n$ such that $t \in [t^{k_n}_n, t^{k_n+1}_n)$. Recall the definition of $\Omega_{n,k_n}^{\rm crack} $  and $\Omega_{n,k_n}^{\rm mod} $ in \eqref{Tcrack} and \eqref{eq: combined}, respectively.  By $u^{\rm mod}_n$ we denote the function given by Theorem \ref{generaltheorem} applied on $u_n(t)$, which by \eqref{themainthing}  and the energy bounds in  Proposition~\ref{prop-control-on-ela}, and Corollary \ref{cor: energy bound} satisfies \begin{align}\label{themainthingXXX}
    \|e(u^{\rm mod}_n) \|_{L^2(   \Omega'_{\eps_n,\eta_n}  \EEE \setminus \Omega_{n,k_n}^{\rm mod})} \le C_{\eta_n}. 
   \end{align}
    Choose $\Omega_* \subset \subset \Omega'$ and suppose that $n$ is large enough such that $\Omega_* \subset \Omega'_{\eps_n,\eta_n} $. \EEE 
We define $v_{n}\in GSBD^{2} ( \Omega_* \EEE )$ by $v_{n}\defas (1-\chi_{\Omega_{n,k_n}^{\rm mod} }) u^{\rm mod}_n$. Then, by \eqref{3112241116}, \eqref{eq: combined}, and  Corollary \ref{cor: energy bound} we get 
\begin{align}\label{Dn}
   |D_n | \le   C_{\eta_n}  \eps_n + C \eps_n  \le   C_{\eta_n}  \eps_n  \to 0, \quad \text{ where } D_n:=  \big( \lbrace u_n(t) \neq  v_n \rbrace \cap \Omega_* \big) \EEE \cup \Omega_{n,k_n}^{\rm crack}  .  
\end{align}

\EEE

% \begin{align}\label{themainthingXXX}
%  \|e(u^{\rm mod}_n) \|_{L^2(\Omega' \setminus \Omega_{n,k_n}^{\rm mod})} \le C_{\eta_n}. 
% \end{align}
% We define  $v_{n}\in GSBD^{2} (\Omega')  $ by $v_{n}\defas (1-\chi_{\Omega_{n,k_n}^{\rm mod} }) u^{\rm mod}_n$. Then, by \eqref{themainthing0},  \eqref{3112241116}, and  Corollary \ref{cor: energy bound} we get 
% \begin{align}\label{Dn}
% |D_n | \le   C_{\eta_n}  \eps_n + C \eps_n  \le   C_{\eta_n}  \eps_n  \to 0, \quad \text{ where } D_n:= \lbrace u_n(t) \neq  v_n \rbrace \cup \Omega_{n,k_n}^{\rm crack}  .  
% \end{align}
\noindent
For $p:=3/2$ we have   by H\"older's inequality, \eqref{themainthingXXX}--\eqref{Dn},   and the energy bound in  Proposition~\ref{prop-control-on-ela} that
       \begin{align*}
       \|e({v}_n) \|_{L^p( \Omega_* \EEE )} & =  \|e(v_n) \|_{L^p(D_n)} + \|e(u_n^{k_n}) \|_{L^p(\Omega' \setminus \Omega_{n,k_n}^{\rm crack})} \\
       & \le   C  |D_n|^{1/6} \|e(u^{\rm mod}_n) \|_{L^2(D_n \setminus \Omega_{n,k_n}^{\rm mod})}  +    C\|e(u_n^{k_n}) \|_{L^2(\Omega' \setminus \Omega_{n,k_n}^{\rm crack})} 
        \le C_{\eta_n}^{7/6} \eps_n^{1/6} + C \le C,       
       \end{align*} 
       where in the last step we used \eqref{3112241116}.   By \eqref{eq: combined} and the energy bound we get   $\mathcal{H}^1(J_{{v}_n}   )   \leq \mathcal{H}^1(K_n(t) ) \leq C  $. \EEE   Thus, by Proposition \ref{prop: compactness}   we find $u(t) \in  GSBD^p_\infty \EEE (\Omega_*)$  such that  (up to a  subsequence, not relabeled)
\begin{equation}\label{1001252053}
       v_n \to u(t)\text{ in measure on }\Omega_* \setminus A_{u(t)}^\infty.
\end{equation}        By Proposition \ref{prop:sigmasigmap}(i) and the $\sigma$-convergence of $K_n(t)$ to $K(t)$, we get $J_{u(t)} \cap \Omega_* \, \tilde{\subset} \,  K(t)$. Then, by a diagonal argument  (letting $\Omega_* \nearrow \Omega'$), we find $u(t) \in   GSBD^p_\infty \EEE (\Omega')$  such that $v_n \to u(t)$ in measure on $\Omega' \setminus A_{u(t)}^\infty$ with $J_{u(t)} \, \tilde{\subset} \,  K(t)$.

  Note that   $ g(t_n^{k_n})_{\mathbf{T}(u_n^{k_n})} \EEE \to g(t)$ in $W^{1,2}(\Omega';\R^2)$ by the regularity of $g$   (recall \eqref{triangul}). \EEE As $u_n(t) \in \mathcal{A}_{n} \MMM (t) \EEE $ (see \eqref{akeps}), it is elementary to check that $u(t) = g(t)$ on $\Omega' \setminus \overline{\Omega}$. 
  % \RRR more details on convergence of boundary conditions? \EEE 
  In particular, we find $A_{u(t)}^\infty \subset \Omega $. Then, we can apply Proposition \ref{prop:sigmasigmap}(ii) which along with the definition of $G(t)$ before \eqref{b(t)} implies $|A_{u(t)}^\infty \cap G(t)| = \emptyset$. Thus, in view of \eqref{1001252053}, we conclude $v_n \to u(t)$ in measure on $G(t)$.       
Next, by \eqref{Dn} we get $u_n(t) \to u(t)$ in measure on $G(t)$.  Moreover, \EEE by Proposition~\ref{prop-control-on-ela},  $|\Omega_{n,k_n}^{\rm crack}| \to 0$, and \VIT by \EEE weak compactness we get $e(u_n(t)) \rightharpoonup e(u(t))$ weakly in $L^2( G(t); \EEE \R^{2 \times 2}_{\rm sym})$. 
  Up to replacing $u(t)$ by $0$ in $\Omega' \setminus G(t)$ (not relabeled), we thus obtain $u(t) \in GSBD^2   (\Omega')$, and  $J_{u(t)} \, \tilde{\subset} \,  K(t)$ as well as $u(t) = g(t)$ on $\Omega' \setminus \overline{\Omega}$ still hold. \EEE   Summarizing, we have shown that $  u(t)  \in AD(g(t),K(t))$, see \eqref{def:ADgH}.

It remains to show \eqref{enlobo}.   
We recall from \eqref{eq: combined} and \eqref{definition-crackset} that  
$$   \mathcal{H}^1( K_n(t)  ) \le  \frac{2}{\eps_n  \sin\theta_0}|   \Omega^{\rm crack}_{n,k}    |  +  C\eta_n \le  \frac{2}{\kappa\sin\theta_0}\mathcal{E}^{\rm crack}_{n}(u_{n}(t);t) +C\eta_n .$$         
 For $n \to \infty$ we have \EEE  $\eta_n \to 0$ and  $|\Omega^{\rm mod}_{n,k}| \to 0$ (see \eqref{3112241116}--\eqref{eq: combined}). Then,  in view of $K_n(t) = \partial \Omega^{\rm mod}_{n,k}$, \EEE by means of Lemma~\ref{le:semicontfactortwo} we derive 
% \VVV that for any  (so that for $n$ large enough $K_n(t)\subset \Omega_*$,        
$$ 2  \mathcal{H}^1( K(t)  ) \le  \frac{2}{\kappa\sin\theta_0} \liminf_{n\to \infty}\mathcal{E}^{\rm crack}_{n}(u_{n}(t);t)\,.  $$
This concludes the proof of \eqref{enlobo}. 
\end{proof}
% \RRR vom rand weg, vorischt \VVV Put $\Omega'_{\eps_n,\eta_n}$ in place of $\Omega_{\eps_n,\eta_n}$, from the first use of Theorem~\ref{generaltheorem}, and approximated  \EEE
\subsection{Proof of Theorem \ref{maintheorem}}

In this section we give the proof of the main result.  We \EEE   need a final preparation.  Recall the Griffith energy $\mathcal{E}$ defined in \eqref{eq: lim-en}. \EEE 

\begin{theorem}[Stability]\label{stability}
  Let $t \mapsto (u_n(t), K_n(t))$ \EEE be the evolution defined by \eqref{definitionofun}--\eqref{definition-crackset} and let $K(t)$ be the $\sigma$-limit of $K_n(t)$.  
  %\GGG $(\psi, H)$ with  
  For any $\psi\in \GGG GSBD^2(\Omega')$ \EEE with $\psi = g(t)$ on $\Omega' \setminus \overline{\Omega}$ 
%  such that there exists a sequence $v_n$ converging weakly in $GSBD^2_\infty(\Omega)$ to $\psi$. Then 
  there exists a sequence $(\psi_{n})_{n}$ of piecewise affine displacements with $\psi_n \in   \MMM \mathcal{A}_{n}(t) \EEE $  % such that $\psi_n$ converges 
  converging to  $\psi$ in measure on $\Omega'$ \EEE such that
%  \begin{equation}\label{stab1}
%  \begin{split}
%  \limsup_{n\to \infty} \, & \Big( 
% \mathcal{E}^{ {\rm cra}}_{n}(\psi_n;t) - \mathcal{E}^{ {\rm cra}}_{n}(u_n(t);t)  \Big)
%   \leq \mathcal{H}^1(J_\psi\sm K(t)) - \mathcal{H}^1(\bigcup_{s\leq t} K(s) \cap \{\psi=\infty\}^{(1)}).
%  \end{split}
%  \end{equation}
\begin{equation}\label{stab1}
  \limsup_{n\to \infty} \, \Big( 
 \   \mathcal{E}^{ {\rm crack}}_{n} \EEE (\psi_n;t) - \mathcal{E}^{ {\rm crack}}_{n}(u_n(t);t)  \Big)
     \leq \kappa \sin(\theta_0) \,  \mathcal{H}^1(J_\psi\sm K(t))  ,
  \end{equation}
and
 \begin{equation}\label{stab2}
      \lim_{n\to \infty}\,  \mathcal{E}^{ {\rm elast}}_{n}(\psi_n;t)   \leq   \int_{\Omega}  |e (\psi)|^2 \, {\rm d}x.
  \end{equation}
\end{theorem}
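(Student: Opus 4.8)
The plan is to construct the mutual recovery sequence $(\psi_n)_n$ by first approximating $\psi$ by sufficiently regular competitors and then building an admissible piecewise affine interpolation on a triangulation that respects the irreversibility constraint $\mathbf{T}_{n,k-1}^{\rm crack} \subset \mathbf{T}_n$. First I would use a density result (as in \cite{ChaCri19, Cortesanietal}) to reduce to the case where $\psi$ is smooth off a finite union of closed connected pieces of $C^1$ hypersurfaces, and where $J_\psi$ is well-separated from the region $\Omega' \setminus \overline{\Omega}$ carrying the boundary datum; this is the standard opening move and it lets us control $\mathcal{H}^1(J_\psi \setminus K(t))$ by covering $J_\psi \setminus K(t)$ with finitely many small balls or tubes on which $\psi$ is smooth. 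Simultaneously I would need a recovery sequence for the $\sigma$-limit $K(t)$: since $K_n(t)$ $\sigma$-converges to $K(t)$, by Definition~\ref{def:sigmaconv} and the characterization of $h^-$ in Remark~\ref{rem: sigma1}(i) one can find piecewise constant functions $w_n$ with $J_{w_n}$ approximating $J_\psi \cap K(t)$ while $\mathcal{H}^1(J_{w_n} \setminus K_n(t)) \to 0$; geometrically this tells us how to route the "new" part of the crack through triangles that are already forced into the mesh.

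Next I would assemble the triangulation for $\psi_n$. Following the recovery-sequence construction of \cite{ChamboDalMaso} (the standard anisotropic-mesh construction realizing $\kappa \sin\theta_0$ as the effective surface density), I would take, away from a neighborhood of $\Omega_{n,k-1}^{\rm crack}$, a fine mesh oriented so that a thin strip of triangles with total area $\sim \eps_n \, \mathcal{H}^1(J_\psi \setminus K(t)) / \sin\theta_0$ surrounds $J_\psi \setminus K(t)$ and each of those triangles has one long edge $\sim \omega(\eps_n)$ aligned with the crack and short edges $\sim \eps_n$, so that $|T| \sim \frac{1}{2}\eps_n \sin\theta_0\, \mathcal{H}^1(\partial^{\rm long} T)$; inside this strip I set $e(\psi_n)$ large, i.e.\ these triangles enter $\mathbf{T}_n^{\rm crack}(\psi_n)$. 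Near $\Omega_{n,k-1}^{\rm crack}$, instead of building a new mesh I glue in the triangulation $\mathbf{T}_n(u_n^{k-1})$ of the previous time step (which by construction contains $\mathbf{T}_{n,k-1}^{\rm crack}$), and on the "broken off" triangles I must interpolate $\psi$ correctly — since the crack set along the sequence is $\partial \Omega_{n,k}^{\rm mod}$ rather than $\partial \Omega_{n,k}^{\rm crack}$, the right choice is to let $\psi_n$ equal a fixed affine (e.g.\ zero, or the relevant piecewise rigid piece) on the components of $\Omega_{n,k}^{\rm mod}$, matching the fact that $u_n(t) = u_n^k \chi_{\Omega'\setminus\Omega_{n,k}^{\rm crack}}$. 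On the remaining bulk I take the piecewise affine Lagrange interpolation of $\psi$, which by the regularity of $\psi$ satisfies $\|e(\text{interp }\psi)\|_{L^2}^2 \to \|e(\psi)\|_{L^2}^2$ and introduces only $o(1/\eps_n)$-many extra "big" triangles from the transition layers between meshes — these contribute $o(1)$ to $\mathcal{E}_n^{\rm crack}$. The boundary condition $\psi_n = g(t)_{\mathbf{T}_n(\psi_n)}$ on triangles disjoint from $\overline{\Omega}$ is enforced by taking $g(t)$'s interpolant there, using that $J_\psi$ was pushed away from that region.

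With $\psi_n$ constructed, I would verify the three claims. Convergence in measure $\psi_n \to \psi$ on $\Omega'$ follows because the interpolation error tends to zero off the strips, the strips and the modified region $\Omega_{n,k}^{\rm mod}$ have vanishing measure, and on the previous-step mesh the chosen affine values agree with the limit. For \eqref{stab2}: $\mathcal{E}_n^{\rm elast}(\psi_n; t) = \int_{\Omega \setminus \Omega_{n,k-1}^{\rm crack}(\psi_n)} |e(\psi_n)|^2$, and since $\Omega_{n,k-1}^{\rm crack}(\psi_n)$ has vanishing measure while $e(\psi_n) \to e(\psi)$ strongly in $L^2$ on the complement of the strips (standard interpolation estimate, using $g \in W^{2,\infty}$ and $\psi$ smooth off its jump), we get $\limsup \mathcal{E}_n^{\rm elast}(\psi_n; t) \le \int_\Omega |e(\psi)|^2$. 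For \eqref{stab1}: by the splitting $\Omega_{n,k-1}^{\rm crack}(\psi_n) = \Omega_{n,k-1}^{\rm crack} \cup \Omega_n^{\rm crack}(\psi_n)$ one has $\mathcal{E}_n^{\rm crack}(\psi_n;t) - \mathcal{E}_n^{\rm crack}(u_n(t);t) = \frac{\kappa}{\eps_n}\big(|\Omega_{n,k-1}^{\rm crack}(\psi_n)| - |\Omega_{n,k-1}^{\rm crack}|\big) \le \frac{\kappa}{\eps_n} |\Omega_n^{\rm crack}(\psi_n) \setminus \Omega_{n,k-1}^{\rm crack}|$, and the newly cracked triangles not already present are exactly the strip triangles around $J_\psi \setminus K(t)$ (plus $o(1/\eps_n)$ transition triangles), whose total area is $\le \frac{1}{2}\eps_n \sin\theta_0\,\mathcal{H}^1(J_\psi\setminus K(t)) + o(\eps_n)$ by the construction of \cite{ChamboDalMaso} and the alignment estimate; dividing by $\eps_n$ and multiplying by $\kappa$ gives \eqref{stab1}. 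The main obstacle is the gluing step: making the previous-step mesh $\mathbf{T}_n(u_n^{k-1})$ compatible with the newly built fine oriented mesh of \cite{ChamboDalMaso} near $\partial \Omega_{n,k-1}^{\rm crack}$ while (i) keeping all angles $\ge \theta_0$ and edge-lengths in $[\eps_n,\omega(\eps_n)]$, (ii) not creating more than $o(1/\eps_n)$ additional big triangles in the transition layer, and (iii) ensuring the interpolation of $\psi$ across the seam is continuous and has controlled symmetric gradient — this requires a careful local regularization of $\psi$ near $K(t)$ combined with the background-mesh condition $\dist(T,\mathbf{Z}_n(u)) < 10^6\eps_n$ built into \eqref{def: new-crackset}, which is precisely what prevents spurious far-from-background triangles from inflating $\Omega_n^{\rm crack}(\psi_n)$.
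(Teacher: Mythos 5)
Your proposal reproduces the overall architecture of the paper's proof — density reduction to regular $\psi$, a Besicovitch cover of $J_\psi$, a $\sigma$-convergence recovery sequence used to route $J_\psi\cap K(t)$ onto $K_n(t)$, a Chambolle--Dal Maso adaptive mesh around $J_\psi\setminus K(t)$, and the background-mesh condition in \eqref{def: new-crackset} to glue the new and old meshes. These are indeed the ingredients. But there are genuine gaps at the points where the construction actually bites.

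The most serious is the treatment of the broken-off pieces. Elements of $\mathcal{A}_n(t)$ are \emph{continuous} piecewise affine functions (see \eqref{Veps}); the statement ``let $\psi_n$ equal a fixed affine (e.g.\ zero, or the relevant piecewise rigid piece) on the components of $\Omega_{n,k}^{\rm mod}$'' while interpolating $\psi$ elsewhere produces a discontinuous function, which is not an admissible competitor. You are implicitly modeling $\psi_n$ on the truncated object $u_n(t)=u_n^k\chi_{\Omega'\setminus\Omega_{n,k}^{\rm crack}}$, but that truncation is only used for compactness; the actual finite-element competitor $u_n^k$ is continuous. The correct construction interpolates a single globally defined jump-transferred function $\phi_n$ (\`a la Francfort--Larsen, with the variant of \cite{FriedrichSeutter}), so $\psi_n$ is automatically continuous, and the ``jump'' is encoded energetically as triangles with large $|e(\psi_n)_T|$. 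The technically delicate point you skip is precisely here: on vertices $x\in\Gamma_i^n\cap K_n(t)=\partial\Omega_{n,k}^{\rm mod}$, the vertex value $\psi_n(x)$ must be chosen as a one-sided trace $\phi_i^{n,\pm}(x)$ according to which side the neighborhood $N_{\rm out}^x$ of not-yet-cracked triangles lies in; otherwise triangles straddling $\partial\Omega_{n,k}^{\rm mod}$ but not in $\mathbf{T}_{n,k}^{\rm crack}$ acquire spuriously large gradients and inflate $\mathcal{E}_n^{\rm crack}(\psi_n;t)-\mathcal{E}_n^{\rm crack}(u_n(t);t)$.

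The second gap is in the crack-energy bookkeeping. Your verification of \eqref{stab1} treats the ``new'' cracked triangles as being exactly the Chambolle--Dal Maso strip around $J_\psi\setminus K(t)$ plus an asserted $o(1/\eps_n)$ transition layer. This misses two contributions that must be bounded by hand: (i) triangles near $J_\psi\cap K(t)$ whose interpolated gradient is large because the curve $\Gamma_i^n$ is not contained in $K_n(t)$ everywhere, and triangles where $\Gamma_i^n$ passes from one component of $\Omega_{n,k}^{\rm mod}$ to another — this is controlled via the bound $\#\mathcal{C}(\Omega_{n,k}^{\rm mod})\le C\eta_n/\eps_n$ from \eqref{extra-statement} and the healing of good vertices (Lemmas \ref{lemma7.1.5}, \ref{lemma7.1}); and (ii) triangles from the old mesh inside good squares intersecting the flat jump — which is why the split $\mathcal{B}_{\rm good}^{\rm small}/\mathcal{B}_{\rm good}^{\rm large}$, the stripe-selection Lemma \ref{stripe-prop}, and the buffer zones $N^{\rm crack}_{n,k}$ are needed before the Chambolle--Dal Maso mesh can be glued in without breaking admissibility or creating an uncontrolled transition layer. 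Asserting ``$o(1/\eps_n)$ transition triangles'' without these tools is precisely the content of the lemma one would need to prove. Finally, a small arithmetic slip: the area of the strip should be $\sim\eps_n\sin\theta_0\,\mathcal{H}^1(J_\psi\setminus K(t))$ — both $\eps_n\mathcal{H}^1/\sin\theta_0$ and $\tfrac12\eps_n\sin\theta_0\mathcal{H}^1$ give the wrong constant in \eqref{stab1}, since each crack segment contributes two triangles each of area $\approx\tfrac12\eps_n\sin\theta_0\times\text{length}$.
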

% \RRR {\tt now the definition of $k(t)$ is not needed anymore, so I deleted it.}\EEE

\begin{corollary}[Recovery sequence]\label{cor: stability}
  For each $\psi\in GSBD^2(\Omega') $ with $ \psi = g(0)\EEE$ on $\Omega' \setminus \overline{\Omega}$ there exists a sequence $(\psi_{n})_{n}$ with $\psi_n \in \mathcal{A}^{0}_n  $ such that $\psi_n$ converges to  $\psi$  in  measure on $\Omega'$  and
     \begin{equation*}%\label{stab1.111}%
     \limsup_{n\to \infty} \mathcal{E}^{0}_n(\psi_n) \le \mathcal{E}(\psi,J_{\psi}).
     \end{equation*}
 \end{corollary}
 
% \GGG
% \begin{corollary}\label{cor:limitstability}
% Global stability \eqref{finalstability} holds for every $t \in [0,T]$.
% \end{corollary}
% In fact, given $K(t)\, \tilde{\subset} \,  H$ and $v\in AD(g(t),H)$, let $v_n$ be the approximations of $v$ provided by Theorem~\ref{stability} which are admissible competitors for the problem \eqref{minimizing-scheme} (recall \eqref{definitionofun}).
% By lower semicontinuity Proposition~\ref{comp} for $u_{n_l}$ and $u(t)$ and 
% 
% \EEE

This result is essential to pass from the minimality condition \eqref{minimizing-scheme} in the finite element  model to the global stability in \eqref{finalstability}. For this reason, estimates of this kind are often referred to as \emph{stability of unilateral minimizers}, see e.g.\ \cite{GiacPonsi}. The proof of Theorem \ref{stability}  will be deferred to the next section; \VIT it could be directly adapted (indeed, simplified: it is enough to drop the dependence on $t$ and formally consider $K_n=\emptyset$) to prove Corollary~\ref{cor: stability}, which in turn confirms the upper bound in \EEE
%[3, Theorem 3.1].
%Corollary \ref{cor: stability} follows from 
the $\Gamma$-convergence result \cite[Theorem~3.1]{BonBab}.   The strategy follows the one of the \emph{jump transfer lemma} introduced \JJJ by {\sc Francfort and Larsen} in \EEE \cite{Francfort-Larsen:2003}, see the  works \cite{dMasoFranToad, Lazzaroni,   FriedrichSeutter,   FriedrichSolombrino,  Giacomini:2005b} for several variants.  Our situation is slightly different and more delicate because we have to construct an admissible mesh associated to the recovery sequence. For this, we will exploit the ideas by {\sc  Dal  Maso and Chambolle} \cite{ChamboDalMaso} \EEE for the explicit construction of a minimizing adaptive mesh. 

We are now in \BBB the \EEE position to prove the main result of this paper. 

\begin{proof}[Proof of Theorem \ref{maintheorem}]
    We split the proof into  five \EEE   parts. First, we prove the existence of a limiting evolution and validate the irreversibility of the crack sets. Subsequently, in a second step, we use Theorem~\ref{stability}  to prove the stability   property \EEE \eqref{finalstability}.  Afterwards, we show    the convergence of displacement fields without the necessity of passing to $t$-dependent subsequences, see Step 3. \EEE   Next, we want to confirm that the mapping  $t\to (u(t),K(t))$ 
    % \RRR rather $(u(t), \Gamma(t))$?  Decide at which point in the paper we want to introduce the definition $\Gamma(t) = \bigcup_{s \le t} K(s)$. \EEE 
    actually fulfills the energy balance \eqref{energybalance} which is content of Step 4. In the last step, we then prove the convergence of energies and thus the   strong convergence of   the linear strains.

    \emph{Step 1: Limiting evolution.} Let  $K(t)$ be the $\sigma$-limit of $K_n(t)$ for $t \in [0,T]$ given by Proposition~\ref{limit-crack-evo}.  Note that \EEE $t \mapsto K(t)$  is an increasing set function. For each fixed time $t\in [0,T]$, by virtue of  Proposition~\ref{liminf-ineq'}, \EEE there exists $u(t) \in  AD(g(t),K(t)) \EEE$  with $u(t) = 0$ on $\Omega' \setminus G(t)$ \EEE    and a subsequence \GGG $n_l$ \EEE depending on $t$ 
      such that $u_{n_l}(t)$ converges to $u(t)$   in measure on $G(t)$ and $e(u_{n_l}(t)) \rightharpoonup e(u(t))$ weakly  in $L^2( G(t); \EEE  \R^{2 \times 2}_{\rm sym} )$. \EEE   

  %   \emph{Step 1: Limiting evolution.} Let  $(K(t),G_{\infty}(t))$ be the $\sigma_{\rm sym}^2$-limit of $K_n(t)$ for $t \in [0,T]$ given by Proposition~\ref{limit-crack-evo}. \GGG By Proposition~\ref{limit-crack-evo}, $t \mapsto (K(t), G_\infty(t))$ \EEE is an increasing set function \GGG in the sense  of \EEE \eqref{2108241738}. 
  %   For each fixed time $t\in [0,T]$, by virtue of  Proposition~\ref{liminf-ineq'}, \EEE there exists $u(t) \in  AD(g(t),K(t)) \EEE$  with \EEE
  %  \[
  %   u(t)=0\text{ on }G_\infty(t),
  %   \]
  %      and a subsequence \GGG $n_l$ \EEE depending on $t$ 
  %     such that $u_{n_l}(t)$ converges to $u(t)$ \NNN in the sense of Proposition \ref{comp}. \EEE 
      % weakly in $GSBD_{\infty}^2(\Omega)$. \RRR Not really... Convergence just outside $G_\infty(t)$ \EEE

% Next, we check that  the  mapping  $t\to (u(t),K(t) )$ is an irreversible quasi-static crack evolution. For this, we need to confirm \eqref{finalstability} and \eqref{energybalance} which is content of Step 2 and Step 3, respectively. In the last step, we then prove the convergence of energies and show that the crack sets also converge in the sense of $\sigma$-convergence. \RRR For me, this paragraph can be romved because it is written above already. \EEE 
%, see Definition \ref{def: sigma conv}.

\emph{Step 2: Stability  \eqref{finalstability}.} 
% \RRR I tried to simplify this step. Old version in comment. \EEE
Fix $t \in [0,T]$. Let $H$ with  $K(t) \, \tilde{\subset} \, H $ and $\psi\in AD(g(t),H)$. \EEE  We employ Theorem \ref{stability} to obtain a sequence of piecewise affine \EEE displacements $\psi_n \in \mathcal{A}_{n} \MMM (t) \EEE $ \GGG approximating $\psi$ and \EEE satisfying \eqref{stab1}--\eqref{stab2}. 
By the minimality property of the solution $u_n(t)$, see \eqref{minimizing-scheme0}--\eqref{minimizing-scheme},  \eqref{definitionofun}, and the shorthand notation in \eqref{energy along evo} we have  
  \[\mathcal
  {E}_{n}(u_n(t); t)\leq \mathcal{E}_{n}(\psi_{n} ;t)\,.\]
  We now split the energy on both sides like in \eqref{energy-split}: subtracting the crack energy on the left-hand side gives us   
  \begin{equation}\label{Passtolimit}
  \begin{aligned}    
 \mathcal{E}_n^{{\rm{elast}}}(u_{n}(t);t)\leq  & \;\mathcal{E}_n^{{\rm{elast}}}(\psi_{n}; t) + \mathcal{E}_n^{{\rm{crack}}}(\psi_{n};t)-\mathcal{E}_n^{{\rm{crack}}}(u_{n}(t);t). 
  \end{aligned}
\end{equation}
Passing to the limit in \eqref{Passtolimit}, by employing \eqref{stab1} 
% \RRR did not understand why we can subtract this term... I removed it. I use me suggestion and  $J_\psi \cap (G_\infty(t))^{(1)} = \emptyset$   
and \eqref{stab2}, we find 
\begin{equation}\label{2108241743}
\limsup_{n\to \infty} \mathcal{E}_n^{{ \rm{elast}}}(u_{n}(t);t)   \le    \int_{\Omega } |e(\psi)|^2 \, {\rm d}x +   \kappa \sin(\theta_0)   \,  \mathcal{H}^1(J_\psi\sm K(t)) .  
\end{equation} 
 By Proposition \ref{prop-control-on-ela}, Proposition \ref{liminf-ineq'},  and  a compactness argument, we get  $e(u_{n_l}(t))\rightharpoonup W(t)$ in $L^2(\Omega';  \R^{2 \times 2}_{\rm sym})$ for some $W(t) \in L^2(\Omega'; \R^{2 \times 2}_{\rm sym})$ with $W(t) = e(u(t))$ on $G(t)$. As  $u(t) = 0$ on $\Omega' \setminus G(t)$, \EEE we
%\eqref{liminf-ineq'XXX}, 
\EEE derive
\begin{equation}\label{2108241743-new'XXX}
  \int_{\Omega}|e(u(t))|^2\, {\rm d}x   +   \int_{\Omega \setminus G(t)}|W(t)|^2\, {\rm d}x   \EEE   \le     \int_{\Omega } |e(\psi)|^2 \, {\rm d}x +   \kappa \sin(\theta_0)  \,  \mathcal{H}^1(J_\psi\sm K(t)) .  \EEE
\end{equation} 
Since $\psi\in AD(\psi, H)$, we have $J_{\psi}  \, \tilde{\subset} \, \EEE H$,  and thus \EEE  
\begin{equation}\label{2108241743-new}
  \int_{\Omega}|e(u(t))|^2\, {\rm d}x   \le     \int_{\Omega } |e(\psi)|^2 \, {\rm d}x +   \kappa \sin(\theta_0) \,   \mathcal{H}^1(H\sm K(t))  . \EEE
\end{equation} 
Because $K(t) \, \tilde{\subset} \,  H$ we conclude that \eqref{finalstability} holds.  By particularly choosing $\psi = u(t)$ in \eqref{2108241743-new'XXX} \MMM and recalling $J_{u(t)} \, \tilde{\subset} \, K(t)$, \EEE we get $W(t) \equiv 0$ on $\Omega \setminus G(t)$, and thus  $e(u_{n_l}(t)) \rightharpoonup e(u(t))$ weakly  in $L^2(\Omega';  \R^{2 \times 2}_{\rm sym} )$. \EEE 

\emph{Step 3: Uniqueness of limiting displacements.}  We argue that the obtained limit $u(t)$ is uniquely determined on $G(t)$ and satisfies  $e(u(t)) = 0 $ on $\Omega' \setminus G(t)$ for all $t \in [0,T]$. This along with Uyrsohn's principle  shows that the subsequence $(n_l)_l$ in Step 1 can be chosen independently of $t$. This shows \eqref{eq: l1convi} except for strong convergence,  which we defer to Step 5. 
 
 Let us suppose that $\hat{u}(t) \in AD(g(t),K(t))$ denotes another limit of the sequence $u_n(t)$ found in Step~1.   As in Step~2, one can show that \eqref{2108241743-new} holds for $\hat{u}(t)$ in place of $u(t)$. This shows that both $u(t)$ and $\hat{u}(t)$ are minimizers of the strictly convex minimization problem $v \mapsto   \int_{\Omega}|e(v)|^2\, {\rm d}x $    for $v \in AD(g(t),K(t))$. Consequently, $e(u(t)) = e(\hat{u}(t))$ on $\Omega'$. Moreover,    $e(u(t)) = e(\hat{u}(t)) = 0$ on $\Omega' \setminus G(t)$, see \eqref{b(t)}. By a piecewise rigidity argument (see \cite{Chambolle-Giacomini-Ponsiglione:2007}), taking the definition of $G(t)$  and the fact $u(t)=\hat{u}(t)=g(t)$ on $\Omega'\sm \overline{\Omega}\subset G(t)$ \EEE  into account and using the fact that adding infinitesimal rigid motions is only admissible on the connected components of $\Omega' \setminus K(t)$ inside $B(t)$, we then see that $u(t) = \hat{u}(t)$ on $G(t)$.   We refer to \cite[Proof of Theorem 2.2]{steinke} for details. 
 
Moreover, arguing as in Step~2 and above in Step~3 (cf.\ also \cite[Lemma~3.8]{Francfort-Larsen:2003} and \cite[Theorem~7.5]{FriedrichSolombrino}), we can ensure that, for each $t \in [0,T]$ outside of an at most countable subset and every $t_n \MMM \nearrow \EEE t$, we have that $u(t_n)\to u(t)$ in measure on $G(t)$ and $e(u(t_n))\rightharpoonup e(u(t))$ in $L^2(\Omega'; \R_{\rm sym}^{2 \times 2})$.

\emph{Step 4: Energy balance: %\eqref{energybalance}.
} From Step 1 and Step 3 we recall that \EEE   $e({u}_{n}  \EEE (\tau))\rightharpoonup e(u(\tau))$ in $L^2( \Omega'; \R_{\rm sym}^{2 \times 2} \EEE)$ for all $\tau \in [0,T]$.  As $e(\partial_{t}g(\MMM \tau, \cdot \EEE ))\in L^2(  \Omega', \R_{\rm sym}^{2 \times 2} \EEE)$  is  uniformly  bounded in time, using Proposition~\ref{prop-control-on-ela} \EEE  we can apply the reverse Fatou's lemma  to  deduce that \EEE 
     \begin{align}\label{limit-testing}
      & \limsup_{n\to \infty} \int_{0}^{t} \int_{\Omega}  e( {u}_n  (\tau)) : ( \partial_{t}g(\tau)) \,  {\rm d}x  \, {\rm d}\tau \le  \int_{0}^{t} \int_{\Omega}  e(u(\tau)) :( \partial_{t}g(\tau))\,  {\rm d}x \,{\rm d}\tau  \,.
     \end{align} By Lemma \ref{e-balance-lemma} we find $(\beta_n)_n$ with $\beta_n \to 0$ such that for any $t\in [t_n^{k}, t_n^{k+1})$ we have 
  \begin{align*}
\mathcal{E}_n(u_n(t);t)  - \mathcal{E}_{n}^{0}(u^0_n) &\leq 2  \int_{0}^{t_{n}^{k}} \int_{\Omega}  e({u}_n(\tau)) :  e(\partial_{t} {g}(\tau) )\,{\rm d} x\,{\rm d}\tau  +  \beta_n. 
\end{align*}
By passing to another sequence $(\beta_n)_n$, still satisfying $\beta_n \to 0 $, \EEE \EEE  for every $t\in [0,T]$ we have   
    \begin{align}\label{energy-diff}   
    \mathcal{E}_n(u_n(t);t) &\leq 2 \,\int_{0}^{t} \int_{\Omega}  e(  {u}_n  (\tau)) :  e(\partial_{t}g(\tau))\,{\rm d} x\,{\rm d}\tau  +  C\beta_n  +  \mathcal{E}_{n}^{0}(u^0_n)\,,
  \end{align}      
  where we have used $e(  {u}_n )\in  L^\infty(0,T; L^2(\Omega, \R_{\rm sym}^{2 \times 2} \EEE))$ (see Proposition \ref{prop-control-on-ela}) \EEE and $g\in W^{1,1}(0,T; W^{2,\infty}(\Omega;\R^2))$ to  estimate the integral from $t_n^k$ to $t$ in terms of  $\beta_n$. 
  %\RRR more details on boundary conditions?  \EEE
Recalling   \eqref{minimizing-scheme0}, \eqref{1908241552}, and Corollary \ref{cor: stability} \EEE we have 
  \[
 \mathcal{E}(u(0), \MMM K(0) \EEE ) \le \EEE \limsup_{n\to \infty} \mathcal{E}^0_{n}(u_n^0) =   \limsup_{n\to \infty}\min_{\mathcal{A}^{0}_n} \mathcal{E}^0_{n} \le   \mathcal{E}(\psi, J_{\psi})  \EEE
  \] 
   for all  $\psi\in GSBD^2( \Omega') \EEE $ with $ \psi = g(0)\EEE$ on $\Omega' \setminus \overline{\Omega}$,  where the first inequality follows from Proposition~\ref{liminf-ineq'} and  weak lower semicontinuity of norms. \MMM As $J_{u(0)} \, \tilde{\subset} \,   K(0)$, \EEE  this shows that condition (a) in Definition \ref{main def} holds. By choosing $\psi = u(0)$  and using \MMM again \EEE $J_{u(0)} \, \tilde{\subset} \,   K(0)$, we get \MMM $J_{u(0)} =K(0)$ and \EEE $\limsup_{n\to \infty} \mathcal{E}^0_{n}(u_n^0) \le \mathcal{E}(u(0), K(0))$. \EEE  Combining this with Proposition~\ref{liminf-ineq'} and \eqref{limit-testing}--\eqref{energy-diff} we get
  \begin{align}\label{eq: for en ba}
          & \mathcal{E}(u(t), K(t) )  \leq \liminf_{n\to \infty} \mathcal{E}_{n}(u_n(t);t) \GGG \leq \limsup_{n\to \infty} \mathcal{E}_{n}(u_n(t);t) \EEE \leq \limsup_{n\to \infty} \mathcal{E}_{n}^{0}(u_{n}^0) \\& \hspace{1em}+  \limsup_{n\to \infty} 2\int_{0}^{t} \int_{\Omega} e( {u}_n \EEE (\tau)) : e(\partial_{t}g(\tau))\,{\rm d} x\,{\rm d}\tau \notag  \MMM \le \EEE  \mathcal{E}(u(0), K(0)) + 2 \int_0^t  \int_{\Omega} e(u(\tau)) : e(\partial_{t} g(\tau))\, {\rm d}x \, {\rm d}\tau.
  \end{align}
%\GGG WRONG! Instead of $K(t)$ $\sds$-limit there should be $\bigcup_t K(t)$! Now too small. How to prove $\bigcup_t K(t)\subset \tilde{K}(t)$ where $\tilde{K}(t)$ the $\sigma$-limit? With $\tsds$ seems doable.
%% To use $\sds$-convergence one could approximate with pw constant in time evolution of $\hcinfty{K(\cdot)}$, but very technical and complicated 
%\EEE 
Thus, it remains to prove the reverse inequality
  \begin{equation}\label{eq: reverse}
      \mathcal{E}(u(t),  K(t)   )\geq \,\mathcal{E}(u(0),K(0))+2\int_0^t  \int_{\Omega} e(u(\tau)) : e(\partial_{t} g(\tau))\, {\rm d}x \, {\rm d}\tau.
  \end{equation}
Fixed $t \in [0,T]$, let $(s_k^i)_i$ be a partition of $[0,t]$ with 
\[
\lim_{k\to +\infty} \max_{1\leq i\leq k} (s_k^i-s_k^{i-1})=0.
\]
  For any $i$ and $k$, we take \EEE  $ H= K(s_k^i)$ and $ v= (u(s_k^i)-g(s_k^{i})+g(s_k^{i-1}))  \in AD(g(s_k^{i-1}),  K(s_k^i) )$ as an admissible competitor at time $s_k^{i-1}$.  This yields \EEE
$${\mathcal{E}(u(s_k^{i-1}),   K(s_k^{i-1})\EEE )\leq \mathcal{E}\big(u(s_k^i) +  \big(g(s_k^{i-1}) - g(s_k^{i})\big)  \EEE , K(s_k^i) \EEE \big),}$$
that is
\begin{equation*}
\begin{split}
\mathcal{E}(u(s_k^i), K(s_k^i))\geq \mathcal{E}(u(s_k^{i-1}),   K(s_k^{i-1})) + 2 \int_{\Omega} e(u(s_k^i))\colon e(g(s_k^i)-g(s_k^{i-1})) \dx 
- \int_\Omega |e(g(s_k^i)-g(s_k^{i-1}))|^2 \dx.
\end{split}
\end{equation*}
Summing over $i$, and observing that $g(s_k^i)-g(s_k^{i-1})=\int_{s_k^{i-1}}^{s_k^{i}} \partial_t g(\tau) \,\mathrm{d}\tau$ \MMM are \EEE functions on $L^2(\Omega'; \R^2)$ (where the integral is in the Bochner sense), we get
\begin{equation*}
\mathcal{E}(u(t), K(t))\geq \mathcal{E}(u(0),   K(0)) + 2 \int_0^t \int_{\Omega} e(\ol u(\tau))\colon e(\partial_t g(\tau))\dx\, \mathrm{d}\tau + o_{k\to +\infty}(1),
\end{equation*}
with $\ol u(\tau):= u(s_k^i)$ for $\tau \in (s_k^{i-1}, s_k^i]$. \MMM Now, we use \EEE $g\in W^{1,1}(0,T; W^{1,2}(\Omega'; \R^2))$ 
%concluded by employing again such regularity for $g$ 
together with the uniform bound from Proposition~\ref{prop-control-on-ela} and the weak convergence in $L^2(\Omega; \R^{2 \times 2}_{\rm sym})$, \MMM as $k \to \infty$, \EEE of $e(\ol u(\tau))$ to $e(u(\tau))$ for all $\tau \in [0,t]$,  except for at most countable many (see end of Step~3). \EEE  
\EEE 
%Summing over $i$, if the partition $(s_k^i)_i$ of $[0,T]$ is well chosen in correspondence of an arbitrarily small error, we obtain an arbitrarily good approximation of \eqref{eq: reverse}. \RRR Why is this important here. Check whole paragraph again. \EEE 
%\RRR Furthermore, we particularly used  that $e(u(\tau)) \equiv 0$ on  $B(\tau)$ for all $\tau \in [0,T]$. {\tt Dont understand where this is used} \EEE 
% \RRR Should we give more details due to this adaptation? I have no gut feeling at the moment, maybe one should rather correct with  $g(s_k^{i-1}) \chi_{\Omega' \setminus G_\infty(s_k^{i-1}) }  - g(s_k^{i}) \chi_{\Omega' \setminus G_\infty(s_k^i) }  $. check in detail!! not that this issue just comes from the fact that I suggest to restrict competitors to those which are 0 on $G_\infty(t)$. Again simplification might be possible if we can directly use $\sigma$-convergence.   Joscha: Exactly!! I think we don't need this adaption any more because we use $\sigma$-convergence 
\EEE This shows \eqref{eq: reverse} and 
  concludes the proof of the energy balance \eqref{energybalance}.

 \emph{Step 5: Energy convergence  and strong convergence of displacements}.  
 \GGG Gathering \eqref{eq: for en ba} with \eqref{eq: reverse}, and recalling \eqref{energy along evo} we deduce \eqref{energ convi}. Moreover, since the two parts of the energy are separately lower semicontinuous,  we obtain the statement in Remark \ref{man: remark}(i). \EEE In particular, $\|e(  {u}_{n}   (t))\|_{L^2(\Omega)} \to \|e(u(t))\|_{L^2(\Omega)}$, which shows $e(u_{n}(t))\to e(u(t))$ strongly   in $L^2(\Omega'; \R^{2 \times 2}_{\rm sym})$ and concludes the proof of \eqref{eq: l1convi}. \EEE
 \end{proof}

\EEE

\section{Unilateral stability: Proof of Theorem \ref{stability}}\label{sec: stability}
  This section is entirely devoted to  the proof of the stability result in  Theorem \ref{stability}.

 \subsection{Preparations}\label{sec:stabprep}

We start with some preparations for the proof. \EEE

\subsection*{Density argument} 
We first observe that it suffices to prove the statement for functions $\psi$ with more regularity, employing a suitable density argument. 
Let $\mathcal{W}(\Omega' ) \subset GSBD^2(\Omega')$ be the collection of functions $v$ such that $J_v$ is closed and included in a finite union of closed and  connected  pieces of $C^1$-curves and $v$ lies in $W^{2,\infty}(  \Omega' \EEE  \setminus J_v;\R^2)$. By the density result \cite[Theorem~3.2]{steinke} we can choose a sequence $(v_n)_n\subset \mathcal{W}( \Omega' \EEE )$ with $v_n=g(t)$ on  $\Omega'\setminus \overline{\Omega}$  such that \begin{equation}
\begin{cases}
  \text{$v_n\to v$ in measure on  $\Omega' $}\,,\\  
 \| e(v_n) - e(v) \|_{L^2(\Omega')}\to 0 \EEE\,,\\
   \mathcal{H}^1(J_{v_n}\triangle J_v) \to 0.
\end{cases}
\end{equation}\EEE
Bearing in mind this density result,  by a diagonal argument \EEE it suffices to construct a sequence as in Theorem~\ref{stability} for a function $\psi \in \mathcal{W}(\Omega ')$ with  $\psi = g(t)$ on  $\Omega'  \setminus \overline{\Omega}$. Furthermore, for simplicity we only treat the case that $J_{\psi} \cap \partial_D \Omega =\emptyset$ (no jump along the boundary), for the general case follows by minor adaptations of the construction at the boundary (see \cite{Francfort-Larsen:2003}) which would merely overburden notation in the sequel.  

\MMM Choose $k(t)  \in \N \EEE $  such that $t \in [t^{k(t)}_n,t^{k(t)+1}_n)$ and \EEE define  $\mathcal{A}^{k(t)}_{n}(g(t))$ as in \eqref{akeps} with $g(t)$ in place of $g(t^k_n)$. \EEE We fix $\theta>0$ and observe that it suffices to construct a sequence $(\psi_n)_n \in \mathcal{A}^{k(t)}_n(g(t))  $ 
%  \RRR actually, slight adaptation of boundary values is needed because we should get $g_n(t)$. One sentence here is enough. (Check our other paper for formulation.) \EEE 
of displacements such that  
  \begin{equation}\label{stab0neu}
    \limsup_{n\to \infty} |\{|\psi_n-\psi|\geq \delta\}| \le C\theta\quad \text{for all}\; \delta>0\,,
    \end{equation}
    \begin{equation}\label{stab1neu}
    \limsup_{n\to \infty} \,  
   \mathcal{E}^{ {\rm crack}}_{n}(\psi_n;t) -\mathcal{E}^{ {\rm crack}}_{n}(u_n(t);t)  
    \leq \kappa \sin(\theta_0)\,\mathcal{H}^1(J_\psi\sm K(t)) + C\theta\,,
    \end{equation}
%    - \mathcal{H}^1(\bigcup_{s\leq t} K(t) \sm J_\psi) 
   \begin{equation}\label{stab2neu}
        \limsup_{n\to \infty}\,  \mathcal{E}^{ {\rm elast}}_{n}(\psi_n;t)   \GGG \leq \EEE   \int_{\Omega} |e (\psi)|^2 \, {\rm d}x + C\theta,\EEE
  \end{equation}
   where $C>0$ is a universal constant.   Strictly speaking, we need to construct a sequence $\psi_n$ that attains the boundary values $g(t^{k(t)}_n)$,  i.e., lies in $\MMM \mathcal{A}_n(t) \EEE = \mathcal{A}^{k(t)}_n = \mathcal{A}^{k(t)}_n(g(t^{k(t)}_n))$. \EEE Therefore, we eventually need to replace the sequence  $(\psi_n)_n $ by $\psi_n -g(t)_{\mathbf{T}(\psi_n)} + g(t^{k(t)}_n)_{\mathbf{T}(\psi_n)}$. \EEE Due to the regularity of $g$, this still leads to  \eqref{stab0neu}--\eqref{stab2neu}.    Then, the statement follows again by a diagonal argument, sending $\theta \to 0$.

  %  Note that, strictly speaking, proceeding in this way, the  boundary values $g(t^{k(t)}_n)$ are not satisfied. Therefore, we eventually need to replace the sequence $(\psi_n)_n $ by $\psi_n(x) -g(t,x) + g(t^{k(t)}_n,x)$ for $x \in \mathcal{L}_n(\Omega)$. Due to the regularity of $g$ and the Lipschitz continuity of $\Psi$, this still leads to  \eqref{stab0neu}--\eqref{stab3neu}, cf.\ also \eqref{wanttoprove} for a similar estimate. 
%  Now, we fix $\psi \in \mathcal{W}(\Omega )$      with $\psi = g(t)$ in a neighborhood of $\Omega  \setminus \overline{U}$ in $\Omega$, and construct a sequence $(\psi_n)_n$ satisfying \eqref{stab0neu}--\eqref{stab2neu} and  $\psi_n \in   \mathcal{A}^{k(t)}_{n}  (g(t))  $. 

\GGG 
\EEE

  \subsection*{Besicovitch covering}
We follow the procedure from \cite{FriedrichSeutter}, which in turn stems from 
% the seminal paper 
\cite{Francfort-Larsen:2003}, i.e., we introduce a fine cover of 
%the jump set 
 $J_{\psi}  $ \EEE with closed squares satisfying certain additional properties. By $\nu_\psi$ we denote \MMM a \EEE measure-theoretic \MMM unit \EEE normal at $J_{\psi}$.  We split our considerations into (1) $K(t) \cap J_\psi$ and (2) $J_\psi \setminus K(t)$. \EEE

 (1)  By a covering argument, \EEE for given $\theta>0$ there exists an open set $U \subset \Omega'$ \EEE and  a function $v\in {\rm PC}( U) \EEE $ such that 
\begin{equation}\label{sigma-covering}
 \mathcal{H}^1(K(t) \setminus U) \le \theta, \EEE \quad \quad \mathcal{H}^1((K(t)\, \triangle \, J_{v})\cap U \EEE )\leq \theta \,. 
\end{equation} 
  By  the $\sigma$-convergence of $K_n(t)$ to $K(t)$, \EEE  there exists a sequence  $(v_n)_{n}\in  PC(U) \EEE $ with $v_n\to v$ in $L^1(U)$ and  
\begin{equation}\label{almost-recovery-seq}
  \limsup_{n \to \infty} \mathcal{H}^1(J_{v_n}\setminus K_n(t))\leq   \mathcal{H}^1(J_{v}\setminus K(t) )\leq \theta \,,
\end{equation} 
 see  \eqref{almost-recovery-seq'} for a similar argument. \EEE We can choose a suitable subset $G_j \subset J_{v}  $ as done preceding  \cite[(2.2)]{Francfort-Larsen:2003} with $\mathcal{H}^1( \GGG J_v \EEE \setminus G_j )\leq \theta$ such that
\begin{align}\label{eq gj}
  \mathcal{H}^1( K(t) \setminus G_j  ) \le  \mathcal{H}^1( (K(t) \setminus G_j) \cap U  ) + \theta \leq  \EEE  \mathcal{H}^1( \GGG J_v \EEE \setminus G_j ) + 2\theta \le  3\theta\,. \EEE
\end{align}  
For each $x\in G_j \cap \GGG J_\psi \EEE $ we consider closed squares $Q_r(x)$ with sidelength $2r$ and \BBB two sides \EEE orthogonal to $\nu_\psi(x)$ which are contained in $ U \EEE $ and satisfy \cite[(2.3), (2.5)]{Francfort-Larsen:2003}. 
 
 (2) \EEE For closed squares $Q_r(x) \subset \Omega' \EEE$ with a center $x\in \GGG J_\psi \EEE\setminus K(t)$, still oriented in direction $\nu_\psi(x)$, we can assume \JJJ that \EEE
%that 
\VVV for \EEE $\mathcal{H}^1$-a.e.\ $x\in \GGG J_\psi \EEE \setminus K(t)$ and for $r$ sufficiently small it holds
    \begin{equation}\label{almostnoK}
\mathcal{H}^{1}\big(K(t) \cap Q_{r}(x)\big)\leq \theta r\,.
    \end{equation}
Here, we use \EEE the fact that $K(t)$ has $\mathcal{H}^{1}$-density $0$ %almost everywhere 
\VIT in a subset of $J_\psi \setminus K(t)$ of full $\mathcal{H}^1$ measure. \EEE

 As $\GGG J_\psi \EEE$ is contained in a finite 
union of  closed $C^1$-curves, for a.e.\ $x \in J_\psi$, possibly passing to smaller $r$, the above squares  in cases (1) and (2) \EEE can be chosen such that they also satisfy 
\begin{align}
  \label{normal-angle-estimate0}
2r &\le \mathcal{H}^1\big(\GGG J_\psi \EEE \cap Q_r(x) \big) \le 4r, \\
\label{normal-angle-estimate2}
 J_\psi \cap  Q_{r   } \EEE (x)  &\subset \lbrace y \colon   |(y-x) \cdot \nu_\psi(x)| \le \theta r \rbrace.        
\end{align}
%  \begin{equation}\label{normal-angle-estimate}
%     |\nu_{\psi}(y)-\nu_{\psi}(x)|\leq \theta \quad \forall y \in J_\psi \cap  Q_{r}(x) .     
%     \end{equation}      
  %  \RRR Vito: Report conditions (2.3) in Francfort-Larsen rather than those above. Joscha: (2.3) in Francfort-Larsen are quite many conditions. In our abbreviated formulation of the argument we need the two above, I believe.. \EEE
With this, we obtain a fine cover of  $\Gamma:= (G_j \cap  \GGG J_\psi \EEE) \cup (\GGG J_\psi \EEE \setminus K(t))$ to which we can apply the Besicovitch covering theorem  with respect to the Radon measure $\mathcal{L}^{2}+\mathcal{H}^{1}|_{\Gamma}$. For $\theta>0$ fixed as above, we hereby find a finite and disjoint subcollection $\mathcal{B}:= (Q_{r_i}(x_i))_i$, or shortly denoted by $(Q_i)_i$, such that $(Q_i)_i$ satisfy the properties \eqref{almostnoK}--\eqref{normal-angle-estimate2} (\eqref{almostnoK} only for $x_i \notin K(t)$) \EEE as well as 
    \begin{equation}\label{besicovitch-props}
            \mathcal{L}^2\big(\bigcup\nolimits_{\mathcal{B}}  Q_{i} \EEE \big)  \le \theta, \EEE    
            \quad \quad \quad 
            \mathcal{H}^{1}\big( \Gamma \setminus \bigcup\nolimits_{\mathcal{B}}Q_i\big)  \le \theta\,.
        \end{equation} 
Here and in the following, we use $\bigcup_{\mathcal{B}} Q_i$ as a shorthand for $\bigcup_{Q_{r_i}(x_i) \in \mathcal{B}} Q_{r_i}(x_i)$. Using  \eqref{eq gj},   \eqref{besicovitch-props}, and \EEE the definition of $\Gamma$  we get 
      \begin{equation}\label{besicovitch-props2}   
\mathcal{H}^{1}\big( \GGG J_\psi \EEE\setminus \bigcup\nolimits_{\mathcal{B}}Q_i\big) \le 4  \theta.
              \end{equation} 
 Without further notice,  we will frequently use that the squares are pairwise disjoint. 

By $\mathcal{B}_{\rm good}\subset \mathcal{B}$ we denote the collection of closed  squares $Q_i = Q_{r_i}(x_i)$ with $x_i\in \GGG J_\psi \EEE\setminus K(t)$, and similarly we let $\mathcal{B}_{\rm bad}\subset \mathcal{B}$ be the collection of all squares $Q_i =Q_{r_i}(x_i)$ with $x_i\in \GGG J_\psi \EEE\cap \BBB G_j \EEE $. Clearly, we have $\mathcal{B}=\mathcal{B}_{\rm good}\cup \mathcal{B}_{\rm bad}$.  We also define the sets 
$$B_{\rm good} =  \bigcup\nolimits_{\mathcal{B}_{\rm good}} Q_i, \quad \quad \quad B_{\rm bad} =  \bigcup\nolimits_{\mathcal{B}_{\rm bad}} Q_i. $$
 Note that by construction we have $B_{\rm bad} \subset U$. \EEE In order to construct a sequence $\psi_n\colon  \Omega' \EEE\to \R^2$ of piecewise affine functions   satisfying \eqref{stab0neu}--\eqref{stab2neu}, we need to specify the triangulation $\mathbf{T}_{n}(\psi_n)$ that is associated to $\psi_n$. For this, we split   $\Omega'$  into \EEE three different \MMM subsets \EEE $B_{\rm good}$, $B_{\rm bad}$ and $B_{\rm rest}=  \Omega'  \setminus (B_{\rm good}\cup B_{\rm bad})$.   Inside of $B_{\rm good}$ and $B_{\rm bad}$ we will need two kinds of  a \emph{transfer of jump sets} which we discuss next. \EEE    

\subsection*{Jump transfer}

 We again consider the good and bad squares separately.

\subsubsection*{Bad squares} \EEE

  The sequence    $(v_n)_{n}\subset PC( U )$  defined above, satisfying \EEE $\limsup_{n \to \infty} \mathcal{H}^1(J_{v_n}\setminus K_n(t))\leq \theta$ and $v_n \to v$   in $L^1(U; \VIT \R^2 \EEE)$,  can be used to transfer  \EEE the jump set of $ \psi \EEE$  inside $B_{\rm bad}$.
 \VVV First, we recall the main points from \cite{Francfort-Larsen:2003}, for any $Q_i = Q_{r_i}(x_i)\in\mathcal{B}_{\rm bad}$: \EEE there are  two lines $H^{n,+}_i$ and $H^{n,-}_i$ with normal $\nu_\phi(x_i)$ which lie above and below the middle line $H_i$  containing the point $x_i$, also with normal $\nu_\phi(x_i)$, such that 
      \begin{equation}\label{Ri-estimates2}
R^{n}_i \supset Q_i \cap   \lbrace y \colon   |(y-x_i) \cdot \nu_\phi(x_i)| \le  2   \theta r_i \rbrace,  
    \end{equation}
    \begin{equation}\label{Ri-estimates}
%    \mathcal{H}^{1}\big(K(t)\setminus \bigcup\nolimits_{\mathcal{B}_{\rm bad}} R_i \big)\leq C\theta, \quad \quad 
      \mathcal{H}^{1}\big(\bigcup\nolimits_{\mathcal{B}_{\rm bad}}  L^n_i \big)\leq C\theta
    \end{equation}
for a universal constant $C>0$, where $R^{n}_i$ denotes  the rectangular subset that lies between $H^{n,+}_i$ and $H^{n,-}_i$, and $L^{n}_i:=\partial R^{n}_i \setminus (H^{n,+}_i \cup H^{n,-}_i)$ denotes its lateral boundaries,  cf.\ \cite[(2.10)--(2.11)]{Francfort-Larsen:2003}  and Figure~\ref{Gamma-constr}. \EEE  Moreover, the construction provides a set of   finite perimeter $P^n_i\subset Q_i = Q_{r_i}(x_i)\in\mathcal{B}_{\rm bad}$ \EEE such that 
\begin{equation}\label{def: conticurve}
  \Gamma^n_i := \partial^* P^n_i \cap Q_i \subset R_i^n
 \end{equation}  
  satisfies \EEE
 \begin{equation}\label{1301251232}
 \hu\big((\Gamma^n_i \cup L_i^n)\sm J_{v_n}\big)\leq C \theta r_i.
 \end{equation}
  Note that, without restriction,   $P^n_i$ can be chosen such that both $P^n_i$ and $Q_i \setminus P^n_i$ are connected sets   and hence  $ \Gamma^n_i$ is a curve,  see \EEE \cite[below (5.17)]{FriedrichSeutter}  for the precise argument. \EEE We refer to \cite[Section 2]{Francfort-Larsen:2003} or  \cite[Section 5]{FriedrichSeutter} for more details and also refer   to   Figure \ref{Gamma-constr} for an illustration of the construction. We mention that the construction is simplified compared to \cite{FriedrichSeutter, Francfort-Larsen:2003} as the sequence $(v_n)_n$ lies in $PC(U)$ and thus $\nabla v_n \equiv 0$.

 In \cite{Francfort-Larsen:2003}, the approximating functions $\phi_n$ are defined  by $\phi_n = \psi$ on $\Omega'\sm B_{\rm bad}$ and, \MMM in  $B_{\rm bad}$, as \EEE $\phi_n=\psi$ outside of $\bigcup_{\mathcal{B}_{\rm bad }} R^{n}_i $ and inside $\bigcup_{\mathcal{B}_{\rm bad }} R^{n}_i $ by \MMM reflection  \EEE ensuring that \EEE 
\begin{align}\label{franfort again}
J_{\phi_n} \cap Q_i \, \tilde{ \subset} \,  \Gamma^n_i \BBB \cup L_i^n \EEE  \quad \quad \text{for all $Q_i \in \mathcal{B}_{\rm bad}$}.
\end{align}
  Here, we will use a variant of this construction, see \cite[Section 5]{FriedrichSeutter}, which is based on a cut-off construction and  further  ensures     that,    for a constant $C > 0$, 
\begin{align}\label{eq: Lipschitz1before}
 & \Vert  \nabla \phi_n \Vert_{L^\infty(\Omega' )} \le   C\Vert    \nabla \psi  \Vert_{L^\infty(\Omega' )}, \\ \EEE
& \nabla \phi_n \in SBV( \Omega' ;\VVV \R^{2{\times}2}\EEE) \  \text{ with }   \Vert  \nabla (\nabla \phi_n) \Vert_{L^\infty(\Omega' )} \le   C \theta^{-1} (\min\nolimits_i r_i)^{-1}\Vert    \nabla \psi \Vert_{\VVV L^\infty(\Omega' ) \EEE} + \EEE  C\Vert    \VVV \nabla(\nabla \psi) \EEE \Vert_{\VVV L^\infty(\Omega' ) \EEE}, \notag
\end{align}
\EEE and that, for a sufficiently small constant $c_\theta>0$ depending on $\theta$, 
%\begin{align}\label{eq: Lipschitz2}
%{\rm (i)} \ \ \bigcup\nolimits_{\mathcal{B}_{\rm bad}} \Gamma^n_i \, \tilde{ \subset} \, J_{\phi_n}, \quad \quad {\rm (ii)}  \ \  \mathcal{H}^1\big( \big\{ x \in J_{\phi_n} \cap B_{\rm bad} \colon|[\phi_n](x)| \le \BBB c_\theta \EEE  \big\} \big)    \le C\theta.
%\end{align} 
\begin{align}\label{eq: Lipschitz2alt}
\JJJ {\rm (i)} \ \bigcup_{\mathcal{B}_{\rm bad}}  \Gamma_i^{n} \cup L_i^n \, \tilde{\subset} \,  J_{\phi_n} \,,\EEE \quad \quad {\rm (ii)}\; \ \mathcal{H}^1\big( \big\{ x \in J_{\phi_n} \cap B_{\rm bad} \colon|[\phi_n](x)| \le \BBB c_\theta \EEE  \big\} \big)    \le C\theta.
\end{align} 
 We refer the reader to  \cite[(5.16), (5.21), (5.22)]{FriedrichSeutter}. \EEE 

  For later purposes, we stress \EEE that we can assume  without restriction that \EEE each connected component of $(\Gamma_i^{n}\setminus K_n(t)) \cap Q_i$ connects two different connected components of $\Omega_{n,k}^{\rm mod}$. \MMM (Here and in the sequel, we write $k$ in place of $k(t)$.) \EEE In fact, if the start and endpoint of a component lie in the same connected component of $\Omega_{n,k}^{\rm mod}$, in the construction of $\phi_n$ the curve  $\Gamma^n_{i}$ \EEE could be replaced by a curve that lies \VIT inside \EEE $K_n(t)$, i.e., on the boundary of $\Omega_{n,k}^{\rm mod}$, and such that \VVV \eqref{1301251232} \EEE would still hold. \MMM For the same reason, it is not restrictive to assume that each connected components of $\Omega_{n,k}^{\rm mod}$ meets at most two connected components of $(\Gamma_i^{n}\setminus K_n(t)) \cap Q_i$ as otherwise the curve could be replaced accordingly without affecting \eqref{1301251232}. \EEE \JOS Furthermore, we can assume that for each component $\compo$ of $\Omega_{n,k}^{\rm mod}$, the set $\ol \compo$ is not self-intersecting (cf., e.g., Figure~\ref{fig:self-intersection}), since otherwise we could modify $K_n(t)$ (and thus $J_{v_n}$) such that no self-intersections appear but \eqref{1301251232} still holds. \EEE

For future notational purposes, we denote the two connected sets $P^n_i$ and $Q_i \setminus P^n_i$ and the restriction of the function on these sets by  
\begin{equation}\label{1301252256}
P_i^{n,+}:= P^n_i, \quad  P_i^{n,-}:=  Q_i \sm P^n_i, \quad \quad  \quad  \MMM \phi_{i}^{n, \pm} \EEE :=\phi_n|_{P_i^{n,\pm}}.
\end{equation}

  \begin{figure}
    \begin{minipage}{.3\textwidth}
    \begin{tikzpicture}[scale=0.5]
      %\filldraw[fill=blue!10!white, draw=white]{} (0,8)-- (8,8) --(8,0)-- (0,0)--cycle;
  %\filldraw[fill=green!10!white, draw=white]{} 
    %  (0.,4.) -- (1.5,4.1) --++ (0:0.8)--++(-60:0.4)-- (3.5,3.8) --++ (20:0.8)--++(-40:0.8)--++(20:0.8) --(6.5,3.5) --++(-120:0.4)--++(-60:0.4)--++(0:0.8)--++(60:0.4)--(8,3.15)--(8,8)-- (0,8)--cycle;
    \draw[color=black] (0,0) -- (8,0) --  (8,8) -- node[anchor=north]{\footnotesize$Q_i$}(0,8) -- (0,0);
      \filldraw[fill=blue!15!white, draw=black]{} (0,6)-- (8,6) --(8,2)-- (0,2)--cycle;
        \draw[color=blue!55!white] (0,2) -- node[anchor=north] {$H_i^{n,-}$} (8,2);
      \draw[color=blue!55!white] (0,6) --node[anchor=south] {$H_i^{n,+}$} (8,6);
      \draw[color=white] (0,4)--%node[anchor=north] {$H_i$}
      (8,4);
    %\draw[color=red, dashed] (0,5.5) -- (8,5.5);
      %\draw[color=red, dashed] (0,4.5) -- (8,4.5);
     % \draw[color=red] (0,5.)--node[anchor=south] {$J_{\phi}$}(8,5.);
      %\draw (0,3.2) .. controls (3,8)  and (5.5,-2)  .. (8,3.3);
      %\node[anchor=south, color=red] at (2.5,4.5) {$A$}; 
      \draw[color=orange](0,2) -- node[anchor=east] {$L_i$}(0,6);
      \draw[color=orange](8,2) -- (8,6);
      \node[anchor=south, color=black] at (3.7,4) {$K_n(t)$};

  %Blaue Dreiecke
  
      \filldraw[fill=blue!50!white, draw=black] (1.5,4.1) -- ++(-120:0.4) -- ++(0:0.4)-- cycle;
      \filldraw[fill=blue!50!white, draw=black] (1.5,4.1)++(-120:0.4) -- ++(0:0.4) -- ++(-120:0.4)-- cycle;
      \filldraw[fill=blue!50!white, draw=black] (1.5,4.1) -- ++(0:0.4) -- ++(-120:0.4)-- cycle; 
      \filldraw[fill=blue!50!white, draw=black] (1.5,4.1)++(0:0.4) -- ++(-120:0.4) -- ++(0:0.4)-- cycle;
      \filldraw[fill=blue!50!white, draw=black] (1.5,4.1)++(-60:0.4)  -- ++(-120:0.4)-- ++(0:0.4)-- cycle;
      \filldraw[fill=blue!50!white, draw=black] (1.5,4.1)++(0:0.4)++(-120:0.4) -- ++(0:0.4) -- ++(-120:0.4)-- cycle;
      \filldraw[fill=blue!50!white, draw=black] (1.5,4.1)++(0:0.4) -- ++(0:0.4) -- ++(-120:0.4)-- cycle;
      \filldraw[fill=blue!50!white, draw=black] (1.5,4.1)++(0:0.4)++(-60:0.4)  -- ++(-120:0.4)-- ++(0:0.4)-- cycle;
      \filldraw[fill=blue!50!white, draw=black] (1.5,4.1)++(0:0.8) -- ++(-120:0.4) -- ++(0:0.4)-- cycle;
      \filldraw[fill=blue!50!white, draw=black] (1.5,4.1)++(0:0.8)++(-120:0.4) -- ++(0:0.4) -- ++(-120:0.4)-- cycle;

      \filldraw[fill=blue!50!white, draw=black] (3.5,3.8) -- ++(-100:0.4) -- ++(20:0.4)-- cycle;
      \filldraw[fill=blue!50!white, draw=black] (3.5,3.8) -- ++(20:0.4) -- ++(-100:0.4)-- cycle; 
      \filldraw[fill=blue!50!white, draw=black] (3.5,3.8)++(20:0.4) -- ++(-100:0.4) -- ++(20:0.4)-- cycle;
      \filldraw[fill=blue!50!white, draw=black] (3.5,3.8)++(20:0.4) -- ++(20:0.4) -- ++(-100:0.4)-- cycle;
      \filldraw[fill=blue!50!white, draw=black] (3.5,3.8)++(20:0.8) -- ++(-100:0.4) -- ++(20:0.4)-- cycle;

      \filldraw[fill=blue!50!white, draw=black] (3.5,3.8)++(20:0.8)  ++(-100:0.4) ++(20:0.4) -- ++(20:0.4) -- ++(-100:0.4)-- cycle; 
      \filldraw[fill=blue!50!white, draw=black] (3.5,3.8)++(20:0.8)  ++(-100:0.4) ++(20:0.4)++(20:0.4) -- ++(-100:0.4) -- ++(20:0.4)-- cycle;
      \filldraw[fill=blue!50!white, draw=black] (3.5,3.8)++(20:0.8)  ++(-100:0.4) ++(20:0.4)++(20:0.4) -- ++(20:0.4) -- ++(-100:0.4)-- cycle;
      \filldraw[fill=blue!50!white, draw=black] (3.5,3.8)++(20:0.8)  ++(-100:0.4) ++(20:0.4)++(20:0.8) -- ++(-100:0.4) -- ++(20:0.4)-- cycle;

      \filldraw[fill=blue!50!white, draw=black] (6.5,3.5) -- ++(-120:0.4) -- ++(0:0.4)-- cycle;
      \filldraw[fill=blue!50!white, draw=black] (6.5,3.5)++(-120:0.4) -- ++(0:0.4) -- ++(-120:0.4)-- cycle;
      \filldraw[fill=blue!50!white, draw=black] (6.5,3.5) -- ++(0:0.4) -- ++(-120:0.4)-- cycle; 
      \filldraw[fill=blue!50!white, draw=black] (6.5,3.5)++(0:0.4) -- ++(-120:0.4) -- ++(0:0.4)-- cycle;
      \filldraw[fill=blue!50!white, draw=black] (6.5,3.5)++(-60:0.4)  -- ++(-120:0.4)-- ++(0:0.4)-- cycle;
      \filldraw[fill=blue!50!white, draw=black] (6.5,3.5)++(0:0.4)++(-120:0.4) -- ++(0:0.4) -- ++(-120:0.4)-- cycle;
      \filldraw[fill=blue!50!white, draw=black] (6.5,3.5)++(0:0.4) -- ++(0:0.4) -- ++(-120:0.4)-- cycle;
      \filldraw[fill=blue!50!white, draw=black] (6.5,3.5)++(0:0.4)++(-60:0.4)  -- ++(-120:0.4)-- ++(0:0.4)-- cycle;
      \filldraw[fill=blue!50!white, draw=black] (6.5,3.5)++(0:0.8) -- ++(-120:0.4) -- ++(0:0.4)-- cycle;
      \filldraw[fill=blue!50!white, draw=black] (6.5,3.5)++(0:0.8)++(-120:0.4) -- ++(0:0.4) -- ++(-120:0.4)-- cycle;

  %Curve Gamma
  
  %\draw[color=red,thick] (0.,4.) -- (1.5,4.1)+(-120:0.4);
  \draw[color=black,thick] (1.5,4.1)-- ++(0:0.8)--++(-60:0.4);
  \draw[color=black,thick] (1.5,4.1)--++(-120:0.4) -- ++(-60:0.4) -- ++(0:0.8)-- ++(60:0.4);
  \draw[color=black,thick] (3.5,3.8) --++ (20:0.8)--++(-40:0.8)--++(20:0.8);
  \draw[color=black,thick] (3.5,3.8) --++ (-100:0.4)--++(20:2.0)--++(-40:0.4);
  %\draw[color=red,thick] (3.5,3.8) ++ (20:0.8)++(-40:0.8)++(20:0.8) --
  %(6.5,3.5)++(-120:0.4);
  \draw[color=black,thick] (6.5,3.5)--++(-120:0.4)--++(-60:0.4)--++(0:0.8)--++(60:0.4)--++(-120:0.4);
  \draw[color=black,thick] (6.5,3.5)--++(0:0.8)--++(-60:0.4);
  
   %   \filldraw[black] (1.5,4.1)++(0:0.4) circle (1pt) node[anchor=south]{\footnotesize $x_1$};
   %   \filldraw[black] (3.5,3.8)++(20:0.8) ++(-40:0.4) circle (1pt) node[anchor=south]{\footnotesize $x_2$};
   %    \filldraw[black] (6.5,3.5) ++(-120:0.8)++(0:0.4) circle (1pt) node[anchor=north]{\footnotesize $x_3$};
  
  % \node[color=black!65!white, anchor=south] at (3.5,6) {$ P_i^{n,+}$};
  %  \node[color=black!65!white, anchor=south] at (3.5,2) {$ P_i^{n,-}$};
  % \node[color=red!65!white, anchor=south] at (3.5,4) {$ \Gamma_i^{n}$};
  \end{tikzpicture}
\end{minipage}
\begin{minipage}{.3\textwidth}
    \begin{tikzpicture}[scale=0.5]
      %\filldraw[fill=blue!10!white, draw=white]{} (0,8)-- (8,8) --(8,0)-- (0,0)--cycle;
  %\filldraw[fill=green!10!white, draw=white]{} 
    %  (0.,4.) -- (1.5,4.1) --++ (0:0.8)--++(-60:0.4)-- (3.5,3.8) --++ (20:0.8)--++(-40:0.8)--++(20:0.8) --(6.5,3.5) --++(-120:0.4)--++(-60:0.4)--++(0:0.8)--++(60:0.4)--(8,3.15)--(8,8)-- (0,8)--cycle;
    \draw[color=black] (0,0) -- (8,0) --  (8,8) -- node[anchor=north]{\footnotesize $Q_i$}(0,8) -- (0,0);
      \filldraw[fill=blue!15!white, draw=black]{} (0,6)-- (8,6) --(8,2)-- (0,2)--cycle;
            \draw[color=blue!55!white] (0,2) -- node[anchor=north] {$H_i^{n,-}$} (8,2);
      \draw[color=blue!55!white] (0,6) --node[anchor=south] {$H_i^{n,+}$} (8,6);
      \draw[color=white] (0,4)--%node[anchor=north] {$H_i$}
      (8,4);
    %\draw[color=red, dashed] (0,5.5) -- (8,5.5);
      %\draw[color=red, dashed] (0,4.5) -- (8,4.5);
     % \draw[color=red] (0,5.)--node[anchor=south] {$J_{\phi}$}(8,5.);
      %\draw (0,3.2) .. controls (3,8)  and (5.5,-2)  .. (8,3.3);
      %\node[anchor=south, color=red] at (2.5,4.5) {$A$}; 
      \draw[color=orange](0,2) -- node[anchor=east] {$L_i$}(0,6);
      \draw[color=orange](8,2) --  (8,6);

  %Blaue Dreiecke
  
      \filldraw[fill=blue!50!white, draw=black] (1.5,4.1) -- ++(-120:0.4) -- ++(0:0.4)-- cycle;
      \filldraw[fill=blue!50!white, draw=black] (1.5,4.1)++(-120:0.4) -- ++(0:0.4) -- ++(-120:0.4)-- cycle;
      \filldraw[fill=blue!50!white, draw=black] (1.5,4.1) -- ++(0:0.4) -- ++(-120:0.4)-- cycle; 
      \filldraw[fill=blue!50!white, draw=black] (1.5,4.1)++(0:0.4) -- ++(-120:0.4) -- ++(0:0.4)-- cycle;
      \filldraw[fill=blue!50!white, draw=black] (1.5,4.1)++(-60:0.4)  -- ++(-120:0.4)-- ++(0:0.4)-- cycle;
      \filldraw[fill=blue!50!white, draw=black] (1.5,4.1)++(0:0.4)++(-120:0.4) -- ++(0:0.4) -- ++(-120:0.4)-- cycle;
      \filldraw[fill=blue!50!white, draw=black] (1.5,4.1)++(0:0.4) -- ++(0:0.4) -- ++(-120:0.4)-- cycle;
      \filldraw[fill=blue!50!white, draw=black] (1.5,4.1)++(0:0.4)++(-60:0.4)  -- ++(-120:0.4)-- ++(0:0.4)-- cycle;
      \filldraw[fill=blue!50!white, draw=black] (1.5,4.1)++(0:0.8) -- ++(-120:0.4) -- ++(0:0.4)-- cycle;
      \filldraw[fill=blue!50!white, draw=black] (1.5,4.1)++(0:0.8)++(-120:0.4) -- ++(0:0.4) -- ++(-120:0.4)-- cycle;

      \filldraw[fill=blue!50!white, draw=black] (3.5,3.8) -- ++(-100:0.4) -- ++(20:0.4)-- cycle;
      \filldraw[fill=blue!50!white, draw=black] (3.5,3.8) -- ++(20:0.4) -- ++(-100:0.4)-- cycle; 
      \filldraw[fill=blue!50!white, draw=black] (3.5,3.8)++(20:0.4) -- ++(-100:0.4) -- ++(20:0.4)-- cycle;
      \filldraw[fill=blue!50!white, draw=black] (3.5,3.8)++(20:0.4) -- ++(20:0.4) -- ++(-100:0.4)-- cycle;
      \filldraw[fill=blue!50!white, draw=black] (3.5,3.8)++(20:0.8) -- ++(-100:0.4) -- ++(20:0.4)-- cycle;

      \filldraw[fill=blue!50!white, draw=black] (3.5,3.8)++(20:0.8)  ++(-100:0.4) ++(20:0.4) -- ++(20:0.4) -- ++(-100:0.4)-- cycle; 
      \filldraw[fill=blue!50!white, draw=black] (3.5,3.8)++(20:0.8)  ++(-100:0.4) ++(20:0.4)++(20:0.4) -- ++(-100:0.4) -- ++(20:0.4)-- cycle;
      \filldraw[fill=blue!50!white, draw=black] (3.5,3.8)++(20:0.8)  ++(-100:0.4) ++(20:0.4)++(20:0.4) -- ++(20:0.4) -- ++(-100:0.4)-- cycle;
      \filldraw[fill=blue!50!white, draw=black] (3.5,3.8)++(20:0.8)  ++(-100:0.4) ++(20:0.4)++(20:0.8) -- ++(-100:0.4) -- ++(20:0.4)-- cycle;

      \filldraw[fill=blue!50!white, draw=black] (6.5,3.5) -- ++(-120:0.4) -- ++(0:0.4)-- cycle;
      \filldraw[fill=blue!50!white, draw=black] (6.5,3.5)++(-120:0.4) -- ++(0:0.4) -- ++(-120:0.4)-- cycle;
      \filldraw[fill=blue!50!white, draw=black] (6.5,3.5) -- ++(0:0.4) -- ++(-120:0.4)-- cycle; 
      \filldraw[fill=blue!50!white, draw=black] (6.5,3.5)++(0:0.4) -- ++(-120:0.4) -- ++(0:0.4)-- cycle;
      \filldraw[fill=blue!50!white, draw=black] (6.5,3.5)++(-60:0.4)  -- ++(-120:0.4)-- ++(0:0.4)-- cycle;
      \filldraw[fill=blue!50!white, draw=black] (6.5,3.5)++(0:0.4)++(-120:0.4) -- ++(0:0.4) -- ++(-120:0.4)-- cycle;
      \filldraw[fill=blue!50!white, draw=black] (6.5,3.5)++(0:0.4) -- ++(0:0.4) -- ++(-120:0.4)-- cycle;
      \filldraw[fill=blue!50!white, draw=black] (6.5,3.5)++(0:0.4)++(-60:0.4)  -- ++(-120:0.4)-- ++(0:0.4)-- cycle;
      \filldraw[fill=blue!50!white, draw=black] (6.5,3.5)++(0:0.8) -- ++(-120:0.4) -- ++(0:0.4)-- cycle;
      \filldraw[fill=blue!50!white, draw=black] (6.5,3.5)++(0:0.8)++(-120:0.4) -- ++(0:0.4) -- ++(-120:0.4)-- cycle;

  %Jump of v
  
  %\draw[color=red,thick] (0.,4.) -- (1.5,4.1)+(-120:0.4);
  \draw[color=black,thick] (1.5,4.1)-- ++(0:0.8)--++(-60:0.4);
  \draw[color=black,thick] (1.5,4.1)--++(-120:0.4) -- ++(-60:0.4) -- ++(0:0.8)-- ++(60:0.4);
  \draw[color=black,thick] (3.5,3.8) --++ (20:0.8)--++(-40:0.8)--++(20:0.8);
  \draw[color=black,thick] (3.5,3.8) --++ (-100:0.4)--++(20:2.0)--++(-40:0.4);
  %\draw[color=red,thick] (3.5,3.8) ++ (20:0.8)++(-40:0.8)++(20:0.8) --
  %(6.5,3.5)++(-120:0.4);
  \draw[color=black,thick] (6.5,3.5)--++(-120:0.4)--++(-60:0.4)--++(0:0.8)--++(60:0.4)--++(-120:0.4);
  \draw[color=black,thick] (6.5,3.5)--++(0:0.8)--++(-60:0.4);

  %Curve Gamma
  
  \draw[color=red,thick] (0.,4.) -- (1.5,4.1)+(-120:0.4);
  \draw[color=black,thick] (1.5,4.1)-- ++(0:0.8)--++(-60:0.4);
  \draw[color=red,thick] (1.5,4.1)++(-120:0.4) ++(60:0.4) ++(0:0.8)++(-60:0.4) -- (3.5,3.8);
  \draw[color=black,thick] (3.5,3.8) --++ (20:0.8)--++(-40:0.8)--++(20:0.8);
  \draw[color=red,thick] (3.5,3.8) ++ (20:0.8)++(-40:0.8)++(20:0.8) --
  (6.5,3.5)++(-120:0.4);
  \draw[color=black,thick] (6.5,3.5)--++(-120:0.4)--++(-60:0.4)--++(0:0.8)--++(60:0.4);
  \draw[color=red,thick] (6.5,3.5)++(-120:0.4)++(-60:0.4)++(0:0.8)++(60:0.4)--(8,3.15);
  \node[color=red!65!white, anchor=south] at (3.7,4) {$J_{v_n}$};

  % \node[color=black!65!white, anchor=south] at (3.5,6) {$ P_i^{n,+}$};
  %  \node[color=black!65!white, anchor=south] at (3.5,2) {$ P_i^{n,-}$};
  % \node[color=red!65!white, anchor=south] at (3.5,4) {$ \Gamma_i^{n}$};
  \end{tikzpicture}
\end{minipage}
  \begin{minipage}{.3\textwidth}
  \begin{tikzpicture}[scale=0.5]
        \filldraw[fill=yellow!15!white, draw=white]{} (0,8)-- (8,8) --(8,0)-- (0,0)--cycle;
          \filldraw[fill=green!10!white, draw=white]{} 
        (0.,4.) -- (1.5,4.1) --++ (0:0.8)--++(-60:0.4)-- (3.5,3.8) --++ (20:0.8)--++(-40:0.8)--++(20:0.8) --(6.5,3.5) --++(-120:0.4)--++(-60:0.4)--++(0:0.8)--++(60:0.4)--(8,3.15)--(8,8)-- (0,8)--cycle;
      \draw[color=black] (0,0) -- (8,0) --  (8,8) -- node[anchor=north]{\footnotesize$Q_i$}(0,8) -- (0,0);
      %\draw[color=blue, dashed] (0,6) -- (8,6);
       % \draw[color=blue, dashed] (0,2) -- (8,2);
        \draw[color=orange](0,2) -- node[anchor=east] {$L_i$}(0,6);
        \draw[color=orange](8,2) -- node[anchor=west] {$L_i$} (8,6);

    \draw[color=red,thick] (0.,4.) -- (1.5,4.1)+(-120:0.4);
    \draw[color=red,thick] (1.5,4.1)-- ++(0:0.8)--++(-60:0.4);
    \draw[color=red,thick] (1.5,4.1)++(-120:0.4) ++(60:0.4) ++(0:0.8)++(-60:0.4) -- (3.5,3.8);
    \draw[color=red,thick] (3.5,3.8) --++ (20:0.8)--++(-40:0.8)--++(20:0.8);
    \draw[color=red,thick] (3.5,3.8) ++ (20:0.8)++(-40:0.8)++(20:0.8) --(6.5,3.5)++(-120:0.4);
    \draw[color=red,thick] (6.5,3.5)--++(-120:0.4)--++(-60:0.4)--++(0:0.8)--++(60:0.4)--(8,3.15);

     \node[color=black!65!white, anchor=south] at (3.5,5.5) {$ P_i^{n,+}$};
      \node[color=black!65!white, anchor=south] at (3.5,2) {$ P_i^{n,-}$};
     \node[color=red!65!white, anchor=south] at (3.5,4) {$ \Gamma_i^{n}$};
  \end{tikzpicture}
\end{minipage}
  \caption{Construction of the continuous curve $ \Gamma^n_i \subset R_{i}^{n}$ from $J_{v_n}$, \VIT for $Q_i \in\mathcal{B}_{\rm bad}$. \EEE \MMM In general, the curve $\Gamma_i^{n}$  is a subset  of $J_{v_n}$ that is almost contained in $K_n$, which in turn is the boundary of the \VIT `modified crack set' $\Omega_{n,k}^{\rm mod}$.
  %$\Omega_{n,k}^{\rm crack}$. 
  \EEE Note that the figure is only a schematic illustration: in fact, the \MMM actual \EEE thickness of $L_i^n$ is much smaller. 
   } \label{Gamma-constr} 
\end{figure}
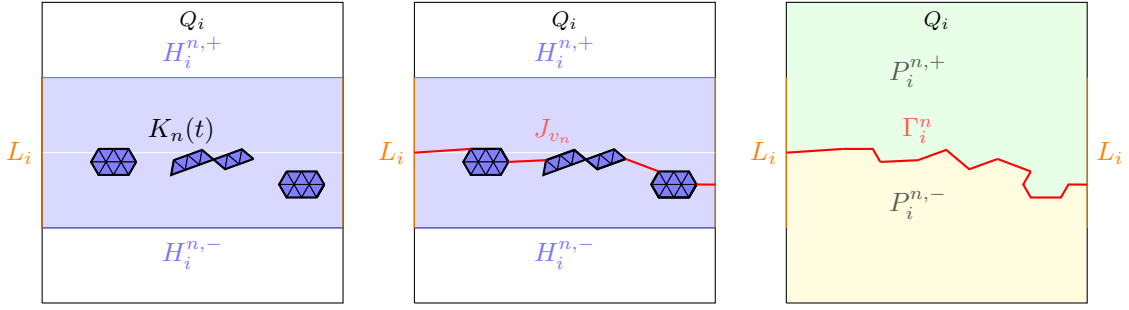

\subsubsection*{Good squares}

 In the good squares we employ a similar construction \EEE to guarantee that the jump sets of the approximating functions  are \EEE flat in 
 suitable \EEE squares of $\mathcal{B}_{\rm good}$ \EEE 
and that \EEE also $\nabla \phi_n$   lies in $W^{1,\infty}(\Omega' \setminus J_{\phi_n};\R^2)$. Such additional properties will be crucial in our following constructions.

%\VIT Replace $\Omega_{n,k}^{\rm crack}$ by $\Omega_{n,k}^{\rm mod}$ in the following? \EEE
First, we define  the collection of squares 
% \RRR frequently in the sequel  $\Omega_{n,k-1}^{\rm crack}$ needed to be replaced by $\Omega_{n,k}^{\rm crack}$ \EEE
\begin{align}\label{bigsmall}
\begin{split}
\mathcal{B}_{\rm good}^{\rm large}:=\{Q_i \in \mathcal{B}_{\rm good} & \colon | \Omega_{n,k}^{\rm crack} \EEE \cap Q_i|  > \theta^{-1} r_i \eps_n\}, \quad \mathcal{B}_{\rm good}^{\rm small}:=\mathcal{B}_{\rm good} \sm \mathcal{B}_{\rm good}^{\rm large}, 
\end{split}
\end{align}
and the sets  
$$B_{\rm good}^{\rm large}:=\bigcup_{\mathcal{B}_{\rm good}^{\rm large}}Q_i, \quad \quad  B_{\rm good}^{\rm small}:=\bigcup_{\mathcal{B}_{\rm good}^{\rm small}}Q_i\,.
$$
\VVV (We notice that the sets defined above depend on $n$ and $k$, but we neglect this dependence in the notation.)
We state a technical lemma, whose \EEE proof is deferred to Section \ref{sec: lemma prof}. 
% \RRR In the lemma something went wrong whether $\xi$ is of order $r_i$ or not. I fixed it. \EEE  

\begin{lemma}\label{stripe-prop}
  Let $\theta>0$,  let \EEE  $E\subset \Omega'$ be induced by a union of triangles $\mathbf{T}_E  \in \EEE \mathcal{T}_{\eps_n}  (\Omega') \EEE$  (cf.\ \eqref{EEE}), and $Q_i\in \mathcal{B}_{\rm good}$ with $| \VVV E \EEE \cap Q_i|\leq  \theta^{-1} r_i \eps_n \EEE $. 
  Then, there exists $\xi\in (-\theta r_i,\theta r_i) \EEE $ and a constant  $\hat{C}>0$ \EEE such that the set 
  \begin{equation*}\label{def: stripe}
    A^{n}_i(\xi; \VVV E \EEE) :=    \big\{ y \in  Q_i \colon    (y-x_i) \cdot \nu_\psi(x_i) \EEE  \in (\xi- 10^{8} \eps_n,\xi +  10^{8} \eps_n) \big\}
  \end{equation*} 
   fulfills 
  \begin{align}\label{property-lemma7.1}
    \# \big\{ T\in \VVV \mathbf{T}_E \EEE \colon T\cap A^{n}_i(\xi; \VVV E) \EEE \neq \emptyset \big\}\leq  \frac{\hat{C}}{\theta^2}\,. \EEE
  \end{align}
\end{lemma}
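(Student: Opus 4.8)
The plan is to select $\xi$ by a one-shot averaging (pigeonhole) argument over the ``height'' coordinate in the direction $\nu_\psi(x_i)$. The two geometric facts driving the estimate are that each $T \in \mathbf{T}_E \subset \mathcal{T}_{\eps_n}(\Omega')$ has diameter at most $\omega(\eps_n) = 10^6\eps_n$ (the diameter of a triangle is its longest side, and all edges have length at most $\omega(\eps_n)$; recall \eqref{eq: 10.6}), while $|T| \ge c\,\eps_n^2$ with $c := \tfrac12\sin\theta_0$, by \eqref{ti} together with the fact that every edge has length at least $\eps_n$. It is not restrictive to assume $\theta \le 1$, which is the relevant regime in the application.

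I would work in coordinates adapted to $Q_i = Q_{r_i}(x_i)$, writing $y = x_i + s\,\tau + t\,\nu_\psi(x_i)$ with $\tau\perp\nu_\psi(x_i)$, so that $Q_i = \{|s|\le r_i,\ |t|\le r_i\}$ and
\begin{equation*}
A^n_i(\xi;E) = \{y\in Q_i\colon |t-\xi| < 10^8\eps_n\}, \qquad \xi \in I := (-\theta r_i,\theta r_i).
\end{equation*}
If $r_i \le 10^9\eps_n$, then any triangle of $\mathbf{T}_E$ meeting $Q_i$ is contained in the ball of radius $4\cdot 10^9\eps_n$ centred at $x_i$; since these triangles have essentially disjoint interiors and area at least $c\eps_n^2$, there are at most $C_0/c$ of them for a universal $C_0$, and because $A^n_i(\xi;E)\subset Q_i$ this already yields $\#\{T\in\mathbf{T}_E\colon T\cap A^n_i(\xi;E)\ne\emptyset\}\le C_0/c\le (C_0/c)\,\theta^{-2}$ for every $\xi$. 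Hence one may assume $r_i > 10^9\eps_n$.

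For $\xi\in I$ set $M(\xi) := \#\{T\in\mathbf{T}_E\colon T\cap A^n_i(\xi;E)\ne\emptyset\}$, and estimate $\int_I M(\xi)\,{\rm d}\xi$ by Fubini, writing it as $\sum_T |\{\xi\in I\colon T\cap A^n_i(\xi;E)\ne\emptyset\}|$. On the one hand, the $t$-values attained on a fixed $T\in\mathbf{T}_E$ form an interval of length at most $10^6\eps_n$, so $T$ meets $A^n_i(\xi;E)$ only for $\xi$ in an interval of length at most $2\cdot 10^8\eps_n + 10^6\eps_n \le 3\cdot 10^8\eps_n$. On the other hand, any triangle $T$ meeting $A^n_i(\xi;E)$ for some $\xi\in I$ satisfies $T\cap Q_i\ne\emptyset$ (since $A^n_i(\xi;E)\subset Q_i$), hence is contained in $\mathcal{R} := \{y\colon \dist(y,Q_i)\le 10^6\eps_n\}$; as each such $T$ lies, up to its boundary, in $E$ and the triangles have essentially disjoint interiors, their number is at most $|E\cap\mathcal{R}|/(c\eps_n^2)$. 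Using $r_i > 10^9\eps_n$, the set $\mathcal{R}\setminus Q_i$ has area at most $C r_i\eps_n$, so together with the hypothesis $|E\cap Q_i|\le\theta^{-1}r_i\eps_n$ and $\theta\le 1$ we get $|E\cap\mathcal{R}| \le \theta^{-1}r_i\eps_n + C r_i\eps_n \le C\,\theta^{-1}r_i\eps_n$, i.e.\ there are at most $C\,\theta^{-1}r_i/(c\eps_n)$ relevant triangles.

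Combining the two estimates gives $\int_I M(\xi)\,{\rm d}\xi \le 3\cdot 10^8\eps_n\cdot C\,\theta^{-1}r_i/(c\eps_n) = C\,\theta^{-1}r_i/c$, and since $|I| = 2\theta r_i$ there exists $\xi\in I$ with $M(\xi)\le C/(2c\,\theta^2)=:\hat C\,\theta^{-2}$, where $\hat C$ depends only on $\theta_0$ through $c = \tfrac12\sin\theta_0$. The argument is essentially elementary; the only points needing care are the degenerate regime $r_i \lesssim \eps_n$ handled separately above, and the second estimate, where one must check that triangles of $\mathbf{T}_E$ protruding from $Q_i$ do not spoil the bound $|E\cap\mathcal{R}|\lesssim\theta^{-1}r_i\eps_n$ inherited from the hypothesis on $|E\cap Q_i|$, which is exactly where the size bound $\omega(\eps_n)=10^6\eps_n$ enters.
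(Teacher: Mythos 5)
Your proposal is correct and uses essentially the same averaging (pigeonhole) argument as the paper over the height coordinate in the direction $\nu_\psi(x_i)$, relying on the same two geometric inputs (diameter $\le\omega(\eps_n)=10^6\eps_n$ and area $\ge c\eps_n^2$, together with the hypothesis $|E\cap Q_i|\le\theta^{-1}r_i\eps_n$). The only difference is cosmetic: you average continuously by integrating $M(\xi)$ over $I$ and dividing by $|I|$, whereas the paper pigeonholes over a finite family of disjoint stripes $A^n_i(\xi_j)$ and then enlarges to three consecutive stripes to account for the triangle diameter; your treatment of the degenerate regime $r_i\lesssim\eps_n$ and of triangles protruding out of $Q_i$ is slightly more explicit but corresponds to the paper's implicit ``for $n$ large enough'' and choice of constant $\hat C$.
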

 For $Q_i \in \mathcal{B}_{\rm good}^{\rm small}$, we define
$$R^n_i = \lbrace y \in Q_i \colon   |(y-x) \cdot \nu_\psi(x)| \le \theta r_i \rbrace, \quad \quad  H^{n,\pm}_i = \lbrace y   \in Q_i \colon   (y-x) \cdot \nu_\psi(x) = \pm \theta r_i\rbrace. $$
(The sets are actually independent of $n$. Yet, $n$ is added to have the same notation as for bad squares.) \EEE  For any $Q_i=Q_{r_i}(x_i)\in \mathcal{B}^{\rm small}_{\rm good}$, let $A^{n}_i:=A^{n}_i( \MMM \xi_i^n; \EEE \Omega_{n,k}^{\rm crack})$ from Lemma~\ref{stripe-prop}, and for \EEE
%To this end, let $A^{n}( \xi_n \EEE )$ be the strip through $Q$ from Lemma \ref{stripe-prop}.  Denoting by $x$ the center of $Q$, \EEE we \VVV  define \EEE the centerline  and side-lines of this strip by 
\begin{equation}\label{1401251329}
  \VVV \Gamma_i^n := \lbrace y\in  Q_i \EEE \colon (y-x_i) \cdot \nu_\psi(x_i)=   \MMM  \xi_i^n \EEE \EEE \rbrace, \quad        P_i^{n,\pm}:=  \lbrace y\in Q_i\colon  \pm (y-x_i) \cdot \nu_\psi(x_i) > \xi_i^n  \rbrace \,, \EEE
\end{equation}
 we apply the reflection lemma \cite[Lemma~3.4]{steinke} to the (restriction of the) function $\psi \in W^{2,\infty}(\Omega' \setminus J_{\psi};\R^2)$ to  $Q_i \setminus R_i^n$, which has Sobolev regularity by \eqref{normal-angle-estimate2}.    \EEE The line $\{x_2=0\}$ in the lemma corresponds to each of the two lines $H^{n,+}_i$ and $H^{n,-}_i$.  By this, we  \EEE get $\phi_{i}^{n-}$, $\phi_{i}^{n,+}$ of class $W^{2,\infty}$ and set 
\begin{align}\label{phiningood}
\phi_n:=\phi_{i}^{n,-} \chi_{P_i^{n,-}}+ \phi_{i}^{n,+} \chi_{P_i^{n,+}},
\end{align}
  see also Figure~\ref{fig: precrack-constr} for an illustration.  
\JJJ By the above construction we get that \EEE
\begin{equation}\label{14012513295}
J_{\phi_n}\cap Q_i \EEE \subset \Gamma_i^n\MMM \cup L_i^n, \EEE  \quad  \MMM  \Vert  \nabla \phi_n \Vert_{L^\infty(\Omega' )} \le   C\Vert    \nabla \psi  \Vert_{L^\infty(\Omega' )}, \EEE \quad 
\|\phi_n\|_{W^{2,\infty}(Q_i\sm \Gamma_i^n)}\leq   C_{\psi,\theta} \EEE \|\psi\|_{W^{2,\infty}(Q_i\sm J_{\psi})}\,,
\end{equation} 
\MMM where $L_i^n$ denotes the lateral boundary  also for $Q_i \in \mathcal{B}^{\rm small}_{\rm good}$, i.e., $L^{n}_i:=\partial R^{n}_i \setminus (H^{n,+}_i \cup H^{n,-}_i)$, and \EEE
where for shortness  we denote by $C_{\psi,\theta}$ the constant \MMM on the right-hand side of \EEE  \eqref{eq: Lipschitz1before}. Following again the strategy of \cite[Section 5]{FriedrichSeutter} as done for \MMM squares \EEE in $\mathcal{B}_{\rm bad}$, we get that for a small constant $c_\theta$ only depending on $\theta$ it holds \EEE
%\begin{align}\label{eq: Lipschitz2good}
%{\rm (i)} \ \ \bigcup\nolimits_{\mathcal{B}^{\rm small}_{\rm good}} \Gamma^n_i \, \tilde{ \subset} \, J_{\phi_n}, \quad \quad {\rm (ii)}  \ \  \mathcal{H}^1\big( \big\{ x \in J_{\phi_n} \cap \bigcup_{\mathcal{B}^{\rm small}_{\rm good}}Q_i \colon|[\phi_n](x)| \le \BBB c_\theta \EEE  \big\} \big)    \le C\theta.
%\end{align} 
\begin{align}\label{eq: Lipschitz2good}
    \JJJ {\rm (i)} \ \bigcup_{\mathcal{B}_{\rm good}^{\rm small}}  \Gamma_i^{n} \cup L_i^n  \, \tilde{\subset} \,  J_{\phi_n} \,, \EEE\quad \quad {\rm (ii)}\
 \mathcal{H}^1\big( \big\{ x \in J_{\phi_n} \cap B^{\rm small}_{\rm good} \colon|[\phi_n](x)| \le \BBB c_\theta \EEE  \big\} \big)    \le C\theta.
\end{align}

%\VVV However, by  construction \EEE 
%% relies on a reflection, 
%this leads only to a Lipschitz-regularity of $\phi_n$  in the two components of $Q_i \setminus \Gamma^n_i$,
%% However, 
%\VVV but \EEE for our purposes we will need that also $\nabla \phi_n$ is Lipschitz continuous. Therefore, 

 \subsubsection*{Definition of $\phi_n$} \EEE
  Eventually, we define the function $\phi_n$ on $B_{\rm bad}$ and $B^{\rm small}_{\rm good}$ as discussed above, and let  \EEE
\begin{equation}\label{1501251236}
\phi_n=\psi \text{ on } \Omega'\sm (B_{\rm bad} \cup B^{\rm small}_{\rm good}).
\end{equation}
 Let us collect some of the main properties that we will use in the following.  \MMM First, \JJJ by the above construction it also holds for all $Q_i\in \mathcal{B}_{\rm bad}\cup\mathcal{B}_{\rm good}^{\rm small}$ that 
\begin{equation}\label{outside-R-i}
  \phi_n=\psi \quad \text{in $Q_i\sm R^{n}_i$} \,.   
\end{equation} \EEE
In view of  \eqref{eq: Lipschitz1before} and \eqref{14012513295}, there is  $C_\psi>0$ depending on $\psi$ and $C_{\psi,\theta}$ additionally depending on $\theta$ such that
\begin{align}\label{eq: Lipschitz1}
\Vert  \nabla \phi_n \Vert_{L^\infty(\Omega' \setminus J_{\phi_n} )} \le C_\psi, \quad \quad \quad   \Vert   \nabla^2 \phi_n \Vert_{L^\infty(\Omega' \setminus J_{\phi_n} )} \le   C_{\psi,\theta}.
\end{align}
 By \eqref{eq: Lipschitz2alt} and \eqref{eq: Lipschitz2good}, \EEE for a sufficiently small constant $c_\theta>0$ depending on $\theta$,  it holds that 
\begin{align}\label{eq: Lipschitz2}
\JJJ {\rm (i)} \bigcup_{\mathcal{B}_{\rm bad} \cup \mathcal{B}^{\rm small}_{\rm good}} \Gamma_i^n \cup L_i^n  \, \tilde{\subset} \,  J_{\phi_n}\,, \EEE \quad {\rm (ii)} \
\mathcal{H}^1\big( \big\{ x \in J_{\phi_n} \cap  (B_{\rm bad} \cup B^{\rm small}_{\rm good}) \EEE \colon|[\phi_n](x)| \le \BBB c_\theta \EEE  \big\} \big)    \le C\theta.
\end{align} 
 Moreover, by construction and recalling \eqref{franfort again},  \eqref{14012513295} we find  \EEE
   \begin{equation}\label{1401251338}
 J_{\phi_n} \cap {\rm int}(Q_i) \, \tilde{ \subset} \,  \Gamma^n_i, \EEE \quad J_{\phi_n} \cap \partial Q_i \subset L_i^n \quad \text{for any } Q_i \in \mathcal{B}_{\rm bad}  \cup \mathcal{B}^{\rm small}_{\rm good}
\end{equation}
 such that by \eqref{normal-angle-estimate0} it holds \EEE 
\begin{align}\label{crack  for later}
 \mathcal{H}^1( \Gamma_i^n \cup    L^{n}_i    ) \le   (1+2\theta) \EEE \mathcal{H}^1(J_{\psi} \cap Q_i) \quad \text{for all $Q_i\in \mathcal{B}^{\rm small}_{\rm good}$ }.
\end{align}

\EEE

\subsection{Definition of $\psi_n$ and $\mathbf{T}_n(\psi_n)$, and proof of \eqref{stab0neu}}
We  now  proceed with the definition of the sequence of displacements $(\psi_n)_n$ and the corresponding triangulations $\mathbf{T}_{n}(\psi_n)$. For $k= k(t)$ such that $t\in [t_n^k,t_n^{k+1})$, we consider   $u_n(t)=u_n^k\in \mathcal{A}_n^k$ and the corresponding triangulation $\mathbf{T}_n(u_n^k)$. In the following, we will use the identity  $\mathbf{T}_{n,k-1}^{\rm crack}(u_n^k) = \mathbf{T}_{n,k}^{\rm crack}$.  Recalling the partition of $\Omega'$ into $B_{\rm good}$, $B_{\rm bad}$, and $B_{\rm rest}$, we let  
% \RRR for consistency, I added $k$ in the following notation, without making blue. I hope I caught all cases... \EEE
$$\mathbf{T}^{\rm bad}_{n,k} := \big\{ T \in \mathbf{T}_n(u_n^k)\colon  T \cap B_{\rm bad} \neq \emptyset \big\}, \quad \quad  \mathbf{T}^{\rm rest}_{n,k} := \big\{ T \in \mathbf{T}_n(u_n^k)\colon  T \cap (B_{\rm good} \cup B_{\rm bad}) = \emptyset \big\},$$
and define the set $B^n_{\rm good} := \Omega' \setminus  \bigcup_{T \in \mathbf{T}^{\rm bad}_{n,k} \cup  \mathbf{T}^{\rm rest}_{n,k}} T$.  Note that \EEE $ B^n_{\rm good} \cap B_{\rm bad}=\emptyset$ \JJJ and for $n$ small enough we have $B_{\rm good}\subset B_{\rm good}^n$\EEE. 
% \RRR Here it is not essential that  $B_{\rm good}$ and $B_{\rm bad}$ have a positive distance. \EEE 
We will define the triangulation $\mathbf{T}_{n}(\psi_n)$ as 
\begin{align}\label{eq: splitti}
\mathbf{T}_{n}(\psi_n) = \mathbf{T}^{\rm bad}_{n,k} \cup \mathbf{T}^{\rm rest}_{n,k} \cup \mathbf{T}^{\rm good}_{n,k}, 
\end{align}
where $\mathbf{T}^{\rm good}_{n,k}$ denotes a suitable triangulation of $ B^{n}_{\rm good}$, which will be specified below. On every $T \in \mathbf{T}^{\rm  rest}_{n,k}$ we choose $\psi_n$ as the affine  interpolation of $\psi$.

The main steps consist now in (a) defining $\psi_n$ on the triangles of $\mathbf{T}^{\rm bad }_{n,k}$  and in (b) defining  $\mathbf{T}^{\rm good}_{n,k}$ and the interpolation $\psi_n$ on $\mathbf{T}^{\rm good}_{n,k}$. \EEE  

% \begin{itemize}
%   \item[(a)] Defining $\psi_n$ on the triangles of $\mathbf{T}^{\rm bad }_n$
%   \item[(b)] Defining  $\mathbf{T}^{\rm good}_{n,k}$ and the interpolation $\psi_n$ on $\mathbf{T}^{\rm good}_{n,k}$.
% \end{itemize}

\subsubsection*{(a) Bad squares}
We   define the sequence of displacements $(\psi_n)_n$ on triangles in $\mathbf{T}^{\rm bad}_{n,k}$. We fix  $Q_i\in \mathcal{B}_{\rm bad}$, and define  the set of vertices 
\begin{equation}
\mathcal{V}_{i,n}\defas \big\{x\in \mathcal{V}(T) \colon\; T \in \mathbf{T}^{\rm bad}_{n,k}, \  T \cap Q_i \neq \emptyset   \big\}\,,
\end{equation} 
where we denote the three vertices of the triangle $T$ by $\mathcal{V}(T)$. \EEE We  specify the value $\psi_n(x)$ on each $x\in \mathcal{V}_{i,n}$ in order to compute the piecewise affine interpolation on each triangle  formed by the vertices $\mathcal{V}_{i,n}$.  Recall that inside the closed set $Q_i$ the jump $J_{\phi_n}$ is included in the curve $\VVV \Gamma^n_i \EEE$ and the two lines $L^n_i$,  see \eqref{franfort again}. For any vertex $x\in \mathcal{V}_{i,n}$ with $ x\notin \Gamma_i^{n} \cup L^n_i \EEE$,   
we simply set $\psi_n(x)\defas \phi_n(x)$.  Instead, if $x\in \Gamma_i^{n} \cup L^n_i\EEE$, we proceed as follows. Since $\phi_n\in W^{1,\infty}(\Omega\setminus J_{\phi_n};  \R^2)$, we know that \VVV $\phi^{n, +}_{i}(x)$ and $\phi^{n,-}_{i}(x)$ \EEE are well defined \VVV as boundary traces
%\RRR $\pm$ is in principle not clearly defined, realtion to $P$ $\pm$, what do on L? Take boundary trace! 
\EEE for each $x\in \Gamma_i^{n}$, \VVV and as boundary traces also on $P_i^{n,\pm}\cap L^n_i$, respectively,  cf.\ \eqref{1301252256}. \EEE     For $x \in L^n_i \cap P_i^{n,\pm}$, we set $\psi_n(x)\defas \phi_{i}^{n,\pm}(x)$. \EEE  For 
$x\in \Gamma_i^{n}\setminus K_n(t)$, we choose one of the values, e.g., $\psi_n(x)\defas \VVV \phi^{n,+}_{i}(x)$. For $x\in \Gamma_i^{n}\cap K_n(t)$,  there exists at least one triangle $T$ with $x\in T$ and $|T \cap  \Omega_{n, k}^{\rm mod} \EEE |=0 $ because $K_n(t) = \partial \Omega_{n,k}^{\rm mod}$. We collect all such triangles in the set 
\begin{equation}\label{below}
 \mathbf{T}_{\rm out}^x  \defas \big\{T\in \mathbf{T}^x \, \colon|T \cap \Omega_{n,k}^{\rm mod}|=0  \big\},\quad \text{where}\quad \mathbf{T}^x\defas \big\{T\in \mathbf{T}^{\rm bad}_{n,k}\colon \EEE  x\in T\big\} \,.
\end{equation} 
Moreover, we let $N^x_{\rm out} \EEE \defas \bigcup_{T\in \mathbf{T}^x_{\rm out}} T$ be the neighborhood of $x$ consisting of these surrounding triangles. 
% Recall that $\Gamma^n_{i}$ is the boundary between   $P_i^{n,+} := P_i^n$ and $P_i^{n,-}:= Q_i \setminus  P_i^n$. \RRR maybe define earlier, and also for good cubes  \EEE
If $N^x_{\rm out} \, \tilde{\subset} \,  P_i^{n,+}$, we set $\psi_n(x)\defas \VVV \phi^{n,+}_{i}(x)$ (\JJJ see the point $x_1$ in Figure \ref{N-x-out-figure}\EEE). \EEE If $N^x_{\rm out} \, \tilde{\subset} \,  P_i^{n,-}$, we set $\psi_n(x)\defas \VVV \phi^{n-}_{i}(x)$ (\JJJ see the point $x_4$ in Figure \ref{N-x-out-figure}). \EEE  Otherwise, we again choose an arbitrary value, e.g.,  $\psi_n(x):=\VVV \phi_{i}^{n,+}(x)$ (\JJJ see e.g. the point $x_3$ in Figure \ref{N-x-out-figure}\EEE). For later purposes, we also define
\begin{align}\label{NNN}
N_{\rm out}(T) = \bigcup_{x \in \mathcal{V}(T)} N_{\rm out}^x.
\end{align}
Now, we can define $\psi_n$ on the union of the triangles    in $\mathbf{T}^{\rm bad}_{n,k}$ (and thus in particular on $B_{\rm bad}$)     
by taking the piecewise affine interpolation on each triangle $T\in \mathbf{T}^{\rm bad}_{n,k}$. 
% $T\in \mathcal{V}(\mathbf{T}_n(u_n^{k}))$. 

\begin{figure}
  \begin{minipage}{0.492\textwidth}
    \begin{tikzpicture}[scale=0.88]
      \filldraw[fill=blue!15!white, draw=white]{} (0,8)-- (8,8) --(8,0)-- (0,0)--cycle;
  \filldraw[fill=green!10!white, draw=white]{} 
      (0.,4.) -- (1.5,4.1) --++ (0:0.8)--++(-60:0.4)-- (3.5,3.8) --++ (20:0.8)--++(-40:0.8)--++(20:0.8) --(6.5,3.5) --++(-120:0.4)--++(-60:0.4)--++(0:0.8)--++(60:0.4)--(8,3.15)--(8,8)-- (0,8)--cycle;
    \draw[color=black] (0,0) -- (8,0) --  (8,8) -- node[anchor=north]{$Q_i$}(0,8) -- (0,0);
    %\draw[color=red, dashed] (0,5.5) -- (8,5.5);
      %\draw[color=red, dashed] (0,4.5) -- (8,4.5);
     % \draw[color=red] (0,5.)--node[anchor=south] {$J_{\phi}$}(8,5.);
      %\draw (0,3.2) .. controls (3,8)  and (5.5,-2)  .. (8,3.3);
      %\node[anchor=south, color=red] at (2.5,4.5) {$A$}; 
      \draw[color=orange](0,2) -- node[anchor=east] {$L_i$}(0,6);
      \draw[color=orange](8,2) -- (8,6);

  %Blaue Dreiecke
  
      \filldraw[fill=blue!50!white, draw=black] (1.5,4.1) -- ++(-120:0.4) -- ++(0:0.4)-- cycle;
      \filldraw[fill=blue!50!white, draw=black] (1.5,4.1)++(-120:0.4) -- ++(0:0.4) -- ++(-120:0.4)-- cycle;
      \filldraw[fill=blue!50!white, draw=black] (1.5,4.1) -- ++(0:0.4) -- ++(-120:0.4)-- cycle; 
      \filldraw[fill=blue!50!white, draw=black] (1.5,4.1)++(0:0.4) -- ++(-120:0.4) -- ++(0:0.4)-- cycle;
      \filldraw[fill=blue!50!white, draw=black] (1.5,4.1)++(-60:0.4)  -- ++(-120:0.4)-- ++(0:0.4)-- cycle;
      \filldraw[fill=blue!50!white, draw=black] (1.5,4.1)++(0:0.4)++(-120:0.4) -- ++(0:0.4) -- ++(-120:0.4)-- cycle;
      \filldraw[fill=blue!50!white, draw=black] (1.5,4.1)++(0:0.4) -- ++(0:0.4) -- ++(-120:0.4)-- cycle;
      \filldraw[fill=blue!50!white, draw=black] (1.5,4.1)++(0:0.4)++(-60:0.4)  -- ++(-120:0.4)-- ++(0:0.4)-- cycle;
      \filldraw[fill=blue!50!white, draw=black] (1.5,4.1)++(0:0.8) -- ++(-120:0.4) -- ++(0:0.4)-- cycle;
      \filldraw[fill=blue!50!white, draw=black] (1.5,4.1)++(0:0.8)++(-120:0.4) -- ++(0:0.4) -- ++(-120:0.4)-- cycle;

      \filldraw[fill=blue!50!white, draw=black] (3.5,3.8) -- ++(-100:0.4) -- ++(20:0.4)-- cycle;
      \filldraw[fill=blue!50!white, draw=black] (3.5,3.8) -- ++(20:0.4) -- ++(-100:0.4)-- cycle; 
      \filldraw[fill=blue!50!white, draw=black] (3.5,3.8)++(20:0.4) -- ++(-100:0.4) -- ++(20:0.4)-- cycle;
      \filldraw[fill=blue!50!white, draw=black] (3.5,3.8)++(20:0.4) -- ++(20:0.4) -- ++(-100:0.4)-- cycle;
      \filldraw[fill=blue!50!white, draw=black] (3.5,3.8)++(20:0.8) -- ++(-100:0.4) -- ++(20:0.4)-- cycle;

      \filldraw[fill=blue!50!white, draw=black] (3.5,3.8)++(20:0.8)  ++(-100:0.4) ++(20:0.4) -- ++(20:0.4) -- ++(-100:0.4)-- cycle; 
      \filldraw[fill=blue!50!white, draw=black] (3.5,3.8)++(20:0.8)  ++(-100:0.4) ++(20:0.4)++(20:0.4) -- ++(-100:0.4) -- ++(20:0.4)-- cycle;
      \filldraw[fill=blue!50!white, draw=black] (3.5,3.8)++(20:0.8)  ++(-100:0.4) ++(20:0.4)++(20:0.4) -- ++(20:0.4) -- ++(-100:0.4)-- cycle;
      \filldraw[fill=blue!50!white, draw=black] (3.5,3.8)++(20:0.8)  ++(-100:0.4) ++(20:0.4)++(20:0.8) -- ++(-100:0.4) -- ++(20:0.4)-- cycle;

      \filldraw[fill=blue!50!white, draw=black] (6.5,3.5) -- ++(-120:0.4) -- ++(0:0.4)-- cycle;
      \filldraw[fill=blue!50!white, draw=black] (6.5,3.5)++(-120:0.4) -- ++(0:0.4) -- ++(-120:0.4)-- cycle;
      \filldraw[fill=blue!50!white, draw=black] (6.5,3.5) -- ++(0:0.4) -- ++(-120:0.4)-- cycle; 
      \filldraw[fill=blue!50!white, draw=black] (6.5,3.5)++(0:0.4) -- ++(-120:0.4) -- ++(0:0.4)-- cycle;
      \filldraw[fill=blue!50!white, draw=black] (6.5,3.5)++(-60:0.4)  -- ++(-120:0.4)-- ++(0:0.4)-- cycle;
      \filldraw[fill=blue!50!white, draw=black] (6.5,3.5)++(0:0.4)++(-120:0.4) -- ++(0:0.4) -- ++(-120:0.4)-- cycle;
      \filldraw[fill=blue!50!white, draw=black] (6.5,3.5)++(0:0.4) -- ++(0:0.4) -- ++(-120:0.4)-- cycle;
      \filldraw[fill=blue!50!white, draw=black] (6.5,3.5)++(0:0.4)++(-60:0.4)  -- ++(-120:0.4)-- ++(0:0.4)-- cycle;
      \filldraw[fill=blue!50!white, draw=black] (6.5,3.5)++(0:0.8) -- ++(-120:0.4) -- ++(0:0.4)-- cycle;
      \filldraw[fill=blue!50!white, draw=black] (6.5,3.5)++(0:0.8)++(-120:0.4) -- ++(0:0.4) -- ++(-120:0.4)-- cycle;

  %Curve Gamma
  
  \draw[color=red,thick] (0.,4.) -- (1.5,4.1)+(-120:0.4);
  \draw[color=red,thick] (1.5,4.1)-- ++(0:0.8)--++(-60:0.4);
  \draw[color=red,thick] (1.5,4.1)++(-120:0.4) ++(60:0.4) ++(0:0.8)++(-60:0.4) -- (3.5,3.8);
  \draw[color=red,thick] (3.5,3.8) --++ (20:0.8)--++(-40:0.8)--++(20:0.8);
  \draw[color=red,thick] (3.5,3.8) ++ (20:0.8)++(-40:0.8)++(20:0.8) --
  (6.5,3.5)++(-120:0.4);
  \draw[color=red,thick] (6.5,3.5)--++(-120:0.4)--++(-60:0.4)--++(0:0.8)--++(60:0.4)--(8,3.15);
  
      \filldraw[black] (1.5,4.1)++(0:0.4) circle (1pt) node[anchor=south]{\footnotesize $x_1$};
      \filldraw[black] (3.5,3.8) circle (1pt) node[anchor=south]{\footnotesize $x_2$};
      \filldraw[black] (3.5,3.8)++(20:0.8) ++(-40:0.4) circle (1pt) node[anchor=south]{\footnotesize $x_3$};
       \filldraw[black] (6.5,3.5) ++(-120:0.8)++(0:0.4) circle (1pt) node[anchor=north]{\footnotesize $x_4$};

   \node[color=black!65!white, anchor=south] at (3.5,6) {$ P_i^{n,+}$};
    \node[color=black!65!white, anchor=south] at (3.5,2) {$ P_i^{n,-}$};
    \node[color=red, anchor=south] at (6,4) {$ \Gamma_i^{n}$};
  \end{tikzpicture}
  \end{minipage}
  \begin{minipage}{0.5\textwidth}
    \begin{tikzpicture}[scale=0.88]
      \filldraw[fill=blue!15!white, draw=white]{} (0,8)-- (8,8) --(8,0)-- (0,0)--cycle;
  \filldraw[fill=green!10!white, draw=white]{} 
      (0.,4.) -- (1.5,4.1) --++ (0:0.8)--++(-60:0.4)-- (3.5,3.8) --++ (20:0.8)--++(-40:0.8)--++(20:0.8) --(6.5,3.5) --++(-120:0.4)--++(-60:0.4)--++(0:0.8)--++(60:0.4)--(8,3.15)--(8,8)-- (0,8)--cycle;
    \draw[color=black] (0,0) -- (8,0) --  (8,8) -- node[anchor=north]{$Q_i$}(0,8) -- (0,0);
    %\draw[color=red, dashed] (0,5.5) -- (8,5.5);
      %\draw[color=red, dashed] (0,4.5) -- (8,4.5);
     % \draw[color=red] (0,5.)--node[anchor=south] {$J_{\phi}$}(8,5.);
      %\draw (0,3.2) .. controls (3,8)  and (5.5,-2)  .. (8,3.3);
      %\node[anchor=south, color=red] at (2.5,4.5) {$A$}; 
      \draw[color=orange](0,2) -- node[anchor=east] {$L_i$}(0,6);
      \draw[color=orange](8,2) -- node[anchor=west] {$L_i$} (8,6);

  %Blaue Dreiecke
  
      \filldraw[fill=blue!50!white, draw=black] (1.5,4.1) -- ++(-120:0.4) -- ++(0:0.4)-- cycle;
      \filldraw[fill=blue!50!white, draw=black] (1.5,4.1)++(-120:0.4) -- ++(0:0.4) -- ++(-120:0.4)-- cycle;
      \filldraw[fill=blue!50!white, draw=black] (1.5,4.1) -- ++(0:0.4) -- ++(-120:0.4)-- cycle; 
      \filldraw[fill=blue!50!white, draw=black] (1.5,4.1)++(0:0.4) -- ++(-120:0.4) -- ++(0:0.4)-- cycle;
      \filldraw[fill=blue!50!white, draw=black] (1.5,4.1)++(-60:0.4)  -- ++(-120:0.4)-- ++(0:0.4)-- cycle;
      \filldraw[fill=blue!50!white, draw=black] (1.5,4.1)++(0:0.4)++(-120:0.4) -- ++(0:0.4) -- ++(-120:0.4)-- cycle;
      \filldraw[fill=blue!50!white, draw=black] (1.5,4.1)++(0:0.4) -- ++(0:0.4) -- ++(-120:0.4)-- cycle;
      \filldraw[fill=blue!50!white, draw=black] (1.5,4.1)++(0:0.4)++(-60:0.4)  -- ++(-120:0.4)-- ++(0:0.4)-- cycle;
      \filldraw[fill=blue!50!white, draw=black] (1.5,4.1)++(0:0.8) -- ++(-120:0.4) -- ++(0:0.4)-- cycle;
      \filldraw[fill=blue!50!white, draw=black] (1.5,4.1)++(0:0.8)++(-120:0.4) -- ++(0:0.4) -- ++(-120:0.4)-- cycle;

      \filldraw[fill=blue!50!white, draw=black] (3.5,3.8) -- ++(-100:0.4) -- ++(20:0.4)-- cycle;
      \filldraw[fill=blue!50!white, draw=black] (3.5,3.8) -- ++(20:0.4) -- ++(-100:0.4)-- cycle; 
      \filldraw[fill=blue!50!white, draw=black] (3.5,3.8)++(20:0.4) -- ++(-100:0.4) -- ++(20:0.4)-- cycle;
      \filldraw[fill=blue!50!white, draw=black] (3.5,3.8)++(20:0.4) -- ++(20:0.4) -- ++(-100:0.4)-- cycle;
      \filldraw[fill=blue!50!white, draw=black] (3.5,3.8)++(20:0.8) -- ++(-100:0.4) -- ++(20:0.4)-- cycle;

      \filldraw[fill=blue!50!white, draw=black] (3.5,3.8)++(20:0.8)  ++(-100:0.4) ++(20:0.4) -- ++(20:0.4) -- ++(-100:0.4)-- cycle; 
      \filldraw[fill=blue!50!white, draw=black] (3.5,3.8)++(20:0.8)  ++(-100:0.4) ++(20:0.4)++(20:0.4) -- ++(-100:0.4) -- ++(20:0.4)-- cycle;
      \filldraw[fill=blue!50!white, draw=black] (3.5,3.8)++(20:0.8)  ++(-100:0.4) ++(20:0.4)++(20:0.4) -- ++(20:0.4) -- ++(-100:0.4)-- cycle;
      \filldraw[fill=blue!50!white, draw=black] (3.5,3.8)++(20:0.8)  ++(-100:0.4) ++(20:0.4)++(20:0.8) -- ++(-100:0.4) -- ++(20:0.4)-- cycle;

      \filldraw[fill=blue!50!white, draw=black] (6.5,3.5) -- ++(-120:0.4) -- ++(0:0.4)-- cycle;
      \filldraw[fill=blue!50!white, draw=black] (6.5,3.5)++(-120:0.4) -- ++(0:0.4) -- ++(-120:0.4)-- cycle;
      \filldraw[fill=blue!50!white, draw=black] (6.5,3.5) -- ++(0:0.4) -- ++(-120:0.4)-- cycle; 
      \filldraw[fill=blue!50!white, draw=black] (6.5,3.5)++(0:0.4) -- ++(-120:0.4) -- ++(0:0.4)-- cycle;
      \filldraw[fill=blue!50!white, draw=black] (6.5,3.5)++(-60:0.4)  -- ++(-120:0.4)-- ++(0:0.4)-- cycle;
      \filldraw[fill=blue!50!white, draw=black] (6.5,3.5)++(0:0.4)++(-120:0.4) -- ++(0:0.4) -- ++(-120:0.4)-- cycle;
      \filldraw[fill=blue!50!white, draw=black] (6.5,3.5)++(0:0.4) -- ++(0:0.4) -- ++(-120:0.4)-- cycle;
      \filldraw[fill=blue!50!white, draw=black] (6.5,3.5)++(0:0.4)++(-60:0.4)  -- ++(-120:0.4)-- ++(0:0.4)-- cycle;
      \filldraw[fill=blue!50!white, draw=black] (6.5,3.5)++(0:0.8) -- ++(-120:0.4) -- ++(0:0.4)-- cycle;
      \filldraw[fill=blue!50!white, draw=black] (6.5,3.5)++(0:0.8)++(-120:0.4) -- ++(0:0.4) -- ++(-120:0.4)-- cycle;
  
  %Magenta Dreiecke
  
  \filldraw[fill=magenta!20!white, draw=magenta] (1.5,4.1) -- ++(60:0.4) -- ++(-60:0.4)-- cycle;
     \filldraw[fill=magenta!20!white, draw=magenta] (1.5,4.1)++(60:0.4) -- ++(0:0.4) -- ++(-120:0.4)-- cycle;
      \filldraw[fill=magenta!20!white, draw=magenta] (1.5,4.1)++(0:0.4) -- ++(0:0.4) -- ++(120:0.4)-- cycle; 
    %  \filldraw[black] (1.5,4.1)++(0:0.4) circle (1pt) node[anchor=south]{\footnotesize $x_1$};
      \node[anchor=north, color=magenta] at (1.5,5.1) {$N^{x_1}_{\rm out}$};

      \filldraw[fill=magenta!20!white, draw=magenta] (3.5,3.8) -- ++(-100:0.4) -- ++(140:0.4)-- cycle;
      \filldraw[fill=magenta!20!white, draw=magenta] (3.5,3.8) -- ++(-160:0.4) -- ++(80:0.4)-- cycle;
      \filldraw[fill=magenta!20!white, draw=magenta] (3.5,3.8) -- ++(140:0.4) -- ++(20:0.4)-- cycle;
      \filldraw[fill=magenta!20!white, draw=magenta] (3.5,3.8) -- ++(20:0.4) -- ++(140:0.4)-- cycle;
      
    \node[anchor=north, color=magenta] at (3.5,4.8) {$N^{x_2}_{\rm out}$};

    %   \filldraw[fill=blue!50!white, draw=black] (3.5,3.8) -- ++(20:0.4) -- ++(-100:0.4)-- cycle; 
    %   \filldraw[fill=blue!50!white, draw=black] (3.5,3.8)++(20:0.4) -- ++(-100:0.4) -- ++(20:0.4)-- cycle;
    %   \filldraw[fill=blue!50!white, draw=black] (3.5,3.8)++(20:0.4) -- ++(20:0.4) -- ++(-100:0.4)-- cycle;
    %   \filldraw[fill=blue!50!white, draw=black] (3.5,3.8)++(20:0.8) -- ++(-100:0.4) -- ++(20:0.4)-- cycle;

      \filldraw[fill=magenta!20!white, draw=magenta] (3.5,3.8)++(20:0.8) -- ++(20:0.4) -- ++(-100:0.4)-- cycle;
     \filldraw[fill=magenta!20!white, draw=magenta] (3.5,3.8)++(20:1.2) -- ++(-100:0.4) -- ++(20:0.4)-- cycle;
      \filldraw[fill=magenta!20!white, draw=magenta] (3.5,3.8)++(20:0.8)  ++(-100:0.4) --++(-40:0.4) -- ++(80:0.4) -- cycle; 
      \filldraw[fill=magenta!20!white, draw=magenta] (3.5,3.8)++(20:0.8)  ++(-40:0.4) --++(-40:0.4) -- ++(-160:0.4) -- cycle; 
  %    \filldraw[black] (3.5,3.8)++(20:0.8) ++(-40:0.4) circle (1pt) node[anchor=south]{\footnotesize $x_2$};
      \node[anchor=north, color=magenta] at (4.5,3.5) { $N^{x_3}_{\rm out}$};
  
      \filldraw[fill=magenta!20!white, draw=magenta] (6.5,3.5) ++(-120:0.4) -- ++(-120:0.4) -- ++(0:0.4)-- cycle;
      \filldraw[fill=magenta!20!white, draw=magenta] (6.5,3.5) ++(-120:0.8) -- ++(-60:0.4) -- ++(60:0.4)-- cycle;
      \filldraw[fill=magenta!20!white, draw=magenta] (6.5,3.5) ++(-120:0.8)++(0:0.4) -- ++(-120:0.4) -- ++(0:0.4)-- cycle;
      \filldraw[fill=magenta!20!white, draw=magenta] (6.5,3.5) ++(-120:0.8)++(0:0.4) -- ++(-60:0.4) -- ++(60:0.4)-- cycle;
  
    %  \filldraw[black] (6.5,3.5) ++(-120:0.8)++(0:0.4) circle (1pt) node[anchor=north]{\footnotesize $x_3$};
      \node[anchor=north, color=magenta] at (6.5,2.5) { $N^{x_4}_{\rm out}$};
  
  %Curve Gamma
  
  \draw[color=red,thick] (0.,4.) -- (1.5,4.1)+(-120:0.4);
  \draw[color=red,thick] (1.5,4.1)-- ++(0:0.8)--++(-60:0.4);
  \draw[color=red,thick] (1.5,4.1)++(-120:0.4) ++(60:0.4) ++(0:0.8)++(-60:0.4) -- (3.5,3.8);
  \draw[color=red,thick] (3.5,3.8) --++ (20:0.8)--++(-40:0.8)--++(20:0.8);
  \draw[color=red,thick] (3.5,3.8) ++ (20:0.8)++(-40:0.8)++(20:0.8) --
  (6.5,3.5)++(-120:0.4);
  \draw[color=red,thick] (6.5,3.5)--++(-120:0.4)--++(-60:0.4)--++(0:0.8)--++(60:0.4)--(8,3.15);
  
      \filldraw[black] (1.5,4.1)++(0:0.4) circle (1pt) node[anchor=south]{\footnotesize $x_1$};
      \filldraw[black] (3.5,3.8) circle (1pt) node[anchor=south]{\footnotesize $x_2$};
      \filldraw[black] (3.5,3.8)++(20:0.8) ++(-40:0.4) circle (1pt) node[anchor=south]{\footnotesize $x_3$};
       \filldraw[black] (6.5,3.5) ++(-120:0.8)++(0:0.4) circle (1pt) node[anchor=north]{\footnotesize $x_4$};

   \node[color=black!65!white, anchor=south] at (3.5,6) {$ P_i^{n,+}$};
    \node[color=black!65!white, anchor=south] at (3.5,2) {$ P_i^{n,-}$};
   \node[color=red, anchor=south] at (6,4) {$ \Gamma_i^{n}$};
  \end{tikzpicture}
  \end{minipage}
  \caption{\JJJ Construction of $\psi_n$ depending on the position of $N^x_{\rm out}$ relative to $\Gamma_i^{n}$, \VIT for $Q_i \in\mathcal{B}_{\rm bad}$. \JJJ In this case, we have e.g. $\psi_n(x_1)=\phi_{i}^{n,+}(x_1)$ and $\psi_n(x_4)=\phi_{i}^{n,-}(x_4)$. For \JOS $x_2$ and $x_3$ \EEE we choose one of the two values. \EEE
  } \label{N-x-out-figure} 
  \end{figure}
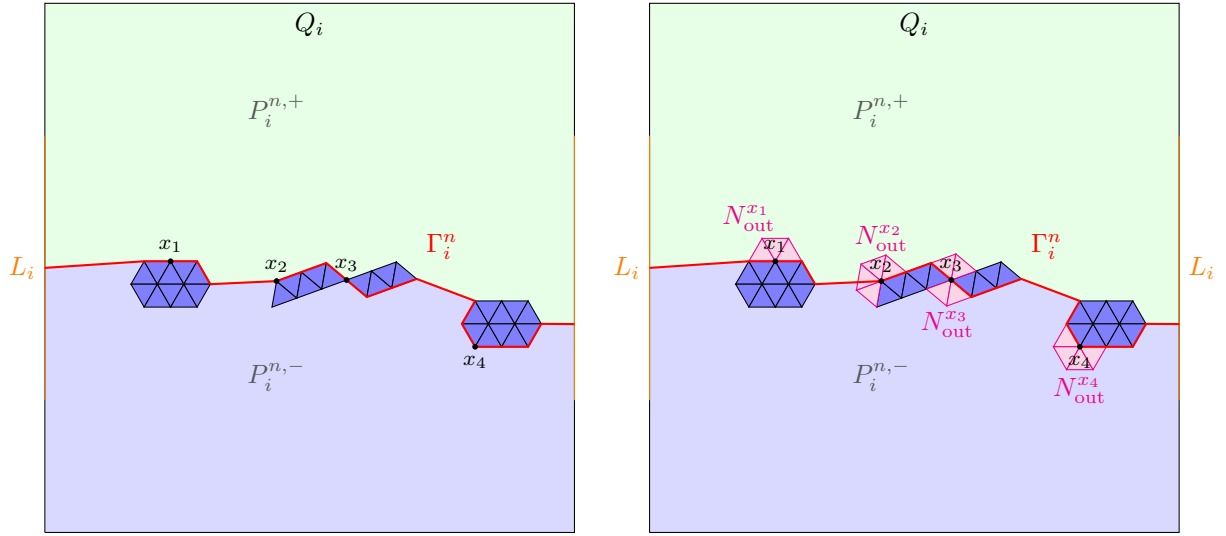

\subsubsection*{(b) Good squares}

We come to the definition of $\mathbf{T}^{\rm good}_{n,k}$ and the corresponding interpolation $\psi_n$ on $B_{\rm good}^n$.  \EEE 
As the argument is local, without restriction we suppose that   $\mathcal{B} _{\rm good}$   consists of a single square $Q_i$.   In the remaining part of this section, we will therefore drop the index $i$ for simplicity,  and write $Q$,  $r$,   as well as $A^n$ (see before \eqref{1401251329}).  \EEE

For $Q\in \mathcal{B}_{\rm good}^{\rm large}$, we set $\mathbf{T}^{\rm good}_{n,k} = \lbrace T \in  \mathbf{T}_n(u_n^k) \colon \, T \cap Q \neq \emptyset\rbrace $, and \EEE  we define $\psi_n$ as the piecewise affine interpolation of $\psi$ on each $T\in \mathbf{T}^{\rm good}_{n,k}$.
 
Let us now come to the construction in the second case $Q\in \mathcal{B}_{\rm good}^{\rm small}$, i.e., $|\Omega_{n,k}^{\rm crack}\cap Q|\leq  \theta^{-1} r \eps_n$.  
As the underlying triangulation in $Q\setminus \VVV A^{n}$  we will use $\mathbf{T}_n(u_n^k)$, whereas \VIT inside \EEE \VVV $A^{n}$ \EEE
%$A^{n}(\xi_n)$
 we  use the explicit construction of an optimal triangulation from \cite{ChamboDalMaso}. However, in order to ensure  that  the triangulation $\mathbf{T}_n(\psi_n)$ is admissible, it \EEE has to fulfill the condition $ \mathbf{T}^{\rm crack}_{n,k-1}\subset \mathbf{T}_n(\psi_n)$,  see \eqref{eq: admis tria}. \EEE Therefore, we need to treat the parts of \VVV $A^{n}$ \EEE that are contained in the \VIT `pre-crack' \EEE $ \Omega^{\rm crack}_{n,k-1}$ differently, which leads to a more involved construction, see Figure \ref{fig: precrack-constr}.

We start by considering neighborhoods of $\JJJ \Omega^{\rm crack}_{n,k}$ in $Q$, by adding a buffer zone of order \MMM $10^7\eps_n'$ with \EEE $\eps'_{n}= 2 \eps_n \cos(\theta_0)$ around each \VIT `broken \EEE triangle'. 
 % and around the boundary $D^n(\xi_n)$.   
  More precisely, we define the \VIT `enlarged \EEE crack sets' $N^{\rm crack}_{n,k}$ and the corresponding collection of triangles as  
% \RRR taking the lateral boundary here as well safes some notation. \EEE  
%\begin{equation}\label{enlarged-crackinA}
%  \mathbf{T}^{\rm neigh}_{n,k}=\big\{T\in \mathbf{T}_n(u_n^{k})\colon \dist\big(T, (\Omega_{n,k-1}^{\rm crack}\cap Q)  \cup D^{n}(\xi_n)  \big)\leq  30 \EEE \eps'_n\big\}\quad \text{and}\quad N^{\rm crack}_{n,k} \defas \bigcup_{T\in \mathbf{T}^{\rm neigh}_{n,k}}T  \,.
%\end{equation} 
\begin{equation}\label{enlarged-crackinA}
  \mathbf{T}^{\rm neigh}_{n,k}:=\big\{T\in \mathbf{T}_n(u_n^{k})\colon \dist\big(T,  ( \Omega_{n,k}^{\rm crack} \cap Q) \EEE \big)\leq    10^{7}  \eps'_n\big\}\quad \text{and}\quad N^{\rm crack}_{n,k} \defas \bigcup_{T\in \mathbf{T}^{\rm neigh}_{n,k}}T  \,.
\end{equation} 
By the definition of $\Omega_{n,k}^{\rm crack}$, we know that outside of $N^{\rm crack}_{n,k}$ the triangles in $\mathbf{T}_n(u_n^{k})$ are part of a regular   `background triangulation' $\mathbf{Z}_n$ \EEE with size $\eps'_{n}$. By the construction \JJJ of $\Gamma_i^{n}$ in $B_{\rm good}^{\rm small}$ \EEE and due to Lemma~\ref{stripe-prop}, the set $(J_{\phi_n}\setminus N^{\rm crack}_{n,k})\cap A^n$ \EEE consists of at most $M:=\hat{C}  \theta^{-2} \EEE +1$ \EEE straight segments $(S^n_k)_{k=1}^{M}$. \EEE 
% Note also that   $\dist(\bigcup_{k=1}^{M} S^n_k, D^n  (\xi_n)\EEE )\geq  30\EEE \eps'_n$. 
According to \cite[Proof of \MMM Proposition \EEE 4.1]{ChamboDalMaso} (see in particular \cite[(4.9) and Figure 4.10\EEE]{ChamboDalMaso}) there exists a triangulation $\mathbf{\tilde{T}}_{n}\in \mathcal{T}_{\eps_n}(\R^2 )$ such that, setting $\mathbf{\tilde{T}'}_{n}\defas \{T\in \mathbf{\tilde{T}}_{n}\colon T\cap \bigcup_{k=1}^{M}S^n_k\neq \emptyset\}$, we have   
\begin{enumerate}
  \item\label{mesh-constr-1} $J_{\phi_n}\cap  \mathcal{V}(T) =\emptyset$ for all $T \in  \mathbf{\tilde{T}}_{n}$. 
  \item\label{mesh-constr-2} All triangles $T\in \mathbf{\tilde{T}}_{n}$ with $\dist(T, \MMM \bigcup_{k=1}^{M} \EEE S^n_k)\geq 5 \eps'_n $ are part of the \VIT `background' \EEE triangulation \JJJ $\mathbf{Z}_n$. \EEE 
  \item\label{mesh-constr-3} It holds \begin{equation}\label{mostcrucialproperty}
 \limsup_{n \to \infty} \EEE   \sum_{T\in \mathbf{\tilde{T}'}_{n}}\frac{ |T|  }{\eps_n} \le \limsup_{n \to \infty} \EEE  \,  \sin(\theta_0)\mathcal{H}^1\Big(\bigcup\nolimits_{k=1}^M S^n_k\Big)  \le   (1+2\theta)\sin(\theta_{0})\mathcal{H}^1(J_{\psi}\cap  Q \EEE ) \EEE \,,  
    \end{equation}  
\end{enumerate}
 where the \MMM second \EEE inequality in \eqref{mostcrucialproperty} follows from the construction of the segments $(S^n_k)_k$ and \eqref{crack  for later}. \EEE  

We now 
%come to the precise definition of 
\VVV define \EEE $\mathbf{T}^{\rm good}_{n,k}$ \VVV as the set of triangles of the following type,  see also Figure \ref{fig: precrack-constr} for an illustration: \EEE
\begin{itemize}
\item[(i)] $T\in \mathbf{T}_n(u_n^k)$  with $T \cap Q \neq \emptyset$ and  $T\cap A^{n} = \emptyset$; 
\item[(ii)] $T\in \mathbf{T}_{n,k}^{\rm neigh}$ with $T \cap A^n\neq \emptyset$;
\item[(iii)] $T \in \mathbf{\tilde{T}}_{n}$ with  $T \cap A^n\neq \emptyset$ and \EEE $T \notin \mathbf{T}_{n,k}^{\rm neigh}$.
\end{itemize}  
 % \RRR Should we rather say $T \cap A^n= \emptyset$  in (i) so that the 3 cases are mutual exclusive? \EEE
Recalling  \eqref{enlarged-crackinA},  we notice that this \EEE leads to a triangulation of $B^{n}_{\rm good}$. In fact, by construction all triangles $T\in \mathbf{\tilde{T}}_{n} $ with $\dist(T, \bigcup_{k=1}^{N}S^n_k)\geq 5 \eps'_n $ are part of $\mathbf{Z}_n$ (see point \eqref{mesh-constr-2} above)
and by the definition in \eqref{def: new-crackset} all triangles of $\mathbf{T}_n(u_n^k)$  inside $\lbrace x \in N^{\rm crack}_{n,k}\colon {\rm dist}(x,   \Omega_{n,k}^{\rm crack}) \ge 10^{6} \eps_n'\rbrace $  are part of $\mathbf{Z}_n$.

Eventually, on the triangles in $\mathbf{T}^{\rm good}_{n,k}$,   we can define $\psi_n$ as the  interpolation of $\phi_n$ on $\mathbf{T}^{\rm good}_{n,k}$. Note that also for $T\in  \mathbf{T}^{\rm good}_{n,k} \cap \EEE \mathbf{\tilde{T}}_n$ the interpolation is well defined  because by construction none of the vertices of $T$ belongs to $\bigcup_{k} S^n_k$ (see point \eqref{mesh-constr-1} above). 
 
\subsubsection*{(c) Conclusion} \EEE
 We have now concluded the definition of the function $\psi_n$ and \VVV of \EEE the triangulation $\mathbf{T}_n(\psi_n)$. As by construction $\mathbf{T}_n(\psi_n)$ fulfills $\mathbf{T}_{n,k-1}^{\rm crack}\subset \mathbf{T}_n(\psi_n)$ \JJJ (actually, it even holds $\mathbf{T}_{n,k}^{\rm crack}\subset \mathbf{T}_n(\psi_n)$\EEE), we have $\psi_n\in  \hat{\mathcal{A}}_n^k(\Omega') $,  see before \eqref{eq: admis tria}. \EEE 
For later purposes, we note that 
\begin{align}\label{extraremark}
\eps_n|e(\psi_n)_{T}|^2\geq \kappa \quad \quad \text{for all $T\in \mathbf{T}_{n,k-1}^{\rm crack}(\psi_n)\sm \mathbf{T}_{n,k}^{\rm crack}$}.
\end{align}
Indeed, \EEE  since $T\notin \mathbf{T}_{n,k}^{\rm crack}$,  by \eqref{def: new-crackset} \EEE we know that $ \eps_n   |e(u_n^{j})_{T}|^2 < \kappa $ and $ \dist(T,\mathbf{Z}_n(u_n^{j})) < \EEE 10^6  \eps_n  \EEE $ for all $j\leq k$  \MMM such that $T \in \mathbf{T}_n(u_n^j)$.  
%\RRR {\tt Same change as in Section 4. } \EEE 
As $T\in \mathbf{T}_{n,k-1}^{\rm crack}(\psi_n)$, we can conclude that either $\eps_n   |e( \psi_n \EEE )_{T}|^2 \geq  \kappa$ or $\dist(T, \mathbf{Z}_n(\psi_n))\geq 10^6  \eps_n \EEE $. \JOS If $T \notin \mathbf{T}_n(u_n^k)$ we have, by the construction of $\mathbf{T}_n(\psi_n)$, that $T\in \tilde{\mathbf{T}}_n$ and thus by \eqref{mesh-constr-2} it holds $\dist(T, \mathbf{Z}_n(\psi_n)) <  10^6  \eps_n $. 
%\RRR {\tt Should we give more details, why exactly? It's true by construction but not super obvious in my opinion...} \EEE  
However, if $T\in \mathbf{T}_n(u_n^k)$, we have by the above argument that $\dist(T,\mathbf{Z}_n(u_n^{k})) <  10^6  \eps_n  $, which leads to $\dist(T, \mathbf{Z}_n(\psi_n)) <  10^6  \eps_n  $. \EEE 
% From  the construction of $\mathbf{T}_n(\psi_n)$ we get: \MMM if 
% $T \notin \mathbf{T}_n(u_n^k)$, then $\dist(T, \mathbf{Z}_n(\psi_n)) < \EEE 10^6  \eps_n \EEE $, and if  $T \in \mathbf{T}_n(u_n^k)$, then  \EEE 
%     $\dist(T,\mathbf{Z}_n(u_n^{k})) < \EEE 10^6  \eps_n \EEE $ leads to $\dist(T, \mathbf{Z}_n(\psi_n)) < \EEE 10^6  \eps_n \EEE $.  \MMM  
In both cases, \EEE this yields  $\eps_n   |e( \psi_n \EEE )_{T}|^2 \geq  \kappa$,  i.e., \eqref{extraremark} holds. 
% \RRR {\tt please just/discuss small adjustment.} \EEE  

 Since we assumed that \EEE $\partial_{D} \Omega\cap J_{\psi}=\emptyset$, we can also suppose that for $n$  large enough we have  $(B_{\rm bad}\cup B^{n}_{\rm good})\cap \partial_{D}\Omega =\emptyset $. Therefore, we obtain  $\psi_n=g(t)_{\mathbf{T}_n(\psi_n)}$ on all $T \in \mathbf{T}_n(\psi_n)$ with  \MMM $T \cap  \overline{\Omega} = \emptyset$, \EEE which yields $\psi_n \in \mathcal{A}_n^{k}(g(t))$, see the definition before \eqref{stab0neu}.  Moreover,  by the regularity of $\psi\in \mathcal{W}(\Omega')$ we have  that $\psi_n\to \psi$ in measure on $\Omega'\sm (B_{\rm good}\cup B_{\rm bad})$. \MMM Therefore, \EEE by the first estimate in \eqref{besicovitch-props} it  follows that, for all $\delta>0$, 
$$ \limsup_{n\to \infty} |\{ |\psi_{n}  - \psi|>\delta\}| \leq |(B_{\rm good} \cup B_{\rm bad})|  \le   \theta.  $$
We thus have validated that \eqref{stab0neu} holds for $\psi_n$.  
 \EEE

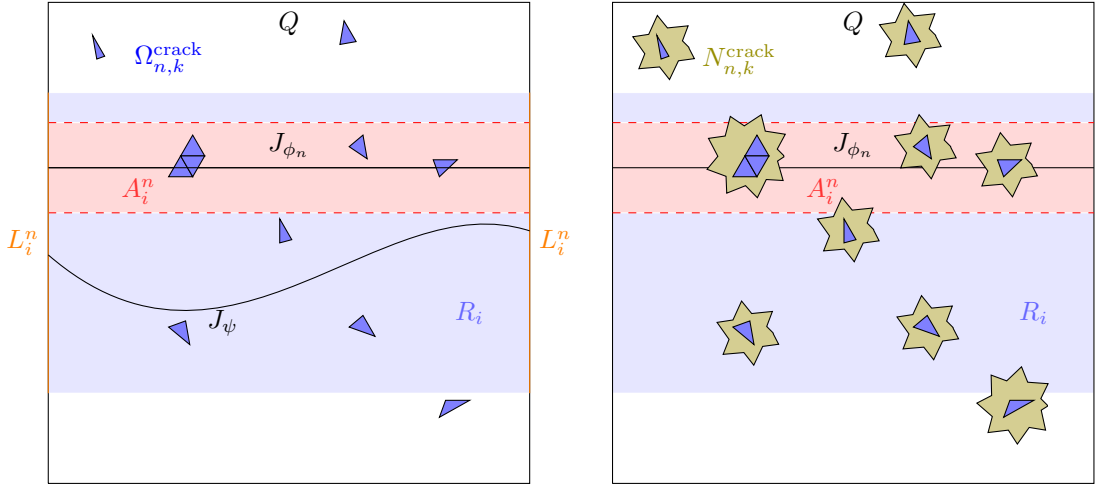
\begin{figure}

  \begin{minipage}{0.5\textwidth}
    \begin{tikzpicture}[scale=0.8]
      \filldraw[fill=blue!10!white, draw=white]{} (0,6.5)-- (8,6.5) --(8,1.5)-- (0,1.5)--cycle;
    \filldraw[fill=red!15!white, draw=white]{} (0,6)-- (8,6) --(8,4.5)-- (0,4.5)--cycle;
    \draw[color=black] (0,0) -- (8,0) --  (8,8) -- node[anchor=north]{$Q$}(0,8) -- (0,0);
    \draw[color=red, dashed] (0,6) -- (8,6);
      \draw[color=red, dashed] (0,4.5) -- (8,4.5);
      \draw[color=black] (0,5.25)--
      %node[anchor=south] {$J_{\phi_n}$}
      (8,5.25);
      %\draw (0,3.2) .. controls (3,8)  and (5.5,-2)  .. (8,3.3);
      \node[anchor=south, color=red!80!white] at (1.5,4.5) {$A_i^n$}; 
      \draw[color=orange](0,1.5) -- node[anchor=east] {$L_i^n$}(0,6.5);
      \draw[color=orange](8,1.5) -- node[anchor=west] {$L_i^n$} (8,6.5);
  
   \draw[color=black] (0,3.8) .. controls (3,1.2) and (5.5,5) .. (8,4.2);
        \node[anchor=south, color=black] at (2.9,2.3) {$J_{\psi}$};
  
  %Kreise 
  
      % \filldraw[fill=blue!20!white, draw=none] (6.55,5.3) circle (12pt);
      % \filldraw[fill= blue!20!white,draw=none] (2.25,5.45) circle (17pt);
      % \filldraw[fill=blue!20!white, draw=none] (3.92,4.22) circle (12pt);  
      % \filldraw[fill=blue!20!white, draw=none] (5.2,5.6) circle (12pt);
      % \filldraw[fill=blue!20!white, draw=none] (2.25,2.5) circle (12pt);
      % \filldraw[fill= blue!20!white,draw=none] (0.85,7.25) circle (12pt);
      % \filldraw[fill=blue!20!white, draw=none] (5.24,2.59) circle (12pt);  
      % \filldraw[fill=blue!20!white, draw=none] (4.95,7.45) circle (12pt);
      % \filldraw[fill=blue!20!white, draw=none] (6.7,1.3) circle (12pt);
  %Blaue Dreiecke

      \filldraw[fill=blue!50!white, draw=black] (5,5.6) -- ++(40:0.3) -- ++(-80:0.4)-- cycle;
  
      \filldraw[fill=blue!50!white, draw=black] (3.5,3.8)++(30:0.4) -- ++(90:0.4) -- ++(-60:0.4)-- cycle;
  
      \filldraw[fill=blue!50!white, draw=black] (6.3,5.1)++(0:0.2) -- ++(110:0.3) -- ++(0:0.4)-- cycle;
  
      \filldraw[fill=blue!50!white, draw=black] (2,5.1) -- ++(60:0.4) -- ++(-60:0.4)-- cycle;
      \filldraw[fill=blue!50!white, draw=black] (2,5.1)++(60:0.4) -- ++(60:0.4) -- ++(-60:0.4)-- cycle;
      \filldraw[fill=blue!50!white, draw=black] (2,5.1)++(0:0.4) -- ++(120:0.4) -- ++(0:0.4)-- cycle;
  
  %Schwarze Dreiecke
  
      \filldraw[fill= blue!50!white, draw=black] (2,2.6) -- ++(20:0.3) -- ++(-80:0.4)-- cycle;
      \filldraw[fill=blue!50!white, draw=black] (4.5,7.1)++(30:0.4) -- ++(80:0.4) -- ++(-60:0.4)-- cycle;
      \filldraw[fill=blue!50!white, draw=black] (6.3,1.1)++(0:0.2) -- ++(70:0.3) -- ++(0:0.4)-- cycle;
      \filldraw[fill=blue!50!white, draw=black] (5,2.6) -- ++(40:0.3) -- ++(-60:0.4)-- cycle;
      \filldraw[fill=blue!50!white, draw=black] (0.5,6.8)++(40:0.4) -- ++(100:0.4) -- ++(-60:0.4)-- cycle;
      %\filldraw[fill=black!50!white, draw=black] (7.3,3.1)++(0:0.2) -- ++(70:0.3) -- ++(0:0.4)-- cycle;
          \draw[color=black] (0,5.25)--node[anchor=south] {$J_{\phi_n}$}(8,5.25);
      \node[color=blue!100!white, anchor=north] at (2.,7.5) {$ \Omega_{n,k}^{\rm crack}$};
      \node[anchor=south, color=blue!60!white] at (7.,2.5) {$R_i$}; 
    \end{tikzpicture}
  \end{minipage}
\begin{minipage}{0.4\textwidth}
	\begin{tikzpicture}[scale=0.8]
        \filldraw[fill=blue!10!white, draw=white]{} (0,6.5)-- (8,6.5) --(8,1.5)-- (0,1.5)--cycle;
        \node[anchor=south, color=blue!60!white] at (7.,2.5) {$R_i$}; 
       %\filldraw[fill=blue!15!white, draw=white]{} (0,6)-- (8,6) --(8,2)-- (0,2)--cycle;
     \filldraw[fill=red!15!white, draw=white]{} (0,6)-- (8,6) --(8,4.5)-- (0,4.5)--cycle;
     \draw[color=black] (0,0) -- (8,0) --  (8,8) -- node[anchor=north]{$Q$}(0,8) -- (0,0);
     \draw[color=red, dashed] (0,6) -- (8,6);
     \draw[color=red, dashed] (0,4.5) -- (8,4.5);
     \draw[color=black] (0,5.25)--node[anchor=south] {$J_{\phi_n}$}(8,5.25);
     %\draw (0,3.2) .. controls (3,8)  and (5.5,-2)  .. (8,3.3);
     \node[anchor=south, color=red!80!white] at (3.5,4.5) {$A_i^n$}; 
 %    \draw[color=green](0,4.5) -- node[anchor=east] {$D$}(0,6);
 %    \draw[color=green](8,4.5) -- node[anchor=west] {$D$} (8,6);

 %Kreise 
 
 %    \draw[fill=blue!20!white, thin, decoration={zigzag, }] decorate{ (6.55,5.3) circle (12pt)};
 %    \filldraw[fill= blue!20!white,draw=none] (2.25,5.45) circle (17pt);
 %    \filldraw[fill=blue!20!white, draw=none] (3.92,4.22) circle (12pt);  
 %    \filldraw[fill=blue!20!white, draw=none] (5.2,5.6) circle (12pt);
 %    \filldraw[fill=blue!20!white, draw=none] (2.25,2.5) circle (12pt);
 %    \filldraw[fill= blue!20!white,draw=none] (0.85,7.25) circle (12pt);
 %    \filldraw[fill=blue!20!white, draw=none] (5.24,2.59) circle (12pt);  
 %    \filldraw[fill=blue!20!white, draw=none] (4.95,7.45) circle (12pt);
 %    \filldraw[fill=blue!20!white, draw=none] (6.7,1.3) circle (12pt);
  \draw[fill=olive!40!white, thin, decoration={zigzag, }] decorate{ (6.55,5.3) circle (12pt)};
   \draw[fill=olive!40!white, thin, decoration={zigzag, }] decorate{(2.25,5.45) circle (17pt)};
   \draw[fill=olive!40!white, thin, decoration={zigzag, }] decorate{(3.92,4.22) circle (12pt)};  
   \draw[fill=olive!40!white, thin, decoration={zigzag, }] decorate{ (5.2,5.6) circle (12pt)};
    \draw[fill=olive!40!white, thin, decoration={zigzag, }] decorate{(2.25,2.5) circle (12pt)};
     \draw[fill=olive!40!white, thin, decoration={zigzag, }] decorate{ (0.85,7.25) circle (12pt)};
     \draw[fill=olive!40!white, thin, decoration={zigzag, }] decorate{ (5.24,2.59) circle (12pt)};  
    \draw[fill=olive!40!white, thin, decoration={zigzag, }] decorate{ (4.95,7.45) circle (12pt)};
  \draw[fill=olive!40!white, thin, decoration={zigzag, }] decorate{ (6.7,1.3) circle (15pt)};
 %Blaue Dreiecke

     \filldraw[fill=blue!50!white, draw=black] (5,5.6) -- ++(40:0.3) -- ++(-80:0.4)-- cycle;
 
     \filldraw[fill=blue!50!white, draw=black] (3.5,3.8)++(30:0.4) -- ++(90:0.4) -- ++(-60:0.4)-- cycle;
 
     \filldraw[fill=blue!50!white, draw=black] (6.3,5.1)++(0:0.2) -- ++(110:0.3) -- ++(0:0.4)-- cycle;
 
     \filldraw[fill=blue!50!white, draw=black] (2,5.1) -- ++(60:0.4) -- ++(-60:0.4)-- cycle;
     \filldraw[fill=blue!50!white, draw=black] (2,5.1)++(60:0.4) -- ++(60:0.4) -- ++(-60:0.4)-- cycle;
     \filldraw[fill=blue!50!white, draw=black] (2,5.1)++(0:0.4) -- ++(120:0.4) -- ++(0:0.4)-- cycle;
 
 %Schwarze Dreiecke
 
     \filldraw[fill=blue!50!white, draw=black] (2,2.6) -- ++(20:0.3) -- ++(-80:0.4)-- cycle;
     \filldraw[fill=blue!50!white, draw=black] (4.5,7.1)++(30:0.4) -- ++(80:0.4) -- ++(-60:0.4)-- cycle;
     \filldraw[fill=blue!50!white, draw=black] (6.3,1.1)++(0:0.2) -- ++(70:0.3) -- ++(0:0.4)-- cycle;
     \filldraw[fill=blue!50!white, draw=black] (5,2.6) -- ++(40:0.3) -- ++(-60:0.4)-- cycle;
     \filldraw[fill=blue!50!white, draw=black] (0.5,6.8)++(40:0.4) -- ++(100:0.4) -- ++(-60:0.4)-- cycle;
     %\filldraw[fill=black!50!white, draw=black] (7.3,3.1)++(0:0.2) -- ++(70:0.3) -- ++(0:0.4)-- cycle;
     \node[color=olive, anchor=south] at (2.1,6.6) {$ N_{n,k}^{\rm crack}$};

  %  \filldraw[fill=blue!15!white, draw=white]{} (-0.15,6)-- (0.15,6) --(0.15,4.5)-- (-0.15,4.5)--cycle;
  %  \filldraw[fill=blue!15!white, draw=white]{} (8-0.15,6)-- (8.15,6) --(8.15,4.5)-- (8-0.15,4.5)--cycle;
 %   \draw[color=green](0,4.5) -- node[anchor=east] %{$D$}(0,6);
 %   \draw[color=green](8,4.5) -- node[anchor=west] %{$D$} (8,6);
     \end{tikzpicture}
\end{minipage}
\hspace*{0.6cm}
\caption{Construction of $\mathbf{T}_n(\psi_n)$ in $Q\in \MMM \mathcal{B}^{\rm small}_{\rm good}$. \EEE  First, we \VVV transfer the jump from \EEE $J_{\psi}$ to \VVV the flat set \VIT $J_{\phi_n}$, \VVV whose vertical coordinate is determined by Lemma~\ref{stripe-prop}. In the \VIT golden \EEE
%blue 
neighborhoods $N_{n,k}^{\rm crack}$ of connected components of $\Omega_{n,k}^{\rm crack}$ (which are actually 
% union of triangles rather than circles and 
 of \EEE size much larger than the triangles), we  keep \EEE the triangulation $\mathbf{T}_n(u_n^k)$,  and in the red region we use the triangulation $\mathbf{\tilde{T}}_{n}$ given by \cite{ChamboDalMaso}. \EEE  
%Note that the graphic is only a schematic illustration as the neighborhoods are \VVV union of triangles rather than \EEE
%% not necessarily 
% spheres and of course much larger compared to the triangles. 
%% \RRR I did not touch the notation in the figure as we need to agree on a final notation!
}
\label{fig: precrack-constr}
\end{figure}

\subsection{Proof of \eqref{stab1neu}: Stability \MMM estimate \EEE of the crack part}

We now proceed with the proof of  \eqref{stab1neu}. Again let $t\in [0,T]$ be given and, for each $n \in \N$, choose $k=k(t)$ such that $t \in [t^k_n,t_n^{k+1})$. Let $(\psi_n)_n$ be the sequence defined above with the associated triangulation $\mathbf{T}_n(\psi_n)$. Note that by definition  we have
\begin{align}\label{split-stability}
  \mathcal{E}_n^{\rm crack}(\psi_n; t)-\mathcal{E}_n^{\rm crack}(u_n(t); t)  \leq  
  \sum_{T\in \VVV \mathbf{T}^{\rm crack}_{n,k-1}(\psi_n) \EEE \setminus \mathbf{T}^{\rm crack}_{n,k} } \JJJ \kappa \EEE \frac{|T|}{ \eps_n} \,.
\end{align}
% -  \sum_{T\in \mathbf{T}^{\rm crack}_{n,k}(u_n^{k})\setminus \mathbf{T}^{\rm crack}_{n,k}(\psi_n)} \frac{|T|}{ \eps_n}
We split \VVV $\mathbf{T}^{\rm crack}_{n,k-1}(\psi_n)$ \EEE into three different parts, namely $ \mathbf{T}^{\rm cr, g}_{n,k}(\psi_n)$, $ \mathbf{T}^{\rm cr, b}_{n,k}(\psi_n)$, and  $ \mathbf{T}^{\rm cr, r}_{n,k}(\psi_n)$, which corresponds to the intersection of \VVV $\mathbf{T}^{\rm crack}_{n,k-1}(\psi_n)$ \EEE with $\mathbf{T}^{\rm good}_{n,k}$, $\mathbf{T}^{\rm bad}_{n,k}$, and  $\mathbf{T}^{\rm rest}_{n,k}$, respectively, see \eqref{eq: splitti}.

%  
% 
% , namely the contribution of triangles in  good squares, in bad squares, and the remaining part. More precisely, we restrict the summation to the corresponding subsets of $\mathbf{T}^{\rm crack}_{n,k}(\psi_n)\setminus \mathbf{T}^{\rm crack}_{n,k}(u_n^{k})$: 
%  \begin{align*}
%    &\mathbf{T}^{\rm good}_{n,k}\defas \{T\in \mathbf{T}^{\rm crack}_{n,k}(\psi_n)\setminus \mathbf{T}^{\rm crack}_{n,k}(u_n^{k})\colon T\cap B_{\rm good}\neq \emptyset\}\\ 
%    &\mathbf{T}^{\rm bad}_{n,k}\defas \{T\in \mathbf{T}^{\rm crack}_{n,k}(\psi_n)\setminus \mathbf{T}^{\rm crack}_{n,k}(u_n^{k})\colon T\cap B_{\rm bad}\neq \emptyset\}\\
%    &\mathbf{T}^{\rm  rest}_{n,k}\defas \{T\in \mathbf{T}^{\rm crack}_{n,k}(\psi_n)\setminus \mathbf{T}^{\rm crack}_{n,k}(u_n^{k})\colon T\cap (B_{\rm bad}\cup B_{\rm good})= \emptyset\}.
%  \end{align*}
%
%

\subsection*{Bad squares}

We start with the bad squares and aim at estimating 
\begin{equation}\label{toprove-bad}
  \limsup_{n\to \infty}\sum_{T\in \mathbf{T}^{\rm cr, b}_{n,k}(\psi_n) \setminus  \mathbf{T}^{\rm crack}_{n,k}  } \frac{|T|}{ \eps_n \EEE }\leq C\theta 
\end{equation} 
for a universal constant $C>0$.  This will rely on the following two lemmas whose proofs will be deferred to Section \ref{sec: lemma prof} below. For their formulation, we recall the definition of the neighborhood $N_{\rm out}(T)$ in \eqref{NNN} and \JJJ of the sets $P_i^{n,\pm}$ in \eqref{1301252256}, see also Figure \ref{N-x-out-figure}. We then \EEE introduce the set of triangles, where $N_{\rm out}(T)$ is contained in one of the sets $P_i^{n,+}$ or $P_i^{n,-}$, namely 
\begin{align}\label{sidedef}
\mathbf{T}^{\rm side}_{n} = \Big\{ T \in \mathbf{T}^{\rm bad}_{n,k} \colon \ \   N_{\rm out}(T) \subset \overline{P_i^{n,+}} \text{ or }  N_{\rm out}(T) \subset \overline{P_i^{n,-}} \text{ for some }Q_i \in \mathcal{B}_{\rm bad}  \Big\}.
\end{align}
We also recall the definition of $ L^n_i$ and $\Gamma_i^n$   in \EEE \eqref{Ri-estimates} and  \eqref{def: conticurve}, respectively.

\begin{lemma}\label{lemma7.1.5}
Consider $T \in \mathbf{T}^{\rm bad}_{n,k}$ satisfying one of the conditions

% \begin{equation}
%     \text{(a)  $T \cap (\VVV L^n_i \EEE\cup \Gamma^i_n) = \emptyset$}.\quad\quad \text{
%     (b)  $T \cap \VVV L^n_i \EEE = \emptyset$ and $T \in \mathbf{T}^{\rm side}_{n}$.}
% \end{equation}

\begin{itemize}
\item[(a)]   $T \cap (L^n_i \cup \Gamma^i_n) = \emptyset$,
\item[(b)]   $T \cap L^n_i  = \emptyset$ and $T \in \mathbf{T}^{\rm side}_{n} \MMM \sm \mathbf{T}_{n,k}^{\rm crack}$. 
% \RRR {\tt important. please check, also proof} \EEE
\end{itemize}
Then, it holds $| (  \nabla  \psi_n)_T|\leq C$  for some $C$ only depending on $\psi$, \EEE and in particular, for $n$ large enough, we have by   \eqref{extraremark} that $ T \notin  \mathbf{T}^{\rm crack}_{n,k-1}(\psi_n) \EEE \setminus \mathbf{T}^{\rm crack}_{n,k}$.   
\end{lemma}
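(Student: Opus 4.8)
\textbf{Plan for the proof of Lemma~\ref{lemma7.1.5}.} The goal is to show that, under either hypothesis (a) or (b), the affine piece $\psi_n$ on $T$ has gradient bounded by a constant depending only on $\psi$ (and $\theta$), so that $\eps_n |e(\psi_n)_T|^2 < \kappa$ for $n$ large; combined with \eqref{extraremark} this forces $T \notin \mathbf{T}^{\rm crack}_{n,k-1}(\psi_n) \setminus \mathbf{T}^{\rm crack}_{n,k}$. The key observation is that $\psi_n$ on $T \in \mathbf{T}^{\rm bad}_{n,k}$ is the affine interpolation of the nodal values $\psi_n(x) \in \{\phi_i^{n,+}(x), \phi_i^{n,-}(x)\}$ over $x \in \mathcal{V}(T)$, and that $\phi_n$ satisfies the Lipschitz bound $\Vert \nabla \phi_n \Vert_{L^\infty(\Omega' \setminus J_{\phi_n})} \le C_\psi$ from \eqref{eq: Lipschitz1}, together with $J_{\phi_n} \cap Q_i \, \tilde\subset \, \Gamma_i^n \cup L_i^n$ from \eqref{1401251338}. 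Hence the only way the interpolation on $T$ can develop a large gradient is if the three nodal values are taken from \emph{both} traces $\phi_i^{n,+}$ and $\phi_i^{n,-}$, which differ substantially across the jump; as long as all three nodes' values come from the same side (the same trace, evaluated across a region where $\phi_i^{n,\pm}$ has Sobolev regularity), the standard interpolation estimate $|(\nabla \psi_n)_T| \le C \eps_n^{-1} \mathrm{osc}_T + \Vert \nabla \phi_n \Vert_{L^\infty} \le C$ applies, using that the edge lengths of $T$ are comparable to $\eps_n$ and that consecutive nodal values differ by at most $C_\psi \eps_n$.

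\textbf{Case (a).} Here $T \cap (L_i^n \cup \Gamma_i^n) = \emptyset$, so $T$ is contained in one of the two open regions $P_i^{n,+}$ or $P_i^{n,-}$ (plus possibly the part of $Q_i$ outside $R_i^n$, where $\phi_n = \psi$ by \eqref{outside-R-i}). Since $\psi_n$ is built on the vertex set $\mathcal{V}_{i,n}$, which consists of vertices of triangles touching $Q_i$, I would first check that every vertex $x \in \mathcal{V}(T)$ that is assigned a value $\psi_n(x) = \phi_i^{n,\pm}(x)$ is assigned it for the side containing $T$: this is automatic for $x \notin \Gamma_i^n \cup L_i^n$ by the rule $\psi_n(x) = \phi_n(x)$, and for $x \in \Gamma_i^n$ (which can happen only if $x$ is a vertex of a neighbouring triangle, not of $T$ itself, since $T \cap \Gamma_i^n = \emptyset$) one invokes the $N_{\rm out}^x$-based rule together with the fact that all triangles in $\mathbf{T}^x$ near $T$ lie on the same side. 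Thus all three nodal values of $\psi_n$ on $T$ are traces of a single $W^{2,\infty}$ (hence $W^{1,\infty}$) function on a connected neighbourhood of $T$, and the interpolation bound gives $|(\nabla \psi_n)_T| \le C_\psi$.

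\textbf{Case (b).} Now $T \cap L_i^n = \emptyset$ and $T \in \mathbf{T}^{\rm side}_n \setminus \mathbf{T}^{\rm crack}_{n,k}$, i.e.\ $N_{\rm out}(T) \subset \overline{P_i^{n,+}}$ or $\subset \overline{P_i^{n,-}}$, say $N_{\rm out}(T) \subset \overline{P_i^{n,+}}$. By definition $N_{\rm out}(T) = \bigcup_{x \in \mathcal{V}(T)} N_{\rm out}^x$ and $T \subset N_{\rm out}(T)$ since, $T$ being a triangle of $\mathbf{T}^{\rm bad}_{n,k}$ not in $\mathbf{T}^{\rm crack}_{n,k}$, we have $|T \cap \Omega_{n,k}^{\rm mod}| = 0$ and hence $T \in \mathbf{T}^x_{\rm out}$ for each $x \in \mathcal{V}(T)$; here one uses that $T \notin \mathbf{T}_{n,k}^{\rm crack}$ to conclude $T$ is among the "outside" triangles. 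Consequently every vertex $x \in \mathcal{V}(T)$ with $x \in \Gamma_i^n \cap K_n(t)$ receives the value $\psi_n(x) = \phi_i^{n,+}(x)$ by the defining rule ("if $N_{\rm out}^x \, \tilde\subset \, P_i^{n,+}$ then $\psi_n(x) := \phi_i^{n,+}(x)$"), since $N_{\rm out}^x \subset N_{\rm out}(T) \subset \overline{P_i^{n,+}}$; vertices $x \in \Gamma_i^n \setminus K_n(t)$ also receive $\phi_i^{n,+}(x)$ by convention, and vertices not on $\Gamma_i^n \cup L_i^n$ receive $\phi_n(x) = \phi_i^{n,+}(x)$ because they lie in $\overline{P_i^{n,+}}$. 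So again all three nodal values are traces of $\phi_i^{n,+}$, which is $W^{2,\infty}$ on a neighbourhood of $N_{\rm out}(T)$, and the interpolation estimate yields $|(\nabla \psi_n)_T| \le C$.

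\textbf{Conclusion and main obstacle.} In both cases $|(\nabla \psi_n)_T| \le C(\psi)$, so $\eps_n |e(\psi_n)_T|^2 \le \eps_n |(\nabla\psi_n)_T|^2 \le C \eps_n < \kappa$ for $n$ large; by the contrapositive of \eqref{extraremark}, which says every $T \in \mathbf{T}^{\rm crack}_{n,k-1}(\psi_n) \setminus \mathbf{T}^{\rm crack}_{n,k}$ satisfies $\eps_n |e(\psi_n)_T|^2 \ge \kappa$, we get $T \notin \mathbf{T}^{\rm crack}_{n,k-1}(\psi_n) \setminus \mathbf{T}^{\rm crack}_{n,k}$. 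The main obstacle is the bookkeeping in Case (b): one must carefully verify that $T \in \mathbf{T}^x_{\rm out}$ for all its vertices (which needs $T \notin \mathbf{T}^{\rm crack}_{n,k}$ and $|T \cap \Omega_{n,k}^{\rm mod}| = 0$, i.e.\ the precise relation between $\mathbf{T}^{\rm crack}_{n,k}$-membership and $\Omega_{n,k}^{\rm mod}$ as in Remark~\ref{remark-on-holes}), and that the side-assignment rules are mutually consistent so that no vertex of $T$ ends up with a value from the wrong trace — the danger being a vertex shared with a triangle on the other side of $\Gamma_i^n$, which is precisely excluded by the $N_{\rm out}^x$-based rule combined with $T \in \mathbf{T}^{\rm side}_n$.
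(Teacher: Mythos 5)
Your proof takes essentially the same route as the paper's: in case (a) you use that $T \cap J_{\phi_n} = \emptyset$ (via \eqref{1401251338}) together with $\Vert \nabla\phi_n\Vert_{L^\infty} \le C_\psi$ from \eqref{eq: Lipschitz1} and the fact that $\psi_n = \phi_n$ on $\mathcal{V}(T)$; in case (b) you use that $T$ lies entirely on one side of $\Gamma_i^n$ (because $T \subset N_{\rm out}(T)$ and $T \in \mathbf{T}^{\rm side}_n$), so $\psi_n$ on $T$ interpolates a single one-sided trace $\phi_i^{n,\pm}$. Two small remarks. In case (a) the parenthetical about vertices of $T$ lying on $\Gamma_i^n$ is vacuous: since $\mathcal{V}(T) \subset T$ and $T \cap (\Gamma_i^n \cup L_i^n) = \emptyset$, no vertex of $T$ can lie on $\Gamma_i^n \cup L_i^n$, so the $N_{\rm out}^x$-based rule never comes into play there; the paper's argument simply applies $\psi_n(x)=\phi_n(x)$ for all three vertices. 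In case (b) you correctly identify the nontrivial step — that $T \notin \mathbf{T}^{\rm crack}_{n,k}$ should give $|T \cap \Omega^{\rm mod}_{n,k}|=0$, hence $T \subset N_{\rm out}(T)$. The paper states this as the implication $N_{\rm out}(T) \tilde\subset P_i^{n,\pm} \Leftrightarrow T \tilde\subset P_i^{n,\pm}$ without elaboration in the proof of Lemma~\ref{lemma7.1.5}; the justification via Remark~\ref{remark-on-holes} (together with the constraint $T \cap \Gamma_i^n \subset K_n(t)$ that is in force when the lemma is actually applied, since the remaining cases are absorbed into \eqref{rare-situation} and \eqref{firsttoproveX}) appears explicitly only in the proof of Lemma~\ref{lemma7.1}. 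So your flagging of this point is warranted, but it is a gap of exposition rather than of substance, and your proof closes it in the same way the paper implicitly does.
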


\begin{lemma}\label{lemma7.1}
For $n$ large enough,  the curves \EEE  $ \Gamma^n_i \EEE$  satisfy the properties    
    \begin{align}\label{rare-situation}
 \sum_{Q_i \in \mathcal{B}_{\rm bad}}      \# \big\{T\in \mathbf{T}_{n}(\psi_n)\colon \ \emptyset \neq T\cap  \Gamma^n_i \EEE \subset K_n(t), \ T \notin \mathbf{T}^{\rm side}_{n}\JJJ \cup \mathbf{T}_{n,k}^{\rm crack} \EEE \big\}\leq \frac{C \,\theta}{\eps_n}\,.  
    \end{align}
\begin{equation}\label{firsttoproveX}
 \sum_{Q_i \in \mathcal{B}_{\rm bad}} \EEE\#\big\{T\in \mathbf{T}_{n}(\psi_n)  \colon \, ( \Gamma^n_i \EEE\setminus K_n(t))\cap T\neq \emptyset \text{ or } L^n_i \EEE \cap T \neq \emptyset \big\}\leq  \frac{C\,\theta}{\eps_n}\,.\end{equation}
\end{lemma}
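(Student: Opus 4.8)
\textbf{Plan of proof of Lemma~\ref{lemma7.1}.}
The two estimates both count triangles in the (new) triangulation $\mathbf{T}_n(\psi_n)$ that meet one of the transferred curves $\Gamma_i^n$ or the lateral boundary $L_i^n$, and the strategy is to exploit that these triangles come only from the parts of the mesh that were \emph{not} inherited from $\mathbf{T}_n(u_n^k)$ together with the very few triangles of $\mathbf{T}_n(u_n^k)$ sitting in the (thin) rectangle $R_i^n$. Concretely, I would first recall from the construction that, inside each $Q_i\in\mathcal{B}_{\rm bad}$, one has $\phi_n=\psi$ outside $R_i^n$, that $J_{\phi_n}\cap Q_i\,\tilde\subset\,\Gamma_i^n\cup L_i^n\subset R_i^n$, and that $\mathcal{H}^1(\bigcup_{\mathcal{B}_{\rm bad}}L_i^n)\le C\theta$ by \eqref{Ri-estimates} and $\hu\big((\Gamma_i^n\cup L_i^n)\setminus J_{v_n}\big)\le C\theta r_i$ by \eqref{1301251232}, while each $Q_i$ contributes $\mathcal{H}^1(J_\psi\cap Q_i)\le 4r_i$ by \eqref{normal-angle-estimate0} and $\sum_i r_i\le C$ by \eqref{besicovitch-props2} and finiteness of $\mathcal{H}^1(J_\psi)$. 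The key geometric point is that each triangle has sidelength at least $\eps_n$ and bounded interior angle $\theta_0$, so any family of triangles all meeting a given rectifiable set of $\mathcal{H}^1$-length $\ell$ has cardinality $\le C(\ell/\eps_n+1)$, and moreover a given point of $\Omega'$ lies in only a bounded number $c=c(\theta_0)$ of triangles.

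For \eqref{firsttoproveX} I would argue as follows. Triangles $T$ with $L_i^n\cap T\neq\emptyset$ are, by the length bound just recalled, at most $C\big(\tfrac{1}{\eps_n}\mathcal{H}^1(\bigcup_{\mathcal{B}_{\rm bad}}L_i^n)+\#\mathcal{B}_{\rm bad}\big)\le C\theta/\eps_n+C\#\mathcal{B}_{\rm bad}$; since $\#\mathcal{B}_{\rm bad}$ is a fixed finite number (depending on $\theta$ but not on $n$) this is $\le C\theta/\eps_n$ for $n$ large. For triangles $T$ with $(\Gamma_i^n\setminus K_n(t))\cap T\neq\emptyset$ I would use $\Gamma_i^n\setminus K_n(t)\,\tilde\subset\,J_{v_n}\setminus K_n(t)$ together with $\hu\big((\Gamma_i^n\cup L_i^n)\setminus J_{v_n}\big)\le C\theta r_i$ and $\limsup_n\mathcal{H}^1(J_{v_n}\setminus K_n(t))\le\theta$ from \eqref{almost-recovery-seq}: thus $\mathcal{H}^1\big(\bigcup_i(\Gamma_i^n\setminus K_n(t))\big)\le\mathcal{H}^1(J_{v_n}\setminus K_n(t))+C\theta\sum_i r_i\le C\theta$ (for $n$ large), and again the length-to-cardinality bound gives at most $C\theta/\eps_n$ such triangles. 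Summing the two contributions over $\mathcal{B}_{\rm bad}$ yields \eqref{firsttoproveX}.

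For \eqref{rare-situation} the situation is more delicate because now we count triangles meeting the part of $\Gamma_i^n$ lying \emph{inside} $K_n(t)$, whose length is \emph{not} small, so a crude length bound fails. Here I would invoke the structural normalizations imposed on $\Gamma_i^n$ during the construction: every connected component of $(\Gamma_i^n\setminus K_n(t))\cap Q_i$ joins two distinct connected components of $\Omega_{n,k}^{\rm mod}$, each component of $\Omega_{n,k}^{\rm mod}$ meets at most two components of $(\Gamma_i^n\setminus K_n(t))\cap Q_i$, and each $\overline{\compo}$ is non-self-intersecting. The plan is to show that a triangle $T$ with $\emptyset\neq T\cap\Gamma_i^n\subset K_n(t)$ and $T\notin\mathbf{T}^{\rm side}_n\cup\mathbf{T}_{n,k}^{\rm crack}$ must sit near a \emph{transition point} where $\Gamma_i^n$ passes from one component of $\Omega_{n,k}^{\rm mod}$ to another: indeed, if $T$ is a mesh triangle that is not itself broken ($T\notin\mathbf{T}_{n,k}^{\rm crack}$) and $N_{\rm out}(T)$ is not entirely on one side of $\Gamma_i^n$ ($T\notin\mathbf{T}^{\rm side}_n$), then one of the vertices $x\in\mathcal{V}(T)\cap\Gamma_i^n$ must be a point where $\partial\Omega_{n,k}^{\rm mod}$ is not locally a single arc bounding one component — i.e. $x$ lies on the boundary of at least two distinct components of $\Omega_{n,k}^{\rm mod}$ (or near the endpoint of a piece of $\Gamma_i^n\setminus K_n(t)$). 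The number of such transition points is controlled by the number of connected components of $\Omega_{n,k}^{\rm mod}$ visited by $\bigcup_i\Gamma_i^n$, which by the two-component normalization is bounded by a constant times the number of components of $(\Gamma_i^n\setminus K_n(t))\cap Q_i$ summed over $i$; and that number is in turn bounded, again by $\hu\big((\Gamma_i^n\cup L_i^n)\setminus J_{v_n}\big)\le C\theta r_i$, $\limsup_n\mathcal{H}^1(J_{v_n}\setminus K_n(t))\le\theta$, and the fact that distinct components of a subset of $J_{v_n}$ (with endpoints on the $\eps_n$-mesh) are at mutual distance $\ge\eps_n$, by $C\theta/\eps_n$. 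Finally, each transition point lies in at most $c(\theta_0)$ triangles, so the total count is $\le C\theta/\eps_n$, which is \eqref{rare-situation}.

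\textbf{Main obstacle.} The routine part is the length-versus-cardinality bookkeeping; the genuinely delicate point is the combinatorial argument for \eqref{rare-situation}, namely making precise that a triangle meeting $\Gamma_i^n\cap K_n(t)$ but neither broken nor one-sided must be charged to a transition between two connected components of $\Omega_{n,k}^{\rm mod}$, and then counting those transitions by the smallness of $\mathcal{H}^1(J_{v_n}\setminus K_n(t))$ rather than by the (large) length of $K_n(t)$. This is exactly where the three normalizations of $\Gamma_i^n$ (join two distinct components, meet each component at most twice, no self-intersections) are used, and verifying that they legitimately reduce the count to $O(\theta/\eps_n)$ — in particular ruling out long stretches of $\Gamma_i^n$ running inside a single component of $\Omega_{n,k}^{\rm mod}$ and generating many non-one-sided triangles — is the crux of the proof.
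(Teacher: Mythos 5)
Your argument for \eqref{firsttoproveX} is essentially the paper's: triangles meeting $L^n_i$ are controlled by \eqref{Ri-estimates}, and for $\Gamma^n_i\setminus K_n(t)$ one uses \eqref{1301251232} and \eqref{almost-recovery-seq} together with the observation that each connected component of $\Gamma^n_i\setminus K_n(t)$ has $\mathcal{H}^1$-length at least $c\eps_n$ (because it joins two distinct components of $\Omega_{n,k}^{\rm mod}$, and distinct components of $K_n(t)$ are at mutual distance $\ge c\eps_n$), so the ``$+1$'' per component in the length-to-cardinality estimate is swallowed. You should make that last point explicit, but it is the same argument as \eqref{forcomponents} in the paper.

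For \eqref{rare-situation}, however, your charging scheme has a genuine gap. You propose to bound the number of ``transition'' triangles by (a constant times) the number of connected components of $(\Gamma^n_i\setminus K_n(t))\cap Q_i$, summed over $i$, and to control that by the smallness of $\mathcal{H}^1(J_{v_n}\setminus K_n(t))$. This misses the second and more dangerous type of transition identified in the paper: $\Gamma^n_i$ may pass from $\partial H_1$ to $\partial H_2$, with $H_1$ and $H_2$ two distinct connected components of $\Omega_{n,k}^{\rm mod}$ touching at a vertex, \emph{without ever leaving $K_n(t)$}. A long chain $H_1 \to H_2 \to \cdots \to H_m$ of components glued at vertices generates $\sim m$ such transitions (and hence $\sim m$ triangles that are neither one-sided nor in $\mathbf{T}_{n,k}^{\rm crack}$) while contributing possibly no new components to $\Gamma^n_i\setminus K_n(t)$ at all; your count therefore cannot see them. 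The two structural normalizations you invoke (each component of $\Omega_{n,k}^{\rm mod}$ meets at most two components of $\Gamma^n_i\setminus K_n(t)$, and intersects at most two other components of $\Omega_{n,k}^{\rm mod}$ along $\Gamma^n_i$) bound the transitions \emph{per component} but do not bound the number of components visited by the chain. The paper resolves this by bounding the relevant triangle count up to a multiplicative constant by the \emph{total} number of connected components $\#\mathcal{C}(\Omega_{n,k}^{\rm mod})$, and then invoking the extra statement \eqref{extra-statement} of Theorem~\ref{generaltheorem} together with the energy bound (Corollary~\ref{cor: energy bound}) to obtain $\#\mathcal{C}(\Omega_{n,k}^{\rm mod}) \le C\eta_n/\eps_n$, which is $\le C\theta/\eps_n$ for $n$ large since $\eta_n\to 0$. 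This component-count estimate, delivered by the void-modification construction, is the input your proposal is missing, and it is exactly what rules out the long-chain scenario; the smallness of $\mathcal{H}^1(J_{v_n}\setminus K_n(t))$ alone does not.
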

 
With these auxiliary results we can prove  \eqref{toprove-bad}. 
    In view of \eqref{rare-situation}, \eqref{firsttoproveX}, and the fact that $|T| \le C  \eps^2_n \EEE$ for all $T \in \mathbf{T}^{\rm cr, b}_{n,k}(\psi_n) $  (recall \eqref{eq: 10.6}), \EEE   it is enough to confirm that each $T \in \mathbf{T}^{\rm cr, b}_{n,k}(\psi_n) \setminus  \mathbf{T}_{n,k}^{\rm crack} $ lies in the collection of triangles estimated in \eqref{rare-situation} or \eqref{firsttoproveX}.  Indeed, all triangles not lying in these collections \JOS either satisfy (a) or (b) of Lemma \ref{lemma7.1.5} or fulfill $T\in \mathbf{T}_{n,k}^{\rm crack}$. In both cases, it holds $T\notin \mathbf{T}^{\rm cr, b}_{n,k}(\psi_n) \setminus  \mathbf{T}_{n,k}^{\rm crack} $ for $n$ large enough, and thus we obtain a contradiction. \EEE

\subsection*{Good squares}
For the good squares, our goal is to prove  
\begin{equation}\label{enoughtoprove}
  \limsup_{n\to \infty}\sum_{T\in \mathbf{T}^{\rm cr, g}_{n,k}(\psi_n) \setminus  \mathbf{T}^{\rm crack}_{n,k}} \frac{|T|}{\eps_n}\leq \sin(\theta_{0})\mathcal{H}^1\big((J_{\psi}\setminus K(t))\cap B_{\rm good}\big)+C\theta\,.
\end{equation}
 We \EEE define the `cracked triangles' in $Q_i\JJJ \in \mathcal{B}_{\rm good}$ according to $\psi_n$ by   
\begin{equation}\label{split-of-jumps}
    \mathbf{T}'_n(\psi_n, Q_i)= \big\{T\in\mathbf{T}_n(\psi_n) \colon T\cap (  J_{\phi_n} \cap Q_i)\EEE \neq \emptyset \big\}\,. 
\end{equation}
 We state an important property whose proof is again deferred to Section \ref{sec: lemma prof}. \EEE

\begin{lemma}\label{lemma7.4}
It holds that 
\begin{equation}\label{crucial-inclusion}
 \mathbf{T}^{\rm cr, g}_{n,k}(\psi_n)\setminus \mathbf{T}^{\rm crack}_{n,k}    \subset  \bigcup_{Q_i\in \mathcal{B}_{\rm good}}\mathbf{T}'_n(\psi_n,Q_i)\,.
\end{equation}
\end{lemma}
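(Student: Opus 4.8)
The statement claims that every triangle $T$ that is ``newly cracked'' due to $\psi_n$ inside a good square $Q_i$ must intersect the transferred jump set $J_{\phi_n} \cap Q_i$. The key idea is that outside of $\bigcup_{Q_i \in \mathcal{B}_{\rm good}} \mathbf{T}'_n(\psi_n, Q_i)$, the function $\psi_n$ coincides with a ``safe'' interpolation on which the symmetric gradient remains controlled, so such triangles cannot be cracked unless they were already in $\mathbf{T}^{\rm crack}_{n,k}$.

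\textbf{Step 1: Reduce to the structure of the triangulation inside good squares.} First I would recall the three-fold decomposition of $\mathbf{T}^{\rm good}_{n,k}$ into the types (i), (ii), (iii) described just before the statement, together with the fact that $\mathbf{T}^{\rm good}_{n,k}$ covers $B^n_{\rm good}$ and that, away from the buffer region $N^{\rm crack}_{n,k}$, the triangulation agrees with the background mesh $\mathbf{Z}_n$. A triangle $T \in \mathbf{T}^{\rm cr,g}_{n,k}(\psi_n) \setminus \mathbf{T}^{\rm crack}_{n,k}$ is, by definition of $\mathbf{T}^{\rm crack}_{n,k-1}(\psi_n)$ in the evolutionary model, a triangle for which either $\eps_n |e(\psi_n)_T|^2 \ge \kappa$ or $\dist(T, \mathbf{Z}_n(\psi_n)) \ge 10^6 \eps_n$; since $T \notin \mathbf{T}^{\rm crack}_{n,k}$ and the construction of $\mathbf{T}^{\rm good}_{n,k}$ guarantees (via points \eqref{mesh-constr-2} and the definition \eqref{def: new-crackset}) that all triangles far from $\bigcup_k S^n_k$ and from $\Omega^{\rm crack}_{n,k}$ belong to $\mathbf{Z}_n$, the distance alternative is excluded for $n$ large. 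Hence we are reduced to showing: if $T \in \mathbf{T}^{\rm good}_{n,k}$, $T \notin \mathbf{T}^{\rm crack}_{n,k}$, and $T \cap (J_{\phi_n} \cap Q_i) = \emptyset$ for every $Q_i \in \mathcal{B}_{\rm good}$, then $\eps_n |e(\psi_n)_T|^2 < \kappa$, i.e.\ $T \notin \mathbf{T}^{\rm cr,g}_{n,k}(\psi_n)$.

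\textbf{Step 2: On such triangles $\psi_n$ is a controlled interpolation.} For $Q_i \in \mathcal{B}^{\rm large}_{\rm good}$ we have $\mathbf{T}^{\rm good}_{n,k} \cap \{T : T \cap Q_i \neq \emptyset\} = \mathbf{T}_n(u_n^k)$-triangles and $\psi_n$ is the affine interpolation of $\psi$, which is $W^{2,\infty}$ near such $T$ (as $J_\psi \cap Q_i$ is covered by $J_{\phi_n}$ up to the reflection construction), so $|e(\psi_n)_T| \le \|\nabla \psi\|_{L^\infty} \le C_\psi$; thus $\eps_n |e(\psi_n)_T|^2 \to 0$. For $Q_i \in \mathcal{B}^{\rm small}_{\rm good}$ the triangle $T$ with $T \cap J_{\phi_n} = \emptyset$ lies entirely in one of $P^{n,\pm}_i$ (or outside $R^n_i$, where $\psi_n = \psi$ by \eqref{outside-R-i}), and on $P^{n,\pm}_i$ the function $\phi_n$ equals $\phi^{n,\pm}_i \in W^{2,\infty}$ with $\|\phi_n\|_{W^{2,\infty}(Q_i \setminus \Gamma^n_i)} \le C_{\psi,\theta} \|\psi\|_{W^{2,\infty}}$ by \eqref{14012513295}. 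Since $\psi_n$ is the (nodal) affine interpolation of $\phi_n$ on $T$, the interpolation estimate gives $|(\nabla \psi_n)_T| \le \|\nabla \phi_n\|_{L^\infty(T)} + C \eps_n \|\nabla^2 \phi_n\|_{L^\infty(T)} \le C_\psi + C_{\psi,\theta}\eps_n$, hence again $\eps_n |e(\psi_n)_T|^2 < \kappa$ for $n$ large. I would spell out that the only delicate case is a triangle of type (iii), living in $\mathbf{\tilde{T}}_n$: there, property \eqref{mesh-constr-1} ensures no vertex of $T$ lies on the segments $S^n_k \subset J_{\phi_n}$, and if additionally $T \cap J_{\phi_n} = \emptyset$ then all three vertices carry values from one and the same smooth branch $\phi^{n,\pm}_i$ of $\phi_n$, so the interpolation estimate applies verbatim with $\eps_n$ replaced by the (comparable) mesh size of $\mathbf{\tilde{T}}_n$.

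\textbf{Step 3: Conclude by contraposition.} Combining Steps 1 and 2: a triangle in $\mathbf{T}^{\rm cr,g}_{n,k}(\psi_n) \setminus \mathbf{T}^{\rm crack}_{n,k}$ cannot satisfy $T \cap (J_{\phi_n} \cap Q_i) = \emptyset$ for all $Q_i \in \mathcal{B}_{\rm good}$, which is exactly the inclusion \eqref{crucial-inclusion}. The main obstacle I anticipate is bookkeeping the three triangle types of $\mathbf{T}^{\rm good}_{n,k}$ and verifying that, for each, ``not touching $J_{\phi_n}$'' forces $\psi_n$ to interpolate a single smooth branch of $\phi_n$ — in particular handling the buffer-zone triangles of type (ii), where $\psi_n$ is the interpolation of $\phi_n$ on the (possibly irregular) mesh $\mathbf{T}_n(u^k_n)$; there one uses that such $T$ lies in $N^{\rm crack}_{n,k}$, is of diameter $\sim \eps_n$, and that $\phi_n = \psi$ or equals one smooth branch in a neighborhood of $T$ whenever $T \cap J_{\phi_n} = \emptyset$, so the interpolation error is still $O(\eps_n)$. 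None of these estimates is deep; the content of the lemma is the combinatorial observation that ``newly cracked'' is incompatible with ``disjoint from the transferred jump''.
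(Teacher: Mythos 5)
Your proof is correct and follows the paper's contraposition argument. The paper's version is a bit leaner: Step~1 is unnecessary because \eqref{extraremark}, established just after the definition of $\psi_n$, already rules out the distance alternative and yields $\eps_n|e(\psi_n)_T|^2\geq\kappa$ directly for any $T\in\mathbf{T}_{n,k-1}^{\rm crack}(\psi_n)\setminus\mathbf{T}^{\rm crack}_{n,k}$; and in Step~2 the paper avoids $\nabla^2\phi_n$ altogether by noting that, since $\psi_n$ coincides with $\phi_n$ at all three vertices $x,x'\in\mathcal{V}(T)$ and $\phi_n$ is $C_\psi$-Lipschitz on $T$ by \eqref{eq: Lipschitz1} (as $T\cap J_{\phi_n}=\emptyset$), one has $|(\nabla\psi_n)_T(x-x')|=|\phi_n(x)-\phi_n(x')|\leq C_\psi|x-x'|$ along all three edges, hence $|(\nabla\psi_n)_T|\leq C$ with $C$ depending only on $\psi$ and $\theta_0$, contradicting the cracked condition for $n$ large. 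Your $W^{2,\infty}$-based interpolation estimate and the case analysis over the triangle types (i)--(iii) are valid (the construction does give $\phi_n\in W^{2,\infty}$ away from $J_{\phi_n}$) but not needed; the uniform Lipschitz bound on $\phi_n$ suffices for all triangle types at once.
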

\noindent
 Recalling the distinction in  \eqref{bigsmall}, we first address squares $Q_i\in \mathcal{B}_{\rm good}^{\rm large}$. In fact, we want to prove that 

\begin{equation}\label{enoughtoprove-large}
  \sum_{Q_i \in \mathcal{B}_{\rm good}^{\rm large}}  \sum_{ T\in \mathbf{T}'_n(\psi_n,Q_i) } \frac{|T|}{\eps_n} \le C \theta.
\end{equation}
By definition we find that for each $Q_i\in \mathcal{B}_{\rm good}^{\rm large} $ we have $r_i<\theta \, \eps_n^{-1}| \Omega_{n,k}^{\rm crack} \EEE \cap Q_i| $ such that  the energy bound in Corollary \ref{cor: energy bound}  implies \EEE 
$\sum_{Q_i\in  \mathcal{B}_{\rm good}^{\rm large}} r_i \le C\theta\,$.  By  \eqref{normal-angle-estimate0} we find
\begin{align}\label{auch das noch}
\mathcal{H}^1( J_\psi \cap \MMM Q_i) \le Cr_i \quad  \text{and} \quad  \EEE \mathcal{H}^1( J_\psi \cap B_{\rm good}^{\rm large}) \le C\theta.
\end{align} \EEE
Then, by construction \MMM and the regularity of $J_\psi$ \EEE we have $\# \mathbf{T}'_n(\psi_n,Q_i) \le Cr_i/\eps_n$ for all  $Q_i\in  \mathcal{B}_{\rm good}^{\rm large}$ (see \eqref{1501251236}), \EEE i.e.,  by \eqref{eq: 10.6} \EEE we get  
$$  \sum_{Q_i \in \mathcal{B}_{\rm good}^{\rm large}}  \sum_{ T\in \mathbf{T}'_n(\psi_n,Q_i) } \frac{|T|}{\eps_n} \le  \sum_{Q_i \in \mathcal{B}_{\rm good}^{\rm large}} C \eps_n \# \mathbf{T}'_n(\psi_n,Q_i) \le \sum_{Q_i\in  \mathcal{B}_{\rm good}^{\rm large}}  C r_i \le C\theta\,, $$ and hence \eqref{enoughtoprove-large}.  
We now proceed with $\mathcal{B}_{\rm good}^{\rm small}$.   Recall that $J_{\phi_n}\cap Q_i\subset \Gamma_i^n \cup   L^{n}_i$, see \eqref{1401251329}--\eqref{14012513295}. Recalling also \EEE the construction of $\mathbf{T}_n(\psi_n)$ in $Q_i$, \VVV in particular  the \EEE definition of $\mathbf{\tilde{T}'}_{n,i}$ for each $Q_i$, we have 
\begin{equation}\label{schonauchnoch}
    \mathbf{T}'_n(\psi_n, Q_i)\subset  \mathbf{\tilde{T}'}_{n,i}\cup \big\{T\in  \VVV \mathbf{T}_{n,k,i}^{\rm neigh}\EEE   \colon T\cap  \Gamma^n_i \EEE  \cap A_i^n \EEE \neq \emptyset \big\}  \cup \{T\in \mathbf{T}_n(\psi_n)\colon T\cap \VVV L_i^n \EEE \neq \emptyset\}\,.
\end{equation}
 Here, 
\JJJ $  \mathbf{T}_{n,k,i}^{\rm neigh} $ and \EEE
$\mathbf{\tilde{T}'}_{n,i}$ \VVV are defined as in  \eqref{enlarged-crackinA} and \JJJ above \EEE \eqref{mostcrucialproperty}, where we also include the subscript $i$ in the notation since the objects are related to $Q_i$. \EEE  
\VVV We estimate the energetic contribution for each of the three families of triangles in \eqref{schonauchnoch}:
for the first family, \EEE
%term in \eqref{good-small-collection}, 
we can make use of \eqref{mostcrucialproperty}  and \EEE we obtain, for any $Q_i\in \mathcal{B}^{\rm small}_{\rm good}$,
\begin{equation}\label{comb3}
  \limsup_{n \to \infty}  \sum_{T\in \tilde{\mathbf{T}}'_{n,i}} \frac{|T|}{\eps_n} \le     (1+2\theta) \EEE \sin(\theta_{0})\mathcal{H}^1( J_{\psi} \EEE \cap Q_i) \,.
  \end{equation}
For the second family, by \EEE definition of $  \mathbf{T}^{\rm neigh}_{n,k,i}$  in \eqref{enlarged-crackinA} and Lemma \ref{stripe-prop}, we get
  \begin{equation}\label{1501251805}
  \# \big\{T\in  \VVV \mathbf{T}_{n,k,i}^{\rm neigh}\EEE   \colon T\cap  \Gamma^n_i \EEE  \cap A_i^n \EEE \neq \emptyset \big\} \leq  C\EEE \#\big\{T\in  \mathbf{T}_{n,k}^{\rm crack} \EEE \colon T\cap \VVV A^{n}_i\EEE \neq \emptyset \big\}\leq C\frac{\hat{C}}{ \theta^2\EEE},
  \end{equation}
   where the $C$ depends on the constant $10^7$. \EEE  For the third family, \EEE since $\mathcal{H}^1(\VVV L_i^n \EEE)\leq C r_i \EEE \theta$,  we deduce \EEE
  %, we have for the last set of triangles 
\begin{equation}\label{lateral-good-small}
  \#  \{T\in \mathbf{T}_n(\psi_n)\colon T\cap \VVV L_i^n \EEE\neq \emptyset\} \leq C\frac{\theta  r_i \EEE }{\eps_n}\,.
\end{equation}
Therefore, \VVV in view of the fact that $|T|\leq C \eps_n^2$  by \eqref{eq: 10.6}, \EEE  collecting \eqref{comb3}--\eqref{lateral-good-small}  and using \eqref{normal-angle-estimate0} \EEE we get that for each $Q_i\in \mathcal{B}_{\rm good}^{\rm small}$ \JJJ and for $n$ large enough \EEE it holds  
%\RRR again volume bound \EEE
\begin{equation*}\label{good-small-collection}
   \sum_{ T\in \mathbf{T}'_n(\psi_n,Q_i)}  \frac{|T|}{\eps_n} \le \sin(\theta_{0})\mathcal{H}^1( J_{\psi} \EEE \cap Q_i)  + \VVV C\frac{\hat{C}}{ \theta^2 \EEE} \eps_n \EEE + C\theta r_i .
\end{equation*}
%We first deal with the second term. Note that by definition of $  \mathbf{T}^{\rm neigh}_{n,k,i}$  in \eqref{enlarged-crackinA}   and the definition of $\mathbf{T}_{n,k,i}^{\rm neigh}(\xi_{n,i})$ in \eqref{def: A-i-crack}, \EEE  there exists a constant $C>0$ such that 
%\[\#     \mathbf{T}^{\rm neigh}_{n,k,i}(\xi_{n,i}) \EEE \leq C \#\{T\in \mathbf{T}_{n,k-1}^{\rm crack}\colon T\cap A^{n}_i( \xi_{n,i} \EEE )\neq \emptyset \} + C \,.\] 
%\RRR Attention: $+C$ seems to come from $D^n$ but I dont know if this will remain. \EEE 
% Then, in view of Lemma \ref{stripe-prop},  we obtain $\#\mathbf{T}^{\rm neigh}_{n,k,i}(\xi_{n,i})  \le C\frac{\hat{C}}{\theta} + C$ which  \EEE  leads to  
%\begin{equation}\label{comb1}
%  \#\mathbf{T}^{\rm neigh}_{n,k,i}(\xi_{n,i})  \eps_n \to 0\,. 
%\end{equation} 
%Now, we put \eqref{good-small-collection}--\eqref{comb3} together   and $\sum_i r_i \le C$ \EEE to obtain
% \begin{equation}\label{small-result}
%  \limsup_{n\to \infty}  \sum_{Q_i \in \mathcal{B}_{\rm good}^{\rm small}}  \sum_{ T\in \mathbf{T}'_n(\psi_n,Q_i) } \frac{|T|}{\eps_n} \le  \sum_{Q_i \in \mathcal{B}_{\rm good}^{\rm small}} \sin(\theta_{0})\mathcal{H}^1( J_{\psi} \EEE \cap Q_i) + C\theta.
%\end{equation}
%In view of 
From Lemma \ref{lemma7.4} and the fact that \EEE $\sum_i r_i \le C$
%, \eqref{enoughtoprove-large}, and \eqref{small-result} 
we can conclude that 
\begin{equation}
  \limsup_{n\to \infty}\sum_{T\in \mathbf{T}^{\rm cr, g}_{n,k}(\psi_n) \setminus  \mathbf{T}^{\rm crack}_{n,k}} \frac{|T|}{\eps_n}\leq \limsup_{n\to \infty} \sum_{Q_i \in \mathcal{B}_{\rm good}}\sum_{T\in \mathbf{T}'_n(\psi_n,Q_i) } \frac{|T|}{\eps_n}\leq \sum_{Q_i \in \mathcal{B}_{\rm good}} \sin(\theta_{0})\mathcal{H}^1(J_{\psi}\cap Q_i) + C\theta.
\end{equation}
 In view of \eqref{almostnoK} and using again that \EEE $\sum_i r_i \le C$, this implies \eqref{enoughtoprove}.

\subsection*{The remaining part}
Finally, we want to show that  
\begin{equation}\label{toprove-remaining}
    \limsup_{n\to \infty}\sum_{T\in \mathbf{T}^{\rm cr, r}_{n,k}(\psi_n) \setminus  \mathbf{T}^{\rm crack}_{n,k} } \frac{|T|}{\eps_n}\leq C\theta\,.
  \end{equation}
Recall that for all $T\in \mathbf{T}^{\rm rest}_{n,k}$ we have $\psi_n=\psi$ on the vertices of $T$,  see below \eqref{eq: splitti}. \EEE In particular, for all triangles 
$T\in\mathbf{T}^{\rm rest}_{n,k}$ with    $T \cap J_{\psi} =  \emptyset$ \EEE we have $ |e(\psi_n)_{T}| = \EEE |e(\psi)_{T}|\leq |\nabla \psi_{T}| \leq C$ and hence,     $T\notin \mathbf{T}^{\rm cr, r}_{n,k}(\psi_n)\setminus  \mathbf{T}^{\rm crack}_{n,k}$   for $n$ large enough \EEE by  \eqref{extraremark}. Therefore,  it \EEE suffices to consider all $T\in \mathbf{T}^{\rm rest}_{n,k}$ with    $T\cap J_{\psi}\neq \emptyset$.  Due to \EEE  \eqref{besicovitch-props2} and the regularity of $J_{\psi}$, we obtain  
\begin{equation}\label{anothersmalljump}
    \# \{T\in \mathbf{T}^{\rm rest}_{n,k} \colon T\cap J_{\psi}\neq\emptyset\} \leq \frac{C}{\eps_n} \mathcal{H}^1(J_{\psi}\cap B_{\rm rest}) \leq C\frac{\theta}{\eps_n}\,. 
\end{equation}  
Altogether, we obtain $\# (\mathbf{T}^{\rm cr, r}_{n,k}(\psi_n) \setminus  \mathbf{T}^{\rm crack}_{n,k} ) \le  C\frac{\theta}{\eps_n}$.  This along with \eqref{eq: 10.6} yields \EEE \eqref{toprove-remaining}.
   
\subsection*{Conclusion} 
By putting together \eqref{toprove-bad}, \eqref{enoughtoprove}, and \eqref{toprove-remaining} we conclude 
\begin{equation}
   \limsup_{n\to \infty} \sum_{T\in  \mathbf{T}^{\rm crack}_{n,k-1}(\psi_n) \EEE \setminus \mathbf{T}^{\rm crack}_{n,k} } \frac{|T|}{\eps_n} \leq \sin(\theta_{0}) \mathcal{H}^1(J_{\psi}\setminus K(t)) + C\theta \,.
\end{equation}
By \eqref{split-stability} this finally validates \eqref{stab1neu}.

\subsection{Proof of \eqref{stab2neu}: Stability estimate for elastic part}

Finally, we come to the proof of \eqref{stab2neu}. Recall the definition of approximating functions $\phi_n$ from the paragraph `Jump transfer'  in Section~\ref{sec:stabprep}. \EEE   We define   the collection of triangles which do not intersect the jump by \EEE
\begin{equation*}
  \mathbf{D}_n(\phi_n)\defas \{T\in \mathbf{T}_n(\psi_n)\colon \VVV T \cap J_{\phi_n}=\emptyset \EEE \}, \quad \quad D_n(\phi_n)\defas \bigcup_{T\in \mathbf{D}_n(\phi_n)} T \,.
\end{equation*} 
 Next, we introduce the set of triangles $T$ \VIT outside \EEE $\mathbf{T}_{n,k-1}^{\rm crack}(\psi_n)$ where   $|e(\psi_n)_{T}|^2$ is potentially not uniformly controlled. To this end, we let \EEE  
\begin{equation*}\label{1501251117'}
    G_{n,k}(\psi_n):=  \bigcup_ {T \in \mathcal{G}_{n,k}(\psi_n)} T , \EEE
    \end{equation*}
where
\begin{equation*}\label{1501251119}
    \begin{split}
    \mathcal{G}_{n,k}(\psi_n):= \big\{ T \in \mathbf{T}_n(\psi_n) \sm \mathbf{T}_{n,k-1}^{\rm crack}(\psi_n) \colon T \cap  (\Gamma_i^n \cup L_i^n) \neq \emptyset \text{ for }Q_i \in \mathcal{B}_{\rm bad} \cup   \mathcal{B}^{\rm small}_{\rm good} \EEE \text{ or }  T \notin \mathbf{D}_n(\phi_n)  \big\}.
    \end{split}
\end{equation*}
We  split the elastic energy $ \mathcal{E}_n^{\rm elast} (\psi_n,t)$ into the two parts 
\begin{equation}\label{split-elastic}
    \int_{\Omega\sm \Omega_{n,k-1}^{\rm crack}(\psi_n) }|e(\psi_n)|^2\, {\rm d}x = \int_{\Omega\sm (G_{n,k}(\psi_n)\cup \Omega_{n,k-1}^{\rm crack}(\psi_n))} |e(\psi_n)|^2\, {\rm d}x +\int_{G_{n,k}(\psi_n)} |e(\psi_n)|^2\, {\rm d}x\,. 
  \end{equation}
% \begin{equation}\label{split-elastic}
%   \int_{\Omega\sm \Omega_{n,k-1}^{\rm crack}(\psi_n) }|e(\psi_n)|^2\, {\rm d}x = \int_{D_n(\phi_n)\sm \Omega_{n,k-1}^{\rm crack}(\psi_n)} |e(\psi_n)|^2\, {\rm d}x +\int_{\Omega \sm (\Omega_{n,k-1}^{\rm crack}(\psi_n)\cup D_n(\phi_n))} |e(\psi_n)|^2\, {\rm d}x 
% \end{equation}
In order to estimate the second term, we  need \EEE the following lemma. \EEE
\begin{lemma}\label{lemma: negli-jump}
  For $\eps_n$ small enough, it holds that
%  \begin{equation}\label{content-of-lemma7.5}
%    \#\, \mathbf{T}_n(\psi_n)\sm \bigl(\mathbf{D}_n(\phi_n) \cup \mathbf{T}_{n,k-1}^{\rm crack}(\psi_n)\bigr) \leq C\theta\eps_n^{-1} 
%  \end{equation} 
%  % \quad \text{and }\quad  \big|\Omega \sm (\Omega_{n,k-1}^{\rm crack}(\psi_n)\cup D_n(\phi_n))\big|\leq C\theta \eps_n
  \VVV 
%  \begin{equation}\label{1501251053}
%  \#\, \{T \in \mathbf{T}_n(\psi_n) \colon T \cap \bigcup_{Q_i \in \mathcal{B}_{\rm bad} \cup \mathcal{B}_{\rm good}^{\rm small}} \Gamma_i^n \neq \emptyset\} \sm  \mathbf{T}_{n,k-1}^{\rm crack}(\psi_n) \leq C\theta\eps_n^{-1} 
%  \end{equation}
\begin{equation}\label{1501251053}
\sum_{Q_i \in \mathcal{B}_{\rm bad} \cup \mathcal{B}_{\rm good}^{\rm small}}  \#\, \big( \{T \in \mathbf{T}_n(\psi_n) \colon T \cap  \Gamma_i^n \neq \emptyset\} \sm  \mathbf{T}_{n,k-1}^{\rm crack}(\psi_n) \big) \leq C\theta\eps_n^{-1}.
  \end{equation}
\end{lemma}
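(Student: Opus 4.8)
\textbf{Proof plan for Lemma~\ref{lemma: negli-jump}.}
The plan is to estimate, separately for bad squares and for small good squares, the number of triangles of $\mathbf{T}_n(\psi_n)$ that meet the transferred crack curve $\Gamma_i^n$ and that do \emph{not} already belong to the carried-over crack set $\mathbf{T}_{n,k-1}^{\rm crack}(\psi_n)$. The crucial geometric input is that by construction $\Gamma_i^n$ is essentially contained in $K_n(t)=\partial\Omega_{n,k}^{\rm mod}$ (up to the part in $J_{v_n}\setminus K_n(t)$, whose contribution was already controlled), so a triangle meeting $\Gamma_i^n$ either meets $\Omega_{n,k}^{\rm mod}$ (hence is already ``cracked'' in the appropriate sense, i.e.\ will not contribute) or is pinched against the boundary of $\Omega_{n,k}^{\rm mod}$ at one of finitely many exceptional places.

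First I would treat $Q_i\in\mathcal{B}_{\rm good}^{\rm small}$. Here $\Gamma_i^n\subset A_i^n$ is a single segment lying in the thin strip $A^n_i(\xi_i^n;\Omega_{n,k}^{\rm crack})$ provided by Lemma~\ref{stripe-prop}, and that lemma gives $\#\{T\in\mathbf{T}_{n,k}^{\rm crack}\colon T\cap A_i^n\neq\emptyset\}\le \hat C\theta^{-2}$. Every triangle $T\in\mathbf{T}_n(\psi_n)$ with $T\cap\Gamma_i^n\neq\emptyset$ and $T\notin\mathbf{T}_{n,k-1}^{\rm crack}(\psi_n)$ must, by the inclusion $\Gamma_i^n\,\tilde\subset\, K_n(t)=\partial\Omega_{n,k}^{\rm mod}$ together with $\mathbf{T}_{n,k}^{\rm mod}\subset \mathbf{T}_{n,k}^{\rm crack}\subset\mathbf{T}_{n,k-1}^{\rm crack}(\psi_n)$, satisfy $|T\cap\Omega_{n,k}^{\rm mod}|=0$; so $T$ touches $\partial\Omega_{n,k}^{\rm mod}$ from outside, hence meets the enlarged neighborhood and in particular $A_i^n$, and is adjacent to a triangle counted by Lemma~\ref{stripe-prop}. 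Since each triangle has a bounded number of neighbours (depending only on $\theta_0$), this gives $\le C\hat C\theta^{-2}$ such triangles per square $Q_i\in\mathcal{B}_{\rm good}^{\rm small}$; summing over the (possibly many) squares requires care — one uses that $\#\mathcal{B}_{\rm good}^{\rm small}\le\#\mathcal{B}$ is finite for fixed $\theta$, and that the bound is multiplied by $\eps_n$ when inserted into the energy, so that $\eps_n\cdot C\theta^{-2}\#\mathcal{B}_{\rm good}^{\rm small}\le C\theta$ once $\eps_n$ is small enough depending on $\theta$ and on the fixed Besicovitch cover. The right-hand side $C\theta\eps_n^{-1}$ is exactly what absorbs this.

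Next I would treat $Q_i\in\mathcal{B}_{\rm bad}$, where the argument is essentially the one already prepared in Lemma~\ref{lemma7.1}: split the curve into $\Gamma_i^n\cap K_n(t)$ and $\Gamma_i^n\setminus K_n(t)$. Triangles meeting $\Gamma_i^n\setminus K_n(t)$ (or the lateral boundary $L_i^n$) are counted by \eqref{firsttoproveX}, giving $\le C\theta\eps_n^{-1}$ after summing over $\mathcal{B}_{\rm bad}$. Triangles meeting $\Gamma_i^n\cap K_n(t)$ that are moreover in $\mathbf{T}^{\rm side}_n$ satisfy $|(\nabla\psi_n)_T|\le C$ by Lemma~\ref{lemma7.1.5}(b), hence by \eqref{extraremark} they lie in $\mathbf{T}_{n,k}^{\rm crack}\subset\mathbf{T}_{n,k-1}^{\rm crack}(\psi_n)$ for $n$ large and so do not contribute to the left-hand side; the triangles meeting $\Gamma_i^n\cap K_n(t)$ that are \emph{not} in $\mathbf{T}^{\rm side}_n$ are counted by \eqref{rare-situation}, again giving $\le C\theta\eps_n^{-1}$. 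Adding the two square-types yields the claim.

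The main obstacle I expect is the book-keeping in the good-square case: one must make sure the per-square bound $C\theta^{-2}$ from Lemma~\ref{stripe-prop} is genuinely independent of $n$ and of $i$, and that the summation over $\mathcal{B}_{\rm good}^{\rm small}$ does not reintroduce an $\eps_n$-dependence on the wrong side — this is handled by noting the Besicovitch cover $\mathcal{B}$ is fixed once $\theta$ is fixed, so $\#\mathcal{B}_{\rm good}^{\rm small}$ is a finite constant and the whole sum is $o(\eps_n^{-1})$, in particular $\le C\theta\eps_n^{-1}$ for $\eps_n$ small. The bad-square part, by contrast, is routine given Lemmas~\ref{lemma7.1.5} and \ref{lemma7.1}.
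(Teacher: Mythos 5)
Your plan misses the actual mechanism of the paper's proof and contains two concrete errors. The paper does not count the triangles along $\Gamma_i^n$ geometrically at all; instead it argues by a quantitative jump-height estimate. If $T\in\mathbf{T}_n(\psi_n)\setminus\mathbf{T}_{n,k-1}^{\rm crack}(\psi_n)$ meets $\Gamma_i^n$, then by definition $|e(\psi_n)_T|\le\sqrt{\kappa/\eps_n}$. Using that $\psi_n$ is the affine interpolation of (one-sided traces of) $\phi_n$ at the vertices of $T$, together with the Lipschitz bound on $\phi_n$ off its jump and the Fundamental Theorem of Calculus applied in two transversal directions, one deduces $|[\phi_n](\tilde x)|\le C\sqrt{\eps_n}$ at every $\tilde x\in J_{\phi_n}\cap T$, and then propagates this to all of $J_{\phi_n}\cap N_{\eps_n}(T)$. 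Once $\eps_n$ is so small that $C\sqrt{\eps_n}<c_\theta$, each such $T$ contributes at least $\eps_n$ to $\mathcal{H}^1\big(\{x\in J_{\phi_n}\cap(B_{\rm bad}\cup B_{\rm good}^{\rm small}):|[\phi_n](x)|\le c_\theta\}\big)$, a quantity bounded by $C\theta$ via \eqref{eq: Lipschitz2}(ii); bounded overlap of the $\eps_n$-neighborhoods then gives the count $\le C\theta\eps_n^{-1}$. This ``small symmetric gradient $\Rightarrow$ small jump of $\phi_n$'' step, exploiting two transversal directions to get control of the full jump vector, is the genuinely vectorial ingredient, and your plan has no substitute for it.

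Concretely, your good-square argument fails because for $Q_i\in\mathcal{B}_{\rm good}^{\rm small}$ the curve $\Gamma_i^n$ is a straight segment chosen by Lemma~\ref{stripe-prop} so that the strip $A_i^n$ meets few triangles of $\Omega_{n,k}^{\rm crack}$; in particular $\Gamma_i^n$ is \emph{not} contained in $K_n(t)=\partial\Omega_{n,k}^{\rm mod}$ — it is mostly far from $\Omega_{n,k}^{\rm crack}$ — so the deduction that $T$ is adjacent to a triangle counted in Lemma~\ref{stripe-prop} is simply wrong. Along $\Gamma_i^n$ there are $O(r_i/\eps_n)$ triangles of $\tilde{\mathbf{T}}_n'$, summing to $O(\eps_n^{-1})$ over all squares, and the only reason most of them drop out of the count is that the interpolation $\psi_n$ across $\Gamma_i^n$ forces $\eps_n|e(\psi_n)_T|^2\ge\kappa$ whenever $|[\phi_n]|$ is not tiny. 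Your bad-square argument also fails on its own terms: Lemma~\ref{lemma7.1.5}(b) applies to $T\in\mathbf{T}^{\rm side}_n\setminus\mathbf{T}_{n,k}^{\rm crack}$ and concludes $T\notin\mathbf{T}_{n,k-1}^{\rm crack}(\psi_n)\setminus\mathbf{T}_{n,k}^{\rm crack}$, which with $T\notin\mathbf{T}_{n,k}^{\rm crack}$ gives $T\notin\mathbf{T}_{n,k-1}^{\rm crack}(\psi_n)$ — so these triangles \emph{do} contribute to the left-hand side rather than dropping out as you assert. Moreover the inclusion $\mathbf{T}_{n,k}^{\rm crack}\subset\mathbf{T}_{n,k-1}^{\rm crack}(\psi_n)$ you invoke is not justified (a triangle cracked by $u_n^k$ need not be cracked by $\psi_n$ nor at an earlier step), and \eqref{extraremark} is an implication in the opposite direction from how you use it: bounded $|e(\psi_n)_T|$ does not place $T$ in $\mathbf{T}_{n,k}^{\rm crack}$.
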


We defer the proof to Section \ref{sec: lemma prof}.  In view of   \eqref{firsttoproveX} and \eqref{lateral-good-small}, we have \EEE  
\begin{equation}\label{1501251131}
\sum_{Q_i \in \mathcal{B}_{\rm bad} \cup \mathcal{B}_{\rm good}^{\rm small}}\#\, \{T \in \mathbf{T}_n(\psi_n) \colon T \cap  L_i^n \neq \emptyset\}  \leq C\theta\eps_n^{-1}.
\end{equation}
 Let $\mathbf{T}^{\rm good, large}_{n,k}   = \lbrace T \in  \mathbf{T}_n(u_n^k) \colon \, T \cap B_{\rm good}^{\rm large} \neq \emptyset\rbrace$.  From \eqref{1501251236}, \eqref{auch das noch}, and the argument in \eqref{anothersmalljump} we deduce \EEE
 \begin{equation}\label{final-control-rest}
    \#\big(   (\mathbf{T}^{\rm  rest}_{n,k}  \cup   \mathbf{T}^{\rm good, large}_{n,k}) \EEE \sm \mathbf{D}_n(\phi_n)\big) \leq C\theta \eps_n^{-1}\,.
  \end{equation}  
  By Lemma~\ref{lemma: negli-jump}, \eqref{1501251131}, \eqref{final-control-rest}, and  \eqref{1401251338}, \EEE recalling $|T|\leq C \eps_n^2$ for any $T \in \mathbf{T}_n(\psi_n)$  by \eqref{eq: 10.6} \EEE and using  $|e(\psi_n)_{T}|^2\leq  \kappa \EEE /\eps_n $ for all $T \notin \mathbf{T}_{n,k-1}^{\rm crack}(\psi_n) $, we obtain 
\begin{equation}\label{1501251117}
 | G_{n,k}(\psi_n) | \leq C\theta \eps_n\EEE \quad \text{and}\quad \int_{G_{n,k}(\psi_n)} |e(\psi_n)|^2 \dx \leq C\theta \,.
\end{equation}
Therefore, we are left to estimate the elastic energy contribution on $\Omega\sm (G_{n,k}(\psi_n)\cup \Omega_{n,k-1}^{\rm crack}(\psi_n))$.  We first show that for each $T \in \mathbf{T}_n(\psi_n) \setminus ( \mathbf{T}_{n,k-1}^{\rm crack}(\psi_n) \cup  \mathcal{G}_{n,k}(\psi_n) )$, the function $\phi_n|_T$ has $W^{2,\infty}$-regularity. Indeed,  if $T$ intersects $Q_i \in \mathcal{B}_{\rm bad} \cup \mathcal{B}_{\rm good}^{\rm small}$ with $T \cap  (\Gamma_i^n \cup L_i^n) = \emptyset$  and $T\subset Q_i$,  it follows that either $T \subset  {P_i^{n,-}}$ or $T \subset {P_i^{n,+}}$ (see \eqref{1301252256} and \eqref{1401251329}),  and thus $\phi_n$ coincides with $\phi_{i}^{n,-}$ or $\phi_{i}^{n,+}$   in $T$,   respectively   (see \eqref{1301252256} and  \eqref{phiningood}). If $T \cap  (\Gamma_i^n \cup L_i^n) = \emptyset$ and $T\sm Q_i\neq \emptyset$, then $\phi_n$ coincides with $\psi$ in $T$ \JJJ by \eqref{outside-R-i}.  If   $T \in (\mathbf{T}^{\rm  rest}_{n,k}   \cup   \mathbf{T}^{\rm good, large}_{n,k})  \cap \mathbf{D}_n(\phi_n)$,  \EEE   we have that $\phi_n= \psi$ on $T$, see \eqref{1501251236}. \EEE 

 In all cases, we get $\Vert \nabla \phi_n \Vert_{W^{1,\infty}(T)} \le C_{\psi,\theta}  $ by \eqref{eq: Lipschitz1}. Since on all such triangles  $\psi_n$ is the piecewise affine interpolation of $\phi_n$,  using \eqref{eq: 10.6}  we derive \EEE
\EEE
$$
 \VVV \| e (\psi_n)-e(\phi_n)\|_{L^{\infty}(T)}\leq \EEE \|\nabla \psi_n -\nabla \phi_n\|_{L^{\infty}(T)}   \leq \MMM C_{\psi,\theta} \EEE \,\eps_n \quad \ \    \text{for all  $T \in \mathbf{T}_n(\psi_n) \setminus ( \mathbf{T}_{n,k-1}^{\rm crack}(\psi_n) \cup  \mathcal{G}_{n,k}(\psi_n) )$}. \EEE
$$ 
By summing  over all triangles  this gives  
\begin{equation}\label{0902250955}
 \int_{\Omega\sm (G_{n,k}(\psi_n)\cup \Omega_{n,k-1}^{\rm crack}(\psi_n))}|e(\psi_n)-e(\phi_n)|^2 \dx\leq |\Omega|\, \MMM C^2_{\psi, \theta} \EEE \,\eps_n^2.
\end{equation} 
By the bound on   $\JJJ \nabla \phi_n\EEE$ in \eqref{eq: Lipschitz1}, the first property in  \eqref{besicovitch-props}, and  \eqref{1501251236} we further find
$$ \int_{\Omega}|e(\phi_n)|^2 \dx \le \int_{\Omega}|e(\psi)|^2 \dx + C\theta. $$
This and \eqref{0902250955} control the limsup of the first integral on the right-hand side of \eqref{split-elastic}. Therefore, in view of \eqref{1501251117}, the proof of \eqref{stab2neu} is concluded.  \EEE

\subsection{Proof of lemmas}\label{sec: lemma prof}
 Finally, we prove the lemmas used in the previous subsections. 

\begin{proof}[Proof of Lemma \ref{stripe-prop}]
  We consider $\xi_j\defas  ( -\theta  r_i   +  2    j\cdot  10^{8}  \eps_n)\EEE $ for $  j = 0, \ldots, \EEE   {N}:=\lceil\frac{ \theta  r_i \EEE }{  10^{8}   \eps_n}\rceil$ and observe  that for $n$ large enough    
   \begin{equation*}
    \bigcup_{j=0}^{ {N}} A^{n}_i(\xi_j) \supset R^n_i \quad \text{and}\quad |\VVV E \EEE\cap Q_i| \ge  \sum_{j=1}^{N-2} \EEE |\VVV E \EEE \cap A^{n}_i(\xi_j)  |,
  \end{equation*}  
  \VVV denoting, here and below, $A^{n}_i(\xi_j; E)$ by $A^{n}_i(\xi_j)$. \EEE
Therefore, we find $J  \in \lbrace 2,\ldots, N - 3\rbrace$ such that
$${\sum_{j=J-1}^{J+1} |  E  \cap A^{n}_i(\xi_j)  | \le \tilde{C}N^{-1} |  E  \cap Q_i| \le \tilde{C}\eps_n^2/ \theta^2 \EEE} $$
 for some universal $\tilde{C}>0$, \EEE where we used the assumption that $|\VVV E \EEE  \cap Q_i|\leq  \theta^{-1} r_i \eps_n \EEE$.  As $|T|\geq c \eps_n^2$  for $c$ only depending on $\theta_0$, we obtain 
$$\# \Big\{   T\in  \mathbf{T}_E  \EEE \colon T\subset\bigcup_{j=J-1}^{J+1} A^{n}_i(\xi_j)\ \Big\} \le \tilde{C}/ \theta^2. \EEE$$
\MMM In view of \eqref{eq: 10.6}, \EEE this shows the statement for $\xi = \xi_J$. 
%\RRR Manuel: Provided that length of triangles is controlled by $\sim 100 \eps_n$. Do we assume this? Joscha: It seems like we have to! \EEE
\end{proof}
 
\begin{proof}[Proof of Lemma \ref{lemma7.1.5}]
We first consider the case that $T \cap (\VVV L^n_i \EEE\cup \VVV \Gamma^n_i \EEE) = \emptyset$. \JJJ As it holds $\Vert  \nabla \phi_n \Vert_{L^\infty(\Omega'\JJJ \setminus J_{\phi_n}\EEE)} \le C_\psi$ by \eqref{eq: Lipschitz1}, \EEE we find by \eqref{1401251338} that
$$  |\phi_n(x)-\phi_n(x')|\leq \|\nabla \phi_n\|_{\JJJ L^\infty (T)\EEE} |x-x'|\leq \MMM C_\psi \EEE |x-x'|
$$
for vertices $x,x' \in \mathcal{V}(T)$. In particular,   as $\psi_n(x)=\phi_n(x)$ and $\psi_n(x')=\phi_n(x')$, \EEE  this implies  
    \begin{equation}\label{bovi}
      |(\nabla \psi_n)_{T} (x-x')|= |\psi_n(x)-\psi_n(x')|\leq \MMM C_\psi \EEE  |x-x'|\,.
    \end{equation}  
    Since this holds true along all three edges of $T$, we deduce that also $| (  \nabla  \psi_n)_T|\leq C$ for some $C>0$,  as desired. \EEE
    
Now, we assume that $T \cap \VVV L^n_i \EEE = \emptyset$ but $T \cap \VVV \Gamma^n_i \EEE \neq \emptyset$ and $T \in \mathbf{T}^{\rm side}_{n} \MMM \sm \mathbf{T}_{n,k}^{\rm crack} \EEE $. \VVV By \EEE definition of $\mathbf{T}^{\rm side}_{n}$  in \eqref{sidedef}, \EEE we find that either \VVV $N_{\rm out}(T) \EEE \, \tilde{\subset} \, P^{n,+}_{i}$ or \VVV $N_{\rm out}(T) \EEE \, \tilde{\subset} \, P_i^{n,-}$ and
 $N_{\rm out}(T)   \, \tilde{\subset} \, \EEE P^{n,\pm}_{i}$ if and only if $T  \, \tilde{\subset} \, \EEE P^{n,\pm}_{i}$, i.e., $\psi_n$ is the piecewise affine interpolation of $\phi_{i}^{n,\pm}$. Arguing as above, we deduce that $| (  \nabla  \psi_n)_T|\leq C$.
% Since $T \, \tilde{\subset} \,  P^{n,+}_{i}$, we necessarily have $N^x_{\rm out} \, \tilde{\subset} \, P^{n,+}_{i}$, and then by the construction below \eqref{below} we conclude  $\psi_n(x) = \phi^+_n(x) $, as desired.  
\end{proof}

\begin{proof}[Proof of Lemma \ref{lemma7.1}]
We start with the proof of \EEE  \eqref{rare-situation}. Consider $T\in \mathbf{T}^{\rm bad}_{n} $ such that $\emptyset \neq T\cap \Gamma^n_i \EEE \subset K_n(t)$ and $T \notin \mathbf{T}^{\rm side}_{n} \JJJ \cup \mathbf{T}_{n,k}^{\rm crack}\EEE$.   As $\partial T \cap \partial \Omega_{n,k}^{\rm mod}  \neq  \emptyset$ and $T \notin  \mathbf{T}_{n,k}^{\rm crack}$, we first  notice that,  in view of Remark~\ref{remark-on-holes}, we have $|T \cap \Omega_{n,k}^{\rm mod}|=0$.   
%As $T \cap K_n(t) \subset \partial T$, we get $T\cap \VVV \Gamma^n_i \EEE \subset \partial T$, and thus we deduce that without restriction $T \, \tilde{\subset} \, P^{n,+}_{i}$. \RRR why? should only take ones outside $\mathbf{T}_{n,k}^{\rm crack}$? renmove sentence? \EEE 
 \VIT Then \JJJ
 % for $T$ fixed as before \JJJ % $T\notin \mathbf{T}_{n,k}^{\rm crack}$, 
we \VIT may assume that \JJJ $T\subset N_{\rm out}^{x}$ for all $x\in \mathcal{V}(T)$, hence $N_{\rm out}(T)$ is a connected set. As $\emptyset \neq T\cap \Gamma^n_i \subset K_n(t)$, we  get $N_{\rm out}(T)\cap \partial \VIT \Omega_{n,k}^{\rm mod}\JJJ\neq \emptyset$. However, \VIT since \JJJ $T \notin \mathbf{T}^{\rm side}_{n}$, we also have $N_{\rm out}(T)\cap P^{n,+}_{i}\neq \emptyset$ and at the same time $N_{\rm out}(T)\cap P^{n,-}_{i}\neq \emptyset$. \MMM  This implies that $T$ is either close to the intersection of a component of $\Gamma_i^{n}\setminus K_n(t)$ and a component of $\Omega_{n,k}^{\rm mod}$ or it is close to the intersection of two different components in $\Omega_{n,k}^{\rm mod}$ (see, e.g., $x_2$ in  Figure~\ref{N-x-out-figure} or \VIT $x_3$ \MMM 
%$x_4$ 
in this picture, \VIT respectively). \MMM 

Recall that each connected components of $\Omega_{n,k}^{\rm mod}$ meets at most two connected components of $\Gamma_i^{n}\setminus K_n(t) $, see the discussion before \eqref{1301252256}. For the same reason, we can assume that for each connected component $H$ of  $\Omega_{n,k}^{\rm mod}$, the set $H \cap \Gamma^n_i$ intersects at most two other components in $\Omega_{n,k}^{\rm mod}$. 
Summarizing, the number of triangles $T$ of the above form is controlled (up to a multiplicative constant) by the total number of connected components $\mathcal{C}(\Omega_{n,k}^{\rm mod})$. \EEE  By \eqref{extra-statement} and Corollary \ref{cor: energy bound}  we \MMM  thus  \EEE obtain 
\begin{equation}
  \# \big\{T\in \mathbf{T}^{\rm bad}_{n}\colon \ \emptyset \neq T\cap  \Gamma^n_i \EEE \subset K_n(t), \ T \notin \mathbf{T}^{\rm side}_{n}   \MMM \cup \mathbf{T}_{n,k}^{\rm crack}\EEE \big\}\leq C \MMM \frac{ \eta_n  }{\eps_n}. \EEE
\end{equation}
Since   $\eta_n<\theta$ for  $n$ large enough, we conclude the proof of \eqref{rare-situation}.

% Recalling the associated graph $(\mathcal{V}(A), \mathcal{E}(A))$ of $A$, this situation is related to vertices in $\mathcal{V}_{2l}(A)$ for $l\geq 2$  with (at least) four edges, see Figure \ref{fig:touching-components}. More precisely, using the notation of Chapter 2, we can count the points $x\in \VVV \Gamma^n_i \EEE\cap K_n(t)$ where this situation appears, by counting all vertices in $\mathcal{V}_{2l}(E)$ for $l\geq 2$. Note, however, that in the construction of $\Omega_{n,k}^{\rm mod}$ (see \eqref{iddd} and \eqref{counti2}), we modified $\Omega_{n,k+1}^{\rm crack}$ exactly such that it holds 
% % $\mathcal{D}_{2}^{\rm small}(E)=\emptyset$ and $\# \mathcal{D}_2^{\rm large}(E)\leq C \frac{\delta}{\eps_n}$.
% % We now have by the very construction of $\Omega_{n,k+1}^{\rm mod}$, 
% \begin{equation}
%     \sum_{l\geq 2} \# \mathcal{V}_{2l}(E) \leq\sum_{l\geq 2} l \# \mathcal{V}_{2l}(E) \leq C\frac{\delta}{\eps_n}\,.
% \end{equation}
% Since we have $\#\,\mathbf{T}^x\leq\frac{2\pi}{\theta_{0}}$ and because the two cases considered above are the only situations, where potentially $\mathbf{T}^x\cap \mathbf{T}_{n,k}^{\rm crack}(\psi_n)\setminus \mathbf{T}_{n,k+1}^{\rm crack}\neq \emptyset $, we conclude that \eqref{rare-situation} holds.  

We now come to the proof of \eqref{firsttoproveX}.  The triangles $T$ with $L^n_i \EEE \cap T \neq \emptyset$ can be controlled \JJJ exactly \EEE as in \eqref{lateral-good-small}.  \EEE  By $(\Gamma^n_{i,l})_l$  we denote the connected components of $(\Gamma_i^{n}  \setminus K_n(t)) \cap Q_i$, and recall that we  assumed without restriction that $\Gamma^n_{i,l}$ connects two different connected components of $\Omega_{n,k}^{\rm mod}$, \VVV see the discussion \MMM before  \eqref{1301252256}. \EEE   Note that $K_n(t)$ is the boundary of a union of triangles \JJJ in \EEE $\mathbf{T}_{n}(u_n^{k})$, and thus  different connected components of $K_n(t)$ have distance at least  $c  \eps_n$  for some $c>0$ only depending on $\theta_0$. Therefore, each $\Gamma^n_{i,l}$ fulfills $\mathcal{H}^{1}(\Gamma^n_{i,l})\geq  c \EEE \eps_n$.  The key point consists in showing 
\begin{equation}\label{forcomponents}
    \#\{T\JJJ \in  \mathbf{T}_{n}(\psi_n) \EEE\colon T\cap \Gamma_{i,l}^n\neq \emptyset\}\leq \frac{C}{\eps_n} \mathcal{H}^1(\Gamma_{i,l}^n).
\end{equation}
Once this is shown,  by summing over all   components $(\Gamma^n_{i,l})_l$, \EEE we get
\begin{equation}\label{firsttoprove}
\sum_{Q_i \in \mathcal{B}_{\rm bad}}\#\{T\in  \mathbf{T}_{n}(\psi_n)  \colon \,   (\Gamma_i^{n}\setminus K_n(t))    \cap T\neq \emptyset\}\leq \sum_{Q_i \in \mathcal{B}_{\rm bad}} \frac{C}{\eps_n}\mathcal{H}^1  \big( ( \Gamma_i^{n} \setminus K_n(t)) \cap Q_i   \big)\,.\end{equation} 
By using \JJJ \eqref{almost-recovery-seq} 
and \eqref{1301251232} \EEE we have $\limsup_{n\to \infty}\mathcal{H}^1(( \Gamma^n_i \EEE\setminus K_n(t))\cap B_{\rm bad})\leq C\theta$, which    concludes the proof of \EEE  \eqref{firsttoproveX}.
 
To prove \eqref{forcomponents}, we consider the   $\omega(\varepsilon_n) $-neighborhood of $ \Gamma^n_{i,l}$, i.e., $
 N_{\varepsilon_n}( \Gamma^n_{i,l}):=\{x \in \R^2\colon {\rm dist}(x, \Gamma^n_{i,l})\leq \omega(\varepsilon_n)\}\,$.  
 As \EEE $\mathcal{H}^1 ( \Gamma^n_{i,l})\geq c\eps_n$, an elementary geometric argument yields the existence of a universal constant $C>0$ such that 
\[ \big|N_{\varepsilon_n}( \Gamma^n_{i,l})\big|\leq C \, \omega(\varepsilon_n) \mathcal{H}^{1}( \Gamma^n_{i,l})\leq C \varepsilon_n \mathcal{H}^{1}( \Gamma^n_{i,l})\,, \]
 where we used \eqref{eq: 10.6}. \EEE Since $|T| \geq c\varepsilon_n^2$, we hence obtain  
    \[\#\big\{T \JJJ \in  \mathbf{T}_{n}(\psi_n) \EEE\colon \, \Gamma^n_{i,l}\cap T\neq \emptyset  \big\}\le \# \big\{T\JJJ \in  \mathbf{T}_{n}(\psi_n) \EEE\colon \, T\subset N_{\varepsilon_n}(\Gamma^n_{i,l})\big\} \leq C\frac{\big|N_{\varepsilon_n}( \Gamma^n_{i,l})\big|}{  \eps_n^{2}\EEE}\leq \frac{C}{\varepsilon_n}\mathcal{H}^{1}( \Gamma^n_{i,l}) \,,\]
    \JJJ which validates \eqref{forcomponents}. \EEE 
This concludes the proof. 
\end{proof}

\begin{proof}[Proof of Lemma \ref{lemma7.4}]
  We argue by contradiction and assume that there exists $T\in  \mathbf{T}^{\rm cr, g}_{n,k}(\psi_n) \EEE \setminus \mathbf{T}^{\rm crack}_{n,k} $ with $T \notin  \mathbf{T}'_{n} \EEE (\psi_n,Q_i)$ for all $Q_i\in \mathcal{B}_{\rm good}$.  In view of   \eqref{extraremark}, we have \EEE  $|e(\psi_n)_{T}|^2\geq \kappa/\eps_n \EEE$ for $T\in  \mathbf{T}^{\rm cr, g}_{n,k}(\psi_n) \EEE \setminus \mathbf{T}^{\rm crack}_{n,k} $. Since $ T  \notin \mathbf{T}'_{n}(\psi_n,Q_i)$, \MMM i.e., \EEE $T\cap ( J_{\phi_n} \EEE \cap Q_i)=\emptyset$, we can deduce  $\| \phi_n\|_{W^{1,\infty}(T)}\leq  C_\psi \EEE $ \VVV by \eqref{eq: Lipschitz1}.  \EEE  Now, consider vertices $x,x'\in \mathcal{V}(T)$   and recall that by definition $\psi_n(x)=\phi_n(x)$ and $\psi_n(x')=\phi_n(x')$. \EEE  We obtain   
\begin{equation}
    |(\nabla \psi_n)_{T}  (x-x')|=|\psi_n(x)-\psi_n(x')|=|\phi_n(x)-\phi_n(x')|\leq \|\nabla \phi_n\|_{L^{\infty}(T)}  |x-x'| \leq \JJJ C_{\psi} \EEE \eps_n \,.  
  \end{equation}
  Since this holds along all three edges of $T$,  there exists a constant $C>0$, such that $|e(\psi_n)_{T}| \leq|(\nabla \psi_n)_{T}|\leq {C}$, which for large $n\in \N$  contradicts the fact that $|e(\psi_n)_{T}|^2\geq \kappa/\eps_n $. Hence, \eqref{crucial-inclusion} holds.  
  \end{proof}
  
\begin{proof}[Proof of Lemma \ref{lemma: negli-jump}]
 We follow \EEE the argumentation in \cite[Lemma 5.2]{FriedrichSeutter},  which relies on the fact that the measure \MMM of \EEE jump points with small jump height is small, cf.\ \eqref{eq: Lipschitz2}. \EEE  Fix $T \in \mathbf{T}_n(\psi_n)\sm  \mathbf{T}_{n,k-1}^{\rm crack}(\psi_n)$ such that $T \cap \Gamma_i^n \neq \emptyset$, for some $Q_i \in \mathcal{B}_{\rm bad} \cup \mathcal{B}^{\rm small}_{\rm good}$. \MMM By \eqref{eq: Lipschitz2}{\rm (i)} we then get  that also $J_{\phi_n}\cap T\neq \emptyset$. \EEE Let $\tilde{x}\in T \cap J_{\phi_n}$ and choose \VVV $x\in T\cap  \overline{P_{i}^{n,+}} \EEE$ and $x'\in  T\cap \overline{P_i^{n,-}}\EEE$ (recall   \eqref{1301252256}  and \eqref{phiningood}) \EEE  in such a way that $\tilde{x}$ lies on the segment  between $x$ and $x'$, \EEE and $|x-x'|\geq c\eps_n$.  (If \JJJ $T \subset  \overline{P_{i}^{n,+}}$\EEE, we choose $x' = \tilde{x}$, and if $\JJJ T\subset  \overline{P_{i}^{n,-}}\EEE $, we choose $x = \tilde{x}$. \MMM In the following, we assume that $\tilde{x} \neq x,x'$. In the other case, the argument is similar using the one-sided traces of $\phi_n$ at $\tilde{x}$.) \EEE From $T\notin \mathbf{T}_{n,k-1}^{\rm crack}(\psi_n)$, we have that $ |e(\psi_n)_{T}| \leq \sqrt{\kappa/\eps_n}$.  Since $\phi_n$ is Lipschitz on $Q_i\sm \VVV \Gamma^n_i \EEE$ (see \eqref{eq: Lipschitz1} \JJJ and \eqref{1401251338})\EEE,  \EEE we get $|\phi_n(x)-\psi_n(x)|, |\phi_n(x')-\psi_n(x')|\leq  C_{\psi}\eps_n + C\sqrt{\eps_n} \EEE$  by \eqref{eq: 10.6}, \EEE 
and therefore
  \begin{equation*}
    \Big\langle \phi_n(x)-\phi_n(x'),  \frac{x-x'}{|x-x'|}  \Big\rangle\leq \Big\langle \psi_n(x)-\psi_n(x'), \frac{x-x'}{|x-x'|} \Big\rangle +  C\sqrt{\eps_n} \EEE \leq \Big\langle e(\psi_n)_{T}\cdot (x-x'), \frac{x-x'}{|x-x'|}\Big\rangle+   C\sqrt{\eps_n}\,. \EEE 
  \end{equation*}  
By this fact and $|x-x'|\leq  \omega(\eps_n)  \leq C \eps_n$ (\JJJ see \eqref{eq: 10.6}\EEE), we hence can estimate
 \begin{equation}\label{jumpsize-esti1}
  \Big\langle \phi_n(x)-\phi_n(x'),  \frac{x-x'}{|x-x'|} \Big\rangle \leq |e(\psi_n)_{T}| \, \eps_n +  C\sqrt{\eps_n} \EEE \leq  C\sqrt{\eps_n}. \EEE
 \end{equation}
On the other hand,   by the Fundamental Theorem of Calculus we have that 
\begin{equation}\label{jumpsize-esti2}
  \Big\langle \phi_n(x)-\phi_n(x'), \frac{x-x'}{|x-x'|} \Big\rangle = \int_{0}^{1} \Big\langle \nabla \phi_n(x+ s(x'-x)) \cdot (x'-x), \frac{x'-x}{|x-x'|} \Big\rangle\, {\rm d}s + \Big\langle [\phi_n(\tilde{x})],\frac{x'-x}{|x-x'|}\Big\rangle .
\end{equation}
  By  \eqref{eq: Lipschitz1}  \EEE   and $|x-x'|\le C\eps_n$,    the first term on the right-hand side \EEE is of order $\eps_n$. Putting together \eqref{jumpsize-esti1} and \eqref{jumpsize-esti2} we thus obtain
\begin{equation*}
  \Big\langle [\phi_n(\tilde{x})],\frac{x'-x}{|x-x'|}\Big\rangle \leq C   \sqrt{ \,\eps_n} \EEE + C\eps_n   \leq C \sqrt{\eps_n}. \EEE
\end{equation*}
Letting $\nu_{1}\defas \frac{x'-x}{|x'-x|}$  this means $\langle [\phi_n(\tilde{x})], \nu_1\rangle \leq C\sqrt{\eps_n}$. Now we can repeat the procedure for different $\hat{x}$ and $\hat{x}'$, where \MMM the segment between $\hat{x}$ and $\hat{x}'$ \EEE intersects $J_{\phi_n}$ also exactly at $\tilde{x}$ and $\nu_{2}\defas \frac{\hat{x}-\hat{x}'}{|\hat{x}-\hat{x}'|}$ satisfies $\langle \nu_1,\nu_2 \rangle \ge c$ for a universal $c>0$. We then have $\langle [\phi_n(\tilde{x})], \nu_{i}\rangle \leq C\sqrt{\eps_n}$ for $i=1,2$, which leads to $|[\phi_n(\tilde{x})]|\leq C\sqrt{\eps_n}$. 

We consider
$N_{\varepsilon_n}(T):=\{x \in \R^2\colon {\rm dist}(x, T \EEE )\leq \varepsilon_n\}\,,$ where we use $\|\nabla \phi_n\|_{L^{\infty}(N_{\eps_n}(T)\sm J_{\phi_n})}\leq \MMM C_\psi \EEE $ to find
\begin{equation*}
  |[\phi_n(z)]|\leq C  \sqrt{\eps_n} \EEE + C \MMM C_\psi \EEE  \eps_n \quad \text{for all} \; z\in J_{\phi_n}\cap N_{\eps_n}(T)\,.
\end{equation*}  
(In fact, $\tilde{x}$ and each $z$ can be connected by two different curves on different components of  $N_{\varepsilon_n}(T) \setminus \overline{J_{\phi_n}}$ of length $\sim \eps_n$.) As $\mathcal{H}^1(J_{\phi_n} \cap N_{\varepsilon_n}(T))$ is at least $\eps_n$, we get for $\eps_n$ small enough depending on $\theta$ 
$${\mathcal{H}^1\big( \lbrace  z \EEE \in  J_{\phi_n} \cap N_{\varepsilon_n}(T)\colon   |[\phi_n](  z \EEE )|  \le c_\theta  \rbrace \big) = \mathcal{H}^1(J_{\phi_n} \cap N_{\varepsilon_n}(T))\ge \eps_n.} $$ 
Summing  over all $T$ \VVV of the form described above, \EEE  observing that each neighborhood $N_{\varepsilon_n}(T)$ intersects only a bounded number of neighborhoods $N_{\varepsilon_n}(T')$,  $T'\neq T$,  and using \eqref{eq: Lipschitz2} we conclude \EEE that \eqref{1501251053} holds.  
\end{proof}

 \section*{Acknowledgements} 
 This research was funded by the Deutsche Forschungsgemeinschaft (DFG, German Research Foundation) - 377472739/GRK 2423/2-2023.  
 
\VIT   VC is member of the Gruppo Nazionale per l'Analisi Matematica, la Probabilit\`a e le loro Applicazioni (GNAMPA) of the Istituto Nazionale di Alta Matematica (INdAM) and acknowledges the financial support of PRIN 2022J4FYNJ
 `Variational methods for stationary and evolution problems with singularities and interfaces', PNRR Italia Domani, funded by the European Union under NextGenerationEU, CUP B53D23009320006, and of Sapienza Università di Roma through a SEED PNR Project.

\EEE

\end{document}